\documentclass[a4paper,36pt,notitlepage]{article}
\usepackage{amsmath,amsthm,amssymb,mathrsfs}
\usepackage{enumerate}
\usepackage{graphicx}
\usepackage[top=3cm, bottom=3.5cm, left=2.5cm, right=2.5cm]{geometry}

\newtheorem{dfn}{Definition}[subsection]
\newtheorem{thm}[dfn]{Theorem}

\newtheorem{lem}[dfn]{Lemma}
\newtheorem{cor}[dfn]{Corollary}
\newtheorem{con}[dfn]{Conjecture}
\newtheorem{rem}[dfn]{Remark}
\newtheorem{prop}[dfn]{Proposition}\makeatletter

         \@addtoreset{equation}{section}
\makeatother

\begin{document}
\title{\bf {\large Growth exponent for loop-erased random walk in three dimensions}}
\author{Daisuke Shiraishi}
\date{Department of Mathematics\\ Kyoto University}

\maketitle
\begin{abstract}
Let $M_{n}$ be the number of steps of the loop-erasure of a simple random walk on $\mathbb{Z}^{3}$ run until its first exit from a ball of radius $n$. 
In the paper, we will show the existence of the growth exponent, i.e., we show that there exists $\beta > 0$ such that 
\begin{equation*}
\lim_{n \to \infty} \frac{ \log E (M_{n} ) }{ \log n } = \beta.
\end{equation*}
 \end{abstract}

\section{Introduction}
\subsection{Introduction}
Let $S$ be the simple random walk on $\mathbb{Z}^{d}$ started at the origin and let $\tau_{n}$ be its first exit from the ball of radius $n$ centered at the origin. How does the  random walk path $S[0, \tau_{n} ]$ look like? This question has fascinated probabilists and mathematical physicists for a long time, and it continues to be an unending source of challenging problems. 

Cut points are one of the most important objects to study the random walk path (\cite{BGL}, \cite{BGS}, \cite{BSZ}, \cite{JP}, \cite{Law b}, \cite{L}, \cite{Greg}, \cite{S poi}, \cite{S pt}). Here a time $k \in [0, \tau_{n}]$ is called a (local) cut time if $S[0, k] \cap S[k+1, \tau_{n} ] = \emptyset$ and $S(k)$ is a (local) cut point if $k$ is a cut time.  We call  random walk path between each consecutive cut points a \textit{piece} so that the random walk path consists of the disjoint union of several pieces. The number of cut points are studied in many papers (\cite{JP}, \cite{Law b}, \cite{Law2} \cite{L}). In \cite{Law b}, it is proved that the expected number of cut points is of order $n^{2}$ for $d \ge 5$. Since $\tau_{n}$ is also of order $n^{2}$, the set of cut times has a positive density in $[0, \tau_{n}]$ in higher dimensions. For $d=4$, it is shown in \cite{Law2} that the expected number of cut points is of order $n^{2} (\log n)^{-1/2}$. Finally, for $d=2, 3$, it is proved in \cite{L} that there exist $ \xi_{d} \  (d=2, 3)$ such that the expected number of cut points is comparable to $n^{2- \xi_{d}}$. The exponent $\xi_{d}$ is called the intersection exponent. For the value of $\xi_{2}$, Lawler, Schramm and Werner \cite{LSW} prove that $\xi_{2} = \frac{5}{4}$ by using the SLE techniques. Consequently, the expected number of cut times up to time $\tau_{n}$ grows like $n^{\frac{3}{4}}$ for $d=2$. The exact value of $\xi_{3}$ is not known. The best rigorous estimates for $\xi_{3}$ \cite{Greg, Law3} are $\frac{1}{2} < \xi_{3} < 1.$

In higher dimensions, $d \ge 5$, roughly we may think of $S[0, \tau_{n} ]$ as a union of $O( n^{2} )$-stationary and ergodic pieces (\cite{BSZ}, \cite{Cro}). In that case, length of each piece has a finite moment and a correlation of two pieces is negligible, which enables us to analyze the path in detail.  Borrowing a term from physics we might say that the upper critical dimension for cut points is $4$. In $4$ dimensions, a logarithmic correction is required in the analysis of pieces. Study of geometrical structure of pieces in 4 dimensions is done in \cite{S pt}.  Roughly speaking, it is proved that a piece has a ``long sparse loop" if the length of the piece is large (see \cite{S pt} for the details). 

In 2 and 3 dimensions, the situation is more complicated since a correlation of two pieces is not negligible and each piece has no common distribution. To deal with this problem, we reconsider the non-intersecting random walk in this paper. In \cite{S ejp}, in order to investigate the structure of the path around cut points, the following problem was considered: if we condition that $S[0,n] \cap S[n+1, 2n] = \emptyset$, then how does the path look like around $S(n)$? Let $S^{1},S^{2}$ be independent simple random walks started at the origin. Then, thanks to the translation invariance and the reversibility of the simple random walk, our problem may be reduced to clarify the structure of $S^{1},S^{2}$ around the origin when we condition that $S^{1}[0,n] \cap S^{2}[1,n] = \emptyset$. To tackle this problem, the non-intersecting two-sided random walk paths were constructed for $d=2,3$ in \cite{S ejp}, namely the following limit exists:
\begin{equation}\label{limit existance}
\lim _{n \rightarrow \infty } P ( \cdot \  | \ S^{1}[0, \tau^{1}_{n} ] \cap S^{2}[1, \tau^{2}_{n} ] = \emptyset )=: \overline{P} (\cdot ),
\end{equation}
where $\tau^{i}_{n}$ is the first time that $S^{i}$ exits from a ball of radius $n$ centered at the origin, see \eqref{limit} for the precise definition of $\overline{P}$. Let $\overline{S}^{1}, \overline{S}^{2}$ be the associated two-sided random walks whose probability law is $\overline{P}$ and we define $\overline{S} (n)$ by $\overline{S} (n) = \overline{S}^{2}(n)$ if $n \ge 0$ and $\overline{S} (n) = \overline{S}^{1}(-n)$ if $n < 0$. We call $\overline{S}$ a non-intersecting random walk (see Figure 1 for $\overline{S}$).

In \cite{S}, it is proved that $\overline{S}$ has infinitely many \textit{global} cut points almost surely. Here, $n \in \mathbb{Z}$ is called global cut time for $\overline{S}$ if the entire of the past path $\overline{S} ( -\infty , n]$ and the future path $\overline{S} [n+1, \infty)$ do not intersect. We call $\overline{S}(n)$ a global cut point if $n$ is a global cut time. In \cite{S}, it is shown that the number of global cut points of $\overline{S}$ lying in the ball of radius $n$ is of order $n^{2-\xi_{d}}$. Therefore, we see that the number of local cut points for $S$ and the number of global cut points for $\overline{S}$ lying in a ball of radius $n$ are of the same order of magnitude.

For $k \in \mathbb{Z}$, we write $\overline{T}_{k}$ for the $k$-th global cut times with $\overline{T}_{0}=0$ and $\overline{T}_{k} < \overline{T}_{k+1}$ for each $k$. (Note that by definition, 0 is always a global cut time.) We call each $\overline{S}[ \overline{T}_{k} , \overline{T}_{k+1}]$ a piece again. In the present paper, we first show that each piece has common distribution and that $\overline{S} [0, \overline{T}_{1}]$ is asymptotically independent of $\overline{S} [\overline{T}_{k} , \overline{T}_{k+1}]$ as $|k| \to \infty$. More precisely, let $\overline{\theta}$ be a translation shift with respect to the first global cut point so that $\overline{S} \circ \overline{\theta} (m) = \overline{S} (m + \overline{T}_{1}) - \overline{S} ( \overline{T}_{1})$ for all $m$, see \eqref{shift} for $\overline{\theta}$.  We recall that a measure preserving system $(X, {\cal B}, \mu, T)$ is mixing if 
$\lim_{n \to \infty} \mu (A \cap T^{-n} B ) = \mu (A) \mu (B)$ for all $A, B \in {\cal B}$. The first our main result is the following.

\begin{thm}\label{ososugi}
Let $d=2, 3$. The law of $\overline{S}$ is invariant under the shift $\overline{\theta}$ and $\overline{\theta}$ is mixing.
\end{thm}

As an application of Theorem \ref{ososugi}, we investigate some quantities generated by the random walk path $\overline{S} [0,   \overline{T}_{n}]$. The quantities that we are interested in are 
\begin{itemize}
\item $q^{1}_{n} = \text{len} \big( LE (\overline{S} [0,   \overline{T}_{n}] ) \big)$, length (number of steps) of the loop-erasure of $\overline{S} [0,   \overline{T}_{n}]$,

\item $q^{2}_{n} = d_{\overline{S} [0, \overline{T}_{n} ]} (0, \overline{S} ( \overline{T}_{n} ) )$, graph distance between the origin and $\overline{S} ( \overline{T}_{n} ) $ on $\overline{S} [0,   \overline{T}_{n}]$,

\item $q^{3}_{n} = R_{\overline{S} [0, \overline{T}_{n} ]} (0, \overline{S} ( \overline{T}_{n} ) )$, effective resistance between the origin and $\overline{S} ( \overline{T}_{n} ) $ on $\overline{S} [0,   \overline{T}_{n}]$.
\end{itemize}
(See Section 4 for definitions and backgrounds of loop-erased random walk (LERW), graph distance and effective resistance.) These three quantities have the following similarities: for each $1 \le i \le 3$, $q^{i}_{n}$ can be written in terms of sum of compositions of $q^{i}_{1}$ and $\overline{\theta}^{k}$, i.e., we have 
\begin{equation}\label{ososugi-2}
q^{i}_{n} = \sum_{k=0}^{n-1} q^{i}_{1} \circ \overline{\theta}^{k}.
\end{equation}
Using this expression, we want to apply some results of ergodic theory to analyze $q^{i}_{n}$. If $q^{i}_{1}$ had a finite moment, we could apply Birkhoff's theorem to show that $q^{i}_{n}$ grows like $c n$. However this is not the case since $q^{i}_{1}$ has an infinite moment for all $i$. To deal with this issue, we use Aaronson's results derived in \cite{A}. In \cite{A}, it is shown that for all $a > 1$, either the ratio $\frac{q^{i}_{n}}{n^{a}}$ converges to 0 as $n \to \infty$ a.s. or $\limsup_{n \to \infty} \frac{q^{i}_{n}}{n^{a}} = \infty $ a.s. We are interested in the infimum of $a$ satisfying that $\frac{q^{i}_{n}}{n^{a}}$ converges to 0 a.s. and denote the infimum by $a^{i}_{d}$. Then we have

\begin{thm}\label{kanben}
Let $d=2, 3$. Suppose that $q^{i}_{n}$ and $a^{i}_{d}$ ($1 \le i \le 3$) are as above. Then for every $a > a^{i}_{d}$ the ratio $\frac{q^{i}_{n}}{n^{a}}$ converges to 0 as $n \to \infty$ almost surely. On the other hand, for all $a < a^{i}_{d}$, $\limsup_{n \to \infty} \frac{q^{i}_{n}}{n^{a}} = \infty $ almost surely.
\end{thm}

With Theorem \ref{kanben} in mind, it is natural to ask whether $q^{i}_{n}$ is logarithmically asymptotic to $n^{a^{i}_{d}}$. We are also interested in a comparison between these quantities for $\overline{S}$ and the corresponding quantities for $S$. Unfortunately, we could not give answers to such questions for graph distance and effective resistance. Actually, in an early stage of this project, we tried to find a way to prove $q^{i}_{n}$ is of order $n^{a^{i}_{d}}$ just by using general results of ergodic theory. However, since we could not find such a way, we decided to focus on the length of the loop-erasure of $\overline{S}$. (We should also mention that Theorem \ref{kanben} is the only place where we used a general result from ergodic theory.) For the length of the loop-erasure $q^{1}_{n}$ in 2 dimensions, we have the following theorem:

\begin{thm}\label{kanben-1}
 Let $d=2$. We let $\overline{\tau}^{+}_{n} = \inf \{j \ge 0 \ \big| \ |\overline{S} (j) | \ge n \}$ be the first time that $\overline{S} [0, \infty)$ exits from a ball of radius $n$. Then we have $a^{1}_{2} = \frac{5}{3}$, and 
 \begin{align}
&\lim_{n \to \infty} \frac{\log q^{1}_{n} }{\log n } = a^{1}_{2}, \text{ a.s.,} \label{kanben-1-1} \\
&\lim_{n \to \infty} \frac{\log \text{len} \big( LE (\overline{S} [0,   \overline{\tau}^{+}_{n}] ) \big) }{\log n } = \frac{5}{4}, \text{ a.s.} \label{kanben-1-3}
\end{align}
\end{thm}

Since the expected length of $LE ( S[0, \tau_{n} ] )$ is of order $n^{\frac{5}{4}}$ in 2 dimensions (see \cite{Ken}, \cite{Mas} and \cite{Lawler} for this), Theorem \ref{kanben-1} gives that the length of $LE (\overline{S} [0,   \overline{\tau}^{+}_{n}] )$ and $LE ( S[0, \tau_{n} ] )$ are of the same order of magnitude.

We want to establish same type of results as Theorem \ref{kanben-1} in 3 dimensions. To prove Theorem \ref{kanben-1}, it turns out that we need various results of loop-erased random walks in 2 dimensions (e.g. the expected length of $LE (S[0, \tau_{n} ] )$ is of order $n^{\frac{5}{4}}$, exponential tail bounds on the length of length of $LE (S[0, \tau_{n} ] )$, $\cdots$). Unfortunately, those necessary results have not been established up to now in 3 dimensions. In the present article, we will show the following theorem for loop-erased random walks in 3 dimensions, which will be used to prove that $q^{1}_{n}$ is of order $n^{a^{1}_{3}}$.

 \begin{thm}\label{okonomi} 
Let $d=3$. We write $M_{n} = \text{len} \big( LE ( S[0, \tau_{n} ] ) \big)$ for the length of the loop-erasure of $S[0, \tau_{n} ]$. Then there exists $\alpha \in [\frac{1}{3}, 1)$ such that 
\begin{equation}\label{okonomi-1}
\lim_{n \to \infty} \frac{ \log  E (M_{n} ) }{\log n } =2- \alpha.
\end{equation}
Furthermore, it follows that there exists $c > 0$ such that for all $n \ge 1$ and $\kappa \ge 1$
\begin{equation}\label{okonomi-2}
P \big( M_{n} \ge \kappa E (M_{n} ) \big) \le 2 e^{- c \kappa },
\end{equation}
and that for any $\epsilon \in (0,1)$, there exist $0 < c_{ \epsilon }, C_{ \epsilon } < \infty$ such that for all $\kappa \ge 1$ and $n  \ge 1$, 
\begin{align}\label{okonomi-3}
P \Big( M_{n} \le \frac{ E(M_{n} ) }{\kappa} \Big) \le C_{ \epsilon } \exp \big( - c_{ \epsilon } \kappa^{\frac{1}{2- \alpha} - \epsilon } \big).
\end{align}
\end{thm}

We should mention that once we show the existence of $\alpha$ as in \eqref{okonomi-1}, bounds $\alpha \in [\frac{1}{3}, 1)$ immediately follow from Lawler's estimates in \cite{Law5} where it is proved that $c n^{2- \xi_{3}} \le E (M_{n} ) \le C n^{\frac{5}{3}}$ (recall that $\frac{1}{2} < \xi_{3} < 1$). To our knowledge, the existence of the exponent $\alpha$ as in \eqref{okonomi-1} and exponential tail bounds on $M_{n}$ as in \eqref{okonomi-2} and \eqref{okonomi-3} are new results. In 2 dimensions, $E (M_{n} )$ is known to be of order $n^{\frac{5}{4}}$ (see \cite{Ken}, \cite{Mas} and \cite{Lawler}), and exponential tail bounds on $M_{n}$ are established in \cite{BM}. Theorem \ref{okonomi} is crucial and enough to derive an analog of Theorem \ref{kanben-1} in 3 dimensions as follows:

\begin{thm}\label{kanben-33}
 Let $d=3$. Recall that $\overline{\tau}^{+}_{n} = \inf \{j \ge 0 \ \big| \ |\overline{S} (j) | \ge n \}$ stands for the first time that $\overline{S} [0, \infty)$ exits from a ball of radius $n$, and that $\xi_{3}$ is the intersection exponent in 3 dimensions. Then we have $a^{1}_{3} = \frac{2- \alpha}{2- \xi_{3}}$, and 
 \begin{align}
&\lim_{n \to \infty} \frac{\log q^{1}_{n} }{\log n } = a^{1}_{3}, \text{ a.s.,} \label{kanben-33-1} \\
&\lim_{n \to \infty} \frac{\log \text{len} \big( LE (\overline{S} [0,   \overline{\tau}^{+}_{n}] ) \big) }{\log n } = 2- \alpha, \text{ a.s.} \label{kanben-33-3}
\end{align}
\end{thm}

\begin{rem}
Let $G_{n}$ and $R_{n}$ be the graph distance and effective resistance between the origin and $S( \tau_{n} )$ on the path $S[0, \tau_{n} ]$. To our knowledge, up to now it has not been proved or disproved that the exponents $\beta_{1}$ and $\beta_{2}$ with $E ( G_{n} ) = n^{\beta_{1} + o(1)}$ and $E ( R_{n} ) = n^{\beta_{2} + o(1)}$ exist in 2 and 3 dimensions. Furthermore, exponential tail bounds on $G_{n}$ and $R_{n}$ also have not been established.
\end{rem}

\subsection{Some words about the proofs}
In this subsection, we will explain ideas of main theorems. For Theorem \ref{ososugi}, the invariance of the law of $\overline{S}$ under the shift $\overline{\theta}$ is straightforward, but to show that $\overline{S}$ is mixing takes more work. Using the $\pi$-$\lambda$ Theorem (see \cite{Dur} Theorem A.1.4), it suffices to prove that the first piece $\overline{S} [0,   \overline{T}_{1}]$ and the $n$-th piece $\overline{S} [\overline{T}_{n-1},   \overline{T}_{n}] - \overline{S} (\overline{T}_{n-1})$ are almost independent if $n$ is large. Since the $n$-th piece typically lies in the outside of a large ball when $n$ is large, we need to control the independence of $\overline{S} [0, \overline{\tau}^{+}_{l}]$ and $\overline{S} [\overline{\tau}^{+}_{m}, \infty)$ with $l \ll m$ (see Theorem \ref{kanben-1} for $\overline{\tau}^{+}_{l}$). With this in mind, we take $N$ large and consider two pairs of paths $\overline{\gamma} = (\gamma^{1}, \gamma^{2} )$ and $\overline{\gamma}' = (\gamma^{3}, \gamma^{4} )$ such that for each $i =1, 2$,
\begin{equation}\label{misutta}
P \Big(  S^{2 i-1}[0, \tau^{2 i-1}_{N} ] \cap S^{2i}[1, \tau^{2 i}_{N} ] = \emptyset, \ \big ( S^{2 i-1}[0, \tau^{2 i-1}_{l} ], S^{2 i}[0, \tau^{2 i}_{l} ] \big) = (\gamma^{2 i-1}, \gamma^{2i} ) \Big) > 0,
\end{equation}
where $S^{1}, \cdots , S^{4}$ are independent simple random walks started at the origin and $\tau^{j}_{r}$ stands for the first time that $S^{j}$ exits from a ball of radius $r$. Namely, $\overline{\gamma}$ and $\overline{\gamma}'$ are possible configurations of $\overline{S}$ up to its first exit of the ball of radius $l$ (we will call such $\overline{\gamma}$ an initial configuration). We write $A^{N}_{i, l}$ for the event in the probability of \eqref{misutta}. In order to deal with the independence of $\overline{S} [0, \overline{\tau}^{+}_{l}]$ and $\overline{S} [\overline{\tau}^{+}_{m}, \infty)$, we will show that the distribution of $\big ( S^{1}[\tau^{1}_{m}, \tau^{1}_{N} ], S^{2 }[\tau^{2 }_{m}, \tau^{2 }_{N} ] \big)$ conditioned on $A^{N}_{1, l}$ is almost same as the distribution of $\big ( S^{3}[\tau^{3}_{m}, \tau^{3}_{N} ], S^{4 }[\tau^{4 }_{m}, \tau^{4 }_{N} ] \big)$ conditioned on $A^{N}_{2, l}$ if $l \ll m$ (see Theorem \ref{coupling3} for the details). This implies that $\overline{S} [\overline{\tau}^{+}_{m}, \infty)$ is almost independent of its initial configuration and we can conclude that $\overline{\theta}$ is mixing.

Once we establish Theorem \ref{ososugi}, Theorem \ref{kanben} immediately follows from Theorem A' of \cite{A}. 

We next consider Theorem \ref{kanben-1}. Since the number of global cut points of $\overline{S}$ lying in a ball of radius $n$ is typically of order $n^{2-\xi_{2}}$ in 2 dimensions (see Theorem 1.1 of \cite{S}) and $\xi_{2} = \frac{5}{4}$ (see \cite{LSW}), we see that the distance between the origin and $\overline{S} ( \overline{T}_{n} )$ is roughly of order $n^{\frac{4}{3}}$. Indeed we will see that for any $\epsilon > 0$, with high probability $\overline{T}_{n}$ is bounded above by $ \overline{\tau}^{+}_{n^{\frac{4}{3} + \epsilon }}$. This implies that the length of $LE (\overline{S} [0,   \overline{T}_{n}] )$ is bounded above by the length of $LE (\overline{S} [0,   \overline{\tau}^{+}_{n^{\frac{4}{3} + \epsilon }}] )$. However tail bounds on $M_{k}$ derived in \cite{BM} shows that the probability that $M_{k} \ge k^{\frac{5}{4} + \epsilon}$ is less than $C e^{- c k^{\epsilon} }$, where $\frac{5}{4}$ comes from the fact that the growth exponent for loop-erased random walk in 2 dimensions is equal to $\frac{5}{4}$ (see \cite{Ken}, \cite{Mas} and \cite{Lawler}). Therefore, the probability that $M_{n^{\frac{4}{3} + \epsilon }} \ge n^{\frac{5}{3} + 2 \epsilon}$ is exponentially small in $n$, which is much smaller than the probability that $S^{1}$ and $S^{2}$ do not intersect up to the first time that they exit from a ball of radius $n^{\frac{4}{3} + \epsilon }$ (such a non-intersecting probability is a polynomial order, see \eqref{intersection-exp}). Consequently, we see that the length of $LE (\overline{S} [0,   \overline{T}_{n}] )$ is bounded above by $n^{\frac{5}{3} + 2 \epsilon}$ with high probability. Similar considerations along with lower tail bounds on $M_{k}$ derived in \cite{BM} give the opposite inequality and we get Theorem \ref{kanben-1}.

We want to prove Theorem \ref{kanben-33} by the same strategies as Theorem \ref{kanben-1}. However, the following is missing in 3 dimensions:
\begin{itemize}
\item[(i)] Existence of the exponent $\beta$ such that $E (M_{n} ) = n^{\beta + o(1)}$ as $n \to \infty$.

\item[(ii)] Exponential tail bounds on $M_{n}$.
\end{itemize}
Once we deal with these two issues, Theorem \ref{kanben-33} follows from the same arguments as in the proof of Theorem \ref{kanben-1} explained as above. 

For the first issue (i), the crucial object is so called an escape probability that we will explain from now. We are interested in the probability that a simple random walk started at the origin and the loop-erasure of an independent simple random walk started at the origin do not intersect up to the first time they exit from a ball of radius $n$. We denote the probability by $Es (n)$ (see Section 6.2 for the precise definition of $Es (n)$). We write $B (n)$ for the ball of radius $n$ centered at the origin. Suppose that a point $x$ with $\frac{n}{3} \le |x| \le  \frac{2n}{3} $ lies in $LE ( S[0, \tau_{n}] )$. Then the definition of the loop-erasure (see Section 4) gives that the following holds:
\begin{itemize}
\item $S$ hits $x$ up to $\tau_{n}$.

\item The loop-erasure of the random walk $S$ from the origin to $x$ and $S$ from $x$ to the boundary of $B (n)$ do not intersect.
\end{itemize}
Reversing a path, the probability of this event is equal to the probability that a simple random walk from $x$ up to the boundary of $B (n)$ and the loop-erasure of an independent random walk from $x$ to the origin do not intersect. It turns out that this probability is comparable to $\frac{Es (n)}{n}$, which enables to conclude that $E (M_{n} )$ is comparable to $n^{2} Es (n)$. Therefore the issue (i) is reduced to proving that there exists $\alpha$ such that $Es (n) = n^{- \alpha + o(1)}$ as $n \to \infty$.

In order to show the existence of the exponent $\alpha$, we will give various relations between escape probabilities on various scales (see Propositions \ref{up-to-const lew}, \ref{up to const indep1} and \ref{up to const indep2}). In particular, it will be shown in Proposition \ref{step1} that $Es (2^{m+n} )$ is comparable to the product of $Es (2^{n})$ and the probability that a random walk from $(-2^{n}, 0, 0)$ to the boundary of $B (2^{m+n})$ and the loop-erasure of an independent random walk from $(2^{n}, 0, 0)$ to the boundary of $B (2^{m+n})$ do not intersect (we denote this probability by $a_{m, n}$). Since we know the existence of the scaling limit of LERW in 3 dimensions (see \cite{Koz}), it is natural to predict that the limit of $a_{m, n}$ as $n \to \infty$ exists for each fixed $m$. In fact, it will be shown in Proposition \ref{main lem} that the limit of $a_{m, n}$ exists with the help of some results derived in \cite{Koz}. Using the existence of $\lim_{n \to \infty} a_{m, n}$ and Proposition \ref{step1}, a standard subadditive argument shows that there exists $\alpha$ such that $Es (n) = n^{- \alpha + o(1)}$ (see Theorem \ref{main result-kaetta}).

Estimates on escape probabilities established in Section 6 and the existence of $\alpha$ as in Theorem \ref{main result-kaetta} are enough to get exponential tail bounds on $M_{n}$ by imitating proofs in \cite{BM} (see Section 8 for tail bounds on $M_{n}$).

\begin{figure}
\begin{center}

\includegraphics[width=13cm]{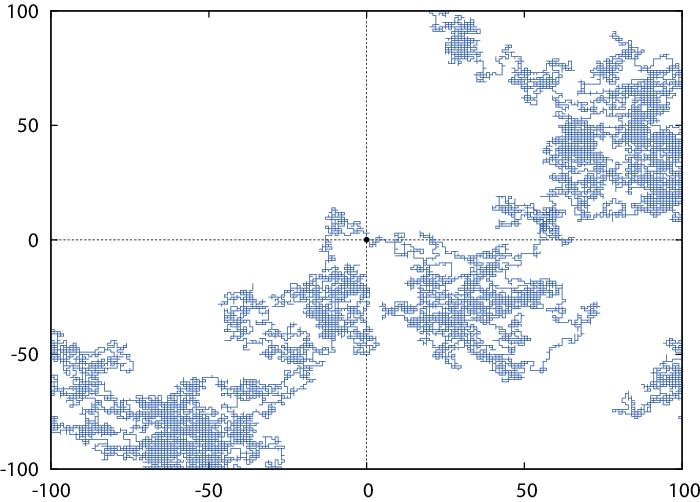}

\end{center}

\caption{A non-intersecting random walk trace $\overline{S}$ for $d=2$. }

\end{figure}

\subsection{Structure of the paper}
In the next subsection, we will collect notation and definitions which will be used throughout the paper.

In Section 2, we will prove the first claim of Theorem \ref{ososugi}, i.e., we will show that the law of $\overline{S}$ is invariant under the shift $\overline{\theta}$ in Theorem \ref{stationary-o}.

Section 3 will be devoted to prove the second claim of Theorem \ref{ososugi}. We will show that $\overline{\theta}$ is mixing in Theorem \ref{mixing}.

As an application of Theorem \ref{ososugi}, in Section 4 we will consider asymptotic behaviors of three quantities, the length of the loop-erasure, graph distance and effective resistance of $\overline{S} [0,   \overline{T}_{n}] $ along with Aaronson's results in \cite{A}. We will show Theorem \ref{kanben} in Theorem \ref{critical exp} after giving some backgrounds of these three quantities in Section 4.1.

We will prove Theorem \ref{kanben-1} in Section 5 by establishing Proposition \ref{upper-2dim-loop} and Proposition \ref{lower-2dim-loop}.

From Section 6 to Section 9, we will focus on LERW in 3 dimensions. In particular, from Section 6 to Section 8, we will focus on the loop-erasure of usual simple random walks (not the loop-erasure of $\overline{S}$). In Section 6, we will give various relations between escape probabilities on various scales. In order to give such relations of escape probabilities, the separation lemma (see Theorem \ref{sep lem 3dim}) is an important tool. Theorem \ref{sep lem 3dim} roughly states that a random walk and an independent LERW that are conditioned not to intersect are likely to be not very close at their endpoints, which allows to derive relations between escape probabilities on various scales, see Propositions \ref{up-to-const lew}, \ref{up to const indep1}, \ref{up to const indep2}) and \ref{step1}.

Using those results of the escape probabilities obtained in Section 6, we will prove the existence of the exponent $\alpha$ such that $Es (n)$ is of order $n^{- \alpha}$ in Theorem \ref{main result-kaetta}.

Section 8 will be devoted to establish exponential tail bounds on $M_{n}$ in three dimensions. We will prove \eqref{okonomi-2} and \eqref{okonomi-3} in Theorem \ref{dounaruno} and Theorem \ref{yattoda}. It is also proved that $E ( M_{n} )$ is comparable to $n^{2} Es (n) $ in Theorem \ref{leipzig} and Proposition \ref{zhan}. Combining this with Theorem \ref{main result-kaetta}, we get \eqref{okonomi-1}.

Using results obtained in Section 6 -- Section 8, we will prove Theorem \ref{kanben-33} in Section 9 by giving Proposition \ref{upper-3dim-loop} and Proposition \ref{lower-3dim-loop}.

In Section 10, we will summarize our results and discuss some future works.

\subsection{Notation}
In this subsection, we will collect some notation and definitions which will be used in the present article many times. 

Take a sequence of points $\lambda = [\lambda (0), \lambda(1), \cdots, \lambda(m) ]$ in $\mathbb{Z}^{d}$. We call $\lambda$ a path of length $m$ if $| \lambda(j) - \lambda(j+1)| = 1$ for all $j$. Here $| \cdot |$ stands for the Euclid distance in $\mathbb{R}^{d}$. We write $\text{len} \lambda$ for the length of $\lambda$. For two pats $\lambda = [\lambda (0), \lambda(1), \cdots, \lambda(m) ]$ and $\gamma = [\gamma (0), \gamma(1), \cdots, \gamma (n) ]$ with $\lambda(m) = \gamma (0)$, we write $\lambda + \gamma = [\lambda (0), \lambda(1), \cdots, \lambda(m), \gamma(1), \cdots, \gamma (n) ]$. We set $\lambda^{R} = [\lambda (m), \lambda(m-1), \cdots, \lambda(0) ]$ for the time reversal of $\lambda$. We call $\lambda$ a simple path if $\lambda (i) \neq \lambda (j)$ for all $i \neq j$.

We let $B (x, n) = \{ y \in \mathbb{Z}^{d} \ | \ |y - x | \le n \}$ be the discrete ball of radius $n$ centered at $x$. We write $B (n )$ for $B (0, n)$ the ball of radius $n$ centered at the origin. 

For a set $A \subset \mathbb{Z}^{d}$, we let $\partial A = \{ x \notin A \ | \ \text{there exits } y \in A \text{ such that } |x- y| =1    \}$ be the outer boundary of $A$. We write $\partial_{i} A = \{ x \in A \ | \ \text{there exits } y \notin A \text{ such that } |x- y| =1    \}$ for the inner boundary of $A$.

For a subset $A \subset \mathbb{R}^{d}$, $r > 0$, and a point $x \in \mathbb{R}^{d}$, we write $x + A = \{ x + y \ | \ y \in A \}$ and $r A = \{ r y \ | \ y \in A \}$.

$S$, $S^{1}$, $S^{2}$, $S^{3}$ and $S^{4}$ stand for independent simple random walks in $\mathbb{Z}^{d}$. We write $P^{x}$ and $E^{x}$ for probability of $S$ and its expectation  assuming that $S(0) =x$. If $x=0$, we use $P$ and $E$ instead of $P^{0}$ and $E^{0}$. We sometimes consider a product probability of $S^{i}$ and $S^{j}$ with $i < j$ assuming that $S^{i} (0) = x$ and $S^{j} (0) = y$. We write $P^{x, y}$ and $E^{x, y}$ for the product probability and its expectation. Of course it depends on $i$ and $j$ and we should write $P^{x, y}_{i, j}$ instead of $P^{x, y}$ to emphasize that it stands for the product probability measure of $S^{i}$ and $S^{j}$. However, in order to avoid complication of notation, we will use $P^{x, y}$. For example, $P^{x, y} \big( S^{1} [0, n] \cap A \neq \emptyset, \  S^{3} [0, m] \cap A \neq \emptyset \big)$ stands for the probability that both $S^{1} [0, n]$ and $S^{3} [0, m]$ hit $A$ assuming that $S^{1} (0) = x$ and $ S^{3} (0) =y $. We also use $P$ and $E$ instead of $P^{0, 0}$ and $E^{0, 0}$ if $x=y=0$.

For a Markov chain $X$ and a set $A$, we let $\tau^{X}_{A} = \{ j \ge 0 \ | \ X (j) \notin A \}$ be the first time that $X$ exists from $A$. If $X$ is $S$, we use $\tau_{A}$ instead of $\tau^{S}_{A}$. Furthermore if $X = S^{i}$, we use $\tau^{i}_{A}$ instead of $\tau^{S^{i}}_{A}$. When $A = B ( n)$, we use $\tau^{X}_{n}$ instead of $\tau^{X}_{B (n)}$. We also use $\tau_{n}$ (resp. $\tau^{i}_{n}$) for the case that $A = B (n)$ and $X = S$ (resp. $X =S^{i}$). If $A = \{ x \}$, we write $\tau^{X}_{x}$ instead of $\tau^{X}_{ \{ x \} }$. Then $\tau_{x}$ and $\tau^{i}_{x}$ can be defined for the case that $X= S$ and $X = S^{i}$. We let $\sigma^{X}_{A} = \{ j \ge 1 \ | \ X (j) \in A \}$ be the first time that $X$ hits $A$. For the first hitting time, $\sigma_{A}$, $\sigma^{i}_{A}$, $\sigma^{X}_{n}$, $\sigma_{n}$, $\sigma^{i}_{n}$, $\sigma^{X}_{x}$, $\sigma_{x}$ and $\sigma^{i}_{x}$ can be defined similarly.

For a Markov chain $X$ and $x, y \in A \subset \mathbb{Z}^{d}$, we write 
\begin{equation*}
G^{X} (x, y, A) = E^{x}_{X} \Big( \sum_{ j=0}^{ \tau^{X}_{A} -1 } {\bf 1} \{ X (j) = y \} \Big)
\end{equation*}
for Green's function of $X$ in $A$, where $P^{x}_{X}$ and $E^{x}_{X}$ stands for the probability of $X$ and its expectation assuming that $X(0) =x$. If $X = S$, we use $G (x, y, A)$ instead of $G^{S} (x, y, A)$.

Let $\Lambda (n)$ be the set of paths satisfying that
\begin{align*}
&\gamma (0)=0, \gamma (j) \in  B(n) \text{ for all } j=0,1, \cdots , \text{len}\gamma -1 \\
&\gamma( \text{len} \gamma ) \in \partial B (n). 
\end{align*}
We write $\Lambda (\infty) = \{ \gamma \ | \ \gamma (0)=0,  \text{len} \gamma = \infty, \text{ and } \lim_{j \to \infty} |\gamma (j) | = \infty \}$ for a set of infinite paths. We next define a set of pairs of paths $\overline{\gamma}=(\gamma ^{1}, \gamma ^{2})$ satisfying a non-intersecting condition as follows. Let 
\begin{equation*}
\Gamma (n) = \{ \overline{\gamma}=(\gamma ^{1}, \gamma ^{2}) \in \Lambda (n) \times \Lambda (n) \ | \ \gamma^{1} [0, \text{len}\gamma^{1}] \cap \gamma^{2} [1, \text{len}\gamma^{2}] = \emptyset \}. 
\end{equation*}
We also write $\Gamma (\infty ) = \{ \overline{\gamma}=(\gamma ^{1}, \gamma ^{2}) \in \Lambda (\infty) \times \Lambda (\infty) \ | \ \gamma^{1} [0, \infty ) \cap \gamma^{2} [1, \infty ) = \emptyset \}$ for a set of pairs of infinite paths satisfying the non-intersecting condition. 

Let $S^{1}, S^{2}$ be independent simple random walks in $\mathbb{Z}^{d}$ started at the origin. We write
\begin{equation}\label{nonint}
\overline{A}_{n} = \big\{ \big( S^{1}[0, \tau^{1}_{n} ] , S^{2}[0, \tau^{2}_{n} ] \big) \in \Gamma (n)  \big\}
\end{equation}
for the event that $S^{1}$ and $S^{2}$ do not intersect up to the first time that they exit from $B (n)$. The intersection exponent $\xi_{d}$ ($d=2, 3$) is characterized by 
\begin{equation}\label{intersection-exp}
P ( \overline{A}_{n} ) \asymp n^{- \xi_{d}},
\end{equation}
see \cite{L} for the intersection exponent. For the value of $\xi_{2}$, it is proved in \cite{LSW} that $\xi_{2} = \frac{5}{4}$. The exact value of $\xi_{3}$ is not known. The best rigorous estimates for $\xi_{3}$ are $\frac{1}{2} < \xi_{3} < 1$, see \cite{Greg, Law3}. 

In \cite{S ejp}, it was proved that for each $L \in \mathbb{N}$ and a pair of paths $\overline{\gamma} = (\gamma ^{1}, \gamma ^{2}) \in \Gamma (L)$, the limit of the conditional probability
\begin{equation}\label{limit}
\lim _{n \rightarrow \infty } P \Big( \big( S^{1}[0, \tau^{1}_{L} ] , S^{2}[0, \tau^{2}_{L} ] \big) = \overline{\gamma} \ \big| \ \overline{A}_{n} \Big)
\end{equation}
exists. If we denote the value of \eqref{limit} by $\overline{P} ( \overline{\gamma} )$, then $\overline{P}$ extends uniquely to a probability measure on $\Gamma (\infty )$. We denote this probability space by $(\Omega, {\cal F}, \overline{P})$. Let $\overline{S}^{1}, \overline{S}^{2}$ be the associated two-sided random walks whose probability law is $\overline{P}$. 
We set $\overline{\tau}^{i}_{n} = \inf \{j \ge 0 \ | \ \overline{S}^{i} (j) \notin B (n) \}$ for the first time that $\overline{S}^{i}$ exits from $B (n)$. For $n \in \mathbb{Z}$, we write
\begin{eqnarray*}
\overline{S}(n) =\left\{ \begin{array}{ll}
\overline{S}^{2}(n) & (n \ge 0) \\
\overline{S}^{1}(-n) & (n < 0). 
\end{array} \right.
\end{eqnarray*} 
for the doubly infinite random walk. For $m \in \mathbb{Z}$, we write $\theta_{m}$ for the translation shift so that $\overline{S} \circ \theta_{m} (n) = \overline{S} (n+m) - \overline{S} (m)$ for each $n \in \mathbb{Z}$. In \cite{S}, \textit{global} cut points for $\overline{S}$ are studied. Here, $n \in \mathbb{Z}$ is called global cut time for $\overline{S}$ if the entire of the past part $\overline{S} ( -\infty , n]$ and the future part $\overline{S} [n+1, \infty)$ do not intersect. We call $\overline{S}(n)$ a global cut point if $n$ is a global cut time. We set $\overline{\tau}^{+}_{n} = \inf \{j \ge 0 \ | \ \overline{S} (j) \notin B (n) \}$ for the first time that $\overline{S} [0 , \infty )$ exits from $B (n)$. We also define $\overline{\tau}^{-}_{n} = \sup \{j \le 0 \ | \ \overline{S} (j) \notin B (n) \}$ for $\overline{S} (-\infty , 0 ]$. In \cite{S}, it is proved that the number of global cut times lying in $[0, \overline{\tau}^{+}_{n} ]$ is equal to $n^{2-\xi_{d} + o(1)}$ as $n \to \infty$ with probability one. Here $\xi_{d}$ stands for the intersection exponent for simple random walks in $d$ dimensions (see \cite{L} for the intersection exponent). This is true for the number of global cut times lying in $[-\overline{\tau}^{-}_{n}, 0 ]$. In particular, $\overline{S}$ has infinitely many global cut times both in $(-\infty , 0]$ and $[0, \infty)$ almost surely. Thus we may define the set of global cut times $\overline{T} = \{ \cdots , \overline{T}_{-2}, \overline{T}_{-1}, \overline{T}_{0}, \overline{T}_{1}, \overline{T}_{2}, \cdots \}$ with $\overline{T}_{0} = 0$ and $\overline{T}_{j} < \overline{T}_{j+1}$ for each $j$. We define the translation shift with respect to the first global cut point by  
\begin{equation}\label{shift}
\overline{\theta} := \theta_{\overline{T}_{1}}.
\end{equation}

Throughout the paper, we use $c, c^{\prime} , c_{1}, C, C^{\prime} , C_{1}, \dotsb$ to denote arbitrary positive constants which may change from line to line. If a constant is to depend on some other quantity, this will be made explicit. For example, if $c$ depends on $\epsilon$, we write $c_{\epsilon}$ (or $c( \epsilon )$). We write $a_{n} \asymp b_{n}$ if there exist constants $c_{1},c_{2}$ such that 
\begin{equation}\label{asymp}
c_{1}b_{n} \le a_{n} \le c_{2}b_{n}.
\end{equation}
We write $a_{n} \sim b_{n}$ if 
\begin{equation}\label{sim}
\lim_{n \to \infty} \frac{a_{n}}{b_{n}} = 1.
\end{equation}
Finally, we denote $a_{n} \approx b_{n}$ if 
\begin{equation}\label{approx}
\lim_{n \to \infty} \frac{\log a_{n}}{\log b_{n}} = 1.
\end{equation}
To avoid complication of notation, we don't use $\lfloor r \rfloor$ (the largest integer $\le r$) even though it is necessary to carry it.

\section{Invariance under the translation shift}
Recall that the conditioned random walk $\overline{S}$ has infinitely many global cut times both in positive and negative times almost surely. The shift $\overline{\theta}$ is the translation shift that translates the first global cut point to the origin, see \eqref{shift} for $\overline{\theta}$. 
In this section, we will prove that the law of $\overline{S}$ is invariant under the shift $\overline{\theta}$ in Theorem \ref{stationary-o} below. From this, we see that $\overline{S} [\overline{T}_{i}, \overline{T}_{i+1}] - \overline{S} ( \overline{T}_{i} )$ has same distribution as $\overline{S} [0, \overline{T}_{1}]$ for each $i \in \mathbb{Z}$. It turns out that the proof of Theorem \ref{stationary-o} follows from a standard application of the translation invariance of the usual simple random walk $S$.

\begin{thm}\label{stationary-o}
The law of $\overline{S}$ is invariant under the shift $\overline{\theta}$.
\end{thm}

\begin{proof}
In order to prove the theorem, by the $\pi$-$\lambda$ Theorem (see \cite{Dur} Theorem A.1.4), it suffices to show that
\begin{equation}\label{invariant}
\overline{P} \big( \overline{\theta}^{-1} A \big) = \overline{P} \big(  A \big),
\end{equation}
where $A$ is an event that
\begin{equation*}
A= \{ \overline{S} [0 , \overline{T}_{1}] = \lambda \},
\end{equation*}
with $\overline{P} (A) > 0$.

So fix a path $\lambda = [\lambda(0) , \lambda (1), \cdots , \lambda (l)] $ with length $l$ and assume that $\overline{P} (A) > 0$.
The definition of $\overline{\theta}$ immediately gives that

\begin{equation}\label{henkei}
\overline{P} \big( \overline{\theta}^{-1} A \big) =  \overline{P} \big(  \overline{S} [\overline{T}_{1} , \overline{T}_{2}] - \overline{S} (\overline{T}_{1}) = \lambda \big).
\end{equation}
We want to say that the right hand side of \eqref{henkei} is equal to $\overline{P} (A)$. To show it, we consider every path $\gamma$ such that the probability of the first piece $\overline{S} [0, \overline{T}_{1}]$ being $\gamma$ is positive, i.e., we define

\begin{equation}\label{possible}
\text{Bead} = \Big\{  \gamma \ \big| \  \overline{P} \big(  \overline{S} [0 , \overline{T}_{1}] = \gamma \big) >0  \Big\}.
\end{equation}
Then we have 
\begin{equation*}
\overline{P} \big(  \overline{S} [\overline{T}_{1} , \overline{T}_{2}] - \overline{S} (\overline{T}_{1}) = \lambda \big) = \sum_{\gamma \in \text{Bead} } \overline{P} \big( \overline{S} [0 , \overline{T}_{1}] = \gamma, \ \ \overline{S} [\overline{T}_{1} , \overline{T}_{2}] = \lambda + \gamma ( \text{len}\gamma )  \big).
\end{equation*}
Fix $\gamma \in \text{Bead}$ such that $\overline{P} \big( \overline{S} [0 , \overline{T}_{1}] = \gamma, \ \ \overline{S} [\overline{T}_{1} , \overline{T}_{2}] = \lambda + \gamma ( \text{len}\gamma )  \big) >0$. Let $\text{len}\gamma = k$. The definition of $\overline{P}$ (see \eqref{limit}) gives that
\begin{align*}
&\overline{P} \big( \overline{S} [0 , \overline{T}_{1}] = \gamma, \ \ \overline{S} [\overline{T}_{1} , \overline{T}_{2}] = \lambda + \gamma ( \text{len}\gamma )  \big) \\
&= \lim_{N \to \infty} P \Big( S^{2}[0,k] = \gamma , \ S^{2}[k, k+l] = \lambda + \gamma ( \text{len}\gamma ), \ F_{N} \ \big| \ \overline{A}_{N}  \Big),
\end{align*}
where $\overline{A}_{N}$ was defined as in \eqref{nonint} and $F_{N}$ is defined by
\begin{equation*}
F_{N}= \Big\{ S^{1} [0, \tau^{1}_{N}] \cap \Big( \gamma (0, k] \cup \big( \lambda + \gamma ( \text{len}\gamma ) \big) \cup S^{2} [k+l, \tau^{2}_{N}] \Big) = \emptyset \Big\}. 
\end{equation*}
Namely, $F_{N}$ is the event that time $k= \text{len}\gamma$ and $k+l$ are cut times for $S^{2}$ up to time $\tau^{2}_{N}$. Now we want to translate $\gamma (k)$ to the origin and to use the translation invariance for the usual simple random walk. With this in mind, we write $\gamma^{R} = [\gamma (k) , \gamma(k-1) , \cdots , \gamma(0)]$ for the time reverse of $\gamma$. Let $x = \gamma (k)$. Since $ B ( N- |x| ) \subset B ( N) - x \subset B (N + |x|)$, we can use the translation invariance to show that

\begin{align*}
&P \Big( S^{1}[0,k] = \gamma^{R} - x , \ S^{2}[0, l] = \lambda , \ F^{+}_{N}, \ \overline{A}_{N + |x|}  \Big) \\
&\le P \Big( S^{2}[0,k] = \gamma , \ S^{2}[k, k+l] = \lambda + \gamma ( \text{len}\gamma ), \ F_{N}, \ \overline{A}_{N}  \Big) \\
&\le P \Big( S^{1}[0,k] = \gamma^{R} - x , \ S^{2}[0, l] = \lambda , \ F^{-}_{N}, \ \overline{A}_{N - |x| }  \Big),
\end{align*}
where
\begin{align*}
&F^{+}_{N} = \Big\{ S^{1} [k, \tau^{1}_{N+|x|}] \cap \big( \gamma (0, k]-x \big) = \emptyset, \ S^{2} (l , \tau^{2}_{N+ |x|} ] \cap \lambda = \emptyset  \Big\} \\
&F^{-}_{N} = \Big\{ S^{1} [k, \tau^{1}_{N-|x|}] \cap \big( \gamma (0, k]-x \big) = \emptyset, \ S^{2} (l , \tau^{2}_{N- |x|} ] \cap \lambda = \emptyset  \Big\}.  
\end{align*}
Namely, $F^{+}_{N}$ is the event that time $k$ is a cut time for $S^{1}$ up to $\tau^{1}_{N+|x|}$ and $l$ is a cut time for $S^{2}$ up to $\tau^{2}_{N+ |x|}$. $F^{-}_{N}$ is the event obtained by replacing $\tau^{i}_{N+|x|}$ by $\tau^{i}_{N-|x|}$ in the definition of $F^{+}_{N}$.

By definition of $\overline{P}$, we have
\begin{align*}
&\lim_{N \to \infty} \frac{P \Big( S^{1}[0,k] = \gamma^{R} - x , \ S^{2}[0, l] = \lambda , \ F^{+}_{N}, \ \overline{A}_{N + |x|}  \Big)}{P (\overline{A}_{N + |x|})}  \\
&= \overline{P} \big( \overline{S} [\overline{T}_{-1} , 0] = \gamma - x, \ \ \overline{S} [0 , \overline{T}_{1}] = \lambda   \big) \\
&=\lim_{N \to \infty} \frac{P \Big( S^{1}[0,k] = \gamma^{R} - x , \ S^{2}[0, l] = \lambda , \ F^{-}_{N}, \ \overline{A}_{N - |x|}  \Big)}{P (\overline{A}_{N - |x|})}.
\end{align*}
However, by Corollary 4.2 in \cite{S ejp}, we have
\begin{equation*}
\lim_{N \to \infty} \frac{P (\overline{A}_{N \pm |x| })}{P ( \overline{A}_{N})} = 1,
\end{equation*}
which implies that 
\begin{align*}
&\overline{P} \big( \overline{S} [\overline{T}_{-1} , 0] = \gamma - x, \ \ \overline{S} [0 , \overline{T}_{1}] = \lambda   \big) \\
&=\overline{P} \big( \overline{S} [0 , \overline{T}_{1}] = \gamma, \ \ \overline{S} [\overline{T}_{1} , \overline{T}_{2}] = \lambda + \gamma ( \text{len}\gamma )  \big).
\end{align*}
By taking the sum for $\gamma \in \text{Bead}$ such that $\overline{P} \big( \overline{S} [0 , \overline{T}_{1}] = \gamma, \ \ \overline{S} [\overline{T}_{1} , \overline{T}_{2}] = \lambda + \gamma ( \text{len}\gamma )  \big) >0$, we have 
\begin{equation*}
\overline{P} \big( \overline{\theta}^{-1} A \big) = \overline{P} \big(  A \big),
\end{equation*}
and finish the proof.

\end{proof}

\section{Ergodicity w.r.t. the translation shift}
In this section, we prove the shift $\overline{\theta}$ is mixing in Theorem \ref{mixing} below. We will explain the sketch of the proof here. In order to prove Theorem \ref{mixing}, again by the $\pi$-$\lambda$ Theorem (see \cite{Dur} Theorem A.1.4), it suffices to show that
 
\begin{equation}\label{mixing1}
\lim_{n \to \infty} \overline{P} \big( A \cap  \overline{\theta}^{-n} B \big) = \overline{P} \big(  A \big) \overline{P} \big(  B \big),
\end{equation}
where we write
\begin{equation}\label{target}
A= \{ \overline{S} [0 , \overline{T}_{1}] = \lambda \} \ \ \ B= \{ \overline{S} [0 , \overline{T}_{1}] = \gamma \},
\end{equation} 
with $\lambda, \gamma \in \text{Bead}$ (see \eqref{possible} for $\text{Bead}$). In order to prove \eqref{mixing1} for those events, we want to show that two events
\begin{equation}\label{indep-i}
\{ \overline{S} [0 , \overline{T}_{1}] = \lambda \} \text{ and } \{ \overline{S} [\overline{T}_{n} , \overline{T}_{n+1}] - \overline{S} (\overline{T}_{n}) = \gamma \}
\end{equation}
are asymptotically independent as $n \to \infty$. Suppose that $\lambda \subset B(r)$ and $\gamma + \overline{S} (\overline{T}_{n}) \subset B (R)^{c}$. By taking $n$ large, we may assume $R$ is much bigger than $r$. Therefore, in order to show that two events in \eqref{indep-i} are almost independent, we need to control the independence between $\overline{S} [0, \overline{\tau}^{+}_{r}]$ and $\overline{S} [\overline{\tau}^{+}_{R}, \infty )$ (see Section 1.4 for $\overline{\tau}^{+}_{r}$). Since $\overline{S}$ is a conditioned random walk and does not satisfy the strong Markov property, in order to achieve it, we need a careful consideration as follows. Take two pairs of paths $\overline{\gamma}_{k} = (\gamma^{1}_{k}, \gamma^{2}_{k})$ ($k=1, 2$) satisfying that $\gamma^{i}_{k} \subset B(r)$ for all $i, k$ (we call $\overline{\gamma}_{k}$ an initial configuration). We are interested in the conditional law of $(S^{1} [0, \tau^{1}_{R}], S^{2} [0, \tau^{2}_{R}] )$ conditioned on $\overline{A}_{R}$ and $S^{i} [0, \tau^{i}_{r}] = \gamma^{i}_{k}$ for $i=1, 2$ (recall that $\overline{A}_{R}$ was defined as in \eqref{nonint}). The law of those conditional two-sided walks near $B(r)$ may have a big difference between $k=1$ and $k=2$. However, we will prove that the law the conditional two-sided walks after exiting a large ball (outside a large ball) for $k=1$ is close to that for $k=2$ in Theorem \ref{coupling3} below. This theorem allows to prove Theorem \ref{mixing}. 

In order to prove Theorem \ref{coupling3}, we need to compare the probability of the events $\overline{A}_{R} \cap \{ S^{i} [0, \tau^{i}_{r}] = \gamma^{i}_{k} \}$ for $k=1, 2$. We will give the difference between them in Lemma \ref{hikakusuru}.

% We will show the independence by using the following two ideas;
% \begin{enumerate}
% \item[$(1)$]  The law of the path of $\overline{S}$ in ${\cal B}_{R}^{c}$ is asymptotically independent of that of in ${\cal B}_{r}$ when $R \gg r$. 

% \item[$(2)$]  A local cut time $k$ between $t$ and $T$ is in fact a global one with high probability when $t \ll k \ll T$.
% \end{enumerate}
% The idea $(1)$ will be shown by using same technique as in \cite{Greg} which uses a sort of a coupling method. For the idea $(2)$, such a local dependence of a cut point was already treated in \cite{S}. How to use these ideas to prove \eqref{mixing1}? Thanks to $(1)$ and $(2)$, one sees that the event $A$ in \eqref{target} and the law of the path of $\overline{S}$ outside a large ball are approximately independent. If we take $n$ sufficiently large, then by using $(2)$ again, we see that the event $B$ in \eqref{target} depends only on the path of $\overline{S}$ outside a large ball, so $B$ is approximately independent of $A$.    

\subsection{Forgetting an initial configuration}
The goal of this subsection is Theorem \ref{coupling3}. As we discussed above, Theorem \ref{coupling3} states roughly that the two-sided walk conditioned on $\overline{A}_{R}$ after exiting a large ball is almost independent from an initial configuration. In order to prove this theorem, we will show that the probability of $\overline{A}_{R}$ with a given initial configuration is almost independent from the initial configuration if the configuration satisfies some suitable conditions in Lemma \ref{hikakusuru}. To achieve the lemma, we will first collect some results derived in  \cite{S} in Section 3.1.1, which will be used later.

% We first show the statement corresponding to the idea $(1)$. Basically, we follow the argument given in \cite{Greg}. Since we want to make the present paper be self-contained, we give the detailed argument here. For the non-intersecting Brownian motion in three dimensions, Theorem 4.1 in \cite{Greg} tells us that the idea $(1)$ is indeed true. The goal of this subsection is to establish the discrete analog of that theorem in both two and three dimensions. To do so, we begin with preparations.

\subsubsection{Separation Lemma and Up-to-constants estimates}
In this subsection, we will collect some known results derived in \cite{S}. One of the important result here is so called ``separation lemma" (Proposition \ref{sep lemma}). This lemma  roughly states that two paths that are conditioned not to intersect are likely to be not very close at their endpoints. There are many ways to define the ``separation event". Here we choose a particular one considered in \cite{S}.

Assume $d=2$ or $3$. For each $l < n$ and $\overline{\gamma}= (\gamma ^{1}, \gamma ^{2}) \in \Gamma (l)$, see Section 1.4 for $\Gamma (l)$. We define
\begin{eqnarray}\label{non intersect} 
A_{n} ( \overline {\gamma} )=\left\{ \begin{array}{ll}
S^{1}[0, \tau^{1}_{n}] \cap \gamma^{2} = \emptyset , \\
S^{2}[0, \tau^{2}_{n}] \cap \gamma^{1} = \emptyset , \\
S^{1}[0, \tau^{1}_{n}] \cap S^{2}[0, \tau^{2}_{n}] = \emptyset  
\end{array}
\right\}.
\end{eqnarray}
Let $w^{i} = \gamma^{i} ( \text{len}\gamma ^{i} )$. We assume $S^{i}(0)= w^{i}$ when we consider $A_{n} ( \overline {\gamma} )$. Let
\begin{equation}\label{migihidari}
I(r) = \{ (x_{1}, \cdots , x_{d}) \in \mathbb{Z}^{d} : x_{1} \ge r \}, \ \ I^{\prime}(r) = \{ (x_{1}, \cdots , x_{d}) \in \mathbb{Z}^{d} : x_{1} \le -r \}.
\end{equation}
For each $l \in \mathbb{N}$, let \textsf{Sep}$(l)$ denote the separation event (see Figure \ref{Sep-fig} for \textsf{Sep})
\begin{equation}\label{sep}
\textsf{Sep}(l) = \Big\{ S^{1}[0, \tau^{1}_{2l}] \subset B \big( \frac{3l}{2} \big) \cup I \big( \frac{4l}{3} \big)  \Big\} \cap \Big\{ S^{2}[0, \tau^{2}_{2l}] \subset B \big( \frac{3l}{2} \big) \cup I^{\prime} \big( \frac{4l}{3} \big)  \Big\}.
\end{equation}

In Proposition 2.1 of \cite{S}, the following proposition was proved. This proposition states that conditioned on $A_{2l} ( \overline {\gamma} )$, the conditional probability of $S^{1}$ and $S^{2}$ being well-separated in the sense that they satisfy \textsf{Sep}$(l)$ is positive.

\begin{figure}\label{Sep-fig}
\begin{center}
 %\vspace{-8.0mm}

 \includegraphics[width=7.5cm]{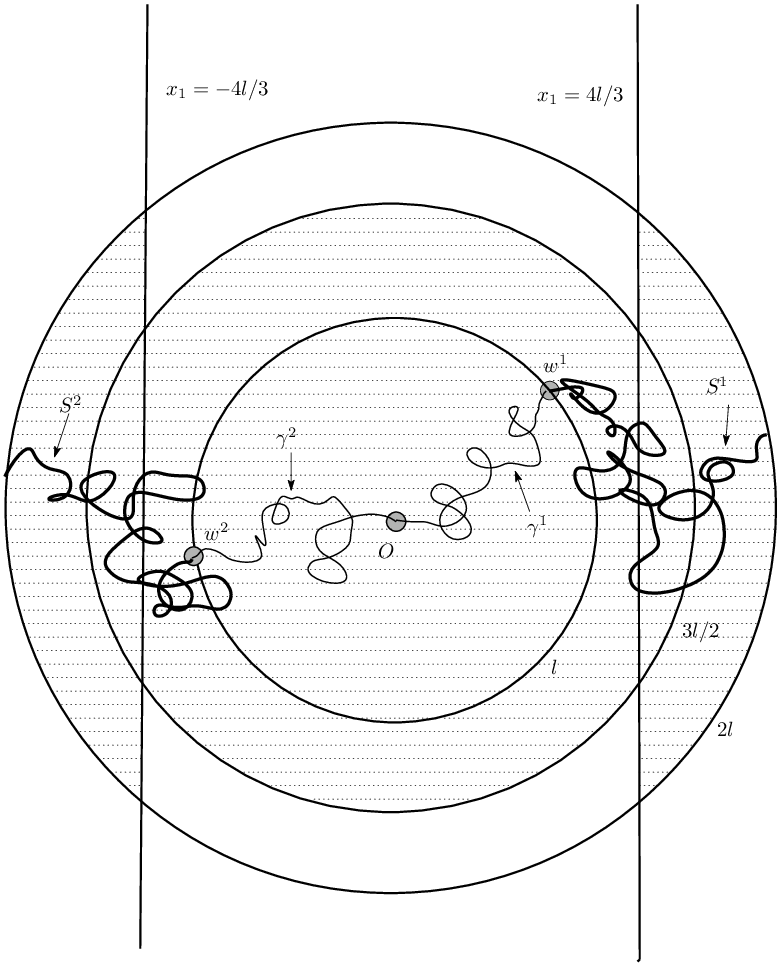}

\end{center}

 \caption{The event $A_{2l} ( \overline {\gamma} ) \cap \textsf{Sep}(l)$. }

 \end{figure}

\begin{prop}\label{sep lemma}
There exists $c > 0$ such that for all $l \in \mathbb{N}$ and $\overline{\gamma} = (\gamma ^{1} , \gamma ^{2} ) \in \Gamma (l)$, 
\begin{equation}\label{sep ineq}
P^{w^{1}, w^{2}} \big( \textsf{Sep}(l) \ \big| \ A_{2l} ( \overline {\gamma} ) \big) \ge c,
\end{equation}
where $w^{i} = \gamma^{i} ( \text{len}\gamma ^{i} )$.
\end{prop}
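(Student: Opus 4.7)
The plan is the classical ``separate-then-preserve'' strategy. Set
\[
f(l) := \inf_{\overline{\gamma} \in \Gamma(l)} P^{w^1, w^2}\big( \textsf{Sep}(l) \mid A_{2l}(\overline{\gamma}) \big);
\]
the goal is $\inf_l f(l) > 0$. Fix an intermediate radius $\rho l$ with $1 < \rho < 4/3$ and a small $\epsilon > 0$, and let $\textsf{Sep}^{\ast}(l)$ be an intermediate event asserting that, at the first exit from ${\cal B}(\rho l)$, $S^1 \in I((1+\epsilon)l) \cap \partial {\cal B}(\rho l)$ and $S^2 \in I'((1+\epsilon)l) \cap \partial {\cal B}(\rho l)$, with the traces inside ${\cal B}(\rho l)$ contained in suitable half-space neighborhoods of $I, I'$. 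By the strong Markov property at the first exit from ${\cal B}(\rho l)$, it suffices to prove: (i) $P^{w^1, w^2}(\textsf{Sep}^{\ast}(l) \mid A_{\rho l}(\overline{\gamma})) \ge c_1$ uniformly in $\overline{\gamma}$; and (ii) from any configuration compatible with $\textsf{Sep}^{\ast}(l)$, the walks complete the journey to $\partial {\cal B}(2l)$ while remaining inside the prescribed half-spaces with probability $\ge c_2$.

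Step (ii) is essentially a half-space escape probability calculation. Starting from a point in $I((1+\epsilon)l)$ at distance $\rho l$ from the origin, a simple random walk reaches $\partial {\cal B}(2l)$ without ever leaving $I(4l/3)$ with uniformly positive probability: this is a gambler's-ruin estimate in $d=2$ and a Green-function / harmonic-measure computation in $d=3$, both justified via the invariance principle. Since trajectories confined to opposite half-spaces cannot intersect each other or the starting paths $\gamma^{i}$ (which $\textsf{Sep}^{\ast}(l)$ places in suitable half-space neighborhoods), the continuations realize the non-intersection event $A_{2l}$ simultaneously with $\textsf{Sep}(l)$, so no extra conditional analysis is needed.

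Step (i) is the main obstacle. My plan is a bootstrap argument across scales, combined with a Beurling-type harmonic-measure estimate. The key point is that the set $\gamma^1 \cup \gamma^2$ is ``thin'' and has small harmonic measure as seen from $\partial {\cal B}(\rho l)$, so conditioning on $A_{\rho l}$ does not drastically distort the joint exit distribution from ${\cal B}(\rho l)$. Combined with an inductive hypothesis $f(l/K) \ge c$ for a large constant $K$ (with a finite base case verified directly at a bounded scale) and a rotational-symmetry argument that selects an admissible ``good sector'' for each walk, one obtains (i) uniformly in $\overline{\gamma}$. The most delicate case is when $w^1$ and $w^2$ lie close on $\partial {\cal B}(l)$ and the incoming paths $\gamma^{i}$ are tangled there: in this regime, naive Beurling bounds are too weak, and one must exploit that the event $A_{\rho l}$ itself forces rapid local separation of the two walks. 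This local separation is extracted by applying a short-range version of the same lemma at scale $l$ rather than $2l$, closing the induction.
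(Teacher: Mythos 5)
There is a genuine gap, and it sits exactly where you locate ``the main obstacle.'' (For context: the paper does not reprove this proposition --- it quotes it from \cite{S} --- but the same argument is reproduced in this paper in the proof of Theorem \ref{sep lem 3dim}, so there is a definite template to compare against.) Your step (i) rests on the claim that $\gamma^{1}\cup\gamma^{2}$ has small harmonic measure from $\partial {\cal B}(\rho l)$, so that conditioning on $A_{\rho l}(\overline{\gamma})$ ``does not drastically distort the joint exit distribution.'' That is not justified and is false in the relevant generality: the conditioning event is not a near-full-measure event but one whose probability $q_{l,\rho l}(\overline{\gamma})$ can be arbitrarily small depending on $\overline{\gamma}$ (this is precisely why the sets $\text{Good}_{l,k}$ are introduced later in the paper, and why no uniform lower bound on $q$ exists over $\Gamma(l)$). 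Any ``small distortion'' statement would have to be measured relative to that possibly tiny probability, which is exactly what the separation lemma is supposed to provide; moreover, in $d=3$ there is no Beurling estimate for paths at all, and in $d=2$ Beurling gives upper bounds on hitting probabilities, not the comparison of conditional laws you need. So the uniform bound $P(\textsf{Sep}^{\ast}(l)\mid A_{\rho l}(\overline{\gamma}))\ge c_{1}$ is asserted, not proved.

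The second problem is that your induction has no well-founded structure. The hypothesis $f(l/K)\ge c$ concerns walks started on $\partial{\cal B}(l/K)$ with configurations in $\Gamma(l/K)$, and has no bearing on walks started on $\partial{\cal B}(l)$ whose initial configuration already fills ${\cal B}(l)$; and the genuinely hard case (endpoints $w^{1},w^{2}$ close together with tangled incoming paths) is deferred to ``a short-range version of the same lemma at scale $l$,'' which is not a smaller instance of anything --- it carries the identical difficulty, so the argument is circular. The missing idea is the quantitative two-part iteration used in \cite{S} and in the proof of Theorem \ref{sep lem 3dim} here: introduce the normalized separation parameter $D(\overline{\gamma})/l$ and show (a) if $D(\overline{\gamma})/l\ge 2^{-n}$, forcing the two walks into disjoint cones gives the conditional separation probability a lower bound $c\,2^{-\alpha n}$, polynomial in the current separation; and (b) from an arbitrary configuration, conditionally on continued non-intersection, the configuration reaches separation parameter $2^{-n}$ within $O(n^{2})$ concentric annuli except on an event of probability at most $c\,2^{-\delta n^{2}}$, which is negligible compared with $2^{-\alpha n}$. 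These two estimates yield $h_{n+1}\ge h_{n}(1-\delta_{n})$ with $\sum_{n}\delta_{n}<\infty$ and hence the uniform constant. Without some version of this gain-versus-cost comparison, your step (i) --- and with it the whole proof --- does not close; step (ii) (the half-space escape and preservation of non-intersection) is fine but is the easy part.
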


\medskip

In Corollary 2.2 of \cite{S} the following corollary was proved. Roughly speaking, we compare $A_{n} ( \overline {\gamma} )$ with the probability that $A_{2l} ( \overline {\gamma} )$ and $S^{1}, S^{2}$ do not intersect from first time that they hit $\partial B (2l)$ to the first time that they hit $\partial B (n)$. Namely, we want to separate the event into ``before" and ``after" exiting $B(2l)$. It turns out that the probability of latter event is comparable to $(\frac{n}{l})^{-\xi_{d}}$ where $ \xi_{d} $ denotes the intersection exponent as in \eqref{intersection-exp}. We also point out that the upper bound of \eqref{cor ineq} is not difficult and that Proposition \ref{sep lemma} was used to prove the lower bound of \eqref{cor ineq}, see Corollary 2.2 of \cite{S}.

\begin{cor}\label{cor1}
There exist $c_{1},c_{2}$ such that for all $l , n$ with $2l < n$ and all $\overline {\gamma} = (\gamma^{1} , \gamma^{2}) \in \Gamma (l)$ with $w^{i} = \gamma^{i}( \text{len}\gamma^{i} ) \in \partial B (l)$, 
\begin{equation}\label{cor ineq}
c_{1} (\frac{n}{l})^{-\xi_{d}} P^{w^{1},w^{2}} ( A_{2l} ( \overline {\gamma} ) ) \le P^{w^{1},w^{2}} ( A_{n} ( \overline {\gamma} ) ) \le c_{2} (\frac{n}{l})^{-\xi_{d}} P^{w^{1},w^{2}} ( A_{2l} ( \overline {\gamma} ) ).
\end{equation}
\end{cor}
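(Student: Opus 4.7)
The strategy is to prove the up-to-constants estimate by iterating over dyadic scales $2l, 4l, \ldots, 2^{K}l$ with $2^{K}l \asymp n$. The Separation Lemma (Proposition \ref{sep lemma}) provides the decoupling between consecutive scales, and the decay rate $2^{-\sigma_{d}}$ per doubling step produces the overall factor $(n/l)^{-\sigma_{d}}$.

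For the upper bound I would apply the strong Markov property at the exit times of ${\cal B}(2l)$, writing
$$P^{w^{1},w^{2}}(A_{n}(\overline{\gamma})) = E^{w^{1},w^{2}}\!\left[\mathbf{1}_{A_{2l}(\overline{\gamma})}\; P^{S^{1}(\xi_{2l}),\,S^{2}(\xi_{2l})}\!\big(A_{n}(\overline{\gamma}')\big)\right],$$
where $\overline{\gamma}' = \overline{\gamma} \cup \overline{S}[0,\xi_{2l}] \in \Gamma(2l)$. Iterating this identity at each dyadic level, the incremental factor is bounded above by a constant multiple of $2^{-\sigma_{d}}$ uniformly in the starting configuration; this uses the definition of $\sigma_{d}$ as the intersection exponent for unconditioned walks, together with a supremum reduction to walks started from separated endpoints (the extra constraint of avoiding $\overline{\gamma}$ can only decrease the non-intersection probability, so it suffices to bound the unconstrained quantity). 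Multiplying the $K \asymp \log_{2}(n/l)$ single-scale factors yields the upper bound $c_{2}(n/l)^{-\sigma_{d}}\,P^{w^{1},w^{2}}(A_{2l}(\overline{\gamma}))$.

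For the lower bound Proposition \ref{sep lemma} is essential. It gives $P^{w^{1},w^{2}}(\textsf{Sep}(l)\mid A_{2l}(\overline{\gamma})) \ge c > 0$, so on this event of positive conditional probability the exit positions $S^{i}(\xi_{2l})$ lie in the disjoint half-spaces $I(4l/3)$ and $I'(4l/3)$. From such macroscopically separated endpoints, a direct Harnack/coupling argument shows that the walks can be extended up to $\partial{\cal B}(4l)$, avoiding one another and $\overline{\gamma}$, with probability at least $c'\cdot 2^{-\sigma_{d}}$, and one can further arrange the extended paths to again satisfy a separation event at scale $2l$. Iterating this step at dyadic scales and concatenating via the Markov property produces the lower bound $c_{1}(n/l)^{-\sigma_{d}}\,P^{w^{1},w^{2}}(A_{2l}(\overline{\gamma}))$.

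The main obstacle throughout is uniformity in $\overline{\gamma}$: naive averaging would lose factors depending on how ``exposed'' $\overline{\gamma}$ is near $\partial{\cal B}(l)$, and it is precisely the pathwise character of the Separation Lemma---producing macroscopically separated endpoints with positive probability for \emph{every} non-intersecting $\overline{\gamma}$---that removes this dependence and allows the dyadic iteration to run with constants independent of the initial configuration.
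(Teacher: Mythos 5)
The paper itself does not prove this corollary: it is quoted verbatim from Corollary~2.2 of the reference [S], so there is no in-paper proof to compare against. Reviewing your proposal on its own merits, the structure is on the right track — strong Markov property at $\xi_{2l}$, dropping the $\gamma$-avoidance constraint for the upper bound, invoking the separation lemma for the lower bound — but the dyadic iteration you use to carry the factor $(n/l)^{-\sigma_{d}}$ has a genuine defect.

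The problem is constant accumulation. You assert that each doubling step, from separated endpoints on $\partial{\cal B}(2^{k}l)$ to $\partial{\cal B}(2^{k+1}l)$ with re-separation at the end, succeeds with probability $\geq c'\cdot 2^{-\sigma_{d}}$. Concatenating $K\asymp\log_{2}(n/l)$ such steps yields $(c'\cdot 2^{-\sigma_{d}})^{K}=(c')^{K}(n/l)^{-\sigma_{d}}$, and since $c'<1$ (the separation lemma and the Harnack/coupling step each cost a multiplicative constant), the prefactor $(c')^{K}$ tends to zero as $n/l\to\infty$. So the bound degrades instead of being uniform in $l$ and $n$, which is precisely what Corollary~\ref{cor1} promises. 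The same issue is latent in the upper bound if you iterate dyadically — you would pick up a factor $C^{K}$ — although there the single-step version (drop $\gamma$, then cite the known uniform bound $\leq C(n/l)^{-\sigma_{d}}$ for unconstrained two-walk non-intersection from $\partial{\cal B}(2l)$ to $\partial{\cal B}(n)$) closes the argument without iteration.

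Worth noting too: the per-doubling non-intersection probability from macroscopically separated positions is of order $1$, not of order $2^{-\sigma_{d}}$; the $\sigma_{d}$-decay is a cumulative effect over many scales and does not appear in a single step. The standard way to obtain uniform constants (as in [S], Masson's Proposition~4.2, and Lawler's treatment of intersection exponents) is therefore not a naive dyadic product. One applies the separation lemma \emph{once} at scale $2l$ to pass, with probability bounded below, to a well-separated configuration, and then invokes a single up-to-constants estimate for the non-intersection probability from separated positions at $\partial{\cal B}(2l)$ out to $\partial{\cal B}(n)$. That last estimate is itself established by a sub/super-multiplicativity argument combined with a Fekete-type limit (both directions of multiplicativity being consequences of the separation lemma), which is exactly the device that prevents the per-scale constants from compounding. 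Your proposal identifies the right ingredients, but without that subadditivity step the iteration as written does not deliver a constant $c_{1}$ independent of $n/l$.
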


\subsubsection{Good sets of paths}
The goal of this subsection is Lemma \ref{hikakusuru}. This lemma shows that the probability of $A_{n} ( \overline {\gamma} )$ is close to that of $A_{n} ( \overline {\gamma}' )$ assuming that initial configurations $\overline {\gamma}$ and $\overline {\gamma}'$ satisfy some condition. If $\overline {\gamma}$ and $\overline {\gamma}'$ are almost same configurations, then the difference between the probability of $A_{n} ( \overline {\gamma} )$ and that of $A_{n} ( \overline {\gamma}' )$ is small. With this in mind, we will first consider some condition that the initial configuration should satisfy (we call the configuration which satisfy the condition a ``good" one).

Take $l < n$ and two initial configurations $\overline{\gamma}= (\gamma ^{1}, \gamma ^{2}), \overline{\gamma}' = (\gamma ^{3} , \gamma ^{4} ) \in \Gamma (l)$. Let $w^{i}$ be the end point of $\gamma^{i}$. We write $q_{l, n} ( \overline{\gamma} ) = P^{w^{1}, w^{2}} \big( A_{n} ( \overline {\gamma} ) \big) $ and define $q_{l, n} ( \overline{\gamma}' )$ similarly. We want to compare the difference between $q_{l, n} ( \overline{\gamma} )$ and $q_{l, n} ( \overline{\gamma}' )$. We will only consider some set of initial configurations defined as follows. We define 
\begin{equation*}
\text{Good}_{l,k} = \{ \overline{\gamma} \in  \Gamma (l) \ : \ q_{l, 2l} ( \overline{\gamma} ) \ge 1/ \sqrt{k} \},
\end{equation*}
for $k \ge 1$ and $l$. When the endpoint of $\gamma^{i}$ is not very close to $\gamma^{3-i}$ for each $i=1, 2$, $q_{l, 2l} ( \overline{\gamma} )$ is not so small. Therefore such a configuration is in a good set $\text{Good}_{l,k}$.

Note that $\bigcup_{k} \text{Good}_{l,k} = \Gamma (l)$. Furthermore, by Corollary \ref{cor1}, we see that for $n > 2l$ 
\begin{align}\label{good-notgood}
& q_{l, n} ( \overline{\gamma} ) \ge  c_{1} \frac{1}{ \sqrt{k}} (\frac{n}{l})^{-\xi_{d}}, \ \ \overline{\gamma} \in \text{Good}_{l,k} \notag \\
& q_{l, n} ( \overline{\gamma} ) \le  c_{2} \frac{1}{ \sqrt{k}} (\frac{n}{l})^{-\xi_{d}}, \ \ \overline{\gamma} \notin \text{Good}_{l,k}.
\end{align}

The next lemma shows that conditioned on $A_{n} ( \overline {\gamma} )$, $S^{1}$ and $S^{2}$ do not return to a small ball with conditional high probability when $\overline {\gamma}$ is a good initial configuration. Furthermore, under the same assumption, we will also prove that conditioned on $A_{2n} ( \overline {\gamma} )$, the pair of $S^{1}$ and $S^{2}$ is in a good set of configurations with high probability. 

\begin{lem}\label{modorinikui}
There exists $c < \infty$ such that if $l \le m \le n$ and $2l < n$, then for all $\overline{\gamma} \in \text{Good}_{l,k}$,
\begin{align*}
&\Big| P^{w^{1}, w^{2}} \Big( A_{n} ( \overline {\gamma} ) \cap F \Big) - q_{l, n} ( \overline{\gamma} ) \Big| \le c \frac{1}{ \sqrt{k}}  q_{l, n} ( \overline{\gamma} ) \\
&\Big| P^{w^{1}, w^{2}} \Big( A_{2n} ( \overline {\gamma} ) \cap F \cap G \Big) - q_{l, 2n} ( \overline{\gamma} ) \Big| \le c \frac{1}{ \sqrt{k}}  q_{l, 2n} ( \overline{\gamma} ),
\end{align*}
where 
\begin{equation*}
F= \big\{ ( S^{1}[0, \tau^{1}_{m}] \cup S^{2}[0, \tau^{2}_{m}] ) \cap B \big( \frac{l}{k} \big) = \emptyset \big\}
\end{equation*}
and
\begin{equation*}
G= \Big\{ \big( \gamma ^{1} + S^{1}[0, \tau^{1}_{m}] , \gamma ^{2}+ S^{2}[0, \tau^{2}_{m}] \big) \in \text{Good}_{m,k} \Big\}.
\end{equation*}

\end{lem}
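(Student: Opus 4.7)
My plan is to reduce both inequalities to a one-sided upper bound of the form $P(A_N(\overline{\gamma}) \cap E^c) \le (c/\sqrt{k}) q_{l,N}(\overline{\gamma})$, where $(N,E) = (n,F)$ for the first and $(N,E) = (2n, F \cap G)$ for the second. Indeed both $F$ and $F\cap G$ are subevents of the ambient probability space and $P(A_N \cap E) \le P(A_N) = q_{l,N}(\overline{\gamma})$, so the absolute value in the statement collapses to the indicated one-sided bound.

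For the first inequality, $F^c$ is the event that at least one of the walks, say $S^1$ by symmetry, enters the small ball ${\cal B}(l/k)$ before exiting ${\cal B}(m)$. I would set $\tau := \inf\{j \ge 0 : S^1(j) \in {\cal B}(l/k)\}$ and condition on the pre-$\tau$ configuration $(S^1[0,\tau], S^2[0,\tau])$. By the strong Markov property at $\tau$, the post-$\tau$ walks must escape to $\partial {\cal B}(n)$ without any intersection among themselves, the initial segments, or $\gamma^1, \gamma^2$. Applying Corollary \ref{cor1} (the up-to-constants non-intersection estimates) together with Proposition \ref{sep lemma} at an intermediate scale to normalize the post-$\tau$ configuration bounds this continuation probability in terms of a factor of order $(l/(kn))^{\sigma_d}$ with a well-separated starting configuration at scale $l/k$. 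Since $\overline{\gamma} \in \text{Good}_{l,k}$ forces $q_{l,n}(\overline{\gamma}) \ge c k^{-1/2} (l/n)^{\sigma_d}$ (combining the definition with Corollary \ref{cor1}), the ratio yields the desired $c/\sqrt{k}$.

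For the second inequality, decompose $(F \cap G)^c = F^c \cup (F \cap G^c)$. The $F^c$ piece is handled exactly as above, with $n$ replaced by $2n$. For $F \cap G^c$, I would condition on the configuration $\overline{\alpha} = (\gamma^1 + S^1[0, \xi_m^1], \gamma^2 + S^2[0,\xi_m^2])$ at the exit from ${\cal B}(m)$. The strong Markov property at this exit time, combined with Corollary \ref{cor1} applied at scale $m$, gives that the conditional probability of subsequently completing $A_{2n}$ is $\asymp q_{m,2m}(\overline{\alpha}) \cdot (m/n)^{\sigma_d}$ (with the implicit constant independent of $\overline{\alpha}$ in the separated case, and handled by another invocation of Proposition \ref{sep lemma} otherwise). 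Restricting to $\overline{\alpha} \notin \text{Good}_{m,k}$ means $q_{m,2m}(\overline{\alpha}) < 1/\sqrt{k}$; summing and comparing to the baseline $q_{l,2n}(\overline{\gamma}) \asymp k^{-1/2} (l/n)^{\sigma_d}$ produces the required $1/\sqrt{k}$ factor.

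The main obstacle lies in the first inequality: the post-$\tau$ configuration $(S^1(\tau), S^2(\tau))$ is asymmetric (one walk at scale $l/k$, the other at scale at most $m$) and is generally not \emph{separated} in the sense required by Proposition \ref{sep lemma}, so Corollary \ref{cor1} cannot be applied verbatim. I expect to handle this either by a last-exit decomposition from ${\cal B}(l/k)$ (trading $\tau$ for the last time $S^1$ sits in ${\cal B}(l/k)$, at which both walks can be driven to a separated configuration at scale $2l/k$ with uniformly positive probability via Proposition \ref{sep lemma}), or by an iterated rescaling between $l/k$ and $l$ that reinjects a separated state scale-by-scale. A secondary technical nuisance is that these estimates must hold uniformly in $d = 2, 3$, even though the unconditional cost of a return to a ball of radius $l/k$ from $\partial{\cal B}(l)$ is very different in the two dimensions; this dimension-dependence, however, is absorbed once we work with the non-intersection exponent $\sigma_d$ rather than with bare random-walk estimates.
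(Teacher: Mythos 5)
Your reduction to the one-sided bound $P(A_{N}(\overline{\gamma})\cap E^{c})\le c k^{-1/2}q_{l,N}(\overline{\gamma})$ and your treatment of the piece $F\cap G^{c}$ in the second inequality are essentially the paper's argument (condition at the exit of ${\cal B}(m)$, use the upper bound of Corollary \ref{cor1} together with $q_{m,2m}(\overline{\alpha})<k^{-1/2}$ on $G^{c}$, then compare back to $q_{l,2n}(\overline{\gamma})$ via the lower bound of Corollary \ref{cor1}; note that no separation argument is needed there, and that you should keep the factor $q_{l,2l}(\overline{\gamma})$ in the comparison rather than replacing it by the Good lower bound). The genuine gap is in your treatment of $F^{c}$. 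You bound the probability by a ``continuation cost'' of order $\big(\tfrac{l}{kn}\big)^{\sigma_{d}}$ after the hitting time $\tau$ of ${\cal B}(l/k)$. First, this bound is not available: Corollary \ref{cor1} cannot be applied with inner radius $l/k$, because the configuration at time $\tau$ (including $\gamma^{1},\gamma^{2}$, which reach $\partial{\cal B}(l)$, and $S^{2}[0,\tau]$) is not contained in ${\cal B}(l/k)$; and in $d=3$ no gain of $k^{-\sigma_{3}}$ from non-intersection across the scales $l/k$ to $l$ exists at all, since a path does not disconnect space and $S^{1}$ can leave the small ball without paying any power of $k$ for avoiding $\gamma^{2}$. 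Second, even granting the bound, your final arithmetic fails in the main case $d=3$: comparing $k^{-\sigma_{3}}(n/l)^{-\sigma_{3}}$ with $q_{l,n}(\overline{\gamma})\ge c_{1}k^{-1/2}(n/l)^{-\sigma_{3}}$ gives a ratio $k^{1/2-\sigma_{3}}$, which is not $O(k^{-1/2})$ because $\sigma_{3}<1$ (the exponent arithmetic would only close for $d=2$, where $\sigma_{2}=5/4$, but there the claimed continuation bound is stronger than what is true).

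What is actually needed, and what the paper proves, is a gain of a full factor $1/k$, and it comes from bare random-walk estimates rather than from the intersection exponent, by different mechanisms in the two dimensions. For $d=3$ one uses transience: $P^{w^{1}}\big(S^{1}[0,\xi_{m}]\cap{\cal B}(l/k)\neq\emptyset\big)\le c/k$, and then the strong Markov property with the (separation-free) upper bound of Corollary \ref{cor1} gives $P(A_{n}(\overline{\gamma})\cap F')\le \tfrac{c}{k}(n/l)^{-\sigma_{3}}$. For $d=2$ the hitting probability is not small, and instead one applies the Beurling estimate: letting $\xi$ be the first exit of ${\cal B}(l)$ after the hit, the event that $S^{1}$ descends to ${\cal B}(l/k)$ and returns to $\partial{\cal B}(l)$ while avoiding $\gamma^{2}$ has probability at most $c/k$, and one concludes as before. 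In both cases the bound $\tfrac{c}{k}(n/l)^{-\sigma_{d}}$ beats $k^{-1/2}q_{l,n}(\overline{\gamma})$ precisely because of the Good lower bound $q_{l,n}(\overline{\gamma})\ge c_{1}k^{-1/2}(n/l)^{-\sigma_{d}}$. So your closing remark that the $d=2$ versus $d=3$ discrepancy ``is absorbed once we work with $\sigma_{d}$'' is exactly backwards: the required $1/k$ does not come from the non-intersection exponent, and the two dimensions require genuinely different inputs (transience versus Beurling).
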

\begin{proof}
We first consider the first inequality in three dimensions.
Let
\begin{equation*}
F'= \big\{  S^{1}[0, \tau^{1}_{m}]  \cap B \big( \frac{l}{k} \big) \neq \emptyset \big\}.
\end{equation*}
For $d=3$, by Proposition 1.5.10 of \cite{Law b}, we have
\begin{equation*}
P^{w^{1}, w^{2}} ( F' ) \le c \frac{1}{k}.
\end{equation*}
However, Corollary 4.6 of \cite{L} shows that 
\begin{equation*}
\max_{z^{1}, z^{2} \in B(l) } P^{z^{1}, z^{2}} \big( S^{1} [0, \tau^{1}_{n}] \cap S^{2} [0, \tau^{2}_{n}] = \emptyset \big) \le C (\frac{n}{l})^{-\xi_{3}}.
\end{equation*}
(We mention that $\zeta_{d}$ was used to stand for the intersection exponent in \cite{L} and that $\xi_{d} = 2 \zeta_{d}$.) Hence, by the strong Markov property, 
\begin{equation*}
P^{w^{1}, w^{2}} \Big( A_{n} ( \overline {\gamma} ) \cap F' \Big) \le c \frac{1}{k} (\frac{n}{l})^{-\xi_{3}}.
\end{equation*}
On the other hand, since $\overline{\gamma} \in \text{Good}_{l,k}$, we have 
\begin{equation*}
q_{l, n} ( \overline{\gamma} ) \ge c_{1} \frac{1}{ \sqrt{k}} (\frac{n}{l})^{-\xi_{3}},
\end{equation*}
which implies that 
\begin{align*}
\Big| P^{w^{1}, w^{2}} \Big( A_{n} ( \overline {\gamma} ) \cap F \Big) - q_{l, n} ( \overline{\gamma} ) \Big| &\le 2 P^{w^{1}, w^{2}} \Big( A_{n} ( \overline {\gamma} ) \cap F' \Big) \\
&\le 2c \frac{1}{k} (\frac{n}{l})^{-\xi_{3}} \\
&\le c \frac{1}{ \sqrt{k}}  q_{l, n} ( \overline{\gamma} ).
\end{align*}

Next we consider $d=2$. 
For $d=2$, since $P^{w^{1}, w^{2}} ( F' )$ is not small enough, we need a different way as follows. Assume that the event $F'$ occurs. Let $u_{1} := \inf \{ t \ | \ S^{1} (t) \in B \big( \frac{l}{k} \big) \}$ and let $u_{2} := \inf \{ t \ge u_{1} \ | \ S^{1} (t) \in \partial B \big( l \big) \}$. By applying the Beurling estimate (see Theorem 6.8.1 of \cite{Law b2}) to both events $S^{1} [0, u_{1}] \cap \gamma^{2} = \emptyset$ and $S^{1} [u_{1}, u_{2}] \cap \gamma^{2} = \emptyset$, we have

\begin{equation*}
 P^{w^{1}, w^{2}} ( F' \cap \{ S^{1}[0 , u_{2} ] \cap \gamma^{2} = \emptyset \} ) \le  \frac{c}{k}.
\end{equation*}
Therefore, by using the strong Markov property as above, we also get the first inequality for $d=2$. 

The second inequality is easy. Using the strong Markov property as well as Corollary \ref{cor1} and \eqref{good-notgood}, we see that
\begin{align*}
P^{w^{1}, w^{2}} \Big( A_{2n} ( \overline {\gamma} ) \cap  G^{c} \Big) &= P^{w^{1}, w^{2}} \Big( A_{m} (\overline {\gamma} ) \cap   A_{2n} ( \overline {\gamma} ) \cap G^{c} \Big) \\
&\le P^{w^{1}, w^{2}} \Big( A_{m} (\overline {\gamma} ) \Big) P^{w^{1}, w^{2}} \Big( A_{2n} ( \overline {\gamma} ) \  \big| \ A_{m} (\overline {\gamma} ) \cap  G^{c} \Big) \\
&\le c (\frac{m}{l})^{-\xi_{d}} P^{w^{1},w^{2}} ( A_{2l} ( \overline {\gamma} ) )  \frac{1}{\sqrt{k}} (\frac{n}{m})^{-\xi_{d}} \\
&\le c P^{w^{1},w^{2}} ( A_{2l} ( \overline {\gamma} ) ) \frac{1}{\sqrt{k}} (\frac{n}{l})^{-\xi_{d}} \\
&\le c q_{l, 2n} ( \overline{\gamma} ) \frac{1}{\sqrt{k}}.
\end{align*}
So we finish the proof.
\end{proof}

For two pairs of paths $\overline{\gamma} = (\gamma ^{1} , \gamma ^{2} ),  \overline{\gamma}' = (\gamma ^{3} , \gamma ^{4} ) \in \Gamma (l)$, we write $\overline{\gamma} =_{k}  \overline{\gamma}'$ if $\overline{\gamma}$ after exiting $B (\frac{l}{k})$ is same as that of $\overline{\gamma}'$. Namely, if we let $\tau_{i} (\frac{l}{k}) = \inf \{ j \ge 0 \ | \ \gamma^{i} (j) \notin {\cal B} (\frac{l}{k}) \}$ for $ 1 \le i \le 4$, then we write $\overline{\gamma} =_{k}  \overline{\gamma}'$ when $\gamma^{i} [ \tau_{i} (\frac{l}{k}), \text{len} \gamma^{i} ] = \gamma^{i+2} [ \tau_{i+2} (\frac{l}{k}), \text{len} \gamma^{i+2} ]$ for each $i=1, 2$.

The next lemma shows that if the initial configuration $\overline{\gamma}$ is good and $\overline{\gamma} =_{k}  \overline{\gamma}'$, then the probability of $A_{n} ( \overline {\gamma} )$ is close to that of $A_{n} ( \overline {\gamma}' )$.

% Assume $\overline{\gamma} =_{k}  \overline{\gamma}'$. Under the conditioning not to intersect each other, it is not likely that random walks starting at the endpoint of $\overline{\gamma}$ enter into ${\cal B} (\frac{l}{k})$ in two and three dimensions. In particular, if $\overline{\gamma} \in \text{Good}_{l,k}$, that probability is negligible. By using this idea, it is not difficult to show the following lemma. We omit the proof.

\begin{lem}\label{hikakusuru}
There exists $c_{0} < \infty$ such that if $l < 2n$, $k \ge 1$, $ \overline{\gamma} \in \text{Good}_{l,k}$, $\overline{\gamma}' \in \Gamma (l)$ and $\overline{\gamma} =_{k}  \overline{\gamma}'$, then we have
\begin{equation*}
\big| q_{l, n} ( \overline{\gamma} ) - q_{l, n} ( \overline{\gamma}' ) \big| \le c_{0} \frac{1}{\sqrt{k}} q_{l, n} ( \overline{\gamma} ).
\end{equation*}
\end{lem}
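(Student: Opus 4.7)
The guiding observation is that, on the event
\begin{equation*}
F = \{(S^{1}[0, \xi_{n}] \cup S^{2}[0, \xi_{n}]) \cap {\cal B}(l/k) = \emptyset\}
\end{equation*}
(the same event $F$ as in Lemma \ref{modorinikui} with $m = n$), neither walk ever enters ${\cal B}(l/k)$, so they cannot distinguish $\overline{\gamma}$ from $\overline{\gamma}'$: the hypothesis $\overline{\gamma} =_{k} \overline{\gamma}'$ says that outside ${\cal B}(l/k)$ the two configurations coincide. Consequently $A_{n}(\overline{\gamma}) \cap F = A_{n}(\overline{\gamma}') \cap F$, and a triangle inequality reduces the lemma to proving
\begin{equation*}
P^{w^{1}, w^{2}}(A_{n}(\overline{\gamma}) \cap F^{c}) + P^{w^{1}, w^{2}}(A_{n}(\overline{\gamma}') \cap F^{c}) \le c k^{-1/2} q_{l,n}(\overline{\gamma}).
\end{equation*}
The first summand is immediate from Lemma \ref{modorinikui} applied with $m=n$, using $\overline{\gamma} \in \text{Good}_{l,k}$.

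The second summand is the main obstacle, because $\overline{\gamma}'$ is not assumed to lie in $\text{Good}_{l,k}$ and Lemma \ref{modorinikui} does not apply directly. The plan is to rerun its proof with $\overline{\gamma}'$ in place of $\overline{\gamma}$, and then to absorb the resulting weaker bound into $q_{l,n}(\overline{\gamma})$ via Corollary \ref{cor1}. The strong Markov property at the exit times $\xi_{2l}^{1}, \xi_{2l}^{2}$, combined with Corollary \ref{cor1} and the trivial bound $q_{2l, 4l}(\cdot) \le 1$, yields
\begin{equation*}
P^{w^{1}, w^{2}}(A_{n}(\overline{\gamma}') \cap F^{c}) \le c (n/l)^{-\sigma_{d}} \, P^{w^{1}, w^{2}}(A_{2l}(\overline{\gamma}') \cap F^{c}).
\end{equation*}
The remaining factor is bounded by $c/k$: in $d = 3$ directly from the Green's function estimate for hitting the ball ${\cal B}(l/k)$ from $\partial {\cal B}(l)$; in $d = 2$ via Beurling's projection theorem applied to each component of $\overline{\gamma}'$, which---exactly like each component of $\overline{\gamma}$---is a connected path from $0$ to $\partial {\cal B}(l)$, so the two-sided passage argument used in the 2D part of the proof of Lemma \ref{modorinikui} reproduces the same $O(1/k)$ estimate.

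It then remains to convert $c (n/l)^{-\sigma_{d}} \cdot c/k$ into a multiple of $q_{l,n}(\overline{\gamma})$. Since $\overline{\gamma} \in \text{Good}_{l,k}$, Corollary \ref{cor1} gives $q_{l,n}(\overline{\gamma}) \ge c k^{-1/2} (n/l)^{-\sigma_{d}}$, so $c k^{-1}(n/l)^{-\sigma_{d}} \le c k^{-1/2} q_{l,n}(\overline{\gamma})$, completing the required bound. The conceptual key to the argument is that the only geometric input to the proof of Lemma \ref{modorinikui} (apart from the good-set hypothesis, which we replace by using $\overline{\gamma} \in \text{Good}_{l,k}$ in the denominator) is that each component of the initial configuration is a connected curve from $0$ to $\partial {\cal B}(l)$; this is automatic from membership in $\Gamma(l)$, and in particular holds for $\overline{\gamma}'$.
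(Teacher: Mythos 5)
Your proposal captures the intended argument. The paper omits this proof, offering only the guiding sentence before the lemma (``it is not likely that random walks starting at the endpoint of $\overline{\gamma}$ enter into ${\cal B}(l/k)$ \ldots In particular, if $\overline{\gamma} \in \text{Good}_{l,k}$, that probability is negligible''), which is exactly the decomposition you use. The identity $A_n(\overline{\gamma}) \cap F = A_n(\overline{\gamma}') \cap F$, the triangle inequality, the treatment of $P(A_n(\overline{\gamma}) \cap F^c)$ via Lemma \ref{modorinikui}, and the main observation that the $O(k^{-1}(n/l)^{-\sigma_d})$ upper bound for $P(A_n(\overline{\gamma}') \cap F^c)$ never uses membership in $\text{Good}_{l,k}$ (the good-set hypothesis on $\overline{\gamma}$ enters only through the lower bound $q_{l,n}(\overline{\gamma}) \ge c k^{-1/2}(n/l)^{-\sigma_d}$ from Corollary \ref{cor1}) are all correct and are the natural reconstruction of what the paper intends.

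Two minor points worth tightening. First, the displayed intermediate inequality $P(A_n(\overline{\gamma}') \cap F^c) \le c(n/l)^{-\sigma_d}P(A_{2l}(\overline{\gamma}') \cap F^c)$ is not literally a strong Markov factorization at $\xi_{2l}$, since $F^c$ (with $m=n$) references the walks up to $\xi_n$ and is therefore not $\mathcal{F}_{\xi_{2l}}$-measurable; one should split $F^c$ into ``hit ${\cal B}(l/k)$ before $\xi_{2l}$'' and ``hit it between $\xi_{2l}$ and $\xi_n$'' and handle each piece (both contribute $O(k^{-1}(n/l)^{-\sigma_d})$, by the argument in the proof of Lemma \ref{modorinikui}). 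Second, you tacitly use that $\overline{\gamma}$ and $\overline{\gamma}'$ share the same endpoints $w^1, w^2 \in \partial{\cal B}(l)$, so that $P^{w^1,w^2}$ is unambiguous and Corollary \ref{cor1} applies to $\overline{\gamma}'$; this is indeed a consequence of $\overline{\gamma} =_k \overline{\gamma}'$ (the endpoints lie outside ${\cal B}(l/k)$) and should be stated explicitly. Neither point affects the correctness of the overall argument.
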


\begin{proof}
Let $w^{i}$ ($i=1,2$) be the endpoint of $\gamma^{i}$. Since $\overline{\gamma} =_{k}  \overline{\gamma}'$, the endpoint of $\gamma^{i+2}$ is $w^{i}$. Let $S^{i}$ be the simple random walk started at $w^{i}$. For each $i=1,2$, let 
\begin{equation*}
F_{i} = \{ S^{i} [0, \tau^{i}_{n} ] \cap B (\frac{l}{k}) = \emptyset \}.
\end{equation*}
Since $\overline{\gamma} =_{k}  \overline{\gamma}'$, when $F_{1}$ and $F_{2}$ occur, the probability of $A_{n} ( \overline {\gamma} )$ is same as that of $A_{n} ( \overline {\gamma}' )$.  So we have
\begin{equation*}
P^{w^{1}, w^{2}} \big( A_{n} ( \overline {\gamma} ) \cap F_{1} \cap F_{2} \big) = P^{w^{1}, w^{2}} \big( A_{n} ( \overline {\gamma}' ) \cap F_{1} \cap F_{2} \big).
\end{equation*}
Thus,
\begin{equation*}
\big| q_{l, n} ( \overline{\gamma} ) - q_{l, n} ( \overline{\gamma}' ) \big| \le \sum_{i=1}^{2} P^{w^{1}, w^{2}} \big( A_{n} ( \overline {\gamma} ) \cap F_{i}^{c}  \big) + \sum_{i=1}^{2} P^{w^{1}, w^{2}} \big( A_{n} ( \overline {\gamma}' ) \cap F_{i}^{c}  \big).
\end{equation*}
We will only show that 
\begin{equation*}
P^{w^{1}, w^{2}} \big( A_{n} ( \overline {\gamma} ) \cap F_{1}^{c}  \big) \le c \frac{1}{\sqrt{k}} q_{l, n} ( \overline{\gamma} ).
\end{equation*}
The other three terms can be estimated similarly. We first consider when $d=3$. Recall that we define $u_{1} := \inf \{ t \ | \ S^{1} (t) \in B \big( \frac{l}{k} \big) \}$ and let $u_{2} := \inf \{ t \ge u_{1} \ | \ S^{1} (t) \in \partial B \big( l \big) \}$ in the proof of the previous lemma. Assume that $F_{1}^{c}$ occurs. Then $u_{1} < u_{2} < \tau^{1}_{n}$. Thus, using the Markov property and Proposition 1.5.10 of \cite{Law b},
\begin{align*}
&P^{w^{1}, w^{2}} \big( A_{n} ( \overline {\gamma} ) \cap F_{1}^{c}  \big) \le P^{w^{1}, w^{2}} \big( u_{1} < u_{2} < \xi_{n}, \ S^{1} [u_{2} , \tau^{1}_{n}] \cap S^{2}[0, \tau^{2}_{n}] = \emptyset \big) \\
&\le P^{w^{1}} ( u_{1} < \tau^{1}_{n} ) \max_{x, y \in \partial {\cal B} (l) } P^{x,y} \big( S^{1} [0 , \tau^{1}_{n}] \cap S^{2}[0, \tau^{2}_{n}] = \emptyset \big) \\
& \le  \frac{c}{k} \max_{x, y \in \partial {\cal B} (l) } P^{x,y} \big( S^{1} [0 , \tau^{1}_{n}] \cap S^{2}[0, \tau^{2}_{n}] = \emptyset \big).
\end{align*}
By Corollary 4.6 in \cite{L}, 
\begin{equation*}
\max_{x, y \in \partial {\cal B} (l) } P^{x,y} \big( S^{1} [0 , \tau^{1}_{n}] \cap S^{2}[0, \tau^{2}_{n}] = \emptyset \big) \le c \big( \frac{n}{l} \big)^{- \xi_{3}}.
\end{equation*}
Therefore, if $\overline{\gamma} \in \text{Good}_{l,k}$, by \eqref{good-notgood} we have
\begin{equation*}
P^{w^{1}, w^{2}} \big( A_{n} ( \overline {\gamma} ) \cap F_{1}^{c}  \big) \le c \frac{1}{\sqrt{k}} q_{l, n} ( \overline{\gamma} ),
\end{equation*}
which finishes the proof when $d=3$.

Now we will show the lemma when $d=2$. Note that by using the strong Markov property and Corollary 4.6 in \cite{L} again,
\begin{align*}
&P^{w^{1}, w^{2}} \big( A_{n} ( \overline {\gamma} ) \cap F_{1}^{c}  \big) \\
& \le P^{w^{1}, w^{2}} \big( u_{1} < u_{2} < \xi_{n}, \ S^{1}[0, u_{2} ] \cap \gamma^{2} = \emptyset, \ S^{1} [u_{2} , \tau^{1}_{n}] \cap S^{2}[0, \tau^{2}_{n}] = \emptyset \big) \\
&\le P^{w^{1}} \big( u_{1} < u_{2} < \xi_{n}, \ S^{1}[0, u_{2} ] \cap \gamma^{2} = \emptyset \big) \max_{x, y \in \partial {\cal B} (l) } P^{x,y} \big( S^{1} [0 , \tau^{1}_{n}] \cap S^{2}[0, \tau^{2}_{n}] = \emptyset \big) \\
&\le c \big( \frac{n}{l} \big)^{- \xi_{2}} P^{w^{1}} \big( u_{1} < u_{2} < \xi_{n}, \ S^{1}[0, u_{2} ] \cap \gamma^{2} = \emptyset \big).
\end{align*}
However, if we apply the Beurling estimate (see Theorem 6.8.1 of \cite{Law b2}) to two events $\{ S^{1}[0, u_{1} ] \cap \gamma^{2} = \emptyset \}$ and $\{ S^{1}[u_{1}, u_{2} ] \cap \gamma^{2} = \emptyset \}$, we have
\begin{equation*}
P^{w^{1}} \big(  u_{1} < u_{2} < \xi_{n}, \ S^{1}[0, u_{2} ] \cap \gamma^{2} = \emptyset \big) \le  \frac{c}{k}.
\end{equation*}
Therefore, if $\overline{\gamma} \in \text{Good}_{l,k}$, by \eqref{good-notgood} we have
\begin{equation*}
P^{w^{1}, w^{2}} \big( A_{n} ( \overline {\gamma} ) \cap F_{1}^{c}  \big) \le c \frac{1}{\sqrt{k}} q_{l, n} ( \overline{\gamma} ),
\end{equation*}
which finishes the proof when $d=2$.
\end{proof}

\subsubsection{Coupling}
The goal of this subsection is Theorem \ref{coupling3}. Theorem \ref{coupling3} roughly states that the conditional law of $S^{1}, S^{2}$ after exiting a large ball conditioned on $A_{n} ( \overline{\gamma})$ is almost independent of the initial configuration $\overline{\gamma}$. To state more precisely, we write $\mu _{l, n} (\overline{\gamma})$ for the probability measure on the space of two-sided paths, which is induced by $\big( S^{1} [0, \tau^{1}_{n}] ,  S^{2} [0, \tau^{2}_{n}] \big)$ conditioned on the event $A_{n} ( \overline{\gamma})$. In Theorem \ref{coupling3}, we want to show that $\mu _{l, n} (\overline{\gamma})$ is close to $\mu _{l, n} (\overline{\gamma}')$ in ``outside" a large ball. To achieve this, we will consider a coupling. This approach is based on the same spirit as in Theorem 4.1 of \cite{Greg}. If $\overline{\gamma} =_{k} \overline{\gamma}'$ for large $k$, then we can couple $\mu _{l, n} (\overline{\gamma})$ and $\mu _{l, n} (\overline{\gamma}')$ with high probability such that they are close (see Proposition \ref{coupling1}). But if $k$ is not large, then we can couple them with positive probability (see Proposition \ref{coupling2}). Using these propositions, we will prove Theorem \ref{coupling3}. Proposition \ref{coupling1}, Proposition \ref{coupling2} and Theorem \ref{coupling3} are discrete analogs of Proposition 4.4, Proposition 4.5 and Theorem 4.1 of \cite{Greg} respectively.

For $\overline{\gamma} \in \Gamma (l)$ and $l < m < n$, let $\mu _{l,m,n} (\overline{\gamma})$ be the probability measure on the space of two-sided paths, which is induced by $\big( S^{1} [0, \tau^{1}_{m}] ,  S^{2} [0, \tau^{2}_{m}] \big)$ conditioned on the event $A_{n} ( \overline{\gamma})$. Note that a two-sided path $\overline{ \lambda } = (\lambda ^{1} , \lambda ^{2} )$ is in the support of $\mu _{l,m,n} (\overline{\gamma})$ if and only if $\lambda^{i} (0) = w^{i}$ for each $i=1,2$ and $(\gamma^{1} + \lambda^{1}, \gamma^{2}+\lambda^{2}) \in \Gamma (m)$.

We will first prove the following proposition which states that if $\overline{\gamma} =_{k} \overline{\gamma}'$ for $k$ large enough, then the paths stay coupled with high probability. 

% Since the proof is completely same as that of Proposition 4.4 in \cite{Greg}, we omit it.

\begin{prop}\label{coupling1}
There exists $C_{0}$ such that we have the following: Suppose that $k, l , m , n$ are positive integers with $2l < m$ and $2m < n$. Let $\overline{\gamma} , \overline{\gamma}' \in \Gamma (l)$. Assume that $\overline{\gamma} \in \text{Good}_{l,k}$ and $\overline{\gamma} =_{k} \overline{\gamma}'$. Then we can define $\overline{\lambda }_{l,m} , \overline{\lambda }'_{l,m}$ on the same probability space $( \Omega , {\cal F} , P )$ such that $\overline{\lambda }_{l,m}$ has the distribution $\mu _{l,m,n} (\overline{\gamma})$, $\overline{\lambda }'_{l,m}$ has the distribution $\mu _{l,m,n} (\overline{\gamma}')$, and that
\begin{align*}
&P \big( \overline{\lambda }_{l,m} =_{\frac{km}{l}} \overline{\lambda }'_{l,m} \big) \ge 1-C_{0} \frac{1}{\sqrt{k}}, \\
&P \big( \overline{\lambda }_{l,m} \in \text{Good}_{m , k} \big) \ge 1 - C_{0} \frac{1}{\sqrt{k}}.
\end{align*}

\end{prop}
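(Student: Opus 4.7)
The plan is to compare the two measures $\mu_{l,m,n}(\overline\gamma)$ and $\mu_{l,m,n}(\overline\gamma')$ via their Radon--Nikodym derivative, show that it is uniformly close to $1$ on a high-probability set, and then extract the coupling from a standard maximal coupling. The second conclusion will fall out of the same estimate.

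Writing the density of $\mu_{l,m,n}(\overline\gamma)$ at a two-sided path $\overline\lambda$ starting at $(w^1,w^2)$ and exiting $\mathcal B(m)$ as
$$\mu_{l,m,n}(\overline\gamma)(\overline\lambda) \;=\; \frac{P^{w^1,w^2}\bigl(\overline S[0,\xi_m]=\overline\lambda\bigr)\, q_{m,n}(\overline\gamma+\overline\lambda)}{q_{l,n}(\overline\gamma)}\,\mathbf{1}_{\{\overline\gamma+\overline\lambda\in\Gamma(m)\}},$$
and noting that $\overline\gamma$ and $\overline\gamma'$ share the endpoints $w^1, w^2$, the ratio $d\mu_{l,m,n}(\overline\gamma')/d\mu_{l,m,n}(\overline\gamma)$ reduces to $[q_{m,n}(\overline\gamma'+\overline\lambda)/q_{m,n}(\overline\gamma+\overline\lambda)]\cdot[q_{l,n}(\overline\gamma)/q_{l,n}(\overline\gamma')]$.

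Let $F$ be the event ``$\overline\lambda$ avoids $\mathcal B(l/k)$'' and $G=\{\overline\gamma+\overline\lambda\in\text{Good}_{m,k}\}$. On $F$ the concatenations $\overline\gamma+\overline\lambda$ and $\overline\gamma'+\overline\lambda$ agree outside $\mathcal B(l/k)=\mathcal B\bigl(m/(km/l)\bigr)$, so on $F\cap G$ the comparison lemma stated just after Lemma \ref{modorinikui} bounds the first ratio within $1\pm O(1/\sqrt k)$; the hypothesis $\overline\gamma\in\text{Good}_{l,k}$ gives the analogous bound for the second ratio. Combining with Lemma \ref{modorinikui}, which yields $\mu_{l,m,n}(\overline\gamma)(F^c)=O(1/\sqrt k)$ and $\mu_{l,m,n}(\overline\gamma)(G^c)=O(1/\sqrt k)$, one obtains the total-variation bound $\|\mu_{l,m,n}(\overline\gamma)-\mu_{l,m,n}(\overline\gamma')\|_{\mathrm{TV}}=O(1/\sqrt k)$. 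A maximal coupling then produces $\overline\lambda_{l,m},\overline\lambda'_{l,m}$ which coincide (hence in particular satisfy $\overline\lambda_{l,m}=_{km/l}\overline\lambda'_{l,m}$) with probability at least $1-C_0/\sqrt k$, while the estimate on $G^c$ is exactly the second inequality of the proposition.

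The main obstacle is the bookkeeping of scales: the ``disagreement'' ball $\mathcal B(l/k)$ of the initial configurations has to be reinterpreted as a ball of radius $m/(km/l)$ relative to the new scale $m$, which is precisely why the conclusion is phrased with the index $km/l$ and why the $q$-comparison must be invoked at scale $m$ rather than $l$. One also needs to check that on the event $G$ the separation created at scale $m$ is good enough for the comparison lemma to apply to $\overline\gamma+\overline\lambda$, which is where the Separation Lemma (Proposition \ref{sep lemma}) implicitly enters through the definition of $\text{Good}_{m,k}$. Once these indices are tracked, the three ingredients -- the density formula, the comparison of $q$-values, and Lemma \ref{modorinikui} -- combine routinely.
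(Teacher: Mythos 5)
Your proof is correct and matches the approach of the cited Proposition 4.4 of Lawler--Vermesi that the paper invokes without reproducing: compare $\mu_{l,m,n}(\overline\gamma)$ and $\mu_{l,m,n}(\overline\gamma')$ via the Radon--Nikodym density, use Lemma \ref{modorinikui} and the comparison lemma to pin the density to $1\pm O(1/\sqrt{k})$ on a set of $\mu$-probability $\ge 1-O(1/\sqrt{k})$, and then convert the resulting total-variation bound into the stated coupling via a maximal coupling. The one point worth making explicit is the inclusion $\text{Good}_{m,k}\subset \text{Good}_{m,km/l}$ (valid since $m/l>1$), which is what lets you apply the comparison lemma to $\overline\gamma+\overline\lambda$ and $\overline\gamma'+\overline\lambda$ at scale $m$ with separation parameter $km/l$, and one should also note that the bound on the exceptional set must be checked for $\mu_{l,m,n}(\overline\gamma')$ as well, which follows from the same strong-Markov estimate in Lemma \ref{modorinikui} once $q_{l,n}(\overline\gamma')\asymp q_{l,n}(\overline\gamma)$ is secured by the comparison lemma (taking $k$ large enough that $1-C_0/\sqrt{k}>0$, which is no loss since the claim is vacuous otherwise).
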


\begin{proof}
Take $\overline{\gamma} = (\gamma ^{1} , \gamma ^{2} ),  \overline{\gamma}' = (\gamma ^{3} , \gamma ^{4} ) \in \Gamma (l)$. Assume that $\overline{\gamma} \in \text{Good}_{l,k}$ and $\overline{\gamma} =_{k} \overline{\gamma}'$. In order to prove the lemma, as in the proof of Proposition 4.4 of \cite{Greg}, it suffices to estimate the total variation distance between $\mu _{l,m,n} (\overline{\gamma})$ and $\mu _{l,m,n} (\overline{\gamma}')$. Let $w^{i}$ be the endpoint of $\gamma^{i}$ ($i=1, 2$). 

Take a pair of paths $\overline{\lambda} = (\lambda^{1}, \lambda^{2})$ with $\lambda^{i} (0) = w^{i}$ for each $i=1,2$ and $(\gamma^{1} + \lambda^{1}, \gamma^{2}+\lambda^{2}) \in \Gamma (m)$. Suppose that $\lambda^{i} \cap B ( \frac{l}{k} ) = \emptyset$ for each $i =1, 2$. Since $\overline{\gamma} =_{k} \overline{\gamma}'$, we see that $(\gamma^{3} + \lambda^{1}, \gamma^{4}+\lambda^{2}) \in \Gamma (m)$. We write $v^{i}$ for the endpoint of $\lambda^{i}$. Let $\overline{\gamma} + \overline{\lambda} := (\gamma^{1} + \lambda^{1}, \gamma^{2}+\lambda^{2})$ and we write $\overline{\gamma}' + \overline{\lambda}$ for $(\gamma^{3} + \lambda^{1}, \gamma^{4}+\lambda^{2})$. Suppose that $\overline{\gamma} + \overline{\lambda} \in \text{Good}_{m , k}$. Note that by the strong Markov property,
\begin{equation*}
\mu _{l,m,n} (\overline{\gamma}) [\overline{\lambda}] = \frac{P^{w^{1}, w^{2}} \big( S^{i} [0, \tau^{i}] = \lambda^{i} \text{ for } i= 1, 2 \big) P^{v^{1}, v^{2}} \big( A_{n} ( \overline{\gamma} + \overline{\lambda} ) \big) }{P^{w^{1}, w^{2}} \big( A_{n} ( \overline{\gamma} ) \big)}.
\end{equation*}
Since $\overline{\gamma} + \overline{\lambda} \in \text{Good}_{m , k}$, $\overline{\gamma}' + \overline{\lambda} \in \Gamma (m)$ and $\overline{\gamma} =_{k} \overline{\gamma}'$, by using Lemma \ref{hikakusuru}, we see that 
\begin{equation*}
| \mu _{l,m,n} (\overline{\gamma}) [\overline{\lambda}] - \mu _{l,m,n} (\overline{\gamma}') [\overline{\lambda}] | \le \frac{c}{\sqrt{k}} \mu _{l,m,n} (\overline{\gamma}) [\overline{\lambda}].
\end{equation*}

Let $H$ be the set of pairs of paths $\overline{\lambda} = (\lambda^{1}, \lambda^{2})$ with $\lambda^{i} (0) = w^{i}$ such that $(\gamma^{1} + \lambda^{1}, \gamma^{2}+\lambda^{2}) \in \Gamma (m)$,  $\lambda^{i} \cap B ( \frac{l}{k} ) = \emptyset$, and $\overline{\gamma} + \overline{\lambda} \in \text{Good}_{m , k}$. Then by Lemma \ref{modorinikui},
\begin{equation*}
\mu _{l,m,n} (\overline{\gamma}) [ H^{c} ] \le \frac{c}{\sqrt{k}}.
\end{equation*}

Therefore, we have
\begin{equation*}
P \big( \overline{\lambda }_{l,m} \neq_{\frac{km}{l}} \overline{\lambda }'_{l,m} \big) = \frac{1}{2} \parallel \mu _{l,m,n} (\overline{\gamma}) - \mu _{l,m,n} (\overline{\gamma}') \parallel  \le C_{0} \frac{1}{\sqrt{k}},
\end{equation*}
for some $C_{0} < \infty$. The second inequality follows from Lemma \ref{modorinikui} and we finish the proof.
\end{proof}

What about the case that $\overline{\gamma} =_{k} \overline{\gamma}'$ for small $k$, or $\overline{\gamma}$ and $\overline{\gamma}'$ do not have the same end points? In such cases, we will show that the coupling still can be started, with positive probability in the next proposition. 

We fix an integer $K$ such that $C_{0} \frac{2}{\sqrt{K}} < \frac{1}{2}$ where $C_{0}$ is the constant as in Proposition \ref{coupling1}. For the case that $k$ is not large, or $\overline{\gamma}$ and $\overline{\gamma}'$ do not have the same end points, we will use the following coupling.

\begin{prop}\label{coupling2}
There exists $b >0$ such that if $l < n$ are positive integers with $Kl < n$ and $\overline{\gamma}, \overline{\gamma}' \in  \Gamma (l)$, then we can couple $\mu _{l,Kl,n} (\overline{\gamma})$ and $\mu _{l,Kl,n} (\overline{\gamma}')$ such that with probability at least $b$,
\begin{equation*}
\overline{\lambda }_{l,Kl} =_{\frac{K}{4}} \overline{\lambda }'_{l,Kl},
\end{equation*}
and
\begin{equation*}
\overline{\lambda }_{l,Kl} \in \text{Good}_{Kl , \frac{K}{4}}.
\end{equation*}
\end{prop}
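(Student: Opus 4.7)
The strategy mirrors the proof of Proposition 4.5 in \cite{Greg}, combining the Separation Lemma with Proposition \ref{coupling1}. The plan is to first drive both conditioned walks, starting from arbitrary initial configurations $\overline{\gamma},\overline{\gamma}'\in\Gamma(l)$, into a \emph{common} well-separated configuration at an intermediate scale with positive probability, and then invoke Proposition \ref{coupling1} to propagate this agreement out to scale $Kl$.

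First I would apply the Separation Lemma (Proposition \ref{sep lemma}) separately to $\overline{\gamma}$ and to $\overline{\gamma}'$: the conditioned pair $\overline{\lambda}_{l,2l}$ (respectively $\overline{\lambda}'_{l,2l}$) lies in the separation event $\textsf{Sep}(l)$ with probability bounded below by a universal constant $c_{1}>0$, uniformly in the initial configuration and in $n$. On this event the endpoints sit inside the half-spaces $I(4l/3)$ and $I'(4l/3)$, and one checks from the definition of $\textsf{Sep}(l)$ together with Corollary \ref{cor1} that the resulting configuration in $\Gamma(2l)$ lies in $\text{Good}_{2l,c_{2}}$ for some absolute constant $c_{2}$.

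Next, conditioned on separation at scale $2l$, I would couple both pairs of processes on the annular region between scales $2l$ and $4l$ so that they hit a single prescribed ``target'' configuration $\overline{\gamma}^{*}\in\Gamma(4l)\cap\text{Good}_{4l,K}$. Because the endpoints at scale $2l$ are well-separated in opposite half-spaces, Corollary \ref{cor1} together with a standard ``hit-a-specific-tube'' estimate for simple random walk yields a lower bound, uniform in $n$ and in the starting configurations inside $\textsf{Sep}(l)$, for the probability of following prescribed polygonal paths from the separated endpoints at scale $2l$ to the endpoints of $\overline{\gamma}^{*}$ while maintaining the non-intersection condition up to scale $n$. By synchronizing the two pairs to trace the same prescribed trajectories on a common probability space, we arrange that with probability at least some $b>0$ both $\overline{\lambda}_{l,4l}$ and $\overline{\lambda}'_{l,4l}$ equal $\overline{\gamma}^{*}$ at scale $4l$.

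Once the two pairs coincide at scale $4l$, the conditional laws of their continuations up to scale $n$ depend only on the common configuration $\overline{\gamma}^{*}$, so the walks can be run identically from $4l$ onward on the same probability space. This automatically yields $\overline{\lambda}_{l,Kl}=_{K/4}\overline{\lambda}'_{l,Kl}$, since the paths agree outside ${\cal B}(4l)={\cal B}(Kl/(K/4))$. The ``good set'' conclusion $\overline{\lambda}_{l,Kl}\in\text{Good}_{Kl,K}$ follows by invoking Proposition \ref{coupling1} with starting configuration $\overline{\gamma}^{*}$ between scales $4l$ and $Kl$, after possibly shrinking $b$. The main technical obstacle is the intermediate step: establishing that the prescribed ``target-hitting'' probability is uniformly positive in $n$ and in both starting configurations. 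This rests on the Separation Lemma and the two-sided estimates in Corollary \ref{cor1}, and in two dimensions requires Beurling's estimate to rule out the contribution of re-entry into ${\cal B}(l)$ during the tube-following phase.
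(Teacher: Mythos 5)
Your overall architecture is sound — use separation to get both conditioned pairs into a nice state, force agreement at an intermediate scale, then propagate agreement outward via Proposition \ref{coupling1} — and this is indeed the shape of the argument in Lawler--Vermesi Proposition 4.5, which the paper invokes. However, your middle step contains a genuine and irreparable gap. You propose forcing both $\overline{\lambda}_{l,4l}$ and $\overline{\lambda}'_{l,4l}$ to equal a \emph{fixed} target configuration $\overline{\gamma}^{*}\in\Gamma(4l)\cap\text{Good}_{4l,K}$ with probability bounded below by a universal constant $b>0$. This cannot be done: the configuration at scale $4l$ is the entire pair of random-walk paths up to first exit of ${\cal B}(4l)$, whose typical length is of order $l^{2}$, so the probability of following any one prescribed lattice path is $(1/6)^{\Theta(l^{2})}$. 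No tube estimate or Harnack-type bound rescues this, since a tube estimate controls the probability of staying in a region, not of matching a specific trajectory step by step. Consequently there is no $b>0$ independent of $l$ for which the ``target-hitting'' probability is bounded below; the quantity you flag as ``the main technical obstacle'' is in fact impossible, not merely technically difficult.

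The correct middle step does not aim at a fixed target. After separation, one shows that the law of the part of $\overline{\lambda}_{l,Kl}$ lying outside ${\cal B}(4l)$, under $\mu_{l,Kl,n}(\overline{\gamma})$, and the corresponding law under $\mu_{l,Kl,n}(\overline{\gamma}')$, have Radon--Nikodym derivatives with respect to one another bounded between two universal positive constants, uniformly in $l$, $n$, and the starting configurations (this is where separation, Corollary \ref{cor1}, and Harnack-type estimates enter). Two probability measures with mutually bounded densities can be coupled so that they produce the \emph{same random} outcome with probability bounded below by the reciprocal of the density bound; once the two pairs agree outside ${\cal B}(4l)$ in this way, the $\text{Good}_{Kl,K}$ conclusion follows from Proposition \ref{coupling1}. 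So the essential idea you are missing is that the coupling exploits overlap of distributions via a maximal (or ratio) coupling, not a lower bound on the probability of reaching a prescribed state. This distinction is exactly why the argument works uniformly in $l$.
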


\begin{proof}
Take $\overline{\gamma} = (\gamma ^{1} , \gamma ^{2} ),  \overline{\gamma}' = (\gamma ^{3} , \gamma ^{4} ) \in  \Gamma (l)$. We attach $(S^{1}[0, \tau^{1}_{Kl}], S^{2} [0, \tau^{2}_{Kl}] )$ to $\overline{\gamma}$ and  $\overline{\gamma}'$ in the following way. By Proposition \ref{sep lemma}, with positive conditional probability conditioned on $A_{n} ( \overline{\gamma} )$ (resp. $A_{n} ( \overline{\gamma}' )$), we can attach $(S^{1} [0, \tau^{1}_{2l}], S^{2} [0, \tau^{2}_{2l}]) $ to $\overline{\gamma}$ (resp. $\overline{\gamma}'$) such that $\overline{\gamma} + (S^{1} [0, \tau^{1}_{2l}], S^{2} [0, \tau^{2}_{2l}])$ (resp. $\overline{\gamma}' + (S^{1} [0, \tau^{1}_{2l}], S^{2} [0, \tau^{2}_{2l}])$) satisfies $\textsf{Sep}(l)$. Next we can attach $(S^{1} [\tau^{1}_{2l}, \tau^{1}_{4l}], S^{2} [\tau^{2}_{2l}, \tau^{2}_{4l}]) $ with positive conditional probability such that $\overline{\gamma} + (S^{1} [0, \tau^{1}_{4l}], S^{2} [0, \tau^{2}_{4l}])$ and $\overline{\gamma}' + (S^{1} [0, \tau^{1}_{4l}], S^{2} [0, \tau^{2}_{4l}])$ have the same endpoints and both of them satisfy $\textsf{Sep}(2 l)$. Finally, since they are separated, with positive probability, we can attach the same random walks $(S^{1} [\tau^{1}_{4l}, \tau^{1}_{Kl}], S^{2} [\tau^{2}_{4l}, \tau^{2}_{Kl}]) $ such that $\overline{\lambda }_{l,Kl} =_{\frac{K}{4}} \overline{\lambda }'_{l,Kl}$ and $\overline{\lambda }_{l,Kl} \in \text{Good}_{Kl , K}$. So we finish the proof.
\end{proof}

For $\overline{\gamma} \in \Gamma (l)$ and $l <n$, we write $\mu _{l,n} (\overline{\gamma})$ for $\mu _{l,n,n} (\overline{\gamma})$. Recall that $\mu _{l,n,n} (\overline{\gamma})$ is the probability measure induced by $ \big( S^{1} [0, \tau^{1}_{n}] ,  S^{2} [0, \tau^{2}_{n}] \big)$ conditioned that $A_{n}(\overline{\gamma})$ holds. For any two paths $\overline{\gamma}, \overline{\gamma}' \in \Gamma (l)$, we want to say $\mu _{l,n} (\overline{\gamma})$ and $\mu _{l,n} (\overline{\gamma}')$ are close. Clearly, if the endpoints for $\overline{\gamma}$ are not same as those of $\overline{\gamma}'$, $\mu _{l,n} (\overline{\gamma})$ and $\mu _{l,n} (\overline{\gamma}')$ are not close near $B (l)$. So we will show that $\big( S^{1} [0, \tau^{1}_{n}] ,  S^{2} [0, \tau^{2}_{n}] \big)$ conditioned that $A_{n}(\overline{\gamma})$ and $\big( S^{1} [0, \tau^{1}_{n}] ,  S^{2} [0, \tau^{2}_{n}] \big)$ conditioned that $A_{n}(\overline{\gamma}')$ are close in the outside a large ball. To show this, we construct two random variables $\overline{\lambda }_{l, n},  \overline{\lambda }'_{l, n}$ on the same probability space $(\Omega, {\cal F}, P)$ such that $\overline{\lambda }_{l, n}$ has the distribution $\mu _{l,n} (\overline{\gamma})$, $\overline{\lambda }'_{l, n}$ has the distribution $\mu _{l,n} (\overline{\gamma}')$ and they are close in outside a large ball with high probability, that is, we will show the following coupling result.

\begin{thm}\label{coupling3}
There exist $0 < c, \beta < \infty$ such that for all integers $l, m , n$ with $0 < 2l< m \le n$ and all $\overline{\gamma}, \overline{\gamma}' \in \Gamma (l)$, we can define $\overline{\lambda }_{l, n},  \overline{\lambda }'_{l, n}$ on the same probability space $(\Omega, {\cal F}, P)$ such that $\overline{\lambda }_{l, n}$ has the distribution $\mu _{l,n} (\overline{\gamma})$, $\overline{\lambda }'_{l, n}$ has the distribution $\mu _{l,n} (\overline{\gamma}')$, and that
\begin{equation}\label{owarida}
P \big( \overline{\lambda }_{l, n} =_{\frac{n}{m}} \overline{\lambda }'_{l, n} \big) \ge 1 - c \big( \frac{m}{l} \big)^{- \beta}.
\end{equation}
\end{thm}

\begin{proof}
Recall that $K$ is the constant as in Proposition \ref{coupling2}. We write $J$ for the largest integer such that $K^{J} l \le \frac{m}{2}$. We will first construct a coupling of $\mu _{l, K^{J} l, n} (\overline{\gamma})$ and $\mu _{l, K^{J} l, n} (\overline{\gamma}')$. To achieve it, we first define a coupling of $\overline{\gamma} + (S^{1} [0, \tau^{1}_{Kl} ], S^{2} [0, \tau^{2}_{Kl} ] )$ and $\overline{\gamma}' + (S^{1} [0, \tau^{1}_{Kl} ], S^{2} [0, \tau^{2}_{Kl} ] )$, and then we define a coupling of $\overline{\gamma} + (S^{1} [0, \tau^{1}_{K^{2} l} ], S^{2} [0, \tau^{2}_{K^{2} l} ] )$ and $\overline{\gamma}' + (S^{1} [0, \tau^{1}_{K^{2} l} ], S^{2} [0, \tau^{2}_{K^{2} l} ] )$, etc. For each $j \le J$, we write $\sigma (j) $ for the largest integer $k$ such that in the coupling at $j$-th stage, 
\begin{equation*}
\overline{\gamma} + (S^{1} [0, \tau^{1}_{K^{j} l} ], S^{2} [0, \tau^{2}_{K^{j} l} ] ) =_{k} \overline{\gamma}' + (S^{1} [0, \tau^{1}_{K^{j} l} ], S^{2} [0, \tau^{2}_{K^{j} l} ] )
\end{equation*}
and $\Big( \overline{\gamma} + (S^{1} [0, \tau^{1}_{K^{j} l} ], S^{2} [0, \tau^{2}_{K^{j} l} ] ) \Big) \in \text{Good}_{K^{j} l , k}$. 

 Given $\overline{\gamma} + (S^{1} [0, \tau^{1}_{K^{j} l} ], S^{2} [0, \tau^{2}_{K^{j} l} ] )$ and $\overline{\gamma}' + (S^{1} [0, \tau^{1}_{K^{j} l} ], S^{2} [0, \tau^{2}_{K^{j} l} ] )$ after $j$-th stage, we proceed the next step as follows.
\begin{itemize}
\item We construct a coupling of $\overline{\gamma} + (S^{1} [0, \tau^{1}_{K^{j+1} l} ], S^{2} [0, \tau^{2}_{K^{j+1} l} ] )$ and \\ $\overline{\gamma}' + (S^{1} [0, \tau^{1}_{K^{j+1} l} ], S^{2} [0, \tau^{2}_{K^{j+1} l} ] )$ using Proposition \ref{coupling1} if $\sigma (j) \ge \frac{K}{4}$.

\item We construct a coupling of $\overline{\gamma} + (S^{1} [0, \tau^{1}_{K^{j+1} l} ], S^{2} [0, \tau^{2}_{K^{j+1} l} ] )$ and \\ $\overline{\gamma}' + (S^{1} [0, \tau^{1}_{K^{j+1} l} ], S^{2} [0, \tau^{2}_{K^{j+1} l} ] )$ using Proposition \ref{coupling2} if $\sigma (j) < \frac{K}{4}$.
\end{itemize} 
Given $\overline{\gamma} + (S^{1} [0, \tau^{1}_{K^{j} l} ], S^{2} [0, \tau^{2}_{K^{j} l} ] )$ and $\overline{\gamma}' + (S^{1} [0, \tau^{1}_{K^{j} l} ], S^{2} [0, \tau^{2}_{K^{j} l} ] )$ with $\sigma (j) = k \ge \frac{K}{4}$. Then Proposition \ref{coupling1} shows that the conditional probability of $\sigma (j+1)$ being equal to  $k K$ is bounded below by $1-C_{0} \frac{1}{\sqrt{k}}$. On the other hand, given a configuration after $j$-th stage, the conditional probability that $\sigma (j+1) \ge \frac{K}{4}$ is bounded below by $b$ by Proposition \ref{coupling2}. Therefore, by comparison with a one-dimensional Markov chain as in the proof of Theorem 4.1 \cite{Greg}, we see that there exist $0 < c, \beta < \infty$ such that
\begin{equation*}
P \big( \sigma (J) \le K^{\frac{J}{2}} \big) \le c (\frac{m}{l})^{-\beta}.
\end{equation*}
Therefore, we can construct a coupling of $\mu _{l, K^{J} l, n} (\overline{\gamma})$ and $\mu _{l, K^{J} l, n} (\overline{\gamma}')$ such that, with probability at least $1- c (\frac{m}{l})^{-\beta}$, 
\begin{equation*}
\Big( \overline{\gamma} + (S^{1} [0, \tau^{1}_{K^{J} l} ], S^{2} [0, \tau^{2}_{K^{J} l} ] ) \Big) =_{K^{\frac{J}{2}}} \Big( \overline{\gamma}' + (S^{1} [0, \tau^{1}_{K^{J} l} ], S^{2} [0, \tau^{2}_{K^{J} l} ] ) \Big),
\end{equation*}
and $\Big( \overline{\gamma} + (S^{1} [0, \tau^{1}_{K^{J} l} ], S^{2} [0, \tau^{2}_{K^{J} l} ] ) \Big) \in \text{Good}_{K^{J} l , K^{\frac{J}{2}}}$.

Once we have constructed the coupling as above, by using Proposition \ref{coupling1}, we can couple $\mu _{l,n} (\overline{\gamma})$ and $\mu _{l,n} (\overline{\gamma}')$ such that, with probability $\ge 1 - c \big( \frac{m}{l} \big)^{- \beta}$, \eqref{owarida} holds. Thus we finish the proof of the theorem.
\end{proof}

\subsection{Local dependence of global cut points and mixing}
In this subsection, we will show that the shift $\overline{\theta}$ is mixing in Theorem \ref{mixing}. As we discussed at the beginning of Section 3, we need to control the independence between two events $\{ \overline{S} [0 , \overline{T}_{1}] = \lambda \}$ and $\{ \overline{S} [\overline{T}_{n} , \overline{T}_{n+1}] - \overline{S} (\overline{T}_{n}) = \gamma \}$ with given two paths $\lambda$ and $\gamma$. To achieve it, we want to replace the global cut times $\overline{T}_{1}$ and $\overline{T}_{n}$ in the events into ``local cut times". By definition, the event ``$k$ is a global cut time for $\overline{S}^{1}$" depends on both $\overline{S}^{1}[0, \infty)$ and $\overline{S}^{2}[0, \infty)$. However, using the transience of $\overline{S}^{i}$, it turns out that if $\overline{S}^{1} [a_{k}, k] \cap \overline{S}^{1} [k+1, b_{k}]= \emptyset$  for suitable times $0 < a_{k} < b_{k} < \infty$ depending on $k$, then with high probability $k$ becomes in fact a global cut time. Such ``local dependence" of global cut points were also used in \cite{S} to give a lower bound of the number of global cut points using the second moment method and Markovian-type ``iteration arguments" (see Proposition 3.6 of \cite{S} for the details). 

\begin{thm}\label{mixing}
The translation shift $\overline{\theta}$ is mixing.
\end{thm}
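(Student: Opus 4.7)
The plan is to establish \eqref{mixing1} for the cylinder events $A = \{\overline{S}[0,\overline{T}_{1}] = \lambda\}$ and $B = \{\overline{S}[0,\overline{T}_{1}] = \gamma\}$ of \eqref{target}; by the $\pi$-$\lambda$ theorem this suffices. Throughout, I would combine the two ingredients stated at the start of this section: the ``forgetting'' estimate Theorem \ref{coupling3}, and the comparison between local and global cut points far from the origin.

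First, for $\epsilon > 0$, I would choose $R = R(\lambda, \epsilon)$ large so that $A$ differs, up to $P^{\sharp}$-probability $\epsilon$, from a finite-scale approximation $\widetilde{A}_{R}$ depending only on the restriction of $\overline{S}$ to ${\cal B}(R)$ together with a separation event of the type \textsf{Sep}$(R)$ in \eqref{sep}. The non-local content of $A$---namely the requirement that $m = |\lambda|$ be the first \emph{global} cut time---is handled via idea (2): conditional on a well-separated configuration at scale $R$, a local cut time at scale $O(R)$ is global with probability $1 - o_{R}(1)$, which should follow from the escape-probability bound Corollary \ref{cor1} applied forwards and backwards in time combined with Theorem \ref{coupling3}. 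Next, for any such $R$, I would use that $|\overline{S}(\overline{T}_{n})| \to \infty$ $P^{\sharp}$-a.s.\ as $n \to \infty$ (a consequence of the bounds on the number of global cut points in ${\cal B}(r)$ recalled in the introduction) to pick $n$ large enough that $|\overline{S}(\overline{T}_{n})| \ge R^{*}$ with probability at least $1-\epsilon$, for some $R^{*} \gg R$, and then approximate $(\theta^{\sharp})^{-n}B$ by a local event $\widetilde{B}_{n,R^{*}}$ about the piece located outside ${\cal B}(R^{*})$.

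With the two truncated events living in disjoint spatial regions, Theorem \ref{coupling3} yields a coupling of $\mu_{R, R^{*}}(\overline{\gamma})$ and $\mu_{R, R^{*}}(\overline{\gamma}')$ for any two initial configurations $\overline{\gamma}, \overline{\gamma}' \in \Gamma(R)$, with total-variation error at most $(R^{*}/R)^{-\beta}$ on the portion of the path outside ${\cal B}(R^{*})$. Averaging this coupling against the marginal law of $\overline{S}$ inside ${\cal B}(R)$ produces the factorization
\[
P^{\sharp}\bigl(\widetilde{A}_{R} \cap \widetilde{B}_{n, R^{*}}\bigr) = P^{\sharp}(\widetilde{A}_{R})\, P^{\sharp}(\widetilde{B}_{n, R^{*}}) + o(1),
\]
and sending $n \to \infty$, $R^{*} \to \infty$, $R \to \infty$ and $\epsilon \to 0$ in that order yields \eqref{mixing1}. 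The main obstacle, in my view, is the first truncation: the defining condition ``$m = \overline{T}_{1}$'' involves the entire past and future of $\overline{S}$, and converting it into a finite-scale event requires quantitative local-versus-global cut point estimates that are uniform in the coupling provided by Theorem \ref{coupling3}---essentially, the programme of \cite{S} must be carried out inside the conditioned measure $P^{\sharp}$ and then matched with the forgetting estimate in a compatible way.
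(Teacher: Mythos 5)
Your proposal follows essentially the same route as the paper: approximate $A$ by a local event (cut times up to $\xi_{L}$), approximate the $n$-fold shifted event by an event depending only on the path outside a large ball (using that $\overline{S}(\overline{T}_{n})$ is far from the origin for $n$ large), then decouple the two via the coupling Theorem \ref{coupling3}, and finish with stationarity. The obstacle you flag --- converting ``first \emph{global} cut time'' into a finite-scale event --- is handled in the paper simply by the transience estimate $P^{\sharp}\big(\overline{S}[\xi_{L},\infty)\cap\lambda\neq\emptyset\big)\le cL^{-1}$ (Lemma 3.8 of \cite{S}), which bounds the discrepancy between the local and global cut-time versions of $A$ directly under $P^{\sharp}$, so no separate programme inside the conditioned measure is required.
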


% By definition, the event ``$k$ is a global cut time for $\overline{S}^{1}$" depends not only on whole path $\overline{S}^{1}[0, \infty)$ but also on the path $\overline{S}^{2}[0, \infty)$. However, thanks to the transience of $\overline{S}^{i}$ in both two and three dimensions, we will see that if $\overline{S}^{1} [a_{k}, k] \cap \overline{S}^{1} [k+1, b_{k}]= \emptyset$  for suitable times $0 < a_{k} < b_{k} < \infty$ depending on $k$, then with high probability $k$ becomes in fact a global cut time. Such ``local dependence" of global cut points are also treated in \cite{S} to carry out estimates on the lower bound of the number of global cut points by using the second moment method and a sort of iteration argument.

\begin{proof}
Recall that $\text{Bead} $ was defined as in \eqref{possible}. In order to prove the theorem, by the $\pi$-$\lambda$ Theorem (see \cite{Dur} Theorem A.1.4), it suffices to show that
\begin{equation}\label{mixing1-1}
\lim_{n \to \infty} \overline{P} \big( A \cap  \overline{\theta}^{-n} B \big) = \overline{P} \big(  A \big) \overline{P} \big(  B \big),
\end{equation}
where we write
\begin{equation}\label{target1}
A= \{ \overline{S} [0 , \overline{T}_{1}] = \lambda \} \ \ \ B= \{ \overline{S} [0 , \overline{T}_{1}] = \gamma \},
\end{equation} 
with $\lambda, \gamma \in \text{Bead}$. In order to prove \eqref{mixing1} for those events, we want to show that two events
\begin{equation}\label{indep-i-1}
\{ \overline{S} [0 , \overline{T}_{1}] = \lambda \} \text{ and } \{ \overline{S} [\overline{T}_{n} , \overline{T}_{n+1}] - \overline{S} (\overline{T}_{n}) = \gamma \}
\end{equation}
are asymptotically independent as $n \to \infty$. With this in mind, take $\lambda, \gamma \in \text{Bead}$ and let 
\begin{equation*}
A= \{ \overline{S}[0 , \overline{T}_{1} ] = \lambda \}, \ B^{n} = \{ \overline{S}[\overline{T}_{n} , \overline{T}_{n+1} ] - \overline{S}( \overline{T}_{n} )= \gamma \}.
\end{equation*}
We will show that 
\begin{equation*}
| \overline{P} \big( A \cap B^{n} \big) - \overline{P} \big( A ) \overline{P} \big( B ) | \to 0, 
\end{equation*}
as $n \to \infty$.

For each $L$, by the transience of $\overline{S}$, 
\begin{equation}\label{transi1}
\overline{P} \big(    \overline{S} [ \overline{\tau}^{+}_{L} , \infty ) \cap \lambda \neq \emptyset \big) \le c L^{-\frac{1}{2}},
\end{equation}
for some constant $c$ depending on $\lambda$. (For this inequality, we used the following fact proved in Lemma 3.8 of \cite{S}: for each $m < n$, 
\begin{equation}\label{mukashino}
\overline{P} \big(    \overline{S} [ \overline{\tau}^{+}_{n} , \infty ) \cap B ( m) \neq \emptyset \big) \le C \big( \frac{n}{m} \big)^{-\frac{1}{2}},
\end{equation}
for $d=2, 3$.)

We call $k$ a cut time up to $\overline{\tau}^{+}_{L}$ if 
\begin{equation*}
\overline{S} [0, k] \cap \overline{S} [k+1, \overline{\tau}^{+}_{L} ] = \emptyset.
\end{equation*}
Let $\overline{T}^{L}_{1}$ be the first cut time up to $\overline{\tau}^{+}_{L}$ and 
\begin{equation*}
A^{L} = \{ \overline{S}[0 , \overline{T}^{L}_{1} ] = \lambda \}.
\end{equation*}
Using \eqref{transi1}, we see that 
\begin{equation*}
| \overline{P} \big( A \cap B^{n} \big) -  \overline{P} \big( A^{L} \cap B^{n} \big) | \le  c L^{-\frac{1}{2}}.
\end{equation*}

Note that
\begin{equation}\label{chotonoyatsu}
\overline{P} \big( | \overline{S} (\overline{T}_{n}) | > n^{1/4} \big) \ge 1- C n^{-\frac{1}{24}}.
\end{equation}
To see this, it follows that $\overline{T}_{n} \ge n$ and that
\begin{equation*}
\overline{P} \big( \max_{0 \le j \le n} | \overline{S} (j) | < n^{1/3} \big) \le C e^{-c n^{\frac{1}{6}}}.
\end{equation*}
(See Proposition 2.4.5 of \cite{Law b2} for this inequality.) So we can assume that $\overline{\tau}^{+}_{n^{1/3}} \le n \le \overline{T}_{n}$ with probability at least $1- C e^{-c n^{\frac{1}{6}}}$. Now suppose that $\overline{\tau}^{+}_{n^{1/3}} \le n \le \overline{T}_{n}$ and $| \overline{S} (\overline{T}_{n}) | \le n^{1/4}$. This implies that $\overline{S} [\overline{\tau}^{+}_{n^{1/3}}, \infty ) \cap B ( n^{1/4} ) \neq \emptyset$. However, by \eqref{mukashino}, we have
\begin{equation*}
\overline{P} \big( \overline{S} [\overline{\tau}^{+}_{n^{1/3}}, \infty ) \cap B ( n^{1/4} ) \neq \emptyset \big) \le C n^{-\frac{1}{24}},
\end{equation*}
which gives \eqref{chotonoyatsu}. So we can assume that $\overline{\tau}^{+}_{n^{1/4}} \le \overline{T}_{n}$ with probability $\ge 1- C n^{-\frac{1}{24}}$.

Combining \eqref{chotonoyatsu} with \eqref{mukashino}, we see that for all $n \ge L^{16}$
\begin{equation}\label{transi2}
\overline{P} \big( \overline{S} [ \overline{T}_{n} , \infty) \cap B ( L^{2} ) \neq \emptyset \big) \le C L^{-\frac{2}{3}}.
\end{equation}
However, if we assume that $\overline{S} [ \overline{T}_{n} , \infty) \cap B ( L^{2} ) = \emptyset$, whether $B^{n}$ holds or not does not depend on $\overline{S}[0, \overline{\tau}^{+}_{L^2}]$. So if we define an event
\begin{equation*}
F^{n} := B^{n} \cap \{ \overline{S} [ \overline{T}_{n} , \infty) \cap B ( L^{2} ) = \emptyset \},
\end{equation*}
then $F^{n}$ is measurable for $\overline{S}[ \overline{\tau}^{+}_{L^2} , \infty )$, and we have
\begin{equation*}
| \overline{P} \big( A \cap B^{n} \big) -  \overline{P} \big( A^{L} \cap F^{n} \big) | \le C L^{-\frac{1}{2}}.
\end{equation*}
Let 
\begin{equation*}
\Gamma'(L) = \{ \overline{\gamma} \in \Gamma(L) \ | \ \overline{P} \big( A^{L}, \ (\overline{S}^{1}[0, \overline{\tau}^{1}_{L} ], \overline{S}^{2}[0, \overline{\tau}^{2}_{L} ] )= \overline{\gamma} \big) > 0 \}
\end{equation*}
be the set of pairs of paths $\overline{\gamma} \in \Gamma(L)$ such that with positive probability, $A^{L}$ and $(\overline{S}^{1}[0, \overline{\tau}^{1}_{L} ], \overline{S}^{2}[0, \overline{\tau}^{2}_{L} ] )= \overline{\gamma}$ occur. Then by conditioning $(\overline{S}^{1}[0, \overline{\tau}^{1}_{L} ], \overline{S}^{2}[0, \overline{\tau}^{2}_{L} ] )$, we see that $\overline{P} \big( A^{L} \cap F^{n} \big)$ is equal to
\begin{equation}\label{pukosuke}
\sum_{ \overline{\gamma} \in \Gamma'(L) } \overline{P} \big( (\overline{S}^{1}[0, \overline{\tau}^{1}_{L} ], \overline{S}^{2}[0, \overline{\tau}^{2}_{L} ] )= \overline{\gamma} \big) \overline{P} \big( F^{n} \ | \ (\overline{S}^{1}[0, \overline{\tau}^{1}_{L} ], \overline{S}^{2}[0, \overline{\tau}^{2}_{L} ] )= \overline{\gamma} \big).
\end{equation}
Applying Theorem \ref{coupling3} to the conditional probability in the right hand side of \eqref{pukosuke}, we see that for all $\overline{\gamma}$,
\begin{equation*}
| \overline{P} \big( F^{n} \ | \ (\overline{S}^{1}[0, \overline{\tau}^{1}_{L} ], \overline{S}^{2}[0, \overline{\tau}^{2}_{L} ] )= \overline{\gamma} \big) - \overline{P} \big( F^{n} ) | \le c L^{-\beta},
\end{equation*}
for some absolute constants $0 < c, \beta < \infty$. Therefore, taking sum for $\overline{\gamma} \in \Gamma' (L)$, we have
\begin{align*}
& \Big| \sum_{ \overline{\gamma} \in \Gamma'(L) } \overline{P} \big( (\overline{S}^{1}[0, \overline{\tau}^{1}_{L} ], \overline{S}^{2}[0, \overline{\tau}^{2}_{L} ] )= \overline{\gamma} \big) \overline{P} \big( F^{n} \ | \ (\overline{S}^{1}[0, \overline{\tau}^{1}_{L} ], \overline{S}^{2}[0, \overline{\tau}^{2}_{L} ] )= \overline{\gamma} \big) \\
&- \overline{P} \big( A^{L} \big) \overline{P} \big( F^{n} ) \Big| \le c L^{-\beta}.
\end{align*}
Therefore, by using \eqref{transi1} and \eqref{transi2} again, for all $n > L^{16}$
\begin{equation*}
| \overline{P} \big( A \cap B^{n} \big) - \overline{P} \big( A \big) \overline{P} \big( B^{n} \big) | \le c L^{-\beta},
\end{equation*}
which finishes the proof since it follows from Theorem \ref{stationary-o} that $\overline{P} \big( B^{n} \big) = \overline{P} \big( B \big)$.
\end{proof}

\section{Application}
Recall that we call a random walk path between consecutive cut points a piece. When we study a random walk path using the pieces, some issues come from the fact that each piece has no common distribution and they are strongly correlated. This is the one of main reason that we are interested in non-intersecting random walks. Theorems \ref{stationary-o} and \ref{mixing} allow to use results of ergodic theory when we study the non-intersecting random walk. In this section we will consider some application of Theorem \ref{stationary-o} and \ref{mixing} along with ergodic theory to estimate quantities generated by the path of $\overline{S}$ in Theorem \ref{critical exp}. The quantities that we are interested in are the length of the loop-erasure, graph distance and effective resistance of $\overline{S} [ 0, \overline{T}_{n} ]$. We will give the definitions of these quantities and briefly explain backgrounds of them in Section 4.1. Then we will apply Aaronson's results derived in \cite{A} to analyze the quantities in Section 4.2.

% As we discussed in the introduction, when we study structure of random walk path, an effective way is to think of the path as connection of beads by dividing the random walk path into beads. However, the difficulty comes from that each bead has no common distribution. This is the one of main reason that we use non-intersecting random walks. Thanks to Theorem \ref{stationary} and \ref{mixing}, we see that each bead in non-intersection random walk has better property than the usual one. By using these theorems, we get estimates on ``non-Markovian" quantities generated by random walk paths. Let us begin with some notations for such quantities.

\subsection{LERW, graph distance and effective resistance}
In this subsection, we will introduce three quantities generated by random walk paths. Those quantities are loop-erased random walk (LERW), shortest path graph distance and effective resistance. We will consider the growth rate of these quantities along with ergodic theory in Section 4.2.

The first quantity that we are interested in is loop-erased random walk (LERW). Loop-erased random walk is a model for a random simple path, which is created by running a simple random walk and, whenever the random walk hits its path, removing the resulting loop and continuing. We begin with the precise definition of loop-erasing procedure of a given path in $\mathbb{Z}^{d}$.
For a deterministic path $\lambda$ with length $m$, we denote the loop-erasure of $\lambda$ by $LE (\lambda)$. More precisely, let $\lambda = [\lambda_{0}, \lambda_{1}, \cdots , \lambda_{m}]$ be a path in $\mathbb{Z}^{d}$. We let 
\begin{equation*}
s_{0} = \sup \{ j : \lambda_{j} = \lambda_{0} \}.
\end{equation*}
and, for $i > 0$,
\begin{equation*}
s_{i} = \sup \{ j : \lambda_{j} = \lambda_{s_{i-1} + 1} \}.
\end{equation*}
Let 
\begin{equation*}
n= \inf \{ i : s_{i} = m \}.
\end{equation*}
Then
\begin{equation}\label{def-lew}
LE (\lambda ) = [\lambda_{s_{0}}, \lambda_{s_{1}}, \cdots , \lambda_{s_{n}} ].
\end{equation}

We are interested in a loop-erasure of a random walk path and we call it loop-erased random walk (LERW). Let us give brief backgrounds of LERW here. Since Lawler \cite{lew} introduced LERW, this process has played an important role both in the statistical physics and mathematics literature. It is closely related to the uniform spanning tree (UST). Let $u$ and $v$ be two vertices on UST. Then UST contains precisely one simple path between $u$ and $v$. Pemantle \cite{Pem} proved that the distribution of this simple path is identical to the distribution of the LERW from $u$ to $v$. Furthermore, the UST  can be generated using LERWs by Wilson's algorithm \cite{Wil}. Concerning a scaling limit of LERW on $\mathbb{Z}^{d}$, the followings are known. For $d \ge 4$, Lawler \cite{Law b, Law-3} showed that the scaling limit of the LERW is Brownian motion (note that Brownian motion is a simple curve almost surely for $d \ge 4$). Lawler, Schramm and Werner \cite{LSW2} showed that LERW has a conformally invariant scaling limit for $d=2$, SLE. Indeed, SLE was introduced by Schramm \cite{Sch} as a candidate for the scaling limit of LERW. For $d=3$, Kozma \cite{Koz} showed that the scaling limit of LERW exists and is invariant to dilations and rotations.

Let $M_{n}$ be the number of steps of $\text{LE} ( S[0, \tau_{n} ] )$, the loop-erasure of $S[0, \tau_{n} ]$. In \cite{Ken}, using domino tilings, it was proved that for $d=2$,
\begin{equation}
\lim_{n \to \infty} \frac{ \log E(M_{n}) }{ \log n} = \frac{5}{4}.
\end{equation}
Recently, Lawler \cite{Lawler} showed that 
\begin{equation}
 E(M_{n}) \asymp n^{ \frac{5}{4} },
\end{equation}
(see \eqref{asymp} for the definition of $\asymp$). The quantity $\frac{5}{4}$ is called the growth exponent for planar loop-erased random walk. 

In 3 dimensions, physicists conjecture that there exists $\beta$ such that 
\begin{equation}\label{big problem}
\lim_{n \to \infty} \frac{ \log E(M_{n}) }{ \log n} = \beta,
\end{equation}
and did numerical experiments to show that $\beta = 1.62 \pm 0. 01$ (\cite{GB}, \cite{Wil2}). However, rigorously the existence of $\beta$ is not proved. The best rigorous bounds are (\cite{Law5}) 
\begin{equation*}
1 < \beta \le \frac{5}{3},
\end{equation*}
if $\beta$ exists. We will prove the existence of the exponent $\beta$ in Section 7 and Section 8 (see Theorem \ref{main result-kaetta}, Theorem \ref{leipzig} and Proposition \ref{zhan}).

While LERW is not a Markov
chain, it satisfies the following ``domain Markov property'': for any Markov chain
$X$, if we condition that the first $k$ steps of $LE (X)$ is equal to a given path $\omega$, the conditional distribution of the rest part of $LE(X)$ is same as the loop-erasure of $X$ starting from the endpoint of $\omega$ conditioned to avoid $\omega$. More precisely, we have the following proposition.

\begin{prop}\label{DMP} (Domain Markov Property \cite{Law b}) 
Let $X$ be a Markov chain in $\mathbb{Z}^{d}$, $A \subset \mathbb{Z}^{d}$ and $\omega = [\omega_{0}, \omega_{1}, \cdots , \omega_{m}]$ be a path in $A$. We let $\tau^{X}_{A} := \inf \{ k \ | \ X (k) \notin A \}$ be the first time that $X$ exits from $A$. Define a new Markov chain
$Y$ to be $X$ started at $\omega_{m}$ conditioned that $X [1, \tau^{X}_{A}] \cap \omega = \emptyset$. We write $\tau^{Y}_{A}$ for the first time that $Y$ exits from $A$.
Suppose that
$\omega' = [\omega'_{0}, \cdots ,  \omega'_{m'} ]$ is a path satisfying that $\omega'_{0} = \omega_{m}$ and $\omega + \omega' := [ \omega_{0}, \omega_{1}, \cdots , \omega_{m}, \omega'_{1}, \cdots ,  \omega'_{m'} ]$ is a path from $\omega_{0}$ to $\partial A$. Then,
\begin{equation*}
P \Big( LE \big( X [0, \tau^{X}_{A}] \big) = \omega + \omega' \ \big| \ LE \big( X [0, \tau^{X}_{A}] \big) [0, m] = \omega \Big) = P \Big( LE \big( Y [0, \tau^{Y}_{A} ] \big) = \omega' \Big).
\end{equation*}
\end{prop}

The second quantity that we are interested in is the graph distance.
For a graph $G$, let $d_{G}( \cdot , \cdot )$ be the shortest path graph distance on $G$. 

Finally, we will introduce the effective resistance on a graph $G$.
To define it, we first introduce a quadratic form $\cal E$ by
\begin{equation*}
{\cal E} (f,g) = \frac{1}{2} \textstyle\sum\limits_{\begin{subarray}{c} x,y \in V, \\   \{ x,y \} \in E \end{subarray}} ( f(x) -f(y))(g(x)-g(y)).
\end{equation*}
If we regard $G$ as an electrical network with a unit resistor on each edge in $E$, then ${\cal E} (f,f)$ is the energy dissipation when the vertices of $V$ are at a potential $f$. Set 
\begin{equation*}
H^{2}= \{ f \in \mathbb{R} ^{V}: {\cal E} (f,f) < \infty \}.
\end{equation*}
Let $A,B$ be disjoint subsets of $V$. The effective resistance between $A$ and $B$ is defined by 
\begin{equation}\label{resistanceteigi}
R_{G}(A,B) ^{-1} = \inf \{ {\cal E}(f,f) : f \in H^{2}, f|_{A} =1, f|_{B}=0 \}.
\end{equation}
We write $R_{G} (x,y)= R_{G} ( \{ x \}, \{ y \})$ for the effective resistance between two points $x$ and $y$.

\subsection{Critical exponents}
Recall that $\overline{T}_{n}$ stands for the $n$-th global cut time for $\overline{S}$. We are interested in the growth rate of the following three quantities;
\begin{itemize}
\item $\text{len} \big( LE (\overline{S} [0, \overline{T}_{n} ] ) \big)$, length of the loop-erasure of $\overline{S} [0, \overline{T}_{n} ]$,

\item $d_{\overline{S} [0, \overline{T}_{n} ]} (0, \overline{S} ( \overline{T}_{n} ) )$, graph distance between the origin and $\overline{S} ( \overline{T}_{n} )$ on $\overline{S} [0, \overline{T}_{n} ]$,

\item $R_{\overline{S} [0, \overline{T}_{n} ]} (0, \overline{S} ( \overline{T}_{n} ) )$, effective resistance between the origin and $\overline{S} ( \overline{T}_{n} )$ on $\overline{S} [0, \overline{T}_{n} ]$,
\end{itemize}
where for the second and third quantities, we think of $\overline{S} [0, \overline{T}_{n} ]$ as a (random) graph whose vertex set is $\{ \overline{S} (k) \ | \ k \in [0, \overline{T}_{n} ] \}$ and edge set is $\{ [ \overline{S} (k), \overline{S} (k+1) ] \ | \ k \in [0, \overline{T}_{n} -1 ] \}$.

If we let $f = \text{len} \big( LE (\overline{S} [0, \overline{T}_{1} ] ) \big)$ be the length of the loop-erasure of $\overline{S} [0, \overline{T}_{1} ]$, then we see that 
\begin{equation}\label{ergoergo}
\text{len} \big( LE (\overline{S} [0, \overline{T}_{n} ] ) \big) = \sum_{k=0}^{n-1} f \circ \overline{\theta}^{k}.
\end{equation}
The graph distance and effective resistance also can be written in terms of the sum along with the shift $\overline{\theta}$ similarly. Recall that by Theorem \ref{stationary-o} and Theorem \ref{mixing}, the law of $\overline{S}$ is invariant under the shift $\overline{\theta}$ and $\overline{\theta}$ is mixing. Therefore, if $f$ had a finite first moment, we could apply Birkhoff's theorem to show that the right hand side of \eqref{ergoergo} grows like $c n$ for some constant $c$. However, this is not the case for three quantities above. In order to study the growth rate of the sum in \eqref{ergoergo} for the case that $f$ does not have a finite first moment, we will use results from \cite{A}. In the next theorem, we will show that there exists a deterministic constant $\alpha$ such that the sum in \eqref{ergoergo} divided by $n^{a}$ converges to 0 almost surely when $a > \alpha$, and it diverges when $a < \alpha$. Same results hold for the graph distance and effective resistance.

\begin{thm}\label{critical exp}
Let $d=2,3$. There exist $\alpha_{\ell}(d), \alpha_{g}(d)$ and $\alpha_{r}(d)$ such that the following holds;
\begin{enumerate}
\item[$(1)$] $1 \le \alpha_{r}(d) \le \alpha_{g}(d) \le \alpha_{\ell}(d) < \infty$.
\item[$(2)$] for every $\alpha_{1} > \alpha_{\ell}(d)$, $\alpha_{2} > \alpha_{g}(d)$ and $\alpha_{3} > \alpha_{r}(d)$, we have 
\begin{align}
&\lim_{n \to \infty} \frac{ \text{len} \big( LE (\overline{S} [0, \overline{T}_{n} ] ) \big) } { n^{\alpha_{1}} } = 0, \ \ \overline{P}\text{-a.s.,} \label{loop-loop1} \\
&\lim_{n \to \infty} \frac{ d_{\overline{S} [0, \overline{T}_{n} ]} (0, \overline{S} ( \overline{T}_{n} ) )  }{n^{\alpha_{2}} } = 0, \ \ \overline{P}\text{-a.s.,} \\
&\lim_{n \to \infty} \frac{ R_{\overline{S} [0, \overline{T}_{n} ]} (0, \overline{S} ( \overline{T}_{n} ) ) }{n^{\alpha_{3}} } = 0, \ \ \overline{P}\text{-a.s.} 
\end{align}
\item[$(3)$] for every $\alpha_{1} < \alpha_{\ell}(d)$, $\alpha_{2} < \alpha_{g}(d)$ and $\alpha_{3} < \alpha_{r}(d)$, we have 
\begin{align}
&\limsup_{n \to \infty} \frac{ \text{len} \big( LE (\overline{S} [0, \overline{T}_{n} ] ) \big) } { n^{\alpha_{1}} } = \infty, \ \ \overline{P}\text{-a.s.,} \label{loop-loop2} \\
&\limsup_{n \to \infty} \frac{ d_{\overline{S} [0, \overline{T}_{n} ]} (0, \overline{S} ( \overline{T}_{n} ) )  }{n^{\alpha_{2}} } = \infty, \ \ \overline{P}\text{-a.s.,} \\
&\limsup_{n \to \infty} \frac{ R_{\overline{S} [0, \overline{T}_{n} ]} (0, \overline{S} ( \overline{T}_{n} ) )  }{n^{\alpha_{3}} } = \infty, \ \ \overline{P}\text{-a.s.} 
\end{align}
\end{enumerate}
\end{thm}

\begin{proof}
By Rayleigh's monotonicity law (see Section 1.4 of \cite{DS}), we see that 
\begin{equation*}
d_{\overline{S} [0, \overline{T}_{n} ]} (0, \overline{S} ( \overline{T}_{n} ) ) \ge R_{\overline{S} [0, \overline{T}_{n} ]} (0, \overline{S} ( \overline{T}_{n} ) ) \ge n.
\end{equation*}
 Since the loop-erasure of $\overline{S} [0, \overline{T}_{n} ]$ is a path from the origin to $\overline{S} ( \overline{T}_{n} )$ contained in $\overline{S} [0, \overline{T}_{n} ]$, it is clear that the length of the loop-erasure of $\overline{S} [0, \overline{T}_{n} ]$ is bounded below by $d_{\overline{S} [0, \overline{T}_{n} ]} (0, \overline{S} ( \overline{T}_{n} ) )$. Therefore, $1 \le \alpha_{r}(d) \le \alpha_{g}(d) \le \alpha_{\ell}(d)$ if these exponents exist. On the other hand, by Theorem 1.1 in \cite{S}, it follows that
\begin{equation*}
\lim_{n \to \infty} \frac {\log  \overline{T}_{n}} {\log n} = \frac{2}{2- \xi_{d}},  \ \overline{P}\text{-a.s.,}
\end{equation*}
where $\xi_{d}$ is the constant as in \eqref{intersection-exp}. Since $\text{len} \big( LE (\overline{S} [0, \overline{T}_{n} ] ) \big) \le \overline{T}_{n}$, we see that $\alpha_{\ell}(d) < \infty$ if it exists.

We will prove the existence of $\alpha_{\ell}(d)$ such that the claims \eqref{loop-loop1} and \eqref{loop-loop2} hold. The existence of $\alpha_{g}(d)$ and $\alpha_{r}(d)$ can be proved similarly. By Theorems \ref{stationary-o} and \ref{mixing}, the law of $\overline{S}$ is invariant under the shift $\overline{\theta}$ and $\overline{\theta}$ is mixing. Therefore, by using Theorem A' in \cite{A}, we see that for all $\alpha > 1$ either
\begin{equation*}
\lim_{n \to \infty} \frac{ \text{len} \big( LE (\overline{S} [0, \overline{T}_{n} ] ) \big) } { n^{\alpha} } = 0 \ \ \overline{P}\text{-a.s.,}
\end{equation*}
or
\begin{equation}\label{aaro-aaro}
\limsup_{n \to \infty} \frac{ \text{len} \big( LE (\overline{S} [0, \overline{T}_{n} ] ) \big) } { n^{\alpha} } = \infty \ \ \overline{P}\text{-a.s.}
\end{equation}
With this in mind, we define
\begin{equation}\label{exponent}
\alpha_{\ell}(d) := \inf \Big\{ \alpha > 1 \ \Big| \ \overline{P} \Big( \lim_{n \to \infty} \frac{ \text{len} \big( LE (\overline{S} [0, \overline{T}_{n} ] ) \big) } { n^{\alpha} } = 0 \Big) =1 \Big\}.
\end{equation}
Then this definition immediately gives \eqref{loop-loop1}. In order to see \eqref{loop-loop2}, take $\alpha_{1} < \alpha_{\ell}(d)$. By \eqref{aaro-aaro}, with probability one we have
\begin{equation*}
\limsup_{n \to \infty} \frac{ \text{len} \big( LE (\overline{S} [0, \overline{T}_{n} ] ) \big) } { n^{\alpha_{1}} } = \infty.,
\end{equation*}
which gives \eqref{loop-loop2}. For the exponents $\alpha_{g}(d)$ and $\alpha_{r}(d)$, we can define them by replacing $\text{len} \big( LE (\overline{S} [0, \overline{T}_{n} ] ) \big)$ by the graph distance and effective resistance in \eqref{exponent}. So we finish the proof. 
\end{proof}

\section{LERW in two dimensions}
Theorem \ref{critical exp} shows that $\text{len} \big( LE (\overline{S} [0, \overline{T}_{n} ] ) \big)$ divided by $n^{\alpha_{\ell}(d) + \epsilon}$ converges to zero almost surely for all $\epsilon > 0$, and $\text{len} \big( LE (\overline{S} [0, \overline{T}_{n} ] ) \big)$ divided by $n^{\alpha_{\ell}(d) - \epsilon}$ diverges in the sense that the $\limsup$ of the ratio goes to infinity. It is natural to expect that $\text{len} \big( LE (\overline{S} [0, \overline{T}_{n} ] ) \big) = n^{\alpha_{\ell}(d) + o(1)}$ a.s. as $n \to \infty$. Unfortunately \eqref{loop-loop2} is not sufficient to show it. In order to prove that $\text{len} \big( LE (\overline{S} [0, \overline{T}_{n} ] ) \big) = n^{\alpha_{\ell}(d) + o(1)}$, we need to show that the limit (not $\limsup$) of $\text{len} \big( LE (\overline{S} [0, \overline{T}_{n} ] ) \big)$ divided by $n^{\alpha_{\ell}(d) - \epsilon}$ is infinity for all $\epsilon > 0$. In this section, we will prove this for $d=2$. We will also give the exact value of $\alpha_{\ell}(2)$. The goal of this section is the following theorem.

\begin{thm}\label{2-dim}
Let $d=2$. Then we have
\begin{equation}\label{2-dim exp}
\alpha_{\ell}(2) = \frac{5}{3}.
\end{equation}
Furthermore it follows that with probability one,
\begin{equation}\label{regular}
\lim_{n \to \infty} \frac{ \log  \text{len} \big( LE (\overline{S} [0, \overline{T}_{n} ] ) \big) }{ \log n}  = \frac{5}{3}.
\end{equation}
\end{thm}

We will prove this theorem in Section 5.1 and 5.2. In Section 5.1, we will show that $\alpha_{\ell}(2) \le \frac{5}{3}$ by proving that $\text{len} \big( LE (\overline{S} [0, \overline{T}_{n} ] ) \big)$ divided by $n^{\frac{5}{3} + \epsilon }$ converges to zero for all $\epsilon > 0$, see Proposition \ref{upper-2dim-loop}. In Section 5.2, we will show that $\alpha_{\ell}(2) \ge \frac{5}{3}$ by proving that $\text{len} \big( LE (\overline{S} [0, \overline{T}_{n} ] ) \big)$ divided by $n^{\frac{5}{3} - \epsilon }$ goes to infinity for all $\epsilon > 0$, see Proposition \ref{lower-2dim-loop}. Theorem \ref{2-dim} immediately follows from Proposition \ref{upper-2dim-loop} and \ref{lower-2dim-loop}.

Before going to the proof, we will explain the reason that $\alpha_{\ell}(2) = \frac{5}{3}$ intuitively. Theorem 1.1 in \cite{S} gives that $n$-th global cut time $\overline{T}_{n}$ is of order $n^{\frac{2}{2- \xi_{2}}}$ when $d=2$. In 2 dimensions, we have $\xi_{2} = \frac{5}{4}$, see (1.4). Therefore $\overline{T}_{n}$ is of order $n^{\frac{8}{3}}$. It is known that the length of the loop-erasure of $S[0, n]$ is of order $n^{\frac{5}{8}}$ in 2 dimensions (see \cite{Ken} for this). It turns out that the length of the loop-erasure of $\overline{S} [0, n]$ is also of order $n^{\frac{5}{8}}$. Thus we expect that $\text{len} \big( LE (\overline{S} [0, \overline{T}_{n} ] ) \big) \approx (n^{\frac{8}{3}})^{\frac{5}{8}} = n^{\frac{5}{3}}$.

% \subsection{Computation of $\alpha_{\ell}(2)$}
% In this section, we show that $\alpha_{\ell}(2) =  \frac{5}{3}$. Since $\zeta_{2} = \frac{5}{8}$, roughly speaking, $\overline{T}_{n} \approx n^{\frac{8}{3}} \approx \xi_{n^{\frac{4}{3}}}$. By using the fact that the growth exponent for the planar LERW is $\frac{5}{4}$ and tail bounds derived in \cite{BM}, it will be shown that  
% \begin{equation*}
% \Big| \text{LE} \big( \overline{S} [0, \overline{T_{n}}] \big) \Big| \approx n^{\frac{4}{3} \times \frac{5}{4}} = n^{\frac{5}{3}},
% \end{equation*}
% that is, we have 

% \begin{thm}\label{2-dim}
% We have
% \begin{equation}\label{2-dim exp}
% \alpha_{\ell}(2) = \frac{5}{3}.
% \end{equation}
% Moreover, it follows that 
% \begin{equation}\label{regular}
% \lim_{n \to \infty} \frac{ \log  \Big| \text{LE} \big( \overline{S} [0, \overline{T_{n}}] \big) \Big| }{ \log n}  = \frac{5}{3},
% \end{equation}
% with probability one.
% \end{thm}

\subsection{Upper bound for $\alpha_{\ell}(2)$}
In this subsection, we will show that $\alpha_{\ell}(2) \le \frac{5}{3}$ by proving that $\text{len} \big( LE (\overline{S} [0, \overline{T}_{n} ] )$ divided by $n^{\frac{5}{3} + \epsilon }$ converges to zero for all $\epsilon > 0$ in Proposition \ref{upper-2dim-loop}. The proof is based on the following steps. We first compare $\overline{T}_{n}$ with the first time that $\overline{S}$ exits from a ball so that $\overline{\tau}^{+}_{n^{\frac{4}{3} - \epsilon}} \le \overline{T}_{n} \le \overline{\tau}^{+}_{n^{\frac{4}{3} + \epsilon}}$. Since $\overline{T}_{n}$ is a cut time, we see that $\text{len} \big( LE (\overline{S} [0, \overline{T}_{n} ] ) \big)$ is bounded above by $\text{len} \big( LE (\overline{S} [0, \overline{\tau}^{+}_{n^{\frac{4}{3} + \epsilon}} ] ) \big)$. Now we use results from \cite{BM} which give exponential tail bounds for the length of the loop-erasure of the usual simple random walk $S$ in 2 dimensions. Theorem 1.1 of \cite{BM} gives that $\text{len} \big( LE (S [0, \tau_{n^{\frac{4}{3} + \epsilon}} ] ) \big)$ is bounded above by $n^{\frac{5}{3} + 3 \epsilon}$ with high probability. Since the probability that $\text{len} \big( LE (S [0, \tau_{n^{\frac{4}{3} + \epsilon}} ] ) \big) \ge n^{\frac{5}{3} + 3 \epsilon}$ is much smaller than the probability that $S^{1}$ and $S^{2}$ do not intersect up to the first time that they exit from $B (\tau_{n^{\frac{4}{3} + \epsilon}})$, we can conclude that $\text{len} \big( LE (\overline{S} [0, \overline{\tau}^{+}_{n^{\frac{4}{3} + \epsilon}} ] ) \big)$ is also bounded above by $n^{\frac{5}{3} + 3 \epsilon}$ with high probability.

\begin{prop}\label{upper-2dim-loop}
Let $d=2$. For all $\alpha > \frac{5}{3}$,
\begin{equation}
\overline{P} \Big( \lim_{n \to \infty} \frac{ \text{len} \big( LE (\overline{S} [0, \overline{T}_{n} ] ) \big) } { n^{\alpha} } = 0 \Big) =1.
\end{equation}
In particular, $\alpha_{\ell}(2) \le \frac{5}{3}$.
\end{prop}

\begin{proof}
Fix $\epsilon > 0$. We write $\overline{K}^{+}_{n}$ for the number of global cut times of $\overline{S}$ in $[0, \overline{\tau}^{+}_{n} ]$. In the proof of Theorem 1.1 of \cite{S}, it was shown that 
\begin{equation*}
n^{\frac{3}{4} - \epsilon } \le \overline{K}^{+}_{n} \le n^{\frac{3}{4} + \epsilon } \text{ for large } n, \ \overline{P}\text{-a.s.}
\end{equation*}
This gives that 
\begin{equation}\label{jirousan}
\overline{\tau}^{+}_{n^{\frac{4}{3} - 2 \epsilon }} \le \overline{T}_{n} \le \overline{\tau}^{+}_{n^{\frac{4}{3} + 2 \epsilon }} \text{ for large } n, \ \overline{P}\text{-a.s.}
\end{equation}

On the other hand, Theorem 1.1 of \cite{BM} gives the following upper tail bound of the length of the loop-erasure of the usual simple random walk $S$ for $d=2$;
\begin{equation}\label{uruseina}
P \Big( \text{len} \big( LE ( S[0, \tau_{ n^{\frac{4}{3}+ 2 \epsilon} } ]  ) \big) \ge n^{\frac{5}{3} + 3 \epsilon } \Big) \le c_{0} e^{- c_{1} n^{\frac{\epsilon}{4}}},
\end{equation}
for some $0 < c_{0}, c_{1} < \infty$. 

Recall that Corollary 4.6 of \cite{L} gives that for $N > n^{\frac{4}{3}+ 2 \epsilon}$
\begin{equation*}
\max_{x, y \in B ( n^{\frac{4}{3}+ 2 \epsilon} ) } P^{x, y} \big( S^{1} [0, \tau^{1}_{N}] \cap S^{2} [0, \tau^{2}_{N}] = \emptyset \big) \le c \big( \frac{N}{n^{\frac{4}{3}+2 \epsilon}} \big)^{-\frac{5}{4}}.
\end{equation*}
Using this along with \eqref{uruseina} and the strong Markov property, we see that
\begin{align}\label{kietane}
&P \Big( S^{1} [0, \tau^{1}_{N}] \cap S^{2} [1, \tau^{2}_{N}] = \emptyset, \ \text{len} \big( LE ( S^{2} [0, \tau^{2}_{ n^{\frac{4}{3}+ 2 \epsilon} } ]  ) \big) \ge n^{\frac{5}{3} + 3 \epsilon } \Big) \notag \\
& \le c_{0} e^{- c_{1} n^{ \frac{\epsilon}{4}}} c \big( \frac{N}{n^{\frac{4}{3}+2 \epsilon}} \big)^{-\frac{5}{4}} \le c N^{-\frac{5}{4}} e^{- \frac{c_{1}}{2} n^{\frac{\epsilon}{4}}}.
\end{align}
Theorem 1.3 of \cite{L} gives that $P \Big( S^{1} [0, \tau^{1}_{N}] \cap S^{2} [1, \tau^{2}_{N}] = \emptyset \Big) \asymp N^{-\frac{5}{4}}$. By dividing both sides of \eqref{kietane} by $P \Big( S^{1} [0, \tau^{1}_{N}] \cap S^{2} [1, \tau^{2}_{N}] = \emptyset \Big)$ first and then by letting $N$ go to infinity, we have
\begin{equation*}
\overline{P} \Big( \text{len} \big( LE ( \overline{S} [0, \overline{\tau}^{+}_{ n^{\frac{4}{3}+ 2 \epsilon} } ] )  \big) \ge n^{\frac{5}{3} + 3 \epsilon } \Big) \le c e^{- \frac{c_{1}}{2} n^{\frac{\epsilon}{4}}}.
\end{equation*}
By the Borel-Cantelli lemma, we have
\begin{equation}\label{jiro-2}
\text{len} \big( LE ( \overline{S} [0, \overline{\tau}^{+}_{ n^{\frac{4}{3}+ 2 \epsilon} } ] )  \big) \le n^{\frac{5}{3} + 3 \epsilon } \text{ for large } n, \ \overline{P}\text{-a.s.}
\end{equation}
Combining this by \eqref{jirousan}, with probability one, $\overline{T}_{n} \le \overline{\tau}^{+}_{n^{\frac{4}{3} + 2 \epsilon }}$ and the length of the loop-erasure of $\overline{S} [0, \overline{\tau}^{+}_{ n^{\frac{4}{3}+ 2 \epsilon} } ]$ is bounded above by $n^{\frac{5}{3} + 3 \epsilon }$ for large $n$. Since $\overline{T}_{n} \le \overline{\tau}^{+}_{n^{\frac{4}{3} + 2 \epsilon }}$, we see that $\text{len} \big( LE (\overline{S} [0, \overline{T}_{n} ] ) \big)$ is bounded above by $\text{len} \big( LE ( \overline{S} [0, \overline{\tau}^{+}_{ n^{\frac{4}{3}+ 2 \epsilon} } ] )  \big)$. Since $\epsilon > 0$ is an arbitrary positive number, we finish the proof.
\end{proof}

\subsection{Lower bound for $\alpha_{\ell}(2)$}
In this subsection, we will show that $\alpha_{\ell}(2) \ge \frac{5}{3}$ by proving that $\text{len} \big( LE (\overline{S} [0, \overline{T}_{n} ] )$ divided by $n^{\frac{5}{3} - \epsilon }$ goes to infinity for all $\epsilon > 0$ in Proposition \ref{lower-2dim-loop}. The proof is based on the same ideas as in the proof of Proposition \ref{upper-2dim-loop}. We compare $\overline{T}_{n}$ with the first time that $\overline{S}$ exits from a ball as in \eqref{jirousan}. Then we will give a lower bound on the length of the loop-erasure of $\overline{S}$ up to $\overline{\tau}^{+}_{n^{\frac{4}{3} - 2 \epsilon }}$. However, there is an issue to achieve it. Since $\text{len} \overline{S} [0, t_{1} ]$ may be larger than $\text{len} \overline{S} [0, t_{2} ]$ even if $t_{1} < t_{2}$, we are not able to conclude that $\text{len} \big( LE (\overline{S} [0, \overline{T}_{n} ] ) \big)$ is bigger than $\text{len} \big( LE (\overline{S} [0, \overline{\tau}^{+}_{n^{\frac{4}{3} - 2 \epsilon }} ] ) \big)$. In order to deal with this issue, we will consider $LE (\overline{S} [0, \overline{\tau}^{+}_{n^{2}}] )$ up to the first time it exits from $B ( n^{\frac{4}{3} - 2 \epsilon } )$ instead of $LE (\overline{S} [0, \overline{\tau}^{+}_{n^{\frac{4}{3} - 2 \epsilon }} ] )$.

\begin{prop}\label{lower-2dim-loop}
Let $d=2$. For all $\alpha < \frac{5}{3}$,
\begin{equation}
\overline{P} \Big( \lim_{n \to \infty} \frac{ \text{len} \big( LE (\overline{S} [0, \overline{T}_{n} ] ) \big) } { n^{\alpha} } = \infty \Big) =1.
\end{equation}
In particular, $\alpha_{\ell}(2) \ge \frac{5}{3}$.
\end{prop}

% It turns out that we need to be more carefully when we give a lower bound on $\Big| \text{LE} \big( \overline{S} [0, \overline{T_{n}}] \big) \Big|$ because of the lack of some sort of monotonicity for LERW, i.e., there is a possibility that 
% \begin{equation*}
% \Big| \text{LE} \big( \overline{S} [0, \xi_{m}] \big) \Big| > \Big| \text{LE} \big( \overline{S} [0, \xi_{n}] \big) \Big|,
% \end{equation*}
% even if $m < n$. To deal with this issue, we use our understanding of the infinite LERW (in fact, we will use some object which approaches infinite LERW), which is convenient to deal with such monotonicity issue.
 
\begin{proof}
Fix $\epsilon > 0$. Recall that it follows from \eqref{jirousan} that $\overline{\tau}^{+}_{n^{\frac{4}{3} - 2 \epsilon }} \le \overline{T}_{n} \le \overline{\tau}^{+}_{n^{\frac{4}{3} + 2 \epsilon }}$ for large $n$ with probability one. Suppose that $\overline{\tau}^{+}_{n^{\frac{4}{3} - 2 \epsilon }} \le \overline{T}_{n}$. We first show that $ | \overline{S} ( \overline{T}_{n} ) | \ge n^{\frac{4}{3} - 3 \epsilon }$ with high probability. To show it, suppose that $ | \overline{S} ( \overline{T}_{n} ) | \le n^{\frac{4}{3} - 3 \epsilon }$. This implies that $\overline{S} [\overline{\tau}^{+}_{n^{\frac{4}{3} - 2 \epsilon }}, \infty ) \cap B (n^{\frac{4}{3} - 3 \epsilon } ) \neq \emptyset$. However, (1.9) of \cite{S} shows that this return probability is bounded above by $c n^{-\frac{\epsilon}{2}}$. Therefore we have
\begin{equation}\label{dareda}
\overline{P} \big( n^{\frac{4}{3} - 3 \epsilon } < | \overline{S} ( \overline{T}_{n} ) | < n^{\frac{4}{3} + 2 \epsilon } \big) \ge 1- c n^{-\frac{\epsilon}{2}}.
\end{equation}

We are interested in $LE (\overline{S} [0, \overline{\tau}^{+}_{n^{2}}] )$ up to the first time that it exits from $B (n^{\frac{4}{3} - 3 \epsilon })$. Let 
\begin{equation}\label{kaetta}
\overline{u} := \inf \{ k \ | \ LE (\overline{S} [0, \overline{\tau}^{+}_{n^{2}}] ) (k) \in B (n^{\frac{4}{3} - 3 \epsilon })^{c} \}.
\end{equation}
Suppose that $n^{\frac{4}{3} - 3 \epsilon } < | \overline{S} ( \overline{T}_{n} ) | < n^{\frac{4}{3} + 2 \epsilon }$. Then we see that $\overline{S} ( \overline{T}_{n} )$ lies in $LE (\overline{S} [0, \overline{\tau}^{+}_{n^{2}}] )$, and that $\overline{S} ( \overline{T}_{n} )$ appears in $LE (\overline{S} [0, \overline{\tau}^{+}_{n^{2}}] )$ after time $\overline{u}$. So there exists an unique time $t$ such that $LE (\overline{S} [0, \overline{\tau}^{+}_{n^{2}}] ) (t) = \overline{S} ( \overline{T}_{n} )$ with $t > \overline{u}$. Since $\overline{T}_{n}$ is a global cut time, we see that $LE (\overline{S} [0, \overline{\tau}^{+}_{n^{2}}] ) [0, t] = LE (\overline{S} [0, \overline{T}_{n} ])$. Consequently it follows that with probability at least $1- c n^{-\frac{\epsilon}{2}}$,
\begin{equation}\label{kaerunone}
\text{len} \big( LE (\overline{S} [0, \overline{T}_{n} ] ) \big) = t > \overline{u}.
\end{equation}
Thus we need to estimate $\overline{u}$ which was defined as in \eqref{kaetta}.

In order to estimate $\overline{u}$, we will again use tail bounds on the length of LERW derived in \cite{BM}. We are interested in $LE ( S^{2} [0, \tau^{2}_{n^{2}}])$ up to the first time that it exits from $B (n^{\frac{4}{3} - 3 \epsilon })$. Let 
\begin{equation}\label{kaetta-1}
u := \inf \{ k \ | \ LE ( S^{2} [0, \tau^{2}_{n^{2}}]) (k) \in B (n^{\frac{4}{3} - 3 \epsilon })^{c} \}.
\end{equation}
Then Theorem 1.2 of \cite{BM} gives that 
\begin{equation*}
P ( u < n^{\frac{5}{3} - 6 \epsilon} ) \le C e^{-c n^{\epsilon}},
\end{equation*}
for some $0 < c, C < \infty$. Using this, same estimates as in \eqref{kietane} gives that
\begin{equation*}
\overline{P} \big( \overline{u} < n^{\frac{5}{3} - 6 \epsilon} \big) \le C e^{- \frac{c}{2} n^{\epsilon}}.
\end{equation*}
Combining this with \eqref{kaerunone}, we can conclude that with probability at least $1- c n^{-\frac{\epsilon}{2}}$, $\text{len} \big( LE (\overline{S} [0, \overline{T}_{n} ] ) \big)$ is bounded below by $n^{\frac{5}{3} - 6 \epsilon}$. Now we apply the Borel-Cantelli lemma for $n = 2^{k}$ to see that
\begin{equation*}
\text{len} \big( LE (\overline{S} [0, \overline{T}_{2^{k}} ] ) \big) \ge (2^{k})^{\frac{5}{3} - 6 \epsilon} \text{ for large } k \ \text{ a.s.}
\end{equation*}
For a general index $n$, by considering $k$ with $2^{k} \le n < 2^{k+1}$, we see that with probability one, 
\begin{equation*}
\text{len} \big( LE (\overline{S} [0, \overline{T}_{n} ] ) \big) \ge \text{len} \big( LE (\overline{S} [0, \overline{T}_{2^{k}} ] ) \big) \ge (2^{k})^{\frac{5}{3} - 6 \epsilon} \ge c n^{\frac{5}{3} - 6 \epsilon},
\end{equation*}
for large $n$. Since $\epsilon$ is an arbitrary positive number, we finish the proof.
\end{proof}

\section{Estimates on escape probabilities}
From this section we will focus on loop-erased random walks in 3 dimensions. In the rest of the present article, the goal is to establish an analog of Theorem \ref{2-dim} in three dimensions. Namely we want to prove that for $d=3$ 
\begin{equation}\label{3-dim-mokuhyo}
\lim_{n \to \infty} \frac{ \log  \text{len} \big( LE (\overline{S} [0, \overline{T}_{n} ] ) \big) }{ \log n}  = \alpha_{\ell}(3) \text{ a.s.,}
\end{equation}
where $\alpha_{\ell}(3)$ is the exponent as in Theorem \ref{critical exp}. We will prove \eqref{3-dim-mokuhyo} in Section 9. Section 6 -- Section 8 will be devoted to establish various results for LERW in 3 dimensions to show \eqref{3-dim-mokuhyo}. The purpose of this section is to give various relations between escape probabilities on various scales (see Section 6.2 for the escape probabilities).

The proof of Theorem \ref{2-dim} was based on Proposition \ref{upper-2dim-loop} and \ref{lower-2dim-loop}. In order to prove these two propositions, we strongly relied on results of \cite{BM} which give exponential tail bounds on the length of LERW in 2 dimensions. Therefore we need to establish similar tail bounds in 3 dimensions. The key ingredient in \cite{BM} is the probability that a random walk and an independent LERW do not intersect up to the first time that they exit from a large ball, which is referred to as an escape probability (see Section 6.2 for the precise definition of the escape probability). We recall that one of the main step in \cite{BM} is to give bounds on the $k$-th moment of the length of LERW in terms of escape probabilities. Such moment estimates allow to establish the exponential tail bounds on the length of LERW, see (1.5) of \cite{BM}.

Several estimates on the escape probability derived in \cite{Mas} were used to give the tail bounds on the length of LERW in \cite{BM}. In this section, we will establish such estimates on the escape probability in 3 dimensions that will be needed later. We will give various relations between the escape probabilities on various scales in Propositions \ref{up-to-const lew}, \ref{up to const indep1} and \ref{up to const indep2}. These propositions are analogs of Lemma 5.1, Proposition 5.2 and Proposition 5.3 of \cite{Mas}. We point out that the separation lemma (see Theorem 4.7 of \cite{Mas}) was a key result in order to prove these results in \cite{Mas}. The separation lemma (Theorem 4.7 of \cite{Mas}) roughly claims that a random walk and an independent LERW that are conditioned not to intersect are likely to be not very close at their endpoints, which is an analog of Proposition \ref{sep lemma} for a random walk and an independent LERW. Unfortunately, the separation lemma was proved only in 2 dimensions in \cite{Mas}, and to our knowledge it has not been proved in 3 dimensions. 
So we need to prove it in 3 dimensions.

In the next subsection, we will prove the separation lemma in 3 dimensions (see Theorem \ref{sep lem 3dim}). Using this lemma, we will give various relations between the escape probabilities on various scales in Section 6.2.

\subsection{Separation lemma --- SRW v.s. LERW}
As we discussed above, in order to give various relations between the escape probabilities on various scales, we need to prove the separation lemma (see Theorem \ref{sep lem 3dim}). The lemma says that a random walk and an independent LERW that are conditioned not to intersect are likely to be ``well-separated". We start by giving preliminary results to show the separation lemma.

Let ${\cal D} = \{ (x,y,z) \in \mathbb{R}^{3} \ : \ x=1, \ y^{2} + z^{2} \le 1 \}$ and $D_{n} = \partial B (n) \cap \{ rw \ : \ r \ge 0, \ w \in {\cal D} \}$. We write $x_{n} =  (n, 0, 0) $. 

Suppose that we have a random walk conditioned that it exits from a ball without hitting a given set $K$ contained in the ``left" side of the ball. The next proposition says that this conditioned random walk exits from the ``right" side of the ball with positive probability. This is an analog of Proposition 3.5 in 3 dimensions. Claim 3.4 of \cite{SS} gave the proof of the proposition in 3 dimensions.

\begin{prop}\label{Masson}
Let $d=3$. There exist $N$ and $c > 0$ such that for all $n \ge N$, we have the following. Suppose that $K \subset \mathbb{Z}^{3} \setminus B (x_{n} , n )$. Recall that $\sigma_{K} := \inf \{ j \ge 1 \ | \ S(j) \in K \}$. Then, 
\begin{equation}
P \Big( S \big( \tau_{n} \big) \in D_{n} \ \Big| \ \tau_{n} < \sigma_{K} \Big) \ge c,
\end{equation}
where $D_{n}$ stands for the ``right" side of the boundary of $B (n)$ defined as above.
\end{prop}

For a subset $A \subset \mathbb{Z}^{3}$, we write $A^{+} = \{ x = (x_{1}, x_{2}, x_{3} ) \in A \ | \ x_{1} > 0 \}$ and $A^{-} = \{ x = (x_{1}, x_{2}, x_{3} ) \in A \ | \ x_{1} < 0 \}$ for the ``right" and ``left" side of $A$. For $x = (x_{1}, x_{2}, x_{3} ) \in \mathbb{Z}^{3}$, we write $\overline{x} = (-x_{1}, x_{2}, x_{3} ) \in \mathbb{Z}^{3}$ for the reflection of $x$ with respect to the $yz$-plane. We let $\overline{A} = \{ \overline{x} \ | \ x \in A \}$ be the reflection of $A$ with respect to the $yz$-plane. We will need the following lemma which is an analog of Lemma 4.4 of \cite{BM} in 3 dimensions.

% \vspace{5zw}

% For $D \subset \mathbb{Z}^{3}$, let $D(+) = \{ x=(x_{1}, x_{2}, x_{3} ) \in D \ : \ x_{1} > 0 \}$ and $D(-) = \{ x=(x_{1}, x_{2}, x_{3} ) \in D \ : \ x_{1} < 0 \}$. If $x= (x_{1}, x_{2}, x_{3} ) \in \mathbb{Z}^{3}$, then we let $x^{1,-} = (-x_{1}, x_{2}, x_{3} ) \in \mathbb{Z}^{3}$ and $D^{1,-} = \{ x^{1,-} \ : \ x \in D \}$.

% By using a simple reflection argument, we have the following lemma. This was showed in Lemma 4.4 of \cite{BM} in two dimensions. It is again easy to extend it in three dimensions, so we will skip its proof.  

\begin{lem}\label{reflect1}
Let $d=3$. Take two subsets $A \subset B \subset \mathbb{Z}^{3}$ satisfying that $B^{+} \subset \overline{B^{-}}$ and $A^{+} \subset \overline{A^{-}}$. Then it follows that for all $x \in B^{-}$
\begin{equation}
P^{x} \big( \tau_{B} < \sigma_{A} \big) \le P^{\overline{x} } \big( \tau_{B} < \sigma_{A} \big).
\end{equation}
\end{lem}

\begin{proof}
The proof is same as the proof of Lemma 4.4 of \cite{BM}. Thus we will give only the sketch of it here. The lemma follows from a reflection argument as follows. Take $x \in B^{-}$. When the event $\tau_{B} < \sigma_{A}$ occurs, Either of the following two cases must occur: 
\begin{itemize}
\item The random walk exits from $B$ without hitting $A$ and the $yz$-plane.

\item The random walk hits the $yz$-plane before hitting $A$ and exiting $B$. Then it exits from $B$ without hitting $A$.
\end{itemize} 
Since we assume that $B^{+} \subset \overline{B^{-}}$ and $A^{+} \subset \overline{A^{-}}$, if we consider the reflection of the random walk path in the first case, the reflected path starts from $\overline{x}$ and it exits from $B$ without hitting $A$ and the $yz$-plane. For the second case, we consider the reflection of the random walk path up to the first time that it hits the $yz$-plane. Then the reflected path starts from $\overline{x}$ and it hits $yz$-plane without hitting $A$. After hitting $yz$-plane, it exits from $B$ without hitting $A$. Therefore the reflected path for both cases will be a random walk path started at $\overline{x}$ which satisfies $\tau_{B} < \sigma_{A}$. So we get the lemma.
\end{proof}

In order to state the next lemma, we need the following definition.

\begin{dfn}\label{set-near-cube}
Take integers $m, n , N$ with $\sqrt{3} m + n \le N$. We set $A_{m} := [-m, m]^{3}$ for the cube of side length $2m$ centered at the origin. We take a point $x$ lying in a face of $A_{m}$. We write $\ell$ for the infinite half line started at $x$ which lies in $A_{m}^{c}$ and is orthogonal to the face of $A_{m}$ containing $x$ (we choose one such faces arbitrarily if $x$ lies in a edge of $A_{m}$). We write $y$ for the unique point which lies in $\ell$ and satisfies $|x-y| = \frac{n}{2} $. Then we let $A_{n} (x):= \prod_{i=1}^{3} [y_{i} - \frac{n}{4}, y_{i} + \frac{n}{4}]$ be the cube of length $\frac{n}{2}$ centered at $y$. The assumption $\sqrt{3} m + n \le N$ ensures that $B(x, n) \subset B (N)$.
\end{dfn}

When we relate a random walk conditioned not to hit a given set to an usual simple random walk, the next lemma is used many times.

\begin{lem}\label{reflect2}
Let $d=3$. We take integers $m, n , N$ with $\sqrt{3} m + n \le N$. We suppose that $A_{m}$ is the cube as in Definition \ref{set-near-cube} and take a point $x$ in the face of $A_{m}$. Let $K \subset A_{m}$ be a subset of the cube. We also suppose that $A_{n} (x)$ is the cube as in Definition \ref{set-near-cube}. Then there exists a universal constant $C < \infty$ such that
\begin{equation}\label{reflection}
\max_{z \in \partial B (x, \frac{n}{8})} P^{z} ( \tau_{N} < \sigma_{K} ) \le C P^{w} ( \tau_{N} < \sigma_{K} ),
\end{equation}
for all $w \in A_{n}(x)$.
\end{lem}

\begin{proof}
Recall that the half line $\ell$ started at $x$ was defined as in Definition \ref{set-near-cube}. We write $y'$ for the unique point lying on $\ell$ such that $|x-y'| = \frac{n}{4}$. 
Let $\pi_{1}$ be the plane containing the middle point of $x$ and $y'$ which is orthogonal to $\ell$. Applying Lemma \ref{reflect1} to the plane $\pi_{1}$, we see that
\begin{equation*}
\max_{z \in \partial B (x, \frac{n}{8})} P^{z} ( \tau_{N} < \sigma_{K} ) \le \max_{z \in \partial B (y', \frac{n}{8})} P^{z} ( \tau_{N} < \sigma_{K} ). 
\end{equation*}
The discrete Harnack principle (see Theorem 1.7.6 of \cite{Law b}) gives that there exists a universal constant $C < \infty$ such that 
\begin{equation*}
\max_{z \in \partial B (y', \frac{n}{8})} P^{z} ( \tau_{N} < \sigma_{K} ) \le C P^{w} ( \tau_{N} < \sigma_{K} ),
\end{equation*}
for all $w \in A_{n}(x)$. So we finish the proof.
\end{proof}

Consider two independent simple random walks $S^{1}$ and $S^{2}$ in $\mathbb{Z}^{3}$. We are interested in the conditional probability that the distance between $S^{1} (\tau^{1}_{n})$ and $LE (S^{2} [0, \tau^{2}_{n}] )$ and the distance between $S^{2} (\tau^{2}_{n})$ and $S^{1} [0, \tau^{1}_{n}]$ is bounded below by $c n$ conditioned that $S^{1} [1, \tau^{1}_{n}]$ and $LE (S^{2} [0, \tau^{2}_{n}] )$ do not intersect. With this in mind, let 
\begin{equation}\label{tabaco}
A^{n} := \{  S^{1} [1, \tau^{1}_{n}] \cap LE (S^{2} [0, \tau^{2}_{n}] ) = \emptyset \}
\end{equation}
be the event that a simple random walk and an independent LERW do not intersect. We also consider the infinite LERW as follows. Since $S^{2}$ is transient, we may consider the loop-erasure of $S^{2}[0, \infty )$. So we let $\gamma^{\infty} := LE ( S^{2}[0, \infty ) )$ be its loop-erasure and we call it the infinite LERW. We set 
\begin{equation}\label{san-bon}
\tau^{\infty}_{n} = \inf \{ j \ | \ \gamma^{\infty} (j) \notin B(n) \}
\end{equation}
for the first time that the infinite LERW exits from $B (n)$. We denote the event that $S^{1}$ and $\gamma^{\infty}$ do not intersect up to the first time that they exit from $B (n )$ by 
\begin{equation}\label{higanai}
A^{n}_{\infty} := \{  S^{1} [1, \tau^{1}_{n}] \cap \gamma^{\infty} [0, \tau^{\infty}_{n} ] = \emptyset \}.
\end{equation}

We choose a ``separation" event as in Proposition \ref{sep lemma}. Recall that $I (r)$ and $I^{\prime}(r)$ were defined as in \eqref{migihidari}. With \eqref{sep} in mind, we define
\begin{align}
&\overline{\textsf{Sep}}(n) = \Big\{ S^{1}[0, \tau^{1}_{n}] \subset B \big( \frac{3n}{4} \big) \cup I \big( \frac{2n}{3} \big)  \Big\} \cap \Big\{ LE (S^{2} [0, \tau^{2}_{n}] ) \subset B \big( \frac{3n}{4} \big) \cup I^{\prime} \big( \frac{2n}{3} \big)  \Big\} \label{sep-for-lerwtachi-1} \\
&\overline{\textsf{Sep}}_{\infty} (n) = \Big\{ S^{1}[0, \tau^{1}_{n}] \subset B \big( \frac{3n}{4} \big) \cup I \big( \frac{2n}{3} \big)  \Big\} \cap \Big\{ \gamma^{\infty} [0, \tau^{\infty}_{n} ] \subset B \big( \frac{3n}{4} \big) \cup I^{\prime} \big( \frac{2n}{3} \big)  \Big\} \label{sep-for-lerwtachi-2}.
\end{align}
Namely, $\overline{\textsf{Sep}}(n)$ stands for the event that $S^{1}[0, \tau^{1}_{n}]$ and $LE (S^{2} [0, \tau^{2}_{n}])$ are well-separated. $\overline{\textsf{Sep}}_{\infty} (n)$ stands for the event that $S^{1}[0, \tau^{1}_{n}]$ and $\gamma^{\infty} [0, \tau^{\infty}_{n} ]$ are well-separated. Then the separation lemma for SRW and LERW states the following.

% \vspace{5zw}

% Let $S$ and $S'$ be the independent simple random walks, and let $S^{\diamondsuit}$ be the infinite LERW for $S'$, that is, $S^{\diamondsuit} (j) = \text{LE} ( S'[0, \infty ) ) (j)$ for each $j \ge 0$. Notice that this is well-defined in three dimensions. Let ${\cal F}_{k}$ denote the $\xi$-algebra generated by 
% \begin{equation*}
% \{ S(n) \ : \ n \le \xi_{k}^{S} \} \cup \{ S^{\diamondsuit} (n) \ : \ n \le \xi_{k}^{S^{\diamondsuit}} \}.
% \end{equation*}
% For positive integers $j$ and $k$, let $A^{k}$ be the event 
% \begin{equation*}
% A^{k} = \{ S[1, \xi_{k}] \cap S^{\diamondsuit} [ 0 ,  \xi_{k} ] = \emptyset \},
% \end{equation*}
% $D^{k}$ be the random variable
% \begin{equation*}
% D^{k} = k^{-1} \min \{ \text{dist} \big( S( \xi_{k} ), S^{\diamondsuit} [ 0 ,  \xi_{k} ] \big) , \text{dist} \big( S^{\diamondsuit} (\xi_{k} ) , S[0, \xi_{k} ] \big) \}.
% \end{equation*}
% and $T^{k}_{j}$ be 
% \begin{equation*}
% T^{k}_{j} = \min \{ l \ge k \ : \ D^{l} \ge 2^{-j} \}.
% \end{equation*}

\begin{thm}(Separation Lemma)\label{sep lem 3dim}
Let $d=3$. There exists a constants $c > 0$ such that for all $n$, 
\begin{align}
&P ( \overline{\textsf{Sep}}(n) \ | \ A^{n} ) \ge c, \label{Sep-3dim-1} \\
&P ( \overline{\textsf{Sep}}_{\infty} (n) \ | \ A^{n}_{\infty} ) \ge c \label{Sep-3dim-2}.
\end{align}
\end{thm}

\begin{proof}
We will prove only \eqref{Sep-3dim-1}. The second inequality \eqref{Sep-3dim-2} can be proved similarly. In the proof of \eqref{Sep-3dim-1}, we will use the same ideas based on the induction as in the proof of Proposition 2.1 in \cite{S}. 

We let  
\begin{equation*}
\Gamma' (n) = \{ \overline{\gamma} = (\gamma^{1}, \gamma^{2}) \in \Gamma (\frac{n}{2}) \ : \ \gamma^{2} \text{ is a simple path } \}
\end{equation*}
be the set of pairs of $\overline{\gamma} = (\gamma^{1}, \gamma^{2})$ such that $\gamma^{1}$ and $\gamma^{2}$ do not intersect, and that $\gamma^{2}$ is a simple path (see Section 1.4 for $\Gamma (n)$). Take $\overline{\gamma} = (\gamma^{1}, \gamma^{2}) \in \Gamma' (n)$. We write $w^{i} = \gamma^{i} (\text{len} \gamma^{i})$ for the endpoint of $\gamma^{i}$. Note that $w^{i}$ lies in the boundary of $B (\frac{n}{2} )$. We consider a simple random walk $S^{3}$ started at $w^{1}$ and an independent random walk $X$ started at $w^{2}$ which is conditioned that $X[1, \tau^{X}_{n} ] \cap \gamma^{2} = \emptyset$, where we write $\tau^{X}_{n}$ for the first time that $X$ exits from $B (n )$. We set 
\begin{equation*}
\eta = LE ( X[0, \tau^{X}_{n} ] )
\end{equation*}
for the loop-erasure of $x$ up to $\tau^{X}_{n}$. With \eqref{tabaco} in mind, we let 
\begin{eqnarray}\label{tabaco-2}
A^{n} ( \overline {\gamma} )=\left\{ \begin{array}{ll}
S_{3}[0, \tau^{3}_{n} ] \cap \gamma^{2} = \emptyset , \\
\eta \cap \gamma^{1} = \emptyset , \\
S_{3}[0, \tau^{3}_{n} ] \cap \eta = \emptyset  
\end{array}
\right\}.
\end{eqnarray}
We are interested in the conditional probability that $S^{3}$ and $\eta$ are well-separated conditioned on $A^{n} ( \overline {\gamma} )$. So we denote the separation event for $S^{3}$ and $\eta$ by
\begin{equation*}
\overline{\textsf{Sep}}(n, \overline{\gamma}) = \Big\{ S_{3}[0, \tau^{3}_{n} ] \subset B \big( \frac{3n}{4} \big) \cup I \big( \frac{2n}{3} \big)  \Big\} \cap \Big\{ \eta \subset B \big( \frac{3n}{4} \big) \cup I^{\prime} \big( \frac{2n}{3} \big)  \Big\}.
\end{equation*}

In order to prove \eqref{Sep-3dim-1}, it suffices to show that there exists a $c > 0$ such that for all $n$ and $\overline{\gamma} = (\gamma^{1}, \gamma^{2}) \in \Gamma' (n)$,
\begin{equation}\label{seminar-darui}
P^{w^{1}, w^{2}} \big( \overline{\textsf{Sep}}(n, \overline{\gamma}) \ | \ A^{n} ( \overline {\gamma} ) \big) \ge c.
\end{equation}
In order to see that \eqref{Sep-3dim-1} follows from \eqref{seminar-darui}, we set $u_{1} = \inf \{ j \ | \ LE (S^{2} [0, \tau^{2}_{n}] ) (j) \notin B (\frac{n}{2} ) \}$ and $u_{2} = \text{len} \big( LE (S^{2} [0, \tau^{2}_{n}] ) \big)$. Then by the strong Markov property of $S^{1}$ and the domain Markov property of LERW (see Proposition \ref{DMP}), we see that
\begin{align}\label{hayakusi}
&P \Big( \big( S^{1} [0, \tau^{1}_{\frac{n}{2}}], LE (S^{2} [0, \tau^{2}_{n}] ) [0, u_{1}] \big) = \overline{\gamma}, \ \overline{\textsf{Sep}}(n), \ A^{n} \Big) \notag \\
&= P \Big( \big( S^{1} [0, \tau^{1}_{\frac{n}{2}}], LE (S^{2} [0, \tau^{2}_{n}] ) [0, u_{1}] \big) = \overline{\gamma} \Big) P^{w^{1}, w^{2}} \Big( \overline{\textsf{Sep}}(n, \overline{\gamma}), \ A^{n} ( \overline {\gamma} ) \Big).
\end{align}
However, by \eqref{seminar-darui}, the left hand side of \eqref{hayakusi} is bounded below by 
\begin{equation*}
c P \Big( \big( S^{1} [0, \tau^{1}_{\frac{n}{2}}], LE (S^{2} [0, \tau^{2}_{n}] ) [0, u_{1}] \big) = \overline{\gamma} \Big) P^{w^{1}, w^{2}} \Big(  A^{n} ( \overline {\gamma} ) \Big),
\end{equation*}
which is, by the strong Markov property and the domain Markov property again, equal to 
\begin{equation*}
c P \Big( \big( S^{1} [0, \tau^{1}_{\frac{n}{2}}], LE (S^{2} [0, \tau^{2}_{n}] ) [0, u_{1}] \big) = \overline{\gamma}, \ A^{n} \Big).
\end{equation*}
By taking sum for $\overline{\gamma} = (\gamma^{1}, \gamma^{2}) \in \Gamma' (n)$, we get \eqref{Sep-3dim-1}. 

We will give a stronger estimate than \eqref{seminar-darui} as follows. We write $\Gamma"(n)$ for the set of pairs $\overline{\gamma} = (\gamma^{1}, \gamma^{2})$ such that the following conditions are fulfilled:
\begin{itemize}
\item $\gamma^{1}$ is a path started at the origin. $\gamma^{1}$ lies in $B( \frac{n}{2})$ except its endpoint. The endpoint $\gamma^{1} (\text{len} \gamma^{1} )$ lies in $\partial B( \frac{n}{2})$.

\item $\gamma^{2}$ is a simple path started at the origin. $\gamma^{2}$ lies in $B( \frac{n}{2})$ except its endpoint. The endpoint $\gamma^{2} (\text{len} \gamma^{2} )$ lies in $\partial B( \frac{n}{2})$.

\item $\gamma^{1} (\text{len} \gamma^{1} ) \neq \gamma^{2} (\text{len} \gamma^{2} )$.
\end{itemize}
Clearly, $\Gamma' (n) \subset \Gamma"(n)$. We will show that there exists a $c > 0$ such that for all $n$ and $\overline{\gamma} = (\gamma^{1}, \gamma^{2}) \in \Gamma" (n)$,
\begin{equation}\label{asitayasumi}
P^{w^{1}, w^{2}} \big( \overline{\textsf{Sep}}(n, \overline{\gamma}) \ | \ A^{n} ( \overline {\gamma} ) \big) \ge c,
\end{equation}
which gives \eqref{seminar-darui}. 

We will prove \eqref{asitayasumi} by induction. To achieve it, let
\begin{equation*}
u_{k} = \textstyle\sum\limits _{j= k} ^{\infty } j^{2} 2^{-j}.
\end{equation*}
We take $N$ sufficiently Large so that $u_{N} \le \frac{1}{8}$. For $\overline{\gamma} = (\gamma^{1}, \gamma^{2}) \in \Gamma" (n)$ with $w^{i} = \gamma^{i} ( \text{len}\gamma ^{i} )$, we set $D( \overline{\gamma} ) = \text{dist} ( w^{1} , \gamma ^{2} ) \wedge \text{dist} ( w^{2} , \gamma ^{1} )$. The definition of $\Gamma" (n)$ gives that $D( \overline{\gamma} ) \ge 1 $ for all $\overline{\gamma} \in \Gamma" (n)$. For $k \ge N$, we let $h_{k}$ be the infimum of 
\begin{equation}\label{cheese-kue}
\frac{ P^{w^{1}, w^{2}} \big( \overline{\textsf{Sep}}(n, \overline{\gamma}), \ A^{n} ( \overline {\gamma} ) \big) }{ P^{w^{1}, w^{2}} \big( A^{n} ( \overline {\gamma} ) \big) },
\end{equation}
where the infimum is over $\frac{n}{2} \ge 2^{k-1}$; $0 \le r \le u_{k}$; and all $\overline{\gamma} = (\gamma ^{1} , \gamma ^{2} ) \in \Gamma" ((1+r) n)$ such that $\frac{ D( \overline{\gamma} ) }{\frac{n}{2}} \ge 2^{-k}$. Then in order to prove \eqref{asitayasumi} it suffices to show that
\begin{equation}\label{deduce}
\inf _{k \ge N} h_{k} > 0.
\end{equation}
Indeed, suppose that $\overline{\gamma} \in \Gamma" (n)$ with $\frac{n}{2} \ge 2^{N-1}$. Consider the unique integer $k$ such that $2^{k-1} \le \frac{n}{2} < 2^{k}$. Then we see that $k \ge N$ and $\frac{ D( \overline{\gamma} ) }{\frac{n}{2}} \ge 2^{-k}$ (we choose $r=0$ in the definition of $h_{k}$). Therefore the ratio of \eqref{cheese-kue} for this $\overline{\gamma}$ is bounded below by $\inf _{k \ge N} h_{k}$. For $\overline{\gamma} \in \Gamma" (n)$ with $\frac{n}{2} < 2^{N-1}$, it is easy to see that the ratio of \eqref{cheese-kue} can be bounded below by some universal constant uniformly. So \eqref{asitayasumi} follows from \eqref{deduce}.

We will prove \eqref{deduce} by showing that $h_{k} > 0$ for each $k \ge N$, and that there exists a summable sequence $\delta _{k} < 1$ such that
\begin{equation}\label{induction}
h_{k+1} \ge h_{k}(1-\delta _{k} ).
\end{equation}
To achieve it, we start by proving that there exist $0 < c, \delta < \infty $ such that
\begin{equation}\label{hn}
h_{k} \ge c 2^{-\delta k}.
\end{equation}
We take $\overline{\gamma} = (\gamma ^{1} , \gamma ^{2} ) \in \Gamma" ((1+r) n)$ with $\frac{n}{2} \ge 2^{k-1}$, $0 \le r \le u_{k}$, and $\frac{ D( \overline{\gamma} ) }{\frac{n}{2}} \ge 2^{-k}$. We write $w^{i}$ for the endpoint of $\gamma^{i}$ again. To show \eqref{hn}, we consider two cones $O_{1},O_{2}$ starting from $z_{1}, z_{2} \in \mathbb{R}^{3}$ as follows. Suppose $U$ is a relatively open subset of $\{ z \in \mathbb{R} ^{3} : |z|=1 \}$. We let $O$ denote the corresponding cone 
\begin{equation}\label{cone}
O= \{ rw : r > 0, \ w \in U \}.
\end{equation}
Then it is easy to see that we can find two cones $O_{1},O_{2}$ as in \eqref{cone} and vertices $z_{1}, z_{2} \in \mathbb{R}^{d}$ such that the following hold for $i=1, 2$:
\begin{align*}
&\text{(a) }\frac{D( \overline{\gamma} )}{100} \le |z_{j} - w^{i}| \le \frac{D( \overline{\gamma} )}{20}. \\
&\text{(b) }w^{i} \in O_{i} + z_{i} \text{ and } \frac{D( \overline{\gamma} )}{100} \le \text{dist} \big( w^{i}, \partial (z_{i} + O_{j}) \big) \le \frac{D( \overline{\gamma} )}{20}. \\
&\text{(c) }(O_{i}+z_{i}) \cap B (\frac{n}{2}) \subset B ( w^{i} , \frac{D( \overline{\gamma} )}{10} ) \\
&\text{(d) }\text{If } V_{i} = (O_{i}+z_{i}) \cap B \big( \frac{9 n}{14} \big)^{c}, \text{ then dist}( V_{i}, (O_{3 -i}+z_{3 - i}) ) \ge \frac{n}{1000}.
\end{align*}
We leave it to the reader to see that such cones can be found. Using the strong Markov property, we see that there exist $0 < c, \delta < \infty $ such that 
\begin{equation}\label{kusocity-1}
P^{w^{1}} \big( S^{3} [ 0, \tau^{3}  ( \frac{ 9 n}{14} ) ] \subset O_{1} + z_{1} \big) \ge c 2^{-\delta k}.
\end{equation}
Recall that $\eta = LE ( X[0, \tau^{X}_{n} ] )$ is the loop-erasure of the random walk $X$ conditioned to avoid $\gamma^{2}$. We let $\tau^{\eta}_{m} := \inf \{ j \ | \ \eta (j) \notin B (m ) \}$ be the first time that $\eta$ exits from $B (m )$. We want to show that there exist $0 < c, \delta < \infty $ such that 
\begin{equation}\label{kusocity}
P^{w^{2}} \big( X [ 0, \tau^{X}  ( \frac{ 9 n}{14} ) ] \subset O_{2} + z_{2} \big) \ge c 2^{-\delta k}.
\end{equation}
The definition of $X$ gives that 
\begin{equation*}
P^{w^{2}} \big( X [ 0, \tau^{X}  ( \frac{ 9 n}{14} ) ] \subset O_{2} + z_{2} \big) \ge \frac{P^{w^{2}} \big( S^{4} [0, \tau^{4}  ( \frac{ 9 n}{14} ) ] \subset O_{2} + z_{2}, \ S^{4} [1, \tau^{4}_{n} ] \cap \gamma^{2} = \emptyset \big) }{P^{w^{2}} \big( S^{4} [1, \tau^{4}_{\frac{D( \overline{\gamma} )}{1000}} ] \cap \gamma^{2} = \emptyset \big) }.
\end{equation*}
However, by using Proposition \ref{Masson}, we see that the right hand side of the inequality above is bounded below by
\begin{equation*}
c \min_{y} P^{y} \Big( S^{4} [0, \tau^{4}  ( \frac{ 9 n}{14} ) ] \subset O_{2} + z_{2}, \ S^{4} [0, \tau^{4}_{n} ] \cap B \big( (1+ r) n \big) = \emptyset \Big),
\end{equation*}
where the minimum is over $y$ such that $\text{dist} (y, O_{2} + z_{2} ) \ge \frac{D( \overline{\gamma} )}{2000}$ and $\text{dist} (y, B \big( (1+ r) n \big) ) \ge \frac{D( \overline{\gamma} )}{2000}$. It is easy to see that this minimum is bounded below by $c 2^{-\delta k}$ for some $0 < c, \delta < \infty $. So we get \eqref{kusocity}. Once $S^{3}$ and $X$ lie in cones as in \eqref{kusocity-1} and \eqref{kusocity}, with positive probability we can attach paths to $S^{3} [ 0, \tau^{3}  ( \frac{ 9 n}{14} ) ]$ and $X [ 0, \tau^{X}  ( \frac{ 9 n}{14} ) ]$ so that $\overline{\textsf{Sep}}(n, \overline{\gamma})$ and $ A^{n} ( \overline {\gamma} )$ are fulfilled. Thus we have
\begin{equation*}
P^{w^{1}, w^{2}} \big( \overline{\textsf{Sep}}(n, \overline{\gamma}), \ A^{n} ( \overline {\gamma} ) \big) \ge c 2^{-\delta k},
\end{equation*}
which gives \eqref{hn}.

We will next prove \eqref{induction}. Suppose that $\overline{\gamma} = (\gamma ^{1} , \gamma ^{2} ) \in \Gamma" ((1+r) n)$ satisfies $\frac{n}{2} \ge 2^{k}$, $0 \le r \le u_{k+1}$, and $\frac{ D( \overline{\gamma} ) }{\frac{n}{2}} \ge 2^{-k-1}$. We write $w^{i}$ for the endpoint of $\gamma^{i}$ again. We define a sequence of balls $\{ B^{j} \} _{j \ge 0}$ by
\begin{equation*}
B^{j} = B ( a_{j} ),
\end{equation*}
where $a_{j} = (1+r) \frac{n}{2} + 4j 2^{-k} n$. We let 
\begin{equation*}
\rho ^{\prime} = \inf \Big\{ j : \text{ dist } \Big( S^{3} ( \tau ^{3}_{ a_{j} } ),  \eta [0, \tau^{\eta}_{a_{j}}] \cup  \gamma ^{2} \Big) \wedge \text{ dist} \Big( \eta (\tau^{\eta}_{a_{j}} ),  S^{3} [0, \tau ^{3}_{ a_{j} } ] \cup \gamma^{1} \Big) \ge 2^{-k} n \Big\}
\end{equation*}
be the first index $j$ such that both endpoints of $S^{3}$ and $\eta$ up to the first time they exit from $B ( a_{j} )$ have a distance $2^{-k} n$ from the other path. We write $\rho = \rho ^{\prime} \wedge \frac{k^{2}}{4}$. We set 
\begin{equation*}
D_{j} = \text{ dist } \Big( S^{3} ( \tau ^{3}_{ a_{j} } ),  \eta [0, \tau^{\eta}_{a_{j}}] \cup  \gamma ^{2} \Big) \wedge \text{ dist} \Big( \eta (\tau^{\eta}_{a_{j}} ),  S^{3} [0, \tau ^{3}_{ a_{j} } ] \cup \gamma^{1} \Big)
\end{equation*}
for the distance from the endpoints of $S^{3}$ and $\eta$ up to the first time they exit from $B ( a_{j} )$ to the other path. We will show that there is a universal constant $p > 0$ such that conditioned on $S^{3} [0, \tau ^{3}_{ a_{j} } ] $ and $ \eta [0, \tau^{\eta}_{a_{j}}]$, the conditional probability that $D_{j+1} \ge 2^{-k} n$ is at least $p$. To show it, take two paths $\lambda^{1}$ and $\lambda^{2}$ such that $\lambda^{i} (0) = w^{i} $, $\lambda^{i}$ lies in $B ( a_{j} )$ except its endpoint, and the endpoint of $\lambda^{i}$ lies in $\partial B ( a_{j} )$. We denote the endpoint of $\lambda^{i}$ by $v^{i}$. We are interested in the conditional probability that $D_{j+1} \ge 2^{-k} n$ conditioned that $S^{3} [0, \tau ^{3}_{ a_{j} } ] = \lambda^{1}$ and $ \eta [0, \tau^{\eta}_{a_{j}}] = \lambda^{2}$. Under this conditioning, by the strong Markov property, the conditional law of $S^{3}$ after time $\tau ^{3}_{ a_{j} }$ is just the law of a simple random walk started at $v^{1}$. For $\eta$, using the domain Markov property (see Proposition \ref{DMP}), conditioned that $ \eta [0, \tau^{\eta}_{a_{j}}] = \lambda^{2}$, the conditional law of $\eta$ after time $\tau^{\eta}_{a_{j}}$ is same as the law of the loop-erasure of a random walk $Y$ up to the first time that it exits from $B( n)$ conditioned that $Y [1, \tau^{Y}_{n} ] \cap (\gamma^{2} + \lambda^{2} ) = \emptyset$ (we write $\tau^{Y}_{n}$ for the first time that $Y$ exits from $B (n )$). We denote the loop-erasure of $Y [0, \tau^{Y}_{n} ]$ by $\eta'$. We attach $S^{3} [0, \tau ^{3}_{ a_{j+ 1} } ] $ (we assume $S^{3} (0) = v^{1}$ here) and $\eta' [0, \tau^{\eta'}_{ a_{j+ 1}} ]$ to $\gamma^{1} + \lambda^{1}$ and $\gamma^{2} + \lambda^{2}$ respectively in two cones as in \eqref{kusocity-1} and \eqref{kusocity}. By choosing those cones suitably and using Proposition \ref{Masson} as in \eqref{kusocity}, we see that there exists a universal constant $p > 0$ such that for all $\lambda^{i}$ as above,
\begin{equation}\label{perla}
P^{w^{1}, w^{2}} \Big( D_{j+1} \ge 2^{-k} n \ \big| \ S^{3} [0, \tau ^{3}_{ a_{j} } ] = \lambda^{1}, \ \eta [0, \tau^{\eta}_{a_{j}}] = \lambda^{2} \Big) \ge p.
\end{equation}
Using \eqref{perla} $\frac{k^{2}}{4}$ times, we see that there exist $0 < c, \delta < \infty$ such that 
\begin{equation}\label{iterate}
P^{w^{1},w^{2}} ( \rho = \frac{k^{2}}{4} ) \le c 2^{-\delta k^{2} }.
\end{equation}
On the event $\rho < \frac{k^{2}}{4}$, we have $D_{\rho} \ge 2^{-k} n$. The definition of $h_{k}$ gives that 
\begin{align*}
P^{w^{1},w^{2}}  \big( \overline{\textsf{Sep}}(n, \overline{\gamma}), \ A^{n} ( \overline {\gamma} ) \big) &\ge P^{w^{1},w^{2}}  \big( \overline{\textsf{Sep}}(n, \overline{\gamma}), \ A^{n} ( \overline {\gamma} ), \ \{ \rho < \frac{k^{2}}{4} \} \big) \\ 
&\ge h_{k} P^{w^{1},w^{2}}  \big( A^{n} ( \overline {\gamma} ), \ \{ \rho < \frac{k^{2}}{4} \} \big).
\end{align*}
However, \eqref{hn} and \eqref{iterate} imply that
\begin{equation*}
P^{w^{1},w^{2}}  \big( A^{n} ( \overline {\gamma} ), \ \{ \rho < \frac{k^{2}}{4} \} \big) \ge P^{w^{1},w^{2}}  \big( A^{n} ( \overline {\gamma} ) \big) - c 2^{-\delta k^{2} } \ge P^{w^{1},w^{2}} \big( A^{n} ( \overline {\gamma} ) \big) \big( 1- c 2^{-\delta k^{2} + \delta k } \big).
\end{equation*}
Therefore, \eqref{induction} follows with $\delta _{k} = c 2^{-\delta k^{2} + \delta k }$ and we finish the proof.
\end{proof}

As in Theorem 4.10 of \cite{Mas}, using a similar technique, one can prove a ``reverse" separation lemma as follows. Let $Z$ be a random walk started uniformly on $\partial B_{n}$ and conditioned to hit 0 before hitting the boundary of $B_{n}$. Let $W$ be the time reversal of $LE (S [0, \tau_{n}] )$ which is independent of $Z$. Note that $W$ is a process starting from $\partial B_{n}$ and its endpoint is the origin. We write $\sigma^{Z}_{k}$ for the first time that $Z$ hits $B (k )$ and define $\sigma^{W}_{k}$ similarly. For $k \le n$, we define the event $A (k)$ by
\begin{equation*}
A (k) = \{ Z [0, \sigma^{Z}_{k} ] \cap W [0, \sigma^{W}_{k} ] = \emptyset \}.
\end{equation*}
We are interested in the distance defined by
\begin{equation*}
D (k) = \min \{ \text{dist} ( Z( \sigma^{Z}_{k} ) , W [0, \sigma^{W}_{k} ] ), \text{dist} ( W (\sigma^{W}_{k}) , Z [0, \sigma^{Z}_{k} ] ) \}.
\end{equation*}

The next theorem says that the time reverse of a simple random walk and the time reverse of LERW that are conditioned not to intersect are likely to be not very close at their endpoints, which is referred to as the reverse separation lemma in \cite{Mas}.

\begin{thm}(Reverse Separation Lemma)\label{rev sep}
Let $d=3$. There exists a $c > 0$ such that for all $n$
\begin{equation}
P \Big( D (k) \ge c k \ \Big| \ A (k) \Big) \ge c.
\end{equation}
\end{thm}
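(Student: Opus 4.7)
The plan is to transplant the scale-by-scale induction used in the proof of Theorem \ref{sep lem 3dim}, reading $Y$ for $X_1$ and $X$ for $X_2^{\diamondsuit}$. Two new inputs are required: a domain-Markov description of the time-reversed LERW $X$, and cone/tube estimates valid both for the Doob-conditioned walk $Y$ and for $X$ in slit domains.

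For the domain-Markov description, I would invoke reversibility of loop-erased random walk via Wilson's algorithm rooted at $0$: the LERW $S^{\diamondsuit}_n$ from $0$ to $\partial {\cal B}(n)$ is equivalently generated by running SRW from a uniform point of $\partial {\cal B}(n)$ conditioned to hit $0$ before leaving ${\cal B}(n)$ and then loop-erasing; its time-reversal $X$ is accordingly the loop-erasure of this Doob $h$-transform of SRW from $\partial {\cal B}(n)$ down to $0$. Consequently, given $X[0, \xi^{\diamondsuit}(k)]$, the remainder of $X$ is distributed as the reversed LERW in the slit domain ${\cal B}(n) \setminus X[0, \xi^{\diamondsuit}(k))$ from the current tip to $0$. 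This is the exact analog of the domain-Markov property for the infinite LERW used in the proof of Theorem \ref{sep lem 3dim}.

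With this in hand I would set up an iteration parallel to the proof of Theorem \ref{sep lem 3dim}. For $l \le k/4$ and admissible pairs $\overline{\gamma} = (\gamma^1, \gamma^2) \in \Gamma'(l)$ describing the joint evolutions of $(Y, X)$ up to $\partial {\cal B}(l)$, define the reverse non-intersection event $A^{2l}_{\text{rev}}(\overline{\gamma})$ (continuations reach $\partial {\cal B}(2l)$ remaining simple and mutually non-intersecting) and the separation event $\textsf{Sep}_{\text{rev}}(l)$ exactly as in \eqref{sep}. Let $h_n$ be the infimum of $P(\textsf{Sep}_{\text{rev}}(l) \mid A^{2l}_{\text{rev}}(\overline{\gamma}))$ over $l \ge 2^{n-1}$ and $\overline{\gamma}$ with initial separation $D(\overline{\gamma})/l \ge 2^{-n}$. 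The two estimates $h_n \ge c\,2^{-\alpha n}$ (forcing $Y$ and $X$ into disjoint cones) and $h_{n+1} \ge h_n(1-\delta_n)$ with summable $\delta_n$ (iterated TUBE-propagation across the scales $a_j = (1+r)l + 4j\cdot 2^{-n}l$) then yield $\inf_n h_n > 0$ exactly as before. The tube estimates for $X$ use Proposition \ref{Masson} together with the domain-Markov reduction to a slit-domain problem, while Lemma \ref{reflect2} carries over verbatim since it rests only on a symmetry of $\mathbb{Z}^3$. Theorem \ref{rev sep} then follows by applying this single-scale separation at scale $k/2$ together with an up-to-constants estimate analogous to Corollary \ref{cor1}.

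The principal obstacle is the comparison between the conditioned walk $Y$ and ordinary SRW. All the cone, tube and boundary-hitting estimates we wish to invoke are proved for SRW; their transfer to $Y$ requires that the $h$-transform by $x \mapsto G_{{\cal B}(n)}(x,0)$ has transition probabilities uniformly comparable to those of SRW on annuli ${\cal B}(2l) \setminus {\cal B}(l/2)$ with $l \le k \le n/2$. This in turn reduces to the Green's function ratio $G_{{\cal B}(n)}(x,0)/G_{{\cal B}(n)}(y,0)$ being bounded above and below uniformly for $x,y$ in such an annulus, which is standard in three dimensions once one stays away from the singularity at $0$. Once this comparison is in place, the scale-by-scale induction of Theorem \ref{sep lem 3dim} runs with essentially no structural change.
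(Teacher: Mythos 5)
The paper gives no proof of Theorem \ref{rev sep}; it simply cites Theorem~4.10 of Masson's paper and states that ``the same technique'' applies. Your proposal is an outline of exactly that technique, and you correctly isolate the two genuinely new ingredients one needs on top of the forward Theorem \ref{sep lem 3dim}: (i) the reversibility of loop-erased random walk via Wilson's algorithm, which gives a domain Markov property for the time-reversed LERW $X$; and (ii) the comparability of the Doob $h$-transformed walk $Y$ with unconditioned SRW on annuli, reduced to the boundedness of the ratio $G_{{\cal B}(n)}(x,0)/G_{{\cal B}(n)}(y,0)$ for $x,y$ in an annulus of scale $\le n/2$. These match the two additional inputs Masson uses in the two-dimensional case and are indeed the correct ones in $\mathbb{Z}^{3}$.

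One concrete flaw in the write-up: you have transplanted the forward iteration literally, so your paths, started from a pair $\overline{\gamma}\in\Gamma'(l)$ with $l\le k/4$, are asked to ``reach $\partial {\cal B}(2l)$ remaining non-intersecting'' and the separation event $\textsf{Sep}_{\text{rev}}(l)$ is taken verbatim from \eqref{sep}, which demands $Y[0,\xi(2l)]$ exit ${\cal B}(2l)$ into the half-space $I(4l/3)$. But $Y$ and $X$ both travel \emph{inward} from $\partial {\cal B}(n)$ towards $0$; once they are inside ${\cal B}(l)$ they cannot re-exit ${\cal B}(2l)$, so as written these events are empty. The iteration has to be carried out with the annuli traversed in the opposite direction: the configuration $\overline{\gamma}$ should record the paths down to $\partial {\cal B}(l)$ for $l$ \emph{between} $k$ and $n$, the continuation event should take them inward from $\partial {\cal B}(l)$ to $\partial {\cal B}(l/2)$, and $\textsf{Sep}_{\text{rev}}$ should open disjoint cones towards the origin in ${\cal B}(l)\setminus {\cal B}(l/2)$, with the induction variable decreasing from scale $\approx n$ to scale $k$. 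Correspondingly, the initial bound $h_{n}\ge c\,2^{-\alpha n}$ is a cone estimate into the interior, and Lemma \ref{reflect2} is used to control the inward tubes. This is a misorientation rather than a missing idea --- the Green's function comparison, the domain Markov property for $X$ on the slit domain ${\cal B}(n)\setminus X[0,\xi^{\diamondsuit}(l))$, and the iterated-tube propagation are each invoked in the right place --- but the scales in your sketch must be reversed before the argument runs.
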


\subsection{Escape probabilities}
In this subsection, we will study the probability that a simple random walk and an independent LERW do not intersect up to the first time that they exit from a large ball. This probability is referred to as an escape probability in \cite{Mas} and \cite{BM}. In order to establish exponential tail bounds on the length of LERW, escape probabilities are key tools and those were used to give a bound on the $k$-th moment of the length of LERW for $d=2$ in \cite{BM}, see (1.5) of \cite{BM}. Several estimates on escape probabilities derived in \cite{Mas} were needed to give such higher moment estimates of the length of LERW in \cite{BM}. The purpose of this subsection is to establish those estimates on escape probabilities on various scales in 3 dimensions (see Proposition \ref{up-to-const lew}, Proposition \ref{up to const indep1} and Proposition \ref{up to const indep2}). The separation lemmas (Theorem \ref{sep lem 3dim} and Theorem \ref{rev sep}) allow to achieve them.

In order to define escape probabilities, we start by giving some definitions. We consider a path $\lambda$ and a point $z \in \mathbb{Z}^{3}$. Take two integers $m \le n$. We write $\eta^{1}_{z, m} (\lambda )$ for the path $\lambda$ up to the first time that $\lambda$ exits from $B (z, m)$. Namely if we let $u = \inf \{ j \ | \ \lambda (j) \notin B (z, m) \}$ then $\eta^{1}_{z, m} (\lambda ) = \lambda [0, u]$. We write $\eta^{1}_{m} (\lambda )$ for $\eta^{1}_{0, m} (\lambda )$. We set $s = \inf \{ j \ | \ \lambda (j) \notin B (z, n) \}$ for the first time $\lambda$ exits from $B (z, n)$ and set $t = \sup \{ j \le s \ | \ \lambda (j) \in B (z, m ) \}$ for its last visit to $B (z, m )$ up to time $s$. We let $\eta^{2}_{z, m, n} ( \lambda ) = \lambda [t, s]$ be the path $\lambda$ between the last visit to $B (z, m )$ and the first time that it exits from $B (z, n)$. Again we write $\eta^{2}_{m, n} ( \lambda )$ for $\eta^{2}_{0, m, n} ( \lambda )$ (see Figure 3 for $\eta^{1}$ and $\eta^{2}$).

Now we define escape probabilities as follows. Suppose that $S^{1}$ and $S^{2}$ are independent simple random walks in $\mathbb{Z}^{3}$. Recall that $A^{n}$ is the event that $S^{1}$ up to the first time that $S^{1}$ exits from $B (n)$ and the loop-erasure of $S^{2}$ up to the first time it exits from $B (n)$ do not intersect, see \eqref{tabaco}. We also recall that we write $\gamma^{\infty}$ for the infinite loop-erased random walk and that $A^{n}_{\infty}$ stands for the event that $S^{1}$ up to the first time that $S^{1}$ exits from $B (n)$ and the infinite LERW up to the first time it exits from $B (n )$ do not intersect, see \eqref{higanai}. We set
\begin{equation*}
Es (n) = P ( A^{n} ),
\end{equation*}
and set 
\begin{equation*}
Es^{\infty} (n) = P ( A^{n}_{\infty} ).
\end{equation*}
We take two integers $m \le n$. We are also interested in the probability that $S^{1}$ up to $\tau^{1}_{n}$ and the loop-erasure of $S^{2}$ up to $\tau^{2}_{n}$ from its last visit to $B (m )$ do not intersect. Namely, we let 
\begin{equation*}
Es (m, n) = P \Big( S^{1} [1, \tau^{1}_{n}] \cap \eta^{2}_{m, n} \big( LE ( S^{2} [0, \tau^{2}_{n} ] ) \big) = \emptyset \Big).
\end{equation*}

We will give various relations between the escape probabilities on various scales. We start by proving the following proposition which says that $Es (n)$ is comparable to $Es (4n)$ and $Es^{\infty} (n)$. This proposition is an analog of Lemma 5.1 of \cite{Mas} in 3 dimensions.

\begin{figure}
\begin{center}
 %\vspace{-7.0mm}

 \includegraphics[width=8cm]{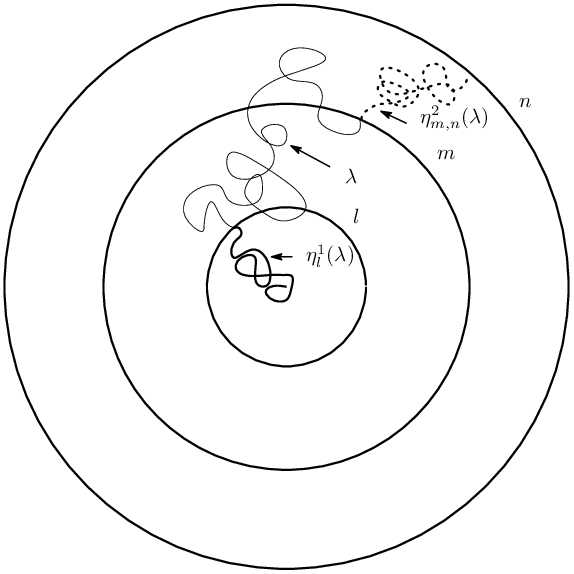}

\end{center}

 \caption{A thick curve is $\eta^{1}_{l} (\lambda )$. A thick dotted curve is $\eta^{2}_{m,n} ( \lambda )$. }

 \end{figure}

\begin{prop}\label{up-to-const lew}
Let $d=3$. Then we have
\begin{equation}
Es^{\infty} (n) \asymp Es (4n) \asymp Es^{\infty} (4n).
\end{equation}
\end{prop}

\begin{proof}
We will first show that $Es^{\infty} (n) \asymp Es^{\infty} (4n)$. The definition of $Es^{\infty} (n)$ immediately gives that $Es^{\infty} (n) \ge Es^{\infty} (4n)$. So we need to show that $Es^{\infty} (4n) \ge c Es^{\infty} (n)$ for some $c > 0$. Recall that we write $\gamma^{\infty}$ for the infinite loop-erased random walk and write $\tau^{\infty }_{n}$ for the first time that it exits from $B (n )$ (see \eqref{san-bon}). We also recall that $\overline{\textsf{Sep}}_{\infty} (n) $ stands for the event that $S^{1}[0, \tau^{1}_{n}]$ and $\gamma^{\infty} [0, \tau^{\infty}_{n} ]$ are ``well-separated" (see \eqref{sep-for-lerwtachi-2} for $\overline{\textsf{Sep}}_{\infty} (n) $). By Theorem \ref{sep lem 3dim}, we see that 
\begin{equation*}
P \big( \overline{\textsf{Sep}}_{\infty} (n), \ A^{n}_{\infty} \big) \ge c P ( A^{n}_{\infty} ).
\end{equation*}
With this in mind, we condition that $( S^{1}[0, \tau^{1}_{n}], \gamma^{\infty} [0, \tau^{\infty}_{n} ]) = (\gamma^{1}, \gamma^{2} )$ so that $(\gamma^{1}, \gamma^{2} )$ satisfies $\overline{\textsf{Sep}}_{\infty} (n)$ and $A^{n}_{\infty}$. Namely $\gamma^{1}$ and $\gamma^{2}$ are well separated and they do not intersect. We write $w^{i}$ for the endpoint of $\gamma^{i}$. By the strong Markov property, under this conditioning, the law of $S^{1}$ after $\tau^{1}_{n}$ is same as the law of a simple random walk started at $w^{1}$. The domain Markov property (see Proposition \ref{DMP}) ensures that conditioned that $\gamma^{\infty} [0, \tau^{\infty}_{n} ] = \gamma^{2}$, the law of $\gamma^{\infty}$ after $\tau^{\infty}_{n}$ is given by the law of the loop-erasure of a random walk $X$ starting from $w^{2}$ conditioned that $X[1, \infty )$ do not hit $\gamma^{2}$. We write $\ell^{i}$ for the infinite half line starting from $w^{i}$ which is orthogonal to the $yz$-plane and lies in $B (n )^{c}$. We let $G^{i} := \{ x \in \mathbb{R}^{3} \ | \ \text{dist} (x, \ell^{i} ) \le \frac{n}{8} \}$ be a $\frac{n}{8}$-neighborhood of $\ell^{i}$. Then it is easy to see that 
\begin{equation}\label{hideosoi}
P^{w^{1}} \big( S^{1} [0, \tau^{1}_{4n} ] \subset G^{1} \big) \ge c,
\end{equation}
for some $c > 0$. We let $\eta := LE ( X [0, \infty ) )$ be the loop-erasure of the conditioned random walk $X$. We want to show that 
\begin{equation}\label{hidehamadanano}
P^{w^{2}} \big( \eta [0, \tau^{\eta}_{4n} ] \subset G^{2} \big) \ge c,
\end{equation}
where $\tau^{\eta}_{4n}$ stands for the first time that $\eta$ exits from $B ( 4n)$. We let $\tau^{X}_{m}$ be the first exit time for $X$ similarly. Suppose that $X [0, \tau^{X}_{8n}] \subset G^{2}$ and that $X[\tau^{X}_{8n}, \infty ) \cap B (4n) = \emptyset$. Then we have $\eta [0, \tau^{\eta}_{4n} ] \subset G^{2}$. Therefore the probability in \eqref{hidehamadanano} is bounded below by
\begin{equation}\label{naniyatteru}
P^{w^{2}} \Big( X [0, \tau^{X}_{8n}] \subset G^{2}, \ X[\tau^{X}_{8n}, \infty ) \cap B (4n) = \emptyset \Big).
\end{equation}
By definition of $X$, the probability of \eqref{naniyatteru} is bounded below by
\begin{equation}\label{huzakeruna}
\frac{P^{w^{2}} \Big( S^{2} [1, \infty) \cap \gamma^{2} = \emptyset, \ S^{2} [0, \tau^{2}_{8 n} ] \subset G^{2}, \ S^{2}[\tau^{2}_{8 n}, \infty) \cap B (4 n) = \emptyset \Big)}{P^{w^{2}} \big( S^{2} [1, \tau^{2}_{\frac{n}{16}}] \cap \gamma^{2} = \emptyset \big) }.
\end{equation}
Using Proposition \ref{Masson} along with Proposition 1.5.10 of \cite{Law b}, we see that the ratio of \eqref{huzakeruna} is bounded below by a constant $c > 0$. So we get \eqref{hidehamadanano}. The separation event ensures that $G^{i} \cap (\gamma^{3-i} \cup G^{3-i} ) = \emptyset$ for each $i =1, 2$. Therefore by attaching $S^{1} [0, \tau^{1}_{4n} ]$ (we assume $S^{1} (0) = w^{1}$ here) and $\eta [0, \tau^{\eta}_{4n} ]$ to $S^{1}[0, \tau^{1}_{n}]$ and $\gamma^{\infty} [0, \tau^{\infty}_{n} ]$ respectively as in \eqref{hideosoi} and \eqref{hidehamadanano}, we see that $Es^{\infty} (4n) \ge c Es^{\infty} (n)$. So we get $Es^{\infty} (n) \asymp Es^{\infty} (4n)$.

We will next show $Es^{\infty} (n) \asymp Es (4n)$. To achieve it, by Corollary 4.5 of \cite{Mas}, it suffices to prove that 
\begin{equation}\label{taimingu}
P \Big( S^{1} [1, \tau^{1}_{n} ] \cap \eta^{1}_{n} \big( LE (S^{2} [0, \tau^{2}_{4n}] ) \big) = \emptyset \Big) \asymp P (A^{4n} ).
\end{equation}
(Recall that $\eta^{1}$ was defined at the beginning of this subsection.) It is clear that the left hand side of \eqref{taimingu} is bounded below by the right hand side. To prove the other inequality, we use the separation lemma (see Theorem \ref{sep lem 3dim}) again. By Theorem \ref{sep lem 3dim}, conditioned on $S^{1} [1, \tau^{1}_{n} ] \cap \eta^{1}_{n} \big( LE (S^{2} [0, \tau^{2}_{4n}] ) \big) = \emptyset$, with positive conditional probability they are separated. Then we can attach paths to them as above, and conclude \eqref{taimingu}. We leave the details to the reader.
\end{proof}

Take two integers $m \le n$. We will next relates $Es (n)$ with the product of $Es (m )$ and $Es (m, n )$. Namely we will show that $Es (n ) \asymp Es (m ) Es (m, n)$ in Proposition \ref{up to const indep1} and Proposition \ref{up to const indep2}. We start by proving $Es (n)$ is bounded above by $C Es (m ) Es (m, n)$ for some $C  < \infty$.

\begin{prop}\label{up to const indep1}
Let $d=3$. There exists $C < \infty$ such that for all $m, n$ with $m \le n$,
\begin{equation}
Es (n) \le C Es (m) Es (m,n).
\end{equation}
\end{prop}

\begin{proof}
We set $\eta^{1} = \eta^{1}_{\frac{m}{4}} \big( LE ( S^{2} [0, \tau^{2}_{n} ] ) \big)$ for $LE ( S^{2} [0, \tau^{2}_{n} ] )$ up to its first exit time of $B (\frac{m}{4} )$. We also write $\eta^{2} = \eta^{2}_{m, n} \big( LE ( S^{2} [0, \tau^{2}_{n} ] ) \big)$ for $LE ( S^{2} [0, \tau^{2}_{n} ])$ from its last visit to $B ( m)$. The definition of $Es (n)$ gives that
\begin{equation}\label{nagaburo}
Es (n ) \le P \Big( S^{1} [1, \tau^{1}_{\frac{m}{4}} ] \cap \eta^{1} = \emptyset, \ S^{1} [\tau^{1}_{\frac{m}{4}}, \tau^{1}_{n} ] \cap \eta^{2} = \emptyset \Big).
\end{equation}
Using the strong Markov property first and then applying the discrete Harnack principle (see Theorem 1.7.6 of \cite{Law b}), we see that the probability in the right hand side of \eqref{nagaburo} is bounded above by
\begin{equation}\label{nagaburo-1}
 C E_{2} \Big\{ P_{1} \big( S^{1} [1, \tau^{1}_{\frac{m}{4}} ] \cap \eta^{1} = \emptyset \big)  P_{1} \big( S^{1} [1, \tau^{1}_{n} ] \cap \eta^{2} = \emptyset \big) \Big\}.
\end{equation}
(Recall that we write $P_{i}$ for the probability of $S^{i}$ and write $E_{i}$ for its expectation.) However, Proposition 4.6 of \cite{Mas} shows that $\eta^{1}$ and $\eta^{2}$ are independent ``up to constant" (see Proposition 4.6 of \cite{Mas} for it). From this, it follows that the quantity of \eqref{nagaburo-1} is bounded above by
\begin{equation}\label{nagaburo-2}
 C P \big( S^{1} [1, \tau^{1}_{\frac{m}{4}} ] \cap \eta^{1} = \emptyset \big)  P \big( S^{1} [1, \tau^{1}_{n} ] \cap \eta^{2} = \emptyset \big) = C P \big( S^{1} [1, \tau^{1}_{\frac{m}{4}} ] \cap \eta^{1} = \emptyset \big) Es (m, n).
\end{equation}
Corollary 4.5 of \cite{Mas} gives that the distribution of $\eta^{1}$ is same as the distribution of $\gamma^{\infty} [0, \tau^{\infty}_{\frac{m}{4}}]$ up to multiplicative constants (see Corollary 4.5 of \cite{Mas} for this). Therefore we have 
\begin{equation}\label{nagaburo-3}
P \big( S^{1} [1, \tau^{1}_{\frac{m}{4}} ] \cap \eta^{1} = \emptyset \big) \asymp P \big( S^{1} [1, \tau^{1}_{\frac{m}{4}} ] \cap \gamma^{\infty} [0, \tau^{\infty}_{\frac{m}{4}}] = \emptyset \big) = Es^{\infty} ( \frac{m}{4} ).
\end{equation}
Finally, Proposition \ref{up-to-const lew} shows that $Es^{\infty} ( \frac{m}{4} ) \asymp Es (m )$ and we finish the proof. 
\end{proof}

In order to prove that $Es (n) \ge c Es (m) Es (m,n)$, we need the next lemma. The next lemma estimates the conditional probability that a random walk lies in a given set conditioned that the random walk avoids some sets and that the endpoint of the random walk is equal to a given point. This lemma is an analog of Corollary 3.8 of \cite{Mas} in 3 dimensions. To state the lemma, we start by giving some definitions.

Take $\kappa \in (0, 1)$. We write $\pi (\kappa ) = \{ x = (x_{1}, x_{2}, x_{3} ) \in \mathbb{R}^{3} \ | \ x_{1} = \kappa \}$ for the plane which is orthogonal to the $x_{1}$-axis and has a distance $\kappa$ from the origin. We set $H ( \kappa ) = \{ |x| < 1 \} \cap \pi ( \kappa )$ for the intersection of $\pi ( \kappa )$ and the unit open ball. Using this set, we define a cone $O (\kappa )$ by $O (\kappa ) = \{ r x \ | \ r > 0, \ x \in H ( \kappa ) \}$.

\begin{lem}\label{koremohitsuyou}
Let $d=3$. Suppose that $\kappa \in (0, 1)$ and $0 < a < 1 < b < \infty$. There exists a constant $c = c (\kappa, a, b )$ which depends on constants $\kappa$, $a$ and $b$ such that the following holds. We consider a subset $W$ of a cone defined by 
\begin{equation*}
W = \{ x \in O ( \kappa ) \ | \ an \le |x| \le 4 b n \}.
\end{equation*}
We take two subsets $K_{1}$ and $K_{2}$ satisfying that $K_{1} \subset B (n )$ and $K_{2} \cap B (4 n) = \emptyset$. We set $K = K_{1} \cup K_{2} \cup B (4 b n )^{c}$. Then it follows that for all $z \in \partial B (n )$ and $y \in \partial_{i} B ( 4 n)$ with $z, y \in O ( \frac{\kappa + 1}{2} )$, 
\begin{equation}\label{kataitai}
P^{z} \big( S [0, \sigma_{y} ] \subset W \ | \ \sigma_{y} < \sigma_{K} \big) \ge c. 
\end{equation}

\end{lem} 

\begin{proof}
We consider a subset $W'$ defined by
\begin{equation*}
W' = \{ x \in O ( \frac{3 \kappa + 1}{4} ) \ | \ an \le |x| \le 2 n \}.
\end{equation*}
Using Proposition \ref{Masson}, we see that 
\begin{equation*}
P^{z} \big( S[0, \tau_{2n} ] \subset W' \ | \ \tau_{2n} < \sigma_{K} \big) \ge c,
\end{equation*}
for some $c > 0$. Therefore, in order to prove \eqref{kataitai}, it suffice to show that there exists a $c > 0$ such that for all $w \in \partial B ( 2n ) \cap W'$
\begin{equation}\label{amedarui}
P^{w} \big( S [0, \sigma_{y} ] \subset W \ | \ \sigma_{y} < \sigma_{K} \big) \ge c. 
\end{equation}
However, by Lemma 3.1 of \cite{Mas}, we see that the probability in the left hand side of \eqref{amedarui} is equal to 
\begin{equation}\label{kasanai}
\frac{ G \big( w, w, W \cap (\{ y \} \cup K )^{c} \big)  P^{y} \big( \sigma_{w} < \tau_{W} \wedge \sigma_{K} \wedge \sigma_{y} \big) }{G \big( w, w, (\{ y \} \cup K )^{c} \big) P^{y} \big( \sigma_{w} <  \sigma_{K} \wedge \sigma_{y} \big)  }.
\end{equation}
(Recall that $G (\cdot, \cdot, W )$ stands for Green's function in $W$ and that $\sigma_{y} = \inf \{ j \ge 1 \ | \ S (j) = y \}$.) Note that by the transience of $S$ in 3 dimensions both Green's functions in \eqref{kasanai} are constants. Therefore, we need to show that 
\begin{equation}\label{kasanai-1}
\frac{   P^{y} \big( \sigma_{w} < \tau_{W} \wedge \sigma_{K} \wedge \sigma_{y} \big) }{ P^{y} \big( \sigma_{w} <  \sigma_{K} \wedge \sigma_{y} \big)  } \ge c.
\end{equation}
Using Proposition \ref{Masson} along with Proposition 1.5.10 of \cite{Law b}, we get \eqref{kasanai-1} and finish the proof of the lemma.
\end{proof}

Now we are ready to show that $Es (n ) \ge c Es (m ) Es (m, n)$ in the next proposition. In the proof of the proposition, we will use the separation lemmas (Theorem \ref{sep lem 3dim} and Theorem \ref{rev sep}) along with Lemma \ref{koremohitsuyou}.

\begin{prop}\label{up to const indep2}
Let $d=3$. There exists $c > 0$ such that for all $m, n$ with $m \le n$,
\begin{equation}
Es (n) \ge c Es (m) Es (m,n).
\end{equation}
\end{prop}

\begin{proof}
Recall that for $\kappa \in (0, 1)$ we define a cone $O (\kappa )$ just before stating Lemma \ref{koremohitsuyou}. Let 
\begin{equation*}
W := O (\frac{3}{4}) \cap \big( B (\frac{5 m}{4} ) \setminus B (\frac{m}{5} ) \big)
\end{equation*}
be a subset of a cone $O (\frac{3}{4})$ restricted to $\big( B (\frac{5 m}{4} ) \setminus B (\frac{m}{5} ) \big)$. We write $W_{-} = \{ (-x_{1}, x_{2}, x_{3} ) \ | \ (x_{1}, x_{2}, x_{3} ) \in W \}$ for the reflection of $W$ with respect to the $x_{2} x_{3}$-plane. We also define a set $W'$ by 
\begin{equation*}
W' := O (\frac{1}{2}) \cap \big( B (\frac{5 m}{4} ) \setminus B (\frac{m}{5} ) \big).
\end{equation*}
Note that $W \subset W'$. We write $W'_{-}$ for the reflection of $W'$ with respect to the $x_{2} x_{3}$-plane similarly.

Throughout the proof of this proposition, we write $\lambda = S^{1} [0, \tau^{1}_{n} ]$ for the path of $S^{1}$ up to $\tau^{1}_{n}$ and $\gamma = LE (S^{2} [0, \tau^{2}_{n} ] )$ for the loop-erasure of $S^{2}$ up to its first exit time of $B (n)$. We next define several random times as follows. We write $u_{1} = \tau^{1}_{\frac{m}{4}}$ and let $u_{2} = \max \{ j \le \tau^{1}_{n} \ | \ S^{1} (j) \in B (m ) \}$ be its last visit to $B ( m)$ up to $ \tau^{1}_{n}$. Similarly we write $t_{1} = \tau^{\gamma}_{\frac{m}{4}}$ and let $t_{2} = \max \{ j \le \tau^{\gamma}_{n} \ | \ \gamma (j) \in B (m ) \}$ be its last visit to $B ( m)$ up to the first time that $\gamma$ exits from $B (n)$. 

Suppose that all of the following three events are fulfilled:
\begin{itemize}
\item[(a)] $\lambda [1, u_{1}] \cap \gamma [0, t_{1} ] = \emptyset$, $\lambda [0, u_{1}] \subset B ( \frac{m}{5} ) \cup W_{-}$ and $\gamma [0, t_{1} ] \subset B ( \frac{m}{5} ) \cup W$.

\item[(b)] $\lambda [u_{1}, u_{2} ] \subset W'_{-}$ and $\gamma [t_{1}, t_{2} ] \subset W'$.

\item[(c)] $\lambda (u_{2} ) \in W_{-}$, $\gamma (t_{2} ) \in W$ and $\Big( \lambda [u_{2}, \text{len} (\lambda ) ] \cup W'_{-} \Big) \cap \Big( \gamma [t_{2}, \text{len} (\gamma ) ] \cup W' \Big) = \emptyset$.
\end{itemize}
Then the definitions of $W, W', W_{-}$ and $W'_{-}$ ensure that $\lambda [1, \text{len} (\lambda ) ] \cap \gamma [0, \text{len} (\gamma ) ] = \emptyset$. Therefore we see that $Es (n)$ is bounded below by the probability that the events (a), (b) and (c) hold. So we need to estimate $P \Big( \text{(a)}, \ \text{(b)}, \ \text{(c)} \Big)$.

Conditioned that $\lambda [0, u_{1}]$ and $\gamma [0, t_{1} ]$ satisfy (a), and that $\lambda [u_{2}, \text{len} (\lambda ) ]$ and $\gamma [t_{2}, \text{len} (\gamma ) ]$ satisfy (c), then by using the strong Markov property and the domain Markov property (see Proposition \ref{DMP}) along with Lemma \ref{koremohitsuyou}, the conditional probability that the event (b) holds is bounded below by some constant $ c > 0$. Namely we have
\begin{equation*}
P \Big( \text{(b)} \ \big| \ \text{(a)}, \ \text{(c)} \Big) \ge c,
\end{equation*}
which gives that $Es (n ) \ge c P \Big(  \text{(a)}, \ \text{(c)} \Big)$. However Proposition 4.6 of \cite{Mas} ensures that the event (a) and (c) are independent up to multiplicative constants (see Proposition 4.6 of \cite{Mas} for this). Thus we see that $Es (n ) \ge c P \big(  \text{(a)}  \big) P \big(  \text{(c)} \big)$. So in order to prove this proposition, it suffices to show that 
\begin{align}
&P \big(  \text{(a)}  \big) \ge c Es (m) \label{pukosuke-sanpo} \\
&P \big(  \text{(c)}  \big) \ge c Es (m, n) \label{pukosuke-sanpo-1}.
\end{align}

We will first show \eqref{pukosuke-sanpo} using Theorem \ref{sep lem 3dim}.
Recall that Theorem \ref{sep lem 3dim} shows that conditioned on $\lambda [1, u_{1}] \cap \gamma [0, t_{1} ] = \emptyset$, then they are well-separated with positive conditional probability. Therefore Theorem \ref{sep lem 3dim} gives that $P \big(  \text{(a)}  \big)$ is bounded below by $c P \big( \lambda [1, u_{1}] \cap \gamma [0, t_{1} ] = \emptyset )$. But using Corollary 4.5 of \cite{Mas}, we can replace $\gamma [0, t_{1} ]$ by $\gamma^{\infty} [0, \tau^{\infty}_{\frac{m}{4}} ]$ (recall that $\gamma^{\infty}$ stands for the infinite LERW). So $P \big( \lambda [1, u_{1}] \cap \gamma [0, t_{1} ] = \emptyset )$ is bounded below by $c P \big( \lambda [1, u_{1}] \cap \gamma^{\infty} [0, \tau^{\infty}_{\frac{m}{4}}] = \emptyset \big)$ which is equal to $c Es^{\infty} (\frac{m}{4})$. Finally, it follows from Proposition \ref{up-to-const lew} that $Es^{\infty} (\frac{m}{4}) \asymp Es (m)$ which gives \eqref{pukosuke-sanpo}.

We will prove \eqref{pukosuke-sanpo-1} by using Theorem \ref{rev sep}. To achieve it, we first set $u_{3} = \max \{ j \le \text{len} (\lambda ) \ | \ \lambda (j) \in B ( 2 m ) \}$ and $t_{3} = \max \{ j \le \text{len} (\gamma ) \ | \ \gamma (j) \in B ( 2 m ) \}$ for the last time that $\lambda$ and $\gamma$ visit to $B (2m)$. We let $d := \text{dist} \big( \lambda (u_{3} ), \gamma [t_{3}, \text{len} (\gamma ) ] ) \wedge \text{dist} \big( \gamma (t_{3} ), \lambda [u_{3}, \text{len} (\lambda ) ] )$ be the distance between the endpoint and the other path. Then Theorem \ref{rev sep} gives that 
\begin{equation*}
P \Big( \lambda [u_{3}, \text{len} (\lambda ) ] \cap \gamma [t_{3}, \text{len} (\gamma ) ] = \emptyset, \ d \ge c m \Big) \ge c P \Big( \lambda [u_{3}, \text{len} (\lambda ) ] \cap \gamma [t_{3}, \text{len} (\gamma ) ] = \emptyset \Big),
\end{equation*}
for some $c > 0$. Conditioned that $\lambda [u_{3}, \text{len} (\lambda ) ] \cap \gamma [t_{3}, \text{len} (\gamma ) ] = \emptyset$ and $d \ge c m$, by using the strong Markov property and the domain Markov property (see Proposition \ref{DMP}) along with Proposition \ref{Masson}, we can find two subset $J_{1}$ and $J_{2}$ which satisfy the following conditions:
\begin{itemize}
\item $\lambda  ( u_{3} ) \in J_{1}$ and $\gamma  ( t_{3} ) \in J_{2}$.

\item $\big( J_{1} \cup W'_{-} \big) \cap \big( J_{2} \cup W' \big) = \emptyset$, $\big( J_{1} \cap B (\frac{5 m}{4} ) \big) \subset W_{-}$ and $\big( J_{2} \cap B (\frac{5 m}{4} ) \big) \subset W$.

\item $P \Big( \lambda [u_{2}, u_{3} ] \subset J_{1}, \ \gamma [ t_{2}, t_{3} ] \subset J_{2} \ \big| \ \lambda [u_{3}, \text{len} (\lambda ) ] \cap \gamma [t_{3}, \text{len} (\gamma ) ] = \emptyset, \ d \ge c m \Big) \ge c $.
\end{itemize}
Suppose that $\lambda [u_{3}, \text{len} (\lambda ) ] \cap \gamma [t_{3}, \text{len} (\gamma ) ] = \emptyset$, $d \ge c m$, and $\lambda [u_{2}, u_{3} ] \subset J_{1}, \ \gamma [ t_{2}, t_{3} ] \subset J_{2}$. Then we see that the event (c) holds. Therefore the probability of the event (c) is bounded below by
\begin{equation*}
P \Big( \lambda [u_{2}, u_{3} ] \subset J_{1}, \ \gamma [ t_{2}, t_{3} ] \subset J_{2}, \ \lambda [u_{3}, \text{len} (\lambda ) ] \cap \gamma [t_{3}, \text{len} (\gamma ) ] = \emptyset, \ d \ge c m \Big).
\end{equation*}
But we have already proved that the probability above is bounded below by
\begin{equation}\label{osanpo-i}
P \Big( \lambda [u_{3}, \text{len} (\lambda ) ] \cap \gamma [t_{3}, \text{len} (\gamma ) ] = \emptyset, \ d \ge c m \Big) \ge c P \Big( \lambda [u_{3}, \text{len} (\lambda ) ] \cap \gamma [t_{3}, \text{len} (\gamma ) ] = \emptyset \Big).
\end{equation}
The definition of the escape probability immediately gives that the right hand side of \eqref{osanpo-i} is bounded below by $Es (2m , n)$. Consequently we get $Es (n ) \ge c Es (m ) Es (2m, n)$ (we let $Es (k, l) = 1$ for $k > l$ here). But Proposition \ref{up-to-const lew} shows that $Es (m ) \asymp Es (2m )$ and thus we have $Es (n ) \ge c Es ( 2m ) Es (2m, n)$. Replacing $m$ by $\frac{m}{2}$, we finish the proof. 
\end{proof}

From Proposition \ref{up to const indep1} and Proposition \ref{up to const indep2}, we see that $Es (n) \asymp Es (m) Es (m, n)$. Recall that $Es (m, n) $ stands for the probability that a simple random walk up to its first exit of $B (n)$ and the loop-erasure of an independent simple random walk up to the first exit of $B (n)$ after last time that the loop-erasure visits to $B (m)$ do not intersect. We want to show that this probability is comparable to the probability that a simple random walk starting from $ (-m, 0, 0)$ up to its first exit of $B (n)$ and the loop-erasure of an independent simple random walk starting from $(m, 0, 0)$ up to the first time that the simple random walk exits from $B (n)$ do not intersect. Using the separation lemma and attaching paths as in the proof of Proposition \ref{up to const indep2}, we have the following proposition, which will be used in the next section.

\begin{prop}\label{step1}
Let $d=3$. We write $x^{n} = (2^{n}, 0, 0)$ for a pole of $B (2^{n} )$. Then there exists $c>0$ such that for each $k, m, n$, 
\begin{align}\label{jikotta}
&c P^{-x^{k}, x^{k}} \Big( S^{1} [0, \tau^{1}_{2^{k + n}}] \cap LE ( S^{2} [0, \tau^{2}_{2^{k + n}}] ) = \emptyset \Big) \notag \\
&  \ \ \ \ \times P^{-x^{k+n}, x^{k+n}} \Big( S^{1} [0, \tau^{1}_{2^{k + n + m}}] \cap LE ( S^{2} [0, \tau^{2}_{2^{k + n + m}}] ) = \emptyset \Big) \notag \\
&\le P^{-x^{k}, x^{k}} \Big( S^{1} [0, \tau^{1}_{2^{k + n + m}}] \cap LE ( S^{2} [0, \tau^{2}_{2^{k + n + m}}] ) = \emptyset \Big) \notag \\
&\le \frac{1}{c} P^{-x^{k}, x^{k}} \Big( S^{1} [0, \tau^{1}_{2^{k + n}}] \cap LE ( S^{2} [0, \tau^{2}_{2^{k + n}}] ) = \emptyset \Big) \notag \\
&  \ \ \ \ \times P^{-x^{k+n}, x^{k+n}} \Big( S^{1} [0, \tau^{1}_{2^{k + n + m}}] \cap LE ( S^{2} [0, \tau^{2}_{2^{k + n + m}}] ) = \emptyset \Big).
\end{align}
\end{prop}

\begin{proof}
Recall that $\eta^{2}_{\cdot , \cdot} ( \cdot )$ was defined in the beginning of this subsection. We let 
\begin{equation*}
Es (2^{k}, 2^{k+n}, 2^{k+n+m}) = P^{-x^{k}, x^{k}}  \Big( S^{1} [0, \tau^{1}_{2^{k + n + m}}] \cap \eta^{2}_{2^{k+n}, 2^{k+n+m}} \big( LE ( S^{2} [0, \tau^{2}_{2^{k + n + m}}] ) \big) = \emptyset \Big)
\end{equation*}
be the probability that $S^{1} [0, \tau^{1}_{2^{k + n + m}}]$ and $LE ( S^{2} [0, \tau^{2}_{2^{k + n + m}}] )$ after its last visit to $B (2^{k + n} )$ do not intersect. Similar arguments as in the proof of Proposition \ref{up to const indep1} and Proposition \ref{up to const indep2} allow to show that 
\begin{align}\label{okaeri}
&P^{-x^{k}, x^{k}} \Big( S^{1} [0, \tau^{1}_{2^{k + n + m}}] \cap LE ( S^{2} [0, \tau^{2}_{2^{k + n + m}}] ) = \emptyset \Big) \notag \\
&\asymp P^{-x^{k}, x^{k}} \Big( S^{1} [0, \tau^{1}_{2^{k + n}}] \cap LE ( S^{2} [0, \tau^{2}_{2^{k + n}}] ) = \emptyset \Big) Es (2^{k}, 2^{k+n}, 2^{k+n+m}).
\end{align}
Therefore in order to prove \eqref{jikotta}, it suffices to show that
\begin{equation}\label{omukae}
Es (2^{k}, 2^{k+n}, 2^{k+n+m}) \asymp P^{-x^{k+n}, x^{k+n}} \Big( S^{1} [0, \tau^{1}_{2^{k + n + m}}] \cap LE ( S^{2} [0, \tau^{2}_{2^{k + n + m}}] ) = \emptyset \Big).
\end{equation}

To show \eqref{omukae}, we will introduce two cubes as follows. Let $y = (y_{1}, 0, 0) := \frac{x^{k} + x^{k +n}}{2}$ be the middle point of $x^{k}$ and $x^{k+n}$. We write $W^{1} = \{ z = (z_{1}, z_{2}, z_{3} ) \in \mathbb{Z}^{3} \ \big| \ |z_{1} -y_{1} | \le 2^{k + n+ 2}, \ |z_{2} | \le 2^{k + n+ 2}, \ |z_{3}| \le 2^{k + n+ 2} \}$ for the cube of side length $2^{k + n+ 3}$ centered at $y$. We also write $W^{2}$ for the cube of side length of $2^{k + n+ m+ 3}$ centered at $y$. These definitions of $W^{1}$ and $W^{2}$ ensure that $B ( 2^{k+n} ) \subset W^{1} \subset B (2^{k+ n + 4} )$ and $B ( 2^{k+n + m} ) \subset W^{2} \subset B ( 2^{k+ n + m+ 4} )$. For a path $\lambda$, we define $\eta^{2}_{W^{1}, W^{2}} (\lambda)$ as follows. We let $s_{1} := \inf \{ j \ | \ \lambda (j) \notin W^{2} \}$ be the first time that $\lambda$ exits from $W^{2}$. We set $s_{2} := \sup \{ j \le s_{1} \ | \ \lambda (j) \in W^{1} \}$ for the last visit to $W^{1}$ up to time $s_{1}$. Then we write $\eta^{2}_{W^{1}, W^{2}} (\lambda) = \lambda [s_{2}, s_{1} ]$. We let $\tau^{l}_{W^{i}} = \inf \{ j \ | \ S^{l} (j) \notin W^{i} \}$ be the first time that $S^{l}$ exits from $W^{i}$ for each $i =1, 2$ and $l=1, 2$. Then similar arguments as in the proof of Proposition \ref{up to const indep1} and Proposition \ref{up to const indep2} give that
\begin{align*}
&P^{-x^{k}, x^{k}} \Big( S^{1} [0, \tau^{1}_{W^{2}} ] \cap LE \big( S^{2} [0, \tau^{2}_{W^{2}} ] \big) = \emptyset \Big) \\
&\asymp P^{-x^{k}, x^{k}} \Big( S^{1} [0, \tau^{1}_{W^{1}} ] \cap LE \big( S^{2} [0, \tau^{2}_{W^{1}} ] \big) = \emptyset \Big) \\
&\times P^{-x^{k}, x^{k}} \Big( S^{1} [0, \tau^{1}_{W^{2}} ] \cap \eta^{2}_{W^{1}, W^{2}} \big( LE \big( S^{2} [0, \tau^{2}_{W^{2}} ] \big) \big) = \emptyset \Big).
\end{align*}
This comparison is an analog of \eqref{okaeri} for cubes. 

We will next see that the non-intersecting probability up to the first exit of the cube is comparable to the non-intersecting probability up to the first exit of the ball. Namely Proposition \ref{up-to-const lew} gives that $P^{-x^{k}, x^{k}} \Big( S^{1} [0, \tau^{1}_{W^{2}} ] \cap LE \big( S^{2} [0, \tau^{2}_{W^{2}} ] \big) = \emptyset \Big)$ is comparable to $P^{-x^{k}, x^{k}} \Big( S^{1} [0, \tau^{1}_{2^{k + n + m}}] \cap LE ( S^{2} [0, \tau^{2}_{2^{k + n + m}}] ) = \emptyset \Big)$. Similarly we see that $P^{-x^{k}, x^{k}} \Big( S^{1} [0, \tau^{1}_{W^{1}} ] \cap LE \big( S^{2} [0, \tau^{2}_{W^{1}} ] \big) = \emptyset \Big)$ is comparable to 
\begin{equation*}
P^{-x^{k}, x^{k}} \Big( S^{1} [0, \tau^{1}_{2^{k + n}}] \cap LE ( S^{2} [0, \tau^{2}_{2^{k + n}}] ) = \emptyset \Big). 
\end{equation*}
Therefore it follows that $Es (2^{k}, 2^{k+n}, 2^{k+n+m})$ is comparable to 
\begin{equation}\label{obaa}
P^{-x^{k}, x^{k}} \Big( S^{1} [0, \tau^{1}_{W^{2}} ] \cap \eta^{2}_{W^{1}, W^{2}} \big( LE \big( S^{2} [0, \tau^{2}_{W^{2}} ] \big) \big) = \emptyset \Big).
\end{equation}
Namely we may consider the non-intersecting probability of $S^{1}$ up to $\tau^{1}_{W^{2}}$ and the loop-erasure of $S^{2}$ up to $\tau^{2}_{W^{2}}$ after its last visit to $W^{1}$ instead of $Es (2^{k}, 2^{k+n}, 2^{k+n+m})$. Since $P^{-x^{k+n}, x^{k+n}} \Big( S^{1} [0, \tau^{1}_{W^{1}} ] \cap LE \big( S^{2} [0, \tau^{2}_{W^{1}} ] \big) = \emptyset \Big)$ is bounded below by a constant, similar arguments as above show that 

$P^{-x^{k+n}, x^{k+n}} \Big( S^{1} [0, \tau^{1}_{2^{k + n + m}}] \cap LE ( S^{2} [0, \tau^{2}_{2^{k + n + m}}] ) = \emptyset \Big)$ is comparable to 
\begin{equation}\label{obaa-1}
P^{-x^{k+n}, x^{k+n}} \Big( S^{1} [0, \tau^{1}_{W^{2}} ] \cap \eta^{2}_{W^{1}, W^{2}} \big( LE \big( S^{2} [0, \tau^{2}_{W^{2}} ] \big) \big) = \emptyset \Big).
\end{equation}

Consequently in order to finish the proof, it suffices to show that the probability of \eqref{obaa} is comparable to the probability of \eqref{obaa-1}. To achieve it, we will use a simple reflection argument as follows. We start by replacing a staring point of $S^{1}$ by $y$. Since $\eta^{2}_{W^{1}, W^{2}} \big( LE \big( S^{2} [0, \tau^{2}_{W^{2}} ] \big)$ lies in $(W^{1})^{c}$, the discrete Harnack principle (see Theorem 1.7.6 of \cite{Law b}) shows that the probability of \eqref{obaa} is comparable to
\begin{equation}\label{obaa-2}
P^{y, x^{k}} \Big( S^{1} [0, \tau^{1}_{W^{2}} ] \cap \eta^{2}_{W^{1}, W^{2}} \big( LE \big( S^{2} [0, \tau^{2}_{W^{2}} ] \big) \big) = \emptyset \Big).
\end{equation}
Similarly we see that the probability of \eqref{obaa-1} is comparable to 
\begin{equation}\label{obaa-3}
P^{y, x^{k+n}} \Big( S^{1} [0, \tau^{1}_{W^{2}} ] \cap \eta^{2}_{W^{1}, W^{2}} \big( LE \big( S^{2} [0, \tau^{2}_{W^{2}} ] \big) \big) = \emptyset \Big).
\end{equation}
However, by using a reflection of paths with respect to the plane $\{ (y_{1}, z_{2}, z_{3} )  \ | \ (z_{2}, z_{3} ) \in \mathbb{Z}^{2} \}$ (recall that $y= (y_{1}, 0, 0)$ is the middle point of $x^{k}$ and $x^{k+n}$), we can conclude that the probability of \eqref{obaa-2} is equal to the probability of \eqref{obaa-3}, and thus we finish the proof.
\end{proof}

\section{Rate of convergence for $Es (n)$}
As we discussed at the beginning of the previous section, we want to establish exponential tail bounds on the length of LERW in 3 dimensions. In order to achieve it, we need to give bounds on the higher moments of the length of LERW in terms of the escape probability. To derive such moment estimates, it turns out that we need to show the existence of some exponent $\alpha > 0$ such that 
\begin{equation}\label{rate-of-conve-esp}
\lim_{n \to \infty} \frac{ \log Es (n)}{\log n} = - \alpha,
\end{equation}
for $d=3$. We point out that $\frac{1}{3} \le \alpha < 1$ if $\alpha$ exists (see \cite{Law5}). In this section we will prove that the limit in \eqref{rate-of-conve-esp} exists.

Before going to the proof, we will explain the strategy of it here. To show the existence of $\alpha$, we first consider 
\begin{equation}\label{ososugiru}
a_{m, n} = P^{-x^{n}, x^{n}} \Big( S^{1} [0, \tau^{1}_{2^{m + n}}] \cap LE ( S^{2} [0, \tau^{2}_{2^{m + n}}] ) = \emptyset \Big)
\end{equation}
the probability that $S^{1}$ starting from $-x^{n}$ up to $\tau^{1}_{2^{m + n}}$ and the loop-erasure of $S^{2}$ starting from $x^{n}$ up to $\tau^{2}_{2^{m + n}}$ do not intersect, which was considered in Proposition \ref{step1} (recall that $x^{n} = (2^{n}, 0, 0)$ is a pole of $B (2^{n})$). Fixing $m$, we are interested in the existence of limit of $a_{m, n}$ as $n \to \infty$. Suppose that $\frac{S^{1} [0, \tau^{1}_{2^{m + n}}]}{2^{n}}$ stands for the rescaled simple random walk obtained by multiplying $S^{1} [0, \tau^{1}_{2^{m + n}}]$ by $2^{-n}$. We also consider the rescaled LERW $\frac{LE ( S^{2} [0, \tau^{2}_{2^{m + n}}] )}{2^{n}}$ similarly. Note that the rescaled simple random walk and LERW start from $(-1, 0, 0)$ and $(1, 0, 0)$ respectively and run until the first exit from $B (2^{m})$. We also mention that the probability $a_{m, n}$ is equal to the probability that these rescaled walks do not intersect. Fixing $m$ and letting $n \to \infty$, we know that the rescaled simple random walk converges to a Brownian motion and that the limit of the rescaled LERW exists (see \cite{Koz}). Therefore it is natural to expect that the limit of $a_{m, n}$ as $n \to \infty$ exists for each fixed $m$. Unfortunately the existence of the limit of $a_{m, n}$ is not an immediate consequence of the existence of the scaling limit of LERW shown in \cite{Koz}. For the scaling limit of LERW in 3 dimensions, the topology considered in \cite{Koz} is the topology of the space of compact subsets with the Hausdorff metric, which is somewhat weak for our purpose. In addition to that, little is known about the scaling limit when $d=3$. Therefore in oder to prove the existence of the limit of $a_{m, n}$ as $n \to \infty$, we need more work and it will be done in the next subsection.

Once we have showed that the limit of $a_{m, n}$ exists as $n \to \infty$ for each $m$, we may write $a_{m} = \lim_{n \to \infty} a_{m, n}$ for its limit. Then Proposition \ref{step1} immediately gives that $a_{k + m} \asymp a_{k} a_{m}$. A standard subadditive argument shows that there exists $\alpha > 0$ such that $a_{m} \asymp 2^{- \alpha m}$. Using this, we will prove that $Es (n ) \approx n^{-\alpha}$ in Theorem \ref{main result-kaetta}.

 \subsection{$\lim_{ n \to \infty} a_{m, n}$ exists}
Recall that $a_{m , n}$ was defined as in \eqref{ososugiru}. the goal of this subsection is to prove that the limit of $a_{m , n}$ exists as $n \to \infty$ for each fixed $m$ in the next proposition. In the proof we will compare $a_{m , n}$ with the probability that a Wiener sausage and the loop-erasure of an independent simple random walk do not intersect. Some results derived in \cite{Koz} will be used to compare them.

 \begin{prop}\label{main lem}
 Let $d=3$. Recall that $a_{m , n}$ was defined as in \eqref{ososugiru}. Then for each $m$, the limit 
\begin{equation}
\lim_{n \to \infty} a_{m,n} =: a_m
\end{equation}
exists.
\end{prop}

\begin{proof}
For a path $\lambda$, $t \ge 0$ and $L>0 $, we write 
\begin{equation*}
\lambda_{L} [0, t] = \{ \lambda (s) + B (L) : 0 \le s \le t \}.
\end{equation*}
for a ``sausage" of $\lambda$, which is a set of points $x$ with $\text{dist} (x, \lambda) \le L$. 

Fix $m \in \mathbb{N}$. We start by showing that for each $0 < \epsilon < 1$, there exists $\delta > 0$ such that 
\begin{equation}\label{fat-11}
\big| a_{m, n}  - P^{-x^n , x^n } \Big( S^{1}_{2^{(1- \epsilon ) n}} [0 , \tau^{1}_{2^{n+m}}] \cap LE ( S^{2} [0, \tau^{2}_{2^{n+m}}] )  = \emptyset \Big) \big| \le c 2^{-\delta n},
\end{equation}
for large $n$. Namely we want to replace $S^{1}$ by its $2^{(1- \epsilon ) n}$-neighborhood in the non-intersecting event.

To show \eqref{fat-11}, note that the difference in the left side of \eqref{fat-11} is equal to the probability that $S^{1}$ and the loop-erasure of $S^{2}$ do not intersect while the sausage of $S^{1}$ and the loop-erasure of $S^{2}$ intersect. Therefore we have
\begin{align*}
&\big| a_{m, n}  - P^{-x^n , x^n } \Big( S^{1}_{2^{(1- \epsilon ) n}} [0 , \tau^{1}_{2^{n+m}}] \cap LE ( S^{2} [0, \tau^{2}_{2^{n+m}}] )  = \emptyset \Big) \big|  \\
&= P^{-x^n , x_n } \Big( S^{1} [0, \tau^{1}_{2^{m + n}}] \cap LE ( S^{2} [0, \tau^{2}_{2^{m + n}}] ) = \emptyset, \ S^{1}_{2^{(1- \epsilon ) n}} [0 , \tau^{1}_{2^{n+m}}] \cap LE ( S^{2} [0, \tau^{2}_{2^{n+m}}] )  \neq \emptyset \Big).
\end{align*}
With this in mind, we define two events $F_{1}$ and $F_{2}$ by
\begin{align*}
&F_{1} = \{ S^{1} [0, \tau^{1}_{2^{m + n}}] \cap LE ( S^{2} [0, \tau^{2}_{2^{m + n}}] ) = \emptyset \} \\
&F_{2} = \{ S^{1}_{2^{(1- \epsilon ) n}} [0 , \tau^{1}_{2^{n+m}}] \cap LE ( S^{2} [0, \tau^{2}_{2^{n+m}}] )  \neq \emptyset \}.
\end{align*}

We write $\gamma = LE ( S^{2} [0, \tau^{2}_{2^{m + n}}] )$ for the loop-erasure of $S^{2} [0, \tau^{2}_{2^{m + n}}]$. Suppose that $F_{1}$ and $F_{2}$ occur. Then $S^{1}$ has a point within a distance $2^{(1- \epsilon ) n}$ of $\gamma$ while $S^{1}$ and $\gamma$ do not intersect. Thus we see that the following event $F_{3}$ occurs:
\begin{equation*}
F_{3} = \{ \exists w \in B (2^{n+m})  \text{ s.t. } \text{dist} ( w,  \gamma  ) \le 2^{(1- \epsilon ) n}, \  \sigma^{1}_{w} \le \tau^{1}_{2^{n+m}}, \ S^{1}[0 , \tau^{1}_{2^{n+m}}] \cap \gamma = \emptyset \},
\end{equation*}
where we recall that $\sigma^{1}_{w} = \inf \{ j \ge 1 \ | \ S^{1} (j) = w \}$ stands for the first time that $S^{1}$ hits $w$. 

We first want to show that with high probability, $w$ is not very close to $\partial B (2^{n+m})$. To show this, let $y := S^{1} \big( \tau^{1}_{2^{n+m} - 2^{(1- \frac{\epsilon}{2})n } } \big)$. We define a sequence of stopping times $t_{k}$ ($k=0, 1, \cdots , \frac{\epsilon n}{4}$) by
\begin{equation*}
t_{k} = \inf \{ j \ge \tau^{1}_{2^{n+m} - 2^{(1- \frac{\epsilon}{2})n } } \ | \ |S^{1}(j) - y| \ge 2^{(1- \frac{\epsilon}{2})n + k} \}.
\end{equation*}
It is easy to check that for each $k$, the probability that $S^{1} [t_{k}, t_{k+1} ]$ hits the boundary of $B (2^{n + m} )$ is bounded below by some universal constant $c > 0$. Therefore iterating this along with the strong Markov property, we see that there exists $\delta > 0$ such that 
\begin{equation*}
P^{-x^{n}} \Big( S^{1} [ \tau^{1}_{2^{n+m} - 2^{(1- \frac{\epsilon}{2})n } } , \tau^{1}_{2^{n+m}} ] \subset B (y, 2^{(1- \frac{\epsilon}{4} )n} ) \Big) \ge 1- 2^{-\delta \epsilon n }.
\end{equation*}
Now suppose that $S^{1} [ \tau^{1}_{2^{n+m} - 2^{(1- \frac{\epsilon}{2})n } } , \tau^{1}_{2^{n+m}} ]$ lies in $B (y, 2^{(1- \frac{\epsilon}{4} )n} )$ and that $F_{3}$ holds with $w$ lying in $B ( 2^{n+m} - 2^{(1- \frac{\epsilon}{2})n } )^{c}$. This implies that $S^{1}$ hits $w$ after $\tau^{1}_{2^{n+m} - 2^{(1- \frac{\epsilon}{2})n } }$ and that the distance between $w$ and $y$ is bounded above by $2^{(1- \frac{\epsilon}{4} )n}$. Since $w$ is a point within a distance $2^{(1- \epsilon ) n}$ of $\gamma$, we see that the distance between $y$ and $\gamma$ is bounded above by $2^{(1- \frac{\epsilon}{8} )n}$. However, using Proposition 1.5.10 of \cite{Law b}, we have
\begin{equation*}
P^{-x^{n}, x^{n}} \Big( S^{2} [0, \tau^{2}_{2^{n + m} } ] \cap B (y, 2^{(1- \frac{\epsilon}{8} )n} ) \neq \emptyset \Big) \le c \frac{ 2^{(1- \epsilon/8)n} }{2^{n} } = c 2^{ - \frac{ \epsilon n}{8}}.
\end{equation*}
Consequently if we write
\begin{equation*}
F_{4} = \{ \exists w \in B ( 2^{n+m} - 2^{(1- \frac{\epsilon}{2})n } )^{c} \text{ s.t. } \text{dist} ( w,  \gamma  ) \le 2^{(1- \epsilon ) n}, \ \sigma^{1}_{w} \le \tau^{1}_{2^{n+m}},  \ S^{1}[0 , \tau^{1}_{2^{n+m}}] \cap \gamma = \emptyset \}
\end{equation*}
for the event that $F_{3}$ occurs with $w$ which is close to the boundary of $B (2^{n+m})$, then we see that $P^{-x^{n}, x^{n}}  ( F_{4} ) \le 2^{-\delta \epsilon n } $. 

Therefore in order to prove \eqref{fat-11}, it suffices to show that the probability of the following event $F_{5}$ is bounded above by $c 2^{-\delta n}$:
\begin{equation*}
F_{5} = \{ \exists w \in B ( 2^{n+m} - 2^{(1- \frac{\epsilon}{2})n } )  \text{ s.t. } \text{dist} ( w,  \gamma  ) \le 2^{(1- \epsilon ) n}, \ \sigma^{1}_{w} \le \tau^{1}_{2^{n+m}},  \ S^{1}[0 , \tau^{1}_{2^{n+m}}] \cap \gamma = \emptyset \},
\end{equation*}
the event that $F_{3}$ occurs with $w$ which is not close to the boundary of $B (2^{n+m})$. To estimate the probability of $F_{5}$, we will use Lemma 4.8 in \cite{Koz} as follows. Suppose that $F_{5}$ occurs. Given $\gamma$, we may define the stopping $u$ by $u = \inf \{ j \ | \ \text{dist} \big( S^{1} (j), \gamma \cap B ( 2^{n+m} - 2^{(1- \frac{\epsilon}{2})n -1 } ) \big) \le 2^{(1- \epsilon ) n} \}$. Namely $u$ is the first time that $S^{1}$ hits the $2^{(1- \epsilon ) n}$-neighborhood of $\gamma$ restricted in $B ( 2^{n+m} - 2^{(1- \frac{\epsilon}{2})n -1 } )$. Then $u < \tau^{1}_{2^{n+m}}$ and $S^{1} [u, \tau^{1}_{2^{n+m}}] \cap \gamma = \emptyset$ on $F_{5}$. Thus we are interested in the probability 
\begin{equation}\label{majide}
P^{-x^{n}}_{1} \Big( u < \tau^{1}_{2^{n+m}}, \ S^{1} [u, \tau^{1}_{2^{n+m}}] \cap \gamma = \emptyset \Big),
\end{equation}
which is a function of $\gamma$. Lemma 4.8 in \cite{Koz} along with the strong Markov property gives that there exists $\delta > 0$ such that with very high probability of $\gamma$, the probability in \eqref{majide} is bounded above by $2^{- \delta n}$, i.e., 
\begin{equation*}
P^{x^{n}}_{2} \Big(   P^{-x^{n}}_{1} \Big( u < \tau^{1}_{2^{n +m}}, \ S^{1} [u, \tau^{1}_{2^{n+m}}] \cap \gamma = \emptyset \Big) > 2^{- \delta n} \Big) \le  c 2^{-10 n}.
\end{equation*}
By using this, we see that 
\begin{equation*}
P^{-x^{n}, x^{n}} (F_{5} ) \le P^{-x^{n}, x^{n}} \Big( u < \tau^{1}_{2^{n+m}}, \ S^{1} [u, \tau^{1}_{2^{n+m}}] \cap \gamma = \emptyset \Big) \le 2^{- \delta n},
\end{equation*}
which gives \eqref{fat-11}.

Next we want to show that $| a_{m, n+1} - a_{m, n} |$ is small. To achieve this, we consider a Wiener sausage as follows. Let $W = (W(t) )_{t \ge 0}$ be a Brownian motion in $\mathbb{R}^{3}$ started at $-x^{n}$, which is independent of $S^{2}$. We write $\tau^{W}_{R} = \inf  \{ t \ge 0 \ | \ W (t) \notin B (R) \}$ for the first time that $W$ exits from $B (R)$. We let  
\begin{equation*}
b_{m, n} = P^{-x^{n} , x^{n} } \Big( \gamma \cap W_{ 2^{\frac{2n}{3}} } [0, \tau^{W}_{2^{n+m}}]= \emptyset \Big)
\end{equation*}
be the probability that $\gamma$ and a $2^{\frac{2n}{3}}$-neighborhood of $W [0, \tau^{W}_{2^{n+m}}]$ do not intersect. We want to compare $a_{m, n}$ with $b_{m, n}$. To so it, we consider the following coupling of $S^{1}$ and $W$. Lemma 3.2 of \cite{L} shows that we can couple $W$ and $S^{1}$ in the same probability space such that the following holds: $W (0) = S^{1} (0) = -x^{n}$ and there exists $ \delta > 0$ such that 
\begin{equation*}
P^{-x^{n} } \Big( \max_{0 \le t \le \tau^{W}_{2^{n+m+ 1}}} | W(t) - S^{1} (3t) | \ge 2^{\frac{2n}{3} -1} \Big) \le c e^{- 2^{\delta n} }.
\end{equation*}
Namely, we can couple $S^{1}$ and $W$ up to the first exit of $B (2^{n+m+ 1})$ so that $S^{1}$ lies in a $2^{\frac{2n}{3} -1}$-neighborhood of $W$ with high probability. From now on we will consider $S^{1}$ and $W$ assuming that they are coupled as above. We set $F_{6} = \{ \max_{0 \le t \le \tau^{W}_{2^{n+m+ 1}}} | W(t) - S^{1} (3t) | \le 2^{\frac{2n}{3} -1} \}$ for the event that $S^{1}$ stays close to $W$. Suppose that $F_{6}$ holds. Then it is easy to see that $S^{1}_{ 2^{\frac{2n}{3} -2 } } [0, \tau^{1}_{2^{n + m}} ]$ is contained in a Wiener sausage $W_{2^{\frac{2n}{3} }} [0, \tau^{W}_{2^{n+m}}]$. Similarly we see that $W_{2^{\frac{2n}{3} }} [0, \tau^{W}_{2^{n+m}}]$ is contained in $S^{1}_{ 2^{\frac{2n}{3} +2 } } [0, \tau^{1}_{2^{n + m}} ]$. Therefore we have 
\begin{equation*}
\big| b_{m, n} - P^{-x^{n} , x^{n} } \Big( F_{6}, \ \gamma \cap W_{ 2^{\frac{2n}{3}} } [0, \tau^{W}_{2^{n+m}}]= \emptyset \Big) \big| \le c e^{- 2^{\delta n} },
\end{equation*}
and 
\begin{align}\label{sko}
& P^{-x^{n} , x^{n} } \Big(  \gamma \cap S^{1}_{ 2^{\frac{2n}{3} +2 } } [0, \tau^{1}_{2^{n + m}} ] = \emptyset \Big) - ce^{- 2^{\delta n} } \le P^{-x^{n} , x^{n} } \Big( F_{6}, \ \gamma \cap W_{ 2^{\frac{2n}{3}} } [0, \tau^{W}_{2^{n+m}}]= \emptyset \Big) \notag  \\
&\le P^{-x^{n} , x^{n} } \Big(  \gamma \cap S^{1}_{ 2^{\frac{2n}{3} -2 } } [0, \tau^{1}_{2^{n + m}} ] = \emptyset \Big).
\end{align}
Using \eqref{fat-11} with $\epsilon = \frac{1}{3}$, we can conclude that 
\begin{equation}\label{bmsw}
| b_{m,n} - a_{m, n} | \le c 2^{- \delta n}.
\end{equation}

Next we want to compare $b_{m, n}$ with $a_{m, n+1}$ by using a result derived in \cite{Koz}. To achieve it, we define the event $F_{7}$ by
\begin{equation*}
F_{7} = \{ W[0 , \tau^{W}_{2^{n+m}}] \cap B ( x^{n} , 2^{ n - \sqrt{n} } ) = \emptyset \}.
\end{equation*}
Theorem 3.17 of \cite{PP} shows that $P^{- x^{n}} (F_{7}^{c} ) \le c 2^{- \sqrt{n} }$. So we may assume that $W$ does not hit $B ( x^{n} , 2^{ n - \sqrt{n} } )$ with probability at least $1-  c 2^{- \sqrt{n} }$. We recall that for $r > 0$ and a set $D \subset \mathbb{R}^{3}$ we write $rD = \{ rz : z \in D \}$. In order to show that the difference between $b_{m, n}$ and $a_{m, n+1}$ is small, we will use Theorem 5 of \cite{Koz} as follows. Conditioned on the Brownian motion $W[0 , \tau^{W}_{2^{n+m}}]$ which satisfies $F_{7}$, we are interested in the probability $P^{x^{n}} \Big( \gamma \cap W_{ 2^{\frac{2n}{3}} } [0, \tau^{W}_{2^{n+m}}]= \emptyset \Big)$ (this probability is a function of $W[0 , \tau^{W}_{2^{n+m}}]$). Theorem 5 of \cite{Koz} shows that there exist  universal (deterministic) constants $\epsilon > 0$ and $c < \infty$ such that if $W[0 , \tau^{W}_{2^{n+m}}]$ satisfies $F_{7}$ then
\begin{align}\label{care}
&P^{x^{n}} \Big( \gamma \cap W_{ 2^{\frac{2n}{3}} } [0, \tau^{W}_{2^{n+m}}]= \emptyset \Big) =P^{x^{n+1} } \Big( LE (R [0, \tau^{R}_{2^{n + m +1}} ]) \cap 2 \big( W_{ 2^{\frac{2n}{3}} } [0, \tau^{W}_{2^{n+m}}] \big) = \emptyset \Big) \notag \\
&\ge P^{x^{n+1} } \Big( LE ( S^{2} [0, \tau^{2}_{2^{n + m +1}} ])  \cap W'_{ 2^{ (1- \epsilon ) n} } = \emptyset \Big) - c 2^{-\epsilon n},
\end{align}
where $R = (R (j))_{j \ge 0}$ stands for a simple random walk on $2 \mathbb{Z}^{3}$ started at $x^{n+1}$ and $\tau^{R}_{l}$ stands for the first exit of $B (l )$. We also mention that $W' = 2 W[0 , \tau^{W}_{2^{n+m}}] $ stands for an enlargement of the Brownian motion and that we write $W'_{ 2^{ (1- \epsilon ) n} }$ for a $2^{ (1- \epsilon ) n}$-neighborhood of $W'$. Namely, Theorem 5 of \cite{Koz} gives a comparison between the probability that the loop-erasure of a simple random walk on $2 \mathbb{Z}^{3}$ does not hit a set $A$ and the probability that 
the loop-erasure of a simple random walk on $ \mathbb{Z}^{3}$ does not hit $A$. It was shown there that the difference between these probabilities is small for a suitable set $A$, and thus we require $W[0 , \tau^{W}_{2^{n+m}}]$ to satisfy $F_{7}$. Using \eqref{care}, we see that 
\begin{align*}
&b_{m, n} \ge E^{-x^{n} } \Big\{ P^{x^{n}}_{2} \Big( \gamma \cap W_{ 2^{\frac{2n}{3}} } [0, \tau^{W}_{2^{n+m}}]= \emptyset \Big) \ ; \ F_{7} \Big\} \\
&\ge E^{-x^{n} } \Big\{ P^{x^{n+1} } \Big( LE ( S^{2} [0, \tau^{2}_{2^{n + m +1}} ])  \cap W'_{ 2^{ (1- \epsilon ) n} } = \emptyset \Big) - c 2^{-\epsilon n} \ ; \ F_{7} \Big\} \\
&\ge E^{-x^{n} } \Big\{ P^{x^{n+1} } \Big( LE ( S^{2} [0, \tau^{2}_{2^{n + m +1}} ])  \cap W'_{ 2^{ (1- \epsilon ) n} } = \emptyset \Big)  \Big\} -  c2^{-\sqrt{n} }.
\end{align*}
Now we use the scaling property of the Brownian motion to see that $W'$ has the same distribution as $W[0, \tau^{W}_{2^{n + m + 1}} ]$ assuming that $W (0) = -x^{n+1}$. Therefore the law of $W'_{ 2^{ (1- \epsilon ) n} }$ is same as the law of the $2^{ (1- \epsilon ) n}$-neighborhood of $W[0, \tau^{W}_{2^{n + m + 1}} ]$ with $W (0) = -x^{n+1}$. Thus we have 
\begin{align*}
&E^{-x^{n} } \Big\{ P^{x^{n+1} } \Big( LE ( S^{2} [0, \tau^{2}_{2^{n + m +1}} ])  \cap W'_{ 2^{ (1- \epsilon ) n} } = \emptyset \Big)  \Big\} \\
&= P^{-x^{n+1} , x^{n+1} } \Big( LE ( S^{2} [0, \tau^{2}_{2^{n + m +1}} ])  \cap W_{2^{ (1- \epsilon ) n}} [0, \tau^{W}_{2^{n + m + 1}} ] = \emptyset \Big).
\end{align*}
As in the proof of \eqref{bmsw}, we can show that the difference between the probability that the LERW and the Wiener sausage do not intersect and the probability that the LERW and the simple random walk do not intersect is small. So we see that there exists $\delta > 0$ such that 
\begin{equation*}
\big| P^{-x^{n+1} , x^{n+1} } \Big( LE ( S^{2} [0, \tau^{2}_{2^{n + m +1}} ])  \cap W_{2^{ (1- \epsilon ) n}} [0, \tau^{W}_{2^{n + m + 1}} ] = \emptyset \Big) - a_{m, n+1} \big| \le c 2^{-\delta n}.
\end{equation*}
Combining these estimates, we can conclude that 
\begin{align*}
&a_{m,n} \ge  b_{m,n} - c 2^{- \delta n} \\
&\ge P^{-x^{n+1} , x^{n+1} } \Big( LE ( S^{2} [0, \tau^{2}_{2^{n + m +1}} ])  \cap W_{2^{ (1- \epsilon ) n}} [0, \tau^{W}_{2^{n + m + 1}} ] = \emptyset \Big)  -  c2^{-\sqrt{n} } \\
&\ge a_{m, n+1} -  c2^{-\sqrt{n} }.
\end{align*}
Similar arguments as above gives that $a_{m , n+1} \ge a_{m, n} - c2^{-\sqrt{n} } $, and thus we have
\begin{equation}\label{cauchy}
|a_{m,n} - a_{m,n+1}| \le c2^{-\sqrt{n} },
\end{equation}
which implies that $\{ a_{m, n} \}_{n} $ is a Cauchy sequence and we finish the proof.
\end{proof}

\begin{rem}
We expect that $a_{m}$ can be written in terms of non-intersection probability of Brownian motion and scaling limit of loop-erased random walk in \cite{Koz}.
\end{rem}

\begin{cor}\label{subad}
Let $d=3$. There exists $\alpha > 0$ such that 
\begin{equation}
a_{n} \asymp 2^{-\alpha n}.
\end{equation}
\end{cor}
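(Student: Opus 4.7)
The plan is to convert the almost-multiplicative inequalities of Proposition \ref{step1} into an almost-multiplicative inequality for the limit sequence $(a_m)$, and then apply Fekete's lemma.

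Fixing $m,n$ and letting $k\to\infty$ in Proposition \ref{step1}, Proposition \ref{main lem} gives convergence of each factor, so we obtain a universal $c\in(0,1)$ with
\[
c\,a_n a_m \;\le\; a_{n+m} \;\le\; c^{-1} a_n a_m \qquad (m,n\ge 1).
\]
Setting $b_n:=-\log a_n$ and $C:=-\log c$, the right inequality makes $(b_n+C)_{n\ge 1}$ subadditive, so Fekete's lemma produces a limit $b_n/n \to L \in [0,\infty)$; write $L=\alpha\log 2$.

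Next I would verify that $\alpha\in(0,\infty)$. Finiteness is immediate: iterating $a_{n+1}\ge c\,a_n a_1$ yields $a_n\ge c^{n-1}a_1^n$, so $b_n \le n(b_1+C)-C$ and hence $\alpha\log 2 \le b_1+C<\infty$ (using that $a_1>0$, which is a standard non-intersection lower bound in $3$ dimensions). Positivity reduces to showing $a_m\to 0$ as $m\to\infty$: granted this, some $a_N<c$, and iterating the upper half of the bi-multiplicative inequality along multiples of $N$ forces $a_{kN}\le c^{-1}(c^{-1}a_N)^{k-1}a_N$ to decay exponentially in $k$, so $\alpha>0$. To see $a_m\to 0$, note that the up-to-constant comparisons in Propositions \ref{up-to-const lew}--\ref{up to const indep2}, combined with the scaling arguments already carried out in the proof of Proposition \ref{step1}, show that $a_m$ is comparable, uniformly in the inner scale $k$ that has been sent to infinity, to the escape probability $\text{Es}(2^m)$, which decays polynomially as a consequence of the intersection-exponent bounds recalled in the introduction.

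Putting these three conclusions together gives $-\log a_n/n \to \alpha\log 2$ with $0<\alpha<\infty$, which is exactly $a_n\approx 2^{-\alpha n}$ in the sense of \eqref{approx}. The main obstacle I foresee is the uniform-in-inner-scale comparison $a_m\asymp\text{Es}(2^m)$ needed to conclude $a_m\to 0$: the per-scale up-to-constant bounds degrade when composed, so producing a bound uniform in $k$ after taking $k\to\infty$ requires careful bookkeeping of the constants involved. Once that is in place, the Fekete machinery and the finiteness of $\alpha$ are essentially automatic from the almost-multiplicative inequality.
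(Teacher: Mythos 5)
Your main line of argument is the paper's: send $k\to\infty$ in Proposition \ref{step1}, use Proposition \ref{main lem} to pass to the limit in each factor (indeed the three probabilities there are $a_{n,k}$, $a_{m,k+n}$ and $a_{n+m,k}$), obtain $c\,a_na_m\le a_{n+m}\le c^{-1}a_na_m$, and conclude by a standard subadditivity argument; the paper compresses exactly this into ``$a_{m+n}\asymp a_ma_n$ plus a standard subadditive argument.'' Two small remarks on that part: it is the lower inequality $a_{n+m}\ge c\,a_na_m$ (not the upper one) that makes $b_n+C$ subadditive, and Fekete then already gives $\lim b_n/n\le b_1+C<\infty$, so your separate finiteness paragraph is redundant (though harmless, and your observation that $a_1>0$ by a uniform half-space construction is correct).

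The genuine gap is your argument for strict positivity of $\alpha$. You reduce it to $a_m\to 0$ and claim this holds because $a_m$ is comparable to $\text{Es}(2^m)$, ``which decays polynomially as a consequence of the intersection-exponent bounds recalled in the introduction.'' The intersection-exponent input goes the wrong way: since $\text{LE}(S^1)\subset S^1$ as a set, avoiding the loop-erasure is easier than avoiding the full path, so those bounds give a lower bound of order $n^{-\sigma_3}$ for $\text{Es}(n)$, i.e.\ only the upper bound $\alpha\le\sigma_3$; they yield no decay of $\text{Es}$ whatsoever. A decay bound $\text{Es}(n)\le n^{-\delta}$ (equivalently $E(M_n)\le n^{2-\delta}$) requires a hittability input for the loop-erased path itself — for instance a capacity lower bound coming from the cut points it contains, or the known a priori upper bound on the three-dimensional LERW growth exponent — and nothing of this sort follows from Propositions \ref{up-to-const lew}--\ref{up to const indep2}. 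Moreover, what the argument for Proposition \ref{step1} actually provides is $a_{m,n}\asymp \text{Es}(2^n,2^{n+m})$ uniformly, not a uniform comparison with $\text{Es}(2^m)$; upgrading the former to the latter is essentially the scale-invariance one is trying to prove, so that step cannot be taken for granted either. (The paper's own two-line proof is silent on why $\alpha>0$, so on existence and finiteness of the exponent you match it; but the positivity justification you propose does not work as written.)
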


\begin{proof}
By Proposition \ref{step1} and Proposition \ref{main lem}, we have 
\begin{equation*}
a_{m+n} \asymp a_{m} a_{n}.
\end{equation*}
Using Lemma 8.5 of \cite{Law-conf}, we get the result.
\end{proof}

\subsection{Proof of \eqref{rate-of-conve-esp}}

\begin{thm}\label{main result-kaetta}
Suppose that $d=3$. Let $\alpha$ be the positive number as in Corollary \ref{subad}. Then we have 
\begin{equation}\label{Goal1}
\text{Es} (n) \approx n^{- \alpha}.
\end{equation}
In particular, we have 
\begin{equation}\label{Goal1-1}
\frac{1}{3} \le \alpha < 1.
\end{equation}
\end{thm}

\begin{proof}
Note that \eqref{Goal1-1} follows from \eqref{Goal1}. Indeed suppose that we get \eqref{Goal1}. The definition of $Es (n)$ and Theorem 1.3 of \cite{L} give that 
\begin{equation*}
Es (n) \ge P \big( S^{1} [1, \tau^{1}_{n} ] \cap S^{2} [0, \tau^{2}_{n} ] = \emptyset \big) \ge c n^{-\xi_{3}},
\end{equation*}
where we recall that $\xi_{3}$ is the intersection exponent with $\xi_{3} \in (\frac{1}{2}, 1 )$ (see \eqref{intersection-exp} for the intersection exponent). This implies that $\alpha \le \xi_{3} < 1$. For the lower bound on $\alpha$, the estimates (11.10) and (11.11) of \cite{Law b2} give that $Es (n) \le c n^{-\frac{1}{3}}$. This implies that $\alpha \ge \frac{1}{3}$. Thus in order to finish the proof of the theorem, it suffices to show \eqref{Goal1}.

Let $\epsilon > 0$.  By Corollary \ref{subad}, there exist $ 0 < c_{1}, c_{2} < \infty$ such that for all $m$ 
\begin{equation*}
c_{1} 2^{- \alpha  m   } \le a_{m} \le c_{2} 2^{- \alpha  m }.
\end{equation*}
We fix a large constant $M=M_{\epsilon }$ depending on $\epsilon$. The precise form of $M$ will be defined later. By Proposition \ref{main lem}, we know that $a_{M, n}$ converges to $a_{M}$ as $n \to \infty$. Therefore we can take $N = N_{M}$ depending on $M$ such that $\frac{1}{2} a_{M} \le a_{M, n} \le 2 a_{M}$ for all $n \ge N$. Consequently for all $n \ge N$, we have
\begin{equation*}
\frac{c_{1}}{2} 2^{- \alpha  M   } \le a_{M, n} \le 2 c_{2} 2^{- \alpha  M }.
\end{equation*}
On the other hand, using Proposition \ref{step1} for the case that $k=0$, we see that $Es (2^{n} ) a_{M, n} $ is comparable to $Es (2^{n + M})$. However, Proposition \ref{up to const indep1} and Proposition \ref{up to const indep2} show that $Es (2^{n + M})$ is comparable to $Es (2^{n} ) Es (2^{n}, 2^{n + M} )$. Consequently we see that $a_{M, n} $ is comparable to $Es (2^{n}, 2^{n + M} )$. Thus there exists $c_{0} > 0$ such that 
\begin{equation}\label{ikanainoka}
c_{0} 2^{- \alpha  M   } \le Es (2^{n}, 2^{n + M} ) \le \frac{1}{c_{0}} 2^{- \alpha  M   },
\end{equation}
for all $n \ge N$.

For $n \ge N$, we write $n = N + j M + r$ with $j \ge 0$ and $0 \le r < M$. Hence by Proposition \ref{up to const indep1},
\begin{align}\label{kamikazari}
&Es ( 2^{n}) = Es ( 2^{N + jM + r } ) \le C^{j+1} Es ( 2^{N+r})  \prod_{k=1}^{j} Es ( 2^{N+r + (k-1) M} , 2^{N + r + kM } ) \notag \\
&\le (C/c_{0})^{j+1} Es ( 2^{N+r})  2^{- \alpha M j },
\end{align}
where $C$ is a constant as in Proposition \ref{up to const indep1}. Now we choose $M$ so that $\frac{C}{c_{0}} < 2^{\epsilon M}$. This choice of $M$ ensures that the right hand side of \eqref{kamikazari} is bounded above by $C_{M} 2^{-( \alpha - \epsilon ) n}$. This gives that 
\begin{equation*}
\limsup_{n \to \infty} \frac{ \log Es ( 2^{n}) }{ \log 2^{n} } \le -\alpha + \epsilon.
\end{equation*}
Since $\epsilon > 0$ is an arbitrary positive number, we have
\begin{equation*}
\limsup_{n \to \infty} \frac{ \log Es ( 2^{n}) }{ \log 2^{n} } \le -\alpha.
\end{equation*}
Similar arguments as above also show that 
\begin{equation*}
\liminf_{n \to \infty} \frac{ \log Es ( 2^{n}) }{ \log 2^{n} } \ge -\alpha.
\end{equation*}
For a general integer $n$, we find $m$ with $2^{m} \le n < 2^{m+1}$. Using Proposition \ref{up-to-const lew} to see that
\begin{equation*}
Es (2^{m} ) \asymp Es ( n) \asymp Es ( 2^{m+1} ),
\end{equation*}
we get \eqref{Goal1}.
\end{proof}

Before finishing this section, we will give the following two lemmas. These lemmas will be used in the next section when we estimate the $k$-th moment of the length of LERW.
We begin with the next lemma which says that $Es (k, l)$ is of order $(\frac{l}{k})^{-\alpha}$ where $\alpha$ is the constant as in Theorem \ref{main result-kaetta}. This lemma is an analog of Lemma 3.12 of \cite{BM} for $d=3$.

\begin{lem}\label{mendo1}
Let $d=3$. Recall that $\alpha$ is the constant as in Theorem \ref{main result-kaetta}. Then
for all $\epsilon > 0$, there exist $c_{\epsilon} > 0$ and $n_{\epsilon} \in \mathbb{N}$ such that 
\begin{equation}
c_{\epsilon} \Big( \frac{l}{k} \Big)^{- \alpha - \epsilon } \le Es (k, l) \le c_{\epsilon}^{-1} \Big( \frac{l}{k} \Big)^{- \alpha + \epsilon },
\end{equation}
for all $n_{\epsilon} \le k \le l$. 
\end{lem}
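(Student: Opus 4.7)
The plan is to refine the scheme from the proof of Theorem~\ref{main result} so that it bounds the shell quantity $\text{Es}(k,l)$ rather than the full escape probability $\text{Es}(n)$. Combining Propositions~\ref{up to const indep1} and \ref{up to const indep2} gives $\text{Es}(k,l)\asymp \text{Es}(l)/\text{Es}(k)$, from which one extracts
\begin{equation*}
c_0\,\text{Es}(k,m)\,\text{Es}(m,l)\le \text{Es}(k,l)\le c_0^{-1}\,\text{Es}(k,m)\,\text{Es}(m,l),\qquad k\le m\le l,
\end{equation*}
for a positive absolute constant $c_0$. Iterating this along the geometric chain $k=2^i, 2^{i+M}, \dots, 2^{i+jM}=l$ expresses $\text{Es}(k,l)$ as a product of $j$ one-shell factors $\text{Es}(2^{i+rM}, 2^{i+(r+1)M})$ up to an overall factor $c_0^{\pm (j-1)}$.

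Each one-shell factor is sharply controlled by the machinery developed above. The proof of Proposition~\ref{step1} shows that $\text{Es}(2^n, 2^{n+M})\asymp a_{M,n}$ with constants independent of $n$, Proposition~\ref{main lem} gives $a_{M,n}\to a_M$ as $n\to\infty$, and Corollary~\ref{subad} provides $2^{-(\alpha+\delta)M}\le a_M\le 2^{-(\alpha-\delta)M}$ for every $\delta>0$ once $M$ is large enough.

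Given $\epsilon>0$, I would fix the parameters in the order $\delta=\epsilon/3$; then $M=M(\epsilon)$ large enough so that the Corollary's bounds on $a_M$ hold \emph{and} so that $\log_2(1/c_0)\le (\epsilon/3)M$; and finally $n_\epsilon$ large enough that $a_{M,n}\in[\tfrac12 a_M, 2 a_M]$ whenever $2^n\ge n_\epsilon$. For dyadic endpoints $k=2^i\ge n_\epsilon$ and $l=2^{i+jM}$, substitution into the iterated product gives
\begin{equation*}
\text{Es}(k,l)\le c_0^{-(j-1)}\prod_{r=0}^{j-1}(2a_M)\le C_\epsilon\,2^{(\epsilon/3)Mj}\cdot 2^{-(\alpha-\epsilon/3)Mj}=C_\epsilon\,(l/k)^{-\alpha+2\epsilon/3},
\end{equation*}
and the matching lower bound follows by the symmetric computation. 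For general $k\le l$, I would replace $k$ and $l$ by the nearest endpoints of the geometric chain; the cost is a bounded number of additional shells, absorbed into $c_\epsilon$ using the fact that $\text{Es}(m,2m)$ lies between two absolute positive constants (which is itself a consequence of Corollary~\ref{subad} applied at scale $M=1$).

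The main obstacle is calibrating the three parameters $\delta$, $M$, and $n_\epsilon$ in the correct order so that the multiplicative losses $c_0^{\pm(j-1)}$ from iteration, the pre-limit fluctuations $a_{M,n}-a_M$, and the error $\delta M$ in the asymptotics of $a_M$ all combine into a single clean exponent $\pm\epsilon$ attached to the ratio $l/k$, rather than producing uncontrolled dependence on $k$ and $l$ separately. The key inequality to verify is $\log_2 C_\epsilon + (\log_2(1/c_0)+1)j \le (\epsilon-\tfrac{2\epsilon}{3})Mj + \text{const}$, which is exactly what forces the choice $M\gtrsim 1/\epsilon$; this is resolved in the same way as the analogous calibration for $\text{Es}(n)$ in the proof of Theorem~\ref{main result}.
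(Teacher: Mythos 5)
Your proposal is correct and follows essentially the same route as the paper: the paper proves the lemma by combining the one-shell estimate \eqref{tsukau} (i.e.\ $\text{Es}(2^{n},2^{n+M})\asymp a_{M,n}$ with $a_{M,n}\to a_{M}\approx 2^{-\alpha M}$) with the chaining and parameter-calibration method of Lemma 3.12 in \cite{BM}, which is exactly the quasi-multiplicative iteration and choice $M\gtrsim 1/\epsilon$ that you spell out. The only cosmetic remark is that the boundedness of $\text{Es}(m,2m)$ away from $0$ is better justified via Proposition \ref{up-to-const lew} (or the one-shell estimate itself) than by citing Corollary \ref{subad} at scale $M=1$, but this does not affect the argument.
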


\begin{proof}
Since the proof is completely same as the proof of Lemma 3.12 of \cite{BM}, we will give a sketch here. 

Let $\epsilon > 0$.
We take a large constant $j = j_{\epsilon}$ depending on $\epsilon$. The precise form of $j$ will be defined later. As in \eqref{ikanainoka}, we see that $C^{-1} j^{ - \alpha } \le Es (n, j n) \le C j^{ - \alpha }$ for all large $n$. We choose $i$ so that $j^{i} \le \frac{l}{k} < j^{i + 1}$. Then similar estimates as in \eqref{kamikazari} show that 
\begin{equation*}
 Es (k, l) \le C Es (k, j^{i} k ) \le C^{i +1} \prod_{q=0}^{i-1} Es (j^{q} k, j^{q+1} k ) \le C^{2 i +1} (j^{- \alpha } )^{i}.
 \end{equation*}
Now we take $j$ so that $j^{\frac{\epsilon}{2}} \ge C$ where $C$ is a constant in the inequality above. This choice of $j$ ensures that the right hand side of the inequality above is bounded above by $C_{j} \Big( \frac{l}{k} \Big)^{- \alpha + \epsilon }$. The lower bound of $Es (k, l)$ can be proved similarly. So we finish the proof.
\end{proof}

The next lemma gives a bound on the ratio of $Es (k)$ and $Es (l)$. This is an analog of Lemma 3.13 of \cite{BM} for $d=3$. We will use the next lemma many times in the next section.

\begin{lem}\label{mendo2}
Let $d=3$. Recall that $\alpha$ is the constant as in Theorem \ref{main result-kaetta}. Then
for all $\epsilon > 0$, there exists $C_{\epsilon} < \infty$ such that 
\begin{equation}
k^{\alpha + \epsilon} Es (k) \le C_{\epsilon}  l^{\alpha + \epsilon} Es (l),
\end{equation}
for all $1 \le k \le l$.
\end{lem}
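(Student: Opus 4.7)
The plan is to derive Lemma \ref{mendo2} as a direct consequence of Lemma \ref{mendo1} combined with the sub-multiplicative lower bound on escape probabilities established in Proposition \ref{up to const indep2}. The desired inequality is equivalent, after rearrangement, to $\text{Es}(l) \ge c_\epsilon (k/l)^{\alpha+\epsilon}\,\text{Es}(k)$, which is precisely the shape of estimate produced by pairing a lower bound for $\text{Es}(k,l)$ with the factorization $\text{Es}(l) \asymp \text{Es}(k)\,\text{Es}(k,l)$.

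First I would treat the main range $n_\epsilon \le k \le l$, where $n_\epsilon$ is the threshold coming from Lemma \ref{mendo1}. By Proposition \ref{up to const indep2} there is a universal constant $c>0$ such that
\begin{equation*}
\text{Es}(l) \;\ge\; c\,\text{Es}(k)\,\text{Es}(k,l).
\end{equation*}
Plugging in the lower bound $\text{Es}(k,l) \ge c_\epsilon (l/k)^{-\alpha-\epsilon}$ from Lemma \ref{mendo1} gives
\begin{equation*}
l^{\alpha+\epsilon}\,\text{Es}(l) \;\ge\; c\,c_\epsilon\,k^{\alpha+\epsilon}\,\text{Es}(k),
\end{equation*}
which is the claim with $C_\epsilon = (c\,c_\epsilon)^{-1}$ on this range.

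Next I would dispose of the boundary cases where $k < n_\epsilon$. If also $l < n_\epsilon$, the pair $(k,l)$ ranges over a finite set and both sides are bounded above and below by positive constants depending only on $\epsilon$, so the inequality holds after enlarging $C_\epsilon$. If $k < n_\epsilon \le l$, I apply the argument above with $k$ replaced by $n_\epsilon$ to get $l^{\alpha+\epsilon}\text{Es}(l) \ge c_\epsilon' \, n_\epsilon^{\alpha+\epsilon}\,\text{Es}(n_\epsilon)$, which is a positive constant depending only on $\epsilon$; on the other hand $k^{\alpha+\epsilon}\text{Es}(k) \le n_\epsilon^{\alpha+\epsilon}$ since $\text{Es}(k)\le 1$, so the inequality again follows after absorbing everything into $C_\epsilon$.

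There is no real obstacle here: the proof is essentially bookkeeping once Lemma \ref{mendo1} and the up-to-constants factorization of $\text{Es}(l)$ are in hand. The only point that requires a moment of attention is that Lemma \ref{mendo1} applies only beyond the threshold $n_\epsilon$, which is why the small-$k$ case must be treated separately; however, because $n_\epsilon$ depends only on $\epsilon$, the finite number of exceptional scales can be absorbed into the constant $C_\epsilon$ without difficulty.
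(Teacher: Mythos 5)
Your proof is correct and follows essentially the same route the paper intends. The paper's own proof is a one-line reference to Lemma 3.13 of Barlow--Masson, and the underlying argument there is exactly what you wrote: combine the factorization $\text{Es}(l)\ge c\,\text{Es}(k)\,\text{Es}(k,l)$ from Proposition \ref{up to const indep2} with the lower bound $\text{Es}(k,l)\ge c_\epsilon(l/k)^{-\alpha-\epsilon}$ from Lemma \ref{mendo1}, then rearrange. Your handling of the boundary scales $k<n_\epsilon$ by absorbing finitely many cases into $C_\epsilon$ (using $\text{Es}(k)\le 1$ and positivity of $\text{Es}(l)$) is the standard cleanup step and is sound.
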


\begin{proof}
We will give a sketch here. Take $\epsilon > 0$. By Theorem \ref{main result-kaetta} and Lemma \ref{mendo1}, we see that there exist constants $c > 0$ and $n $ such that for all $n \le k \le l$, $Es (k, l)$ is bounded below by $c ( \frac{l}{k} )^{- \alpha - \epsilon }$ and $Es (k )$ is bounded below by $c k^{- \alpha - \frac{\epsilon}{2} }$. Therefore using this along with Proposition \ref{up to const indep2}, we see that $l^{\alpha + \epsilon} Es (l)$ is bounded below by 
\begin{equation*}
c l^{\alpha + \epsilon} Es (k) Es (k, l) \ge c l^{\alpha + \epsilon} Es (k) ( \frac{l}{k} )^{- \alpha - \epsilon } = c k^{\alpha + \epsilon} Es (k).
\end{equation*}
for $n \le k \le l$.

For the case that $ k \le  l \le n$, it is easy to check that the claim holds. For the case that $k \le n \le l$, we point out that $k^{\alpha + \epsilon} Es (k) \le C_{n}$ for all $k \le n$ where $C_{n}$ is a constant depending on $n$. However, for every $l \ge n$, we know that $l^{\alpha + \epsilon} Es (l)$ is bounded below by $c l^{\alpha + \epsilon} \times l^{- \alpha - \frac{\epsilon}{2} } = c l^{ \frac{\epsilon}{2}} \ge c$. Thus the claim also holds for this case. So we finish the proof.
\end{proof}

\section{Tail estimates of the length of LERW}
Recall that in order to prove \eqref{regular} of Theorem \ref{2-dim}, we used exponential tail bounds on the length of LERW in 2 dimensions derived in \cite{BM} (see Proposition \ref{upper-2dim-loop} and Proposition \ref{lower-2dim-loop}). Unfortunately, such tail bounds in 3 dimensions have not been established up to now. The main goal of this section to derive both upper and lower tail bounds on the length of LERW in 3 dimensions. 

In the next subsection, we will give an upper tail estimate in Theorem \ref{dounaruno}. Then in Section 8.2, we will give a lower tail estimate in Theorem \ref{yattoda}.

\subsection{Upper tail estimates}
Recall that $M_{n} = \text{len} LE ( S[0, \tau_{n} ])$ stands for the length of LERW assuming that $S (0) = 0$. The goal of this subsection is to derive the following upper exponential tail bounds on $M_{n}$ in 3 dimensions: there exist $ 0 < c, C < \infty$ such that for all $n \ge 1$ and $\kappa > 0$, 
\begin{equation}\label{choco-kansou}
P \Big( M_{n} \ge \kappa E (M_{n} ) \Big) \le C e^{- c \kappa }.
\end{equation}
This is an analog of Theorem 5.8 of \cite{BM} in 3 dimensions. To establish \eqref{choco-kansou}, we will follow the same strategy as in the proof of Theorem 5.8 of \cite{BM}. Before going to its proof, we recall the strategy of \cite{BM} here. The first key step is to give an upper bound on the $k$-th moment of $M_{n}$ in terms of the escape probability, i.e., we will show that for all $k \ge 1$
\begin{equation}\label{hide-last}
E ( M_{n}^{k} ) \le C^{k} k ! (n^{2} Es (n) )^{k},
\end{equation}
where $C$ is some universal constant (see Theorem \ref{leipzig}). The term $n^{2} Es (n)$ comes from the following reason. We first point out that the expectation of $M_{n}$ is equal to the sum of the probability that $LE ( S[0, \tau_{n} ])$ hits $z$, where the sum is over all $z \in B (n)$. It turns out that the sum is comparable to 
\begin{equation*}
\sum_{ \frac{n}{3} \le |z| \le \frac{2 n}{3} } P \big( z \in LE ( S[0, \tau_{n}] ) \big),
\end{equation*}
namely, the sum of the same probabilities for $z$ which is not too close to the origin and the boundary of the ball. Suppose that $\frac{n}{3} \le |z| \le \frac{2 n}{3}$. In order for $z$ to lie in $LE ( S[0, \tau_{n} ] )$, it is required that 
\begin{itemize}
\item[(i)] $S$ hits $z$ up to $\tau_{n}$.

\item[(ii)] The loop-erasure of the random walk from the origin to $z$ and the random walk from $z$ up to $\tau_{n}$ do not intersect.
\end{itemize}
Reversing a path and using the time reversibility of LERW (see Lemma 7.2.1 of \cite{Law b}), we see that the probability of $\text{(i)} \cap \text{(ii)}$ is same as the probability that a random walk starting from $z$ up to the first exit of $B (n)$ and the loop-erasure of an independent random walk from $z$ to the origin do not intersect. We will show that this probability is comparable to $\frac{1}{n} Es (n)$, i.e., the product of the probability that the random walk from $z$ hits the origin and the escape probability. Taking the sum for $z$, it follows that $E ( M_{n} )$ is comparable to $n^{2} Es (n)$ and thus we have 
\begin{equation}\label{kataitaino}
E ( M_{n}^{k} ) \le C^{k} k ! \big( E ( M_{n} ) \big)^{k},
\end{equation}
which is sufficient to get \eqref{choco-kansou} (see Theorem \ref{dounaruno}).

\medskip

We begin with the next proposition (Proposition \ref{barlow1}) which gives an exact expression of the probability that LERW hits given $k$ points $z_{1}, \cdots , z_{k}$ in this order in terms of Green's functions and non-intersecting probabilities of random walks and loop-erased random walks. In order to state it, we need some definitions.

Let $z_{0}, z_{1}, \cdots , z_{k}$ be any distinct $k+1$ points in a given set $D \subset \mathbb{Z}^{3}$.  We write $X$ for a Markov chain on $\mathbb{Z}^{3}$ with $X(0)=z_{0}$ and $P^{z_{0}} (\tau^{X}_{D} < \infty ) =1$ (recall that $\tau^{X}_{D}$ stands for the first time that $X$ exits from $D$). We should write $P^{z_{0}}_{X}$ for the probability of $X$ instead of $P^{z_{0}}$. However, to avoid complication of notation, we will omit the superscript $X$ throughout this section. We let $\gamma = LE ( X [0, \tau^{X}_{D}])$ be the loop-erasure of $X$ up to its first exit of $D$. We are interested in the following event;
\begin{equation}\label{peropero}
F^{X}_{z_{0}, \cdots , z_{k}} = \{ \text{there exist } 0\le t_{1} < \cdots < t_{k} \le \text{len}\gamma \ \text{s.t. }  \gamma (t_{i} ) = z_{i}, \ \forall i=1,2, \cdots, k \},
\end{equation}
which is the event that $\gamma$ passes through points $z_{0}, z_{1}, \cdots , z_{k}$ in this order. We write $z_{k+1}= \partial D$.

We consider several independent versions of $X$ as follows. For $i=0, 1, \cdots, k$, we let $X^{i}$ be independent versions of $X$ with $X^{i}(0) = z_{i}$. We set $Z^{i}$ for $X^{i}$ conditioned on the event $\{ \sigma^{X^{i}}_{z_{i+1}} \le \tau^{X^{i}}_{D} \}$ (recall that $\sigma^{X^{i}}_{z} = \inf \{ t \ge 1 \ | \ X^{i} (t) = z \}$). So $Z^{i}$ is a process from $z_{i}$ to $z_{i+1}$. We write $u (i) = \max \{ l \le \tau^{Z^{i}}_{D} \ : \ Z^{i}_{l} = z_{i+1} \}$ for the last time that $Z^{i}$ visits to $z_{i+1}$ up to its first exit of $D$. Finally, define a non-intersecting event by
\begin{equation}\label{piro}
F^{k}_{i} = \Big\{ LE ( Z^{i-1}[0, u (i-1)]) \cap \bigcup_{j=i}^{k} Z^{j}[1, u (j)] = \emptyset \Big\}.
\end{equation}

The next proposition writes the probability of $F^{X}_{z_{0}, \cdots , z_{k}}$ in terms of Green's functions of $X$ and the probability of the non-intersecting event $F^{k}_{i}$.

\begin{prop}\label{barlow1}
Let $d= 3$. Recall that $z_{i}, D,  X, F^{X}_{z_{0}, \cdots , z_{k}} , X^{i}, Z^{i},  u (i)$ and $F^{k}_{i}$ were defined as above. Then we have
\begin{equation}
P ( F^{X}_{z_{0}, \cdots , z_{k}} ) = \Big[ \prod_{i=1}^{k} G^{X} (z_{i-1} , z_{i}, D ) \Big] P \Big( \bigcap_{i=1}^{k} F^{k}_{i} \Big).
\end{equation}
Here $G^{X} (\cdot , \cdot , D )$ is Green's function in $D$ for a Markov chain $X$.
\end{prop}

\begin{proof}
Proposition 5.2 of \cite{BM} claims the same statement as above in 2 dimensions. However clearly the proof of Proposition 5.2 of \cite{BM} also works in 3 dimensions. So we omit the proof.
\end{proof}

In the next proposition, we will give an upper bound on the probability that the loop-erasure of a Markov chain $X$ hits $k$ points $z_{1}, \cdots , z_{k}$ in terms of escape probabilities. To state it, we introduce some notation here. Take a set $D$ and suppose that $z_{0}, \cdots , z_{k}$ are points (not necessarily distinct) lying in $D$. We write $\vec{z} = (z_{0}, \cdots , z_{k})$. For this pair of $k+1$ points, we are interested in the distance between $z_{i}$ and $\{ z_{i -1}, z_{i+1} \} \cup \partial D$. Namely we set 
\begin{equation*}
d^{\vec{z}}_{i} = |z_{i} - z_{i -1} | \wedge |z_{i} - z_{i + 1} | \wedge \text{dist} (z_{i}, \partial D )
\end{equation*}
for $i = 1, \cdots , k$ (recall that $z_{k+1} = \partial D$). We need to consider permutations of $z_{1}, \cdots , z_{k}$ in the next proposition. So we let $\Pi_{k}$ be the symmetric group on $\{ 1, 2, \cdots , k \}$ and for each element $\pi \in \Pi_{k}$ we write $\pi (0) = 0$ and $\pi ( \vec{z} ) = (z_{0}, z_{\pi (1)}, \cdots , z_{\pi (k)} )$ for the corresponding permutation of $\vec{z}$.

Now we are ready to state the next proposition which estimates the probability that points $z_{1}, \cdots , z_{k}$ lie in $LE ( X [0, \tau^{X}_{D} ] )$. This proposition is an analog of Proposition 5.5 of \cite{BM} for $d=3$.

\begin{prop}\label{pukosukegairu}
Let $d=3$. We consider either
\begin{itemize}
\item[(I)] $D = B (n)$, $z_{0} = 0$, $z_{1}, \cdots , z_{k}$ are points in $B (n)$, and $X$ is a simple random walk $S$ started at the origin; or

\item[(II)] Recall that $m, n , N$, $A_{m}$, $x$ and $A_{n} (x)$ were defined as in Definition \ref{set-near-cube}. We consider a subset $K \subset A_{m}$. Suppose that $X$ is a random walk starting from $x$ conditioned that $X [1, \tau^{X}_{N} ] \cap K = \emptyset$. We let $D = B (N)$ and $z_{0} = x$. Points $z_{1}, \cdots , z_{k}$ lie in $A_{n} (x)$. 
\end{itemize} 
Then there exists a universal constant $C < \infty$ such that
\begin{equation}\label{pukosuke-kieta}
P \big( z_{1}, \cdots , z_{k} \in LE ( X [0, \tau^{X}_{D} ] ) \big) \le C^{k} \sum_{ \pi \in \Pi_{k} } \prod_{i =1}^{k} G^{X} (z_{\pi (i-1)}, z_{\pi (i)}, D ) Es ( d^{\pi ( \vec{z} )}_{\pi (i)} ).
\end{equation}
\end{prop}  

\begin{proof}
We will follow the same strategy as in the proof of Proposition 5.5 of \cite{BM}. We will only consider the first case (I). The claim for the second case will be proved similarly. 

Suppose that $X$ is a simple random walk $S$ started at the origin, $z_{0} = 0$, and $D = B (n)$. We first consider the case that $z_{1}, \cdots , z_{k}$ lie in $D$ and they are distinct. Recall that the event $F^{X}_{z_{0}, \cdots , z_{k}}$ was defined as in \eqref{peropero}. This definition immediately gives that the probability in the left hand side of \eqref{pukosuke-kieta} is equal to 
\begin{equation}\label{pero-1}
\sum_{\pi \in \Pi_{k} } P \big( F^{X}_{z_{0}, z_{\pi (1)}, \cdots , z_{\pi (k)} } \big).
\end{equation}
Thus, by using Proposition \ref{barlow1}, in oder to prove \eqref{pukosuke-kieta}, it suffices to show that 
\begin{equation}\label{pero-2}
P \Big( \bigcap_{i=1}^{k} F^{k}_{i} \Big) \le C^{k} Es ( d^{ \vec{z} }_{i} ),
\end{equation}
where $F^{k}_{i}$ was defined as in \eqref{piro}. But the definition of $F^{k}_{i}$ immediately gives that the probability in the left side of \eqref{pero-2} is bounded above by 
\begin{equation}\label{pero-3}
P \Big( \bigcap_{i=1}^{k} \big\{ LE ( Z^{i-1}[0, u (i-1)]) \cap  Z^{i}[1, u (i)] = \emptyset \big\} \Big).
\end{equation}
In order to estimate the probability above, we will consider a time reverse of a path. For a path $\lambda = [\lambda (0), \lambda (1), \cdots , \lambda (l)]$, we write $\lambda^{R} = [\lambda (l), \lambda (l-1), \cdots , \lambda (0)]$ for its time reversal. Then by the time reversibility of LERW (see Lemma 7.2.1 of \cite{Law b}), we see that the probability in \eqref{pero-3} is equal to 
\begin{equation}\label{pero-4}
P \Big( \bigcap_{i=1}^{k} \big\{ LE ( Z^{i-1}[0, u (i-1)]^{R} ) \cap  Z^{i}[1, u (i)] = \emptyset \big\} \Big).
\end{equation}
Namely we can replace the loop-erasure of $Z^{i-1}$ by the loop-erasure of its time reversal. We write $B^{i} = B (z_{i}, d^{ \vec{z} }_{i}/4 )$ and write $\gamma^{i}$ for $LE ( Z^{i-1}[0, u (i-1)]^{R} )$ from $z_{i}$ up to its first exit of $B^{i}$ for each $i= 1, \cdots , k$ (so that $\gamma^{i}$ is a subset of $LE ( Z^{i-1}[0, u (i-1)]^{R} )$). 

The domain Markov property (see Proposition \ref{DMP}) shows that conditioned on $\gamma^{i}$, the conditional distribution of $Z^{i-1}[0, u (i-1)]$ is same as the distribution of a random walk starting from $z_{i-1}$ conditioned that it hits $\gamma^{i} ( \text{len} \gamma^{i} )$ before hitting $\partial B (n)$ and $\gamma^{i} \setminus \{ \gamma^{i} ( \text{len} \gamma^{i} ) \}$. Using this fact along with the discrete Harnack principle (see Theorem 1.7.6 of \cite{Law b}), we see that the probability of \eqref{pero-4} is bounded above by
\begin{equation}\label{pero-5}
C^{k} \prod_{i=1}^{k} P \Big(  \gamma^{i} \cap  Z^{i}[1, \tau^{Z^{i}}_{B^{i}}] = \emptyset \Big),
\end{equation}
where we recall that $\tau^{Z^{i}}_{B^{i}}$ stands for the first time that $Z^{i}$ exits from $B^{i}$. But using the discrete Harnack principle (Theorem 1.7.6 of \cite{Law b})) again, we see that the law of $Z^{i}$ up to $\tau^{Z^{i}}_{B^{i}}$ is same as the law of a simple random walk staring from $z_{i}$ up to its first exit of $B^{i}$ up to multiplicative constants. In addition to that Proposition 4.2 and Proposition 4.4 of \cite{Mas} show that the distribution of $\gamma^{i}$ is same as the distribution of an infinite LERW starting from $z_{i}$ up to the first time that the infinite LERW exits from $B^{i}$ up to multiplicative constants. Therefore we see that the quantity of \eqref{pero-5} is bounded above by 
\begin{equation}\label{pero-6}
C^{k} \prod_{i=1}^{k} Es^{\infty} ( d^{ \vec{z} }_{i} /4 ).
\end{equation}
Finally, using Proposition \ref{up-to-const lew}, we see that the quantity of \eqref{pero-6} is bounded above by
\begin{equation}\label{pero-7}
C^{k} \prod_{i=1}^{k} Es( d^{ \vec{z} }_{i} ),
\end{equation}
which gives \eqref{pero-2}. 

For general points $z_{1}, \cdots , z_{k}$ (not necessarily distinct), the same argument as in the proof of Proposition 5.5 of \cite{BM} works here. So we omit the proof for that case.  
\end{proof}

In order to estimate the $k$-th moment of the length of LERW, we need the following Green's function estimates. The next lemma shows that for every point $x \in B(n)$ the sum of Green's functions $G (x, y, B_{n} )$ is bounded above by $C r^{2}$, where the sum is over all $y \in B (n)$ whose distance from $\partial B (n )$ is less than $r$.

\begin{lem}\label{leip}
Let $d=3$ and take $n$ and $r \ge 1$ with $n > r$. We write 
\begin{equation*}
G (r) = \{ y \in B (n) \ | \ \text{dist} ( y, \partial B (n) ) \le r \}
\end{equation*}
for the set of points in $B (n)$ whose distance from $\partial B (n)$ is bounded above by $r$. Then there exists a universal constant $ C < \infty$ such that for all $x \in B(n)$,
\begin{equation}\label{leip-1}
\sum_{y \in G (r) } G ( x, y, B (n) ) \le C r^{2}.
\end{equation}
\end{lem}  

\begin{proof}
We will follow the same idea as in the proof of Lemma 4.1 of \cite{BM}. We define entrance and exit times $t_{i}$ and $u_{i}$ by $t_{1} = \min \{ j \ | \ S (j) \in G (r) \}$ and 
\begin{align*}
&u_{i} = \min \{ j \ge t_{i} \ | \ | S(j) - S( t_{i} ) | \ge 2r \}, \\
&t_{i + 1} = \min \{ u_{i} \le j < \tau_{n} \ | \ S (j) \in G (r) \},
\end{align*} 
for $i \ge 1$, where we take $t_{i+1} = \infty$ if the set as above is empty. Conditioned on $t_{i} < \infty$, the conditional expectation $E^{S ( t_{i} ) } \big( | u_{i} - t_{i} | \big)$ is bounded above by $C r^{2}$. Therefore, we see that 
\begin{equation*}
\sum_{y \in G (r) } G ( x, y, B (n) ) = E^{x} \Big( \sum_{j = 1}^{\tau_{n} -1} {\bf 1} \{ S (j) \in G (r) \} \Big) \le E^{x} \Big( \sum_{i = 1}^{\infty} (u_{i} - t_{i} ) \Big) \le C r^{2} \sum_{i = 1}^{\infty} P^{x} ( t_{i} < \infty ).
\end{equation*}
We write $\tau_{z, 2 r} = \min  \{ j \ | \ S(j) \notin B (z, 2r) \}$ for the first exit of $B (z, 2r )$. It is easy to check that there exists a universal constant $p > 0$ such that for all $z \in G (r)$, $P^{z} ( \tau_{n} < \tau_{z, 2 r} ) \ge p$. Thus we have $P^{x} (t_{i} < \infty ) \le (1 - p )^{i -1}$, which finishes the proof.
\end{proof}

Recall that $M_{n}$ stands for the length of $LE ( S[ 0, \tau_{n} ] )$. The next theorem gives a bound on the $k$-th moment of $M_{n}$ in terms of the escape probability. This theorem is an analog of Theorem 5.6 of \cite{BM} in 3 dimensions. 

\begin{thm}\label{leipzig}
Let $d=3$. It follows that there exists $C < \infty$ such that for all $n \ge 1$ and $k \ge 1$, 
\begin{equation}\label{leipzig-1}
E ( M_{n}^{k} ) \le C^{k} k ! \big( n^{2} Es (n) \big)^{k}.
\end{equation}
\end{thm}

\begin{proof}
Recall that $\Pi_{k}$ is the symmetric group on $\{ 1, 2, \cdots , k \}$ and that for each element $\pi \in \Pi_{k}$ we write $\pi (0) = 0$ and $\pi ( \vec{z} ) = (z_{0}, z_{\pi (1)}, \cdots , z_{\pi (k)} )$. Proposition \ref{pukosukegairu} shows that
\begin{align*}
&E ( M_{n}^{k} ) = \sum_{z_{1} \in B (n) } \sum_{z_{2} \in B (n) } \cdots \sum_{z_{k} \in B (n) } P \big( z_{1}, \cdots , z_{k} \in LE ( S [0, \tau_{n} ] ) \big) \\
&\le C^{k} \sum_{ \pi \in \Pi_{k} } \sum_{z_{1} \in B (n) } \sum_{z_{2} \in B (n) } \cdots \sum_{z_{k} \in B (n) } \prod_{i =1}^{k} G (z_{\pi (i-1)}, z_{\pi (i)}, B (n) ) Es ( d^{\pi ( \vec{z} )}_{\pi (i)} ) \\
&= C^{k} k ! \sum_{z_{1} \in B (n) } \sum_{z_{2} \in B (n) } \cdots \sum_{z_{k} \in B (n) } \prod_{i =1}^{k} G (z_{i-1}, z_{i}, B (n) ) Es ( d^{ \vec{z} }_{i} ).
\end{align*}
Thus in order to prove \eqref{leipzig-1}, it suffices to show that 
\begin{equation}\label{bath-tab}
\sum_{z_{1} \in B (n) } \sum_{z_{2} \in B (n) } \cdots \sum_{z_{k} \in B (n) } \prod_{i =1}^{k} G (z_{i-1}, z_{i}, B (n) ) Es ( d^{ \vec{z} }_{i} ) \le C^{k} \big( n^{2} Es (n) \big)^{k}.
\end{equation}

To achieve \eqref{bath-tab}, as in the proof of Theorem 5.6 of \cite{BM}, we are interested in only the terms involving $z_{k}$ in the sum of \eqref{bath-tab}. With this in mind, we let 
$g_{i} = G (z_{i-1}, z_{i}, B (n) ) Es ( d^{ \vec{z} }_{i} )$ and we write $G_{j} = \prod_{i=1}^{j} g_{i}$. We also set $d (z) = \text{dist} ( z, \partial B (n) )$. Then the definition of $d^{ \vec{z} }_{i}$ gives that 
\begin{equation*}
\prod_{i =1}^{k} G (z_{i-1}, z_{i}, B (n) ) Es ( d^{ \vec{z} }_{i} )  = G_{k-1} G (z_{k-1}, z_{k}, B (n) ) Es \Big( |z_{k} - z_{k-1} | \wedge d (z_{k} ) \Big).
\end{equation*}
Now we expand the sum of \eqref{bath-tab} and collect all terms involving $z_{k}$ as follows:
\begin{align*}
&\sum_{z_{1} \in B (n) } \sum_{z_{2} \in B (n) } \cdots \sum_{z_{k} \in B (n) } \prod_{i =1}^{k} G (z_{i-1}, z_{i}, B (n) ) Es ( d^{ \vec{z} }_{i} ) \\
&\le \sum_{z_{1} \in B (n) } \sum_{z_{2} \in B (n) } \cdots \sum_{z_{k-1} \in B (n) } G_{k-2} G (z_{k-2}, z_{k-1}, B (n) ) \\
& \ \ \times \sum_{z_{k} \in B (n ) } G (z_{k-1}, z_{k}, B (n) ) \Big( Es \big( |z_{k-1} - z_{k-2} | \wedge d (z_{k-1} ) \big) + Es (|z_{k-1} - z_{k} | ) \Big) \\
& \ \ \times \Big( Es (|z_{k} - z_{k-1} |) + Es (d ( z_{k} ) ) \Big),
\end{align*}
where we used $Es ( r \wedge R ) \le Es (r ) + Es (R)$ in the inequality above. Therefore, we need to estimate the following four terms:
\begin{itemize}
\item  $I_{1} = Es \big( |z_{k-1} - z_{k-2} | \wedge d (z_{k-1} ) \big) \sum_{z_{k} \in B (n ) } G (z_{k-1}, z_{k}, B (n) ) Es (|z_{k} - z_{k-1} |)$,

\item  $I_{2} = Es \big( |z_{k-1} - z_{k-2} | \wedge d (z_{k-1} ) \big) \sum_{z_{k} \in B (n ) } G (z_{k-1}, z_{k}, B (n) ) Es (d ( z_{k} ) )$,

\item  $I_{3} = \sum_{z_{k} \in B (n ) } G (z_{k-1}, z_{k}, B (n) ) Es (|z_{k} - z_{k-1} |)^{2} $,

\item  $I_{4} = \sum_{z_{k} \in B (n ) } G (z_{k-1}, z_{k}, B (n) ) Es (|z_{k} - z_{k-1} |) Es (d ( z_{k} ) ) $.

\end{itemize}
Using $2 r R \le r^{2} + R^{2}$, if we define $I_{5}$ by 
\begin{equation*}
I_{5} = \sum_{z_{k} \in B (n ) } G (z_{k-1}, z_{k}, B (n) ) Es (d ( z_{k} ) )^{2},
\end{equation*}
then $I_{4}$ is bounded above by $I_{3} + I_{5}$. We start by estimating $I_{3}$. To do it, we set $B^{1} = B (n) \cap B (z_{k-1}, \frac{n}{2} )$ and $B^{2} = B (n) \setminus B^{1}$. Note that Green's function $G (z_{k-1}, z_{k}, B (n) )$ is bounded above by $G (z_{k-1}, z_{k}, \mathbb{Z}^{3} )$ which is comparable to $ ( |z_{k -1} - z_{k}| + 1 )^{-1}$ (see Theorem 4.3.1 of \cite{Law b2} for it). Thus we have a bound on the sum for $z_{k} \in B^{1}$ as follows:
\begin{align*}
& \sum_{z_{k} \in B^{1} } G (z_{k-1}, z_{k}, B (n) ) Es (|z_{k} - z_{k-1} |)^{2} \le C \sum_{z_{k} \in B^{1} } ( |z_{k -1} - z_{k}| + 1 )^{-1} Es (|z_{k} - z_{k-1} |)^{2} \\ 
&\le C \sum_{r=1}^{n} r Es (r)^{2}.
\end{align*}
To estimate the last sum in the inequality above, we use Lemma \ref{mendo2}. By \eqref{Goal1-1}, we know that $\frac{1}{3} \le \alpha < 1$ and thus we can take $\epsilon > 0$ so that $1 - 2 \alpha - 2 \epsilon > -1$. Choosing $\epsilon > 0$ with this condition and applying Lemma \ref{mendo2} to this $\epsilon$, we see that the last sum in the inequality above is bounded above by
\begin{equation*}
C \sum_{r=1}^{n} r^{1- 2 \alpha - 2 \epsilon } n^{2 \alpha + 2 \epsilon } Es (n)^{2} \le C n^{2} Es (n)^{2}.
\end{equation*}
On the other hand, by Proposition \ref{up-to-const lew}, we see that $Es (|z_{k} - z_{k-1} |)^{2} \le C Es (n)^{2}$ for $z_{k} \in B^{2}$. Therefore the sum for $z_{k} \in B^{2}$ is bounded above by $C n^{2} Es (n)^{2}$. So we see that $I_{3} \le C n^{2} Es (n)^{2}$. Similar arguments as above give that $I_{1}$ is bounded above by
\begin{equation}\label{semina}
I_{1} \le C Es \big( |z_{k-1} - z_{k-2} | \wedge d (z_{k-1} ) \big) n^{2} Es (n).
\end{equation}

We next consider $I_{5}$. To estimate it, we recall that $G (r)$ was defined as in Lemma \ref{leip}. Note that $I_{5}$ is bounded above by 
\begin{equation*}
\sum_{j = 0}^{\log_{2} n} \sum_{z_{k} \in G ( 2^{j} ) \setminus G (2^{j-1} ) } G (z_{k-1}, z_{k}, B (n) ) Es (d ( z_{k} ) )^{2},
\end{equation*}
which is, by Proposition \ref{up-to-const lew}, less than
\begin{equation*}
C \sum_{j = 0}^{\log_{2} n} Es (2^{j} )^{2} \sum_{z_{k} \in G ( 2^{j} ) } G (z_{k-1}, z_{k}, B (n) ).
\end{equation*}
Applying Lemma \ref{leip} to the sum for $z_{k} \in G ( 2^{j} )$ above, we see that $I_{5}$ is bounded above by 
\begin{equation*}
C \sum_{j = 0}^{\log_{2} n} 2^{2j} Es (2^{j} )^{2}.
\end{equation*}
Since $\alpha < 1$, we can take $\epsilon >0$ so that $2- 2\alpha -2 \epsilon > 0$. By applying Lemma \ref{mendo2} to this $\epsilon$, we see that the sum above is bounded above by 
\begin{equation*}
C n^{2 \alpha + 2 \epsilon} Es (n)^{2} \sum_{j = 0}^{\log_{2} n} (2^{j})^{2 - 2\alpha - 2 \epsilon} \le C n^{2} Es (n)^{2}.
\end{equation*}
Thus we get $I_{5} \le C n^{2} Es (n)^{2}$. Similarly, we have 
\begin{equation}\label{sorosoroowaru}
I_{2} \le C Es \big( |z_{k-1} - z_{k-2} | \wedge d (z_{k-1} ) \big) n^{2} Es (n).
\end{equation}

Consequently, we see that 
\begin{align*}
&\sum_{z_{1} \in B (n) } \sum_{z_{2} \in B (n) } \cdots \sum_{z_{k} \in B (n) } \prod_{i =1}^{k} G (z_{i-1}, z_{i}, B (n) ) Es ( d^{ \vec{z} }_{i} ) \\
&\le C n^{2} Es (n) \sum_{z_{1} \in B (n) } \sum_{z_{2} \in B (n) } \cdots \sum_{z_{k-1} \in B (n) } G_{k-2} G (z_{k-2}, z_{k-1}, B (n) ) \\
& \ \ \times \Big[ Es \big( |z_{k-1} - z_{k-2} | \wedge d (z_{k-1} ) \big) + Es (n) \Big] \\
&\le C n^{2} Es (n) \sum_{z_{1} \in B (n) } \sum_{z_{2} \in B (n) } \cdots \sum_{z_{k-1} \in B (n) } G_{k-2} G (z_{k-2}, z_{k-1}, B (n) ) \\
& \ \ \times  Es \big( |z_{k-1} - z_{k-2} | \wedge d (z_{k-1} ) \big).
\end{align*}
Iterating this $k-1$ times, we get \eqref{leipzig-1}.
\end{proof}

By Theorem \ref{leipzig}, in order to prove that $E (M_{n}^{k} )$ is bounded above by $C^{k} k ! \big( E (M_{n} ) \big)^{k}$, we need to show that $E (M_{n} ) $ is bounded below by $c n^{2} Es (n)$. This is proved in the next proposition.

\begin{prop}\label{zhan}
Let $d=3$. Then there exists $c > 0$ such that for all $n \ge 1$, 
\begin{equation}\label{momom}
E (M_{n} ) \ge c n^{2} Es (n).
\end{equation}
\end{prop}

\begin{proof}
Since the number of points in $B ( \frac{2 n}{3} ) \setminus B ( \frac{n}{3} )$ is comparable to $n^{3}$, it suffices to prove that for $x \in B ( \frac{2 n}{3} ) \setminus B ( \frac{n}{3} )$, 
\begin{equation}\label{zhan-1}
P \big( x \in LE ( S[0, \tau_{n} ] ) \big) \ge  \frac{c}{n } Es (n).
\end{equation}
So suppose that $x \in B ( \frac{2 n}{3} ) \setminus B ( \frac{n}{3} )$. We write $Z$ for a random walk starting from the origin conditioned that it hits $x$ before exiting from $B (n)$. We let $u$ be the last time that $Z$ visits to $x$ up to its first exit of $B (n)$. Suppose that $S^{1}$ is a simple random walk started at $x$ which is independent of $Z$. Then Proposition \ref{barlow1} gives that 
\begin{equation*}
P \big( x \in LE ( S[0, \tau_{n} ] ) \big) = G (0, x, B (n) ) P \big( LE ( Z [0, u] ) \cap S^{1} [1, \tau^{1}_{n} ] = \emptyset \big). 
\end{equation*}
Proposition 1.5.10 of \cite{Law b} gives that $G (0, x, B (n) )$ is comparable to $\frac{1}{n}$. Thus it suffices to show that 
\begin{equation}\label{ayashii}
P \big( LE ( Z [0, u] ) \cap S^{1} [1, \tau^{1}_{n} ] = \emptyset \big) \ge c Es (n).
\end{equation}
However, by the time reversibility of LERW (see Lemma 7.2.1 of \cite{Law b}), the probability in \eqref{ayashii} is equal to 
\begin{equation*}
P \big( LE ( Z [0, u]^{R} ) \cap S^{1} [1, \tau^{1}_{n} ] = \emptyset \big).
\end{equation*}
Suppose that $Y$ is a random walk starting from $x$ conditioned that it hits the origin before exiting from $B (n)$, which is independent of $S^{1}$. We write $\sigma^{Y}_{0}$ for the first time that $Y$ hits the origin. Since the law of $LE ( Z [0, u]^{R} )$ is same as the law of $LE ( Y [0, \sigma^{Y}_{0} ] )$, the non-intersecting probability above is equal to 
\begin{equation}\label{odoruodoru}
P \big( LE ( Y [0, \sigma^{Y}_{0} ] ) \cap S^{1} [1, \tau^{1}_{n} ] = \emptyset \big).
\end{equation}
So we need to show that the probability of \eqref{odoruodoru} is bounded below by $c Es (n)$. Note that by definition, that probability is equal to 
\begin{equation*}
\frac{P^{x, x} \Big( S^{1} [1, \tau^{1}_{n} ] \cap LE ( S^{2} [0, \sigma^{2}_{0} ] ) = \emptyset, \ \sigma^{2}_{0} < \tau^{2}_{n} \Big) }{ P^{x} ( \sigma^{2}_{0} < \tau^{2}_{n} ) },
\end{equation*}
which is, by Proposition 1.5.10 of \cite{Law b}, comparable to 
\begin{equation*}
n P^{x, x} \Big( S^{1} [1, \tau^{1}_{n} ] \cap LE ( S^{2} [0, \sigma^{2}_{0} ] ) = \emptyset, \ \sigma^{2}_{0} < \tau^{2}_{n} \Big).
\end{equation*}
To estimate the probability above, we set $t^{i} = \inf \{ j \ | \ S^{i} (j) \notin B (x, \frac{n}{4} ) \}$ for the first time that $S^{i}$ exits from $B (x, \frac{n}{4} ) $ for each $i = 1, 2$. Recall that the separation lemma (Theorem \ref{sep lem 3dim}) shows that conditioned that $S^{1} [1, t^{1} ]$ and $ LE ( S^{2} [0, t^{2} ] ) $ do not intersect, they are ``well-separated" with positive conditional probability. Namely if we let 
\begin{equation*}
{\cal D} = \text{dist} \big( S^{1} (t^{1}), LE ( S^{2} [0, t^{2} ] ) \big) \wedge \text{dist} \big( S^{2} (t^{2}), S^{1} [1, t^{1} ] \big),
\end{equation*}
then there exists $c > 0$ such that 
\begin{equation*}
P^{x, x} \Big( S^{1} [1, t^{1} ] \cap LE ( S^{2} [0, t^{2} ] ) = \emptyset, \ {\cal D} \ge c n \Big) \ge c Es (n),
\end{equation*}
where we also used Proposition \ref{up-to-const lew}. Conditioned that $S^{1} [1, t^{1} ]$ and $ LE ( S^{2} [0, t^{2} ] ) $ do not intersect and they are separated, we can attach $S^{1} [t^{1}, \tau^{1}_{n} ]$ and $S^{2} [t^{2}, \sigma^{2}_{0} ]$ so that $S^{1} [1, \tau^{1}_{n} ]$ and $LE ( S^{2} [0, \sigma^{2}_{0} ] )$ do not intersect and $\sigma^{2}_{0} < \tau^{2}_{n}$ with conditional probability at least $\frac{c}{n}$. Therefore, we see that 
\begin{equation*}
 P^{x, x} \Big( S^{1} [1, \tau^{1}_{n} ] \cap LE ( S^{2} [0, \sigma^{2}_{0} ] ) = \emptyset, \ \sigma^{2}_{0} < \tau^{2}_{n} \Big) \ge \frac{c Es (n)}{n},
\end{equation*}
which finishes the proof.
\end{proof}

Now we are ready to give upper exponential tail bounds on $M_{n}$ in the next theorem.

\begin{thm}\label{dounaruno}
Let $d=3$. There exist $0 < c, C < \infty$ such that for all $k \ge 1$, $n \ge 1$ and $\kappa > 0$ the following holds:
\begin{align}
&E ( M_{n}^{k} ) \le C^{k} k ! \big( E (M_{n} ) \big)^{k}, \label{kuraku} \\
& P \big( M_{n} \ge \kappa E (M_{n} ) \big) \le 2 e^{- c \kappa }. \label{nattekita}
\end{align} 
\end{thm}

\begin{proof}
Theorem \ref{leipzig} and Proposition \ref{zhan} immediately give \eqref{kuraku}.

We let $c_{1} = \frac{1}{2C}$ where $C$ is a constant as in \eqref{kuraku}. Then we see that
\begin{equation*}
E \Big( \exp \Big\{ \frac{ c_{1} M_{n} }{ E (M_{n} )} \Big\} \Big) = \sum_{k = 0}^{\infty} \frac{ (c_{1})^{k} E (M_{n}^{k} ) }{k !  \big( E (M_{n} ) \big)^{k} } \le \sum_{k = 0}^{\infty} 2^{-k} = 2.
\end{equation*}
Therefore, \eqref{nattekita} follows from Markov's inequality.
\end{proof}

\subsection{Lower tail estimates}
In this subsection, we will give a lower tail estimate on $M_{n}$ the length of LERW in three dimensions. Namely, the goal of this section is to show that for any $\epsilon > 0$ there exist $0 < c= c_{\epsilon}, C=C_{\epsilon} < \infty$ such that for all $\kappa > 0$ and $n \ge 1$
\begin{equation}\label{yareyare}
P \Big( M_{n} \le \frac{E ( M_{n} ) }{\kappa} \Big) \le C \exp \big\{ - c \kappa^{\frac{1}{ 2- \alpha } - \epsilon} \big\},
\end{equation}
where $\alpha$ is the exponent as in Corollary \ref{subad}. This is an analog of Theorem 1.2 of \cite{BM} in 3 dimensions. To prove \eqref{yareyare}, we will follow the same strategy as in the proof of Theorem 1.2 of \cite{BM}. We will recall the idea of proof here briefly before going to the proof.

Let $k \in \mathbb{N}$ and we write $\gamma = LE ( S [0, \tau_{k n} ])$ for the loop-erasure of a simple random walk up to the first exit of $B (k n)$. For each $i = 1, \cdots , k$, we set $u_{i}$ for the first time that $\gamma$ exits from $B ( i n)$. We will show that there exist universal constants $c > 0$ and $p \in (0, 1)$ such that conditioned on $\gamma [0, u_{i} ]$, the conditional probability that the length of $\gamma [u_{i}, u_{i+1} ]$ is bounded below by $c E (M_{n})$ is bigger than $p$ for each $i$, i.e., 
\begin{equation}\label{yaninattekuru}
P \Big( \text{len} \gamma [ u_{i}, u_{i+1} ] \le c E (M_{n})  \ \big| \ \gamma [0, u_{i} ] \Big) \le p.
\end{equation}
This gives that 
\begin{equation}\label{yaninattekuru-1}
P \Big( M_{k n} \le c E (M_{n})   \Big) \le p^{k}.
\end{equation}
Once we have proved \eqref{yaninattekuru-1}, by choosing suitable $k$ and relating $E (M_{k n} )$ to $E ( M_{n} )$, we get \eqref{yareyare}.

In order to \eqref{yaninattekuru}, by the domain Markov property (see Proposition \ref{DMP}), we need to estimate the length of the loop-erasure of a random walk conditioned that it does not hit $\gamma [0, u_{i} ]$ before exiting $B (k n)$. We will estimate the first and second moments of its length. Then by using the second moment method, we will prove \eqref{yaninattekuru}.

\medskip

We will start by introducing a random walk conditioned not to hit a given set as follows.

\begin{dfn}\label{conditioned-random-walk-da}
Suppose that $m, n , N$, $A_{m}$, $x$, and $A_{n} (x)$ are as in Definition \ref{set-near-cube}. We take a subset $K \subset A_{m}$. Let $X$ be a random walk conditioned that $X [1, \tau^{X}_{N} ] \cap K = \emptyset$, where $\tau^{X}_{N}$ is the first time that $X$ exits from $B (N)$. We set $\eta$ for $LE ( X [0, \tau^{X}_{N} ] )$ up to the first time that $LE ( X [0, \tau^{X}_{N} ] )$ exits from $B (x, n)$. Finally we let $J^{K}_{m, n, N, x} = \sharp \big( \eta \cap A_{n} (x) \big)$ be the number of points lying in both $\eta$ and $A_{n} (x)$.
\end{dfn}

We are interested in the first and second moments of $J^{K}_{m, n, N, x}$ defined as above. The next lemma gives a lower bound on the probability that $\eta$ hits a given point lying in $A_{n} (x)$.

\begin{lem}\label{hakubai}
Let $d=3$. Suppose that $m, n , N$, and $x$ are as in Definition \ref{set-near-cube} and that $K$, $X$, $\eta$, and $J^{K}_{m, n, N, x}$ are as in Definition \ref{conditioned-random-walk-da}. Then there exists $c > 0$ such that for all $z \in A_{n} (x)$,
\begin{equation}\label{hakubai-1}
P \big( z \in \eta \big) \ge \frac{c}{n} Es (n).
\end{equation}
In particular, we have
\begin{equation}\label{kitano}
E \Big( J^{K}_{m, n, N, x} \Big) \ge c n^{2} Es (n).
\end{equation}
\end{lem}

\begin{proof}
The second inequality immediately follows from the first inequality. We will show \eqref{hakubai-1}.

Suppose that $z \in A_{n} (x)$. We write $Y$ for a random walk starting from $x$ conditioned that it hits $z$ without hitting both $K$ and $\partial B (N)$ which is independent of $X$. Let $u$ be the last time that $Y$ hits $z$ before hitting $\partial B (N)$. Then Proposition \ref{barlow1} gives that 
\begin{equation}\label{hakubai-2}
P \big( z \in \eta \big) = G^{X} ( x, z, B (N) ) P_{X, Y}^{z, x} \Big( LE ( Y [0, u] ) \cap X [1, \tau^{X}_{N} ] = \emptyset, \ LE ( Y [0, u] ) \subset B (x, n ) \Big),
\end{equation}
where $P_{X, Y}^{z, x}$ stands for the probability of $X$ and $Y$ assuming that $X (0) = z$, $Y (0) = x$ and that $X$ and $Y$ are independent.

We recall that $\ell$ is the half infinite line defined as in Definition \ref{set-near-cube}. We first estimate Green's function in \eqref{hakubai-2}. Note that $G^{X} ( x, z, B (N) )$ is bounded below by the probability that $X$ hits $z$ before $\tau^{X}_{N}$. The definition of $X$ gives that it is bounded below by
\begin{equation}\label{hakubai-3}
\frac{P^{x} \big( \sigma_{z} < \sigma_{K} \wedge \tau_{N} \big) }{P^{x} \big(  \sigma_{K} < \tau_{ B (x, \frac{n}{8})} \big)},
\end{equation}
where $\tau_{ B (x, \frac{n}{8})}$ is the first exit of $B (x, \frac{n}{8})$. However, Proposition \ref{Masson} shows that conditioned on $\sigma_{K} < \tau_{ B (x, \frac{n}{8})}$, with positive conditional positive probability, the first exit point from $ B (x, \frac{n}{8})$ lies in $A = \{ w \in \partial B (x, \frac{n}{8}) \ | \ \text{dist} (w, \ell) \le \frac{n}{20} \}$. Then with probability at least $\frac{c}{n}$, it hits $z$ before hitting $K$ and $\partial B (N)$. Thus we have
\begin{equation*}
P^{x} \big( \sigma_{z} < \sigma_{K} \wedge \tau_{N} \big) \ge \frac{c}{n} P^{x} \big(  \sigma_{K} < \tau_{ B (x, \frac{n}{8})} \big),
\end{equation*}
which gives that $G^{X} ( x, z, B (N) ) \ge \frac{c}{n}$.

Next we deal with the probability in the right hand side of \eqref{hakubai-2}. To estimate the probability, we will use Theorem \ref{sep lem 3dim}. The definitions of $X, Y$ and the time reversibility of LERW (see Lemma 7.2.1 of \cite{Law b}) give that the probability in the right hand side of \eqref{hakubai-2} is equal to 
\begin{equation}\label{hakubai-5}
\frac{P^{z, z} \Big( S^{1} [1, \tau^{1}_{N} ] \cap LE ( S^{2} [ 0, \sigma^{2}_{x} ] ) = \emptyset, \ \tau^{1}_{N} < \sigma^{1}_{K}, \ \sigma^{2}_{x} < \sigma^{2}_{K} \wedge \tau^{2}_{N} \Big)}{P^{z, z} \Big( \tau^{1}_{N} < \sigma^{1}_{K}, \ \sigma^{2}_{x} < \sigma^{2}_{K} \wedge \tau^{2}_{N} \Big) }.
\end{equation}
We let $x^{i} = S^{i} (\tau^{i}_{B (z, \frac{n}{20} )} )$ be the first exit point from $B (z, \frac{n}{20} )$ for each $S^{i}$. We set 
\begin{equation*}
\tilde{d} = \text{dist} \big( x^{1}, LE ( S^{2}[0,  \tau^{2}_{B (z, \frac{n}{20} )} ] ) \big) \wedge \text{dist} \big( x^{2}, S^{1}[0,  \tau^{1}_{B (z, \frac{n}{20} )} ] \big). 
\end{equation*}
Then Theorem \ref{sep lem 3dim} shows that conditioned on $S^{1}[1,  \tau^{1}_{B (z, \frac{n}{20} )} ] \cap LE ( S^{2}[0,  \tau^{2}_{B (z, \frac{n}{20} )} ] ) = \emptyset$, with positive conditional probability, $\tilde{d} \ge c n$ for some $c > 0$. Once they are separated, we can find a path $\lambda$ such that $\lambda (0) = x^{2}$, $\lambda (\text{len} \lambda ) = x$, $\lambda \subset B (x, n)$ and that a $\frac{n}{100}$-neighborhood of $\lambda$ and $S^{1}[0,  \tau^{1}_{B (z, \frac{n}{20} )}]$ do not intersect. We write $F^{2}$ for the $\frac{n}{100}$-neighborhood of $\lambda$. 

Conditioned on $S^{1}[1,  \tau^{1}_{B (z, \frac{n}{20} )} ] \cap LE ( S^{2}[0,  \tau^{2}_{B (z, \frac{n}{20} )} ] ) = \emptyset$ and $\tilde{d} \ge c n$, we first want to compare the probability that $S^{1}$ started at $x^{1}$ does not hit $K \cup LE ( S^{2}[0,  \tau^{2}_{B (z, \frac{n}{20} )} ] ) \cup F^{2}$ until its first exit of $B (N)$ with the probability that $S^{1}$ started at $z$ does not hit $K$ until its first exit of $B (N)$. Since $\tilde{d} \ge c n$, we see that 
\begin{equation*}
P^{x^{1} } \Big( S^{1} [0, \tau^{1}_{B (x, L n ) } ] \cap \big( K \cup LE ( S^{2}[0,  \tau^{2}_{B (z, \frac{n}{20} )} ] ) \cup F^{2} \big) = \emptyset, \ \text{dist} \big( S^{1} \big( \tau^{1}_{B (x, L n ) }  \big), \ell \big) \le \frac{L n}{20} \Big) \ge c_{L},
\end{equation*}
where $L$ is a large constant which will be defined later and $\ell$ is an half infinite line as in Definition \ref{set-near-cube}. We set $G$ for the set of points in $\partial B (x, L n)$ lying in a $\frac{L n}{20}$-neighborhood of $\ell$. Then by Proposition 1.5.10 of \cite{Law b} and Lemma \ref{reflect2}, we see that for all $v \in G$,
\begin{equation*}
P^{v} \Big( \tau^{1}_{N} < \sigma^{1}_{K}, \ \sigma^{1}_{B (x, n)} < \tau^{1}_{N} \Big) \le \frac{C}{L} P^{v} \Big( \tau^{1}_{N} < \sigma^{1}_{K} \Big),
\end{equation*}
for some universal constant $C$. Choosing $L$ large so that $\frac{C}{L} < \frac{1}{2}$, we see that the probability 
\begin{equation*}
P^{x^{1} } \Big( S^{1} [0, \tau^{1}_{N} ] \cap \big( K \cup LE ( S^{2}[0,  \tau^{2}_{B (z, \frac{n}{20} )} ] ) \cup F^{2} \big) = \emptyset \Big) 
\end{equation*}
is bounded below by
\begin{equation*}
c \min_{v \in G} P^{v } \Big( S^{1} [0, \tau^{1}_{N} ] \cap K = \emptyset \Big).
\end{equation*}
Using Lemma \ref{reflect2} again, this is bounded below by 
\begin{equation*}
c  P^{z } \big(  \tau^{1}_{N} < \sigma^{1}_{K} \big).
\end{equation*}

Conditioned on $S^{1}[1,  \tau^{1}_{B (z, \frac{n}{20} )} ] \cap LE ( S^{2}[0,  \tau^{2}_{B (z, \frac{n}{20} )} ] ) = \emptyset$ and $\tilde{d} \ge c n$, we next estimate the probability that $S^{2}$ started at $x^{2}$ satisfies that $S^{2} [0, \sigma^{2}_{x} -1] \cap K = \emptyset$ and that $S^{2} [0, \sigma^{2}_{x} ]$ is contained in $F^{2}$. Namely, we want to show that the probability 
\begin{equation*}
P^{x^{2}} \Big(  S^{2} [0, \sigma^{2}_{x} ] \cap K = \emptyset, \ S^{2} [0, \sigma^{2}_{x} ] \subset F^{2} \Big)
\end{equation*}
is comparable to the probability $P^{ z} \big(  \sigma^{2}_{x} < \sigma^{2}_{K} \wedge \tau^{2}_{N} \big)$. Reversing a path and using Lemma 3.1 of \cite{Mas}, in order to prove that those two probabilities are comparable, it suffices to show that the probability
\begin{equation*}
P^{x} \big( \sigma^{2}_{x^{2}} < \sigma^{2}_{K} \wedge \sigma^{2}_{x} \wedge \tau^{2}_{F^{2}} \big)
\end{equation*}
is comparable to the probability $P^{x} \big( \sigma^{2}_{z} < \sigma^{2}_{K} \wedge \sigma^{2}_{x} \wedge \tau^{2}_{N} \big)$. But using Proposition \ref{Masson}, we see that those probabilities are comparable.

Consequently, we see that the ratio in \eqref{hakubai-5} is bounded below by $c Es (\frac{n}{20} )$. Proposition \ref{up-to-const lew} shows that $Es (n)$ is comparable to $Es (\frac{n}{20} )$, so we finish the proof.
\end{proof}

Now we give a second moment estimate of $J^{K}_{m, n, N, x}$ in the next lemma. The next lemma shows that the second moment of $J^{K}_{m, n, N, x}$ is comparable to the square of its first moment, which allows us to use the second moment method.

\begin{lem}\label{katsura}
Let $d=3$. Suppose that $m, n , N$, and $x$ are as in Definition \ref{set-near-cube} and that $K$ and $J^{K}_{m, n, N, x}$ are as in Definition \ref{conditioned-random-walk-da}. Then there exists an absolute constant $C < \infty$ such that  
\begin{equation}\label{katsura-1}
E \Big( \big( J^{K}_{m, n, N, x} \big)^{2} \Big) \le C E \Big(  J^{K}_{m, n, N, x}  \Big)^{2}.
\end{equation}
In particular, there exists $c > 0$ such that  
\begin{equation}\label{katsura-2}
P \Big(  J^{K}_{m, n, N, x}  \ge c n^{2} Es (n) \Big) \ge c.
\end{equation}
\end{lem}

\begin{proof}
The second inequality follows from Lemma \ref{hakubai}, \eqref{katsura-1} and the second moment method. We will show \eqref{katsura-1}.

For $z, w \in A_{n}(x)$, we write $d^{1}_{z, w} = \text{dist} (z, \partial B (N) ) \wedge |z-x| \wedge |z-w|$ and $d^{2}_{z, w} = \text{dist} (z, \partial B (N) )  \wedge |z-w|$. Then by Proposition \ref{pukosukegairu}, we see that the second moment of $J^{K}_{m, n, N, x}$ is bounded above by
\begin{equation*}
 \sum_{z, w \in A_{n}(x)} P \big( z, w \in \eta \big) \le C \sum_{z, w \in A_{n}(x)} G^{X } (x, z, B (N) ) G^{X } (z, w, B (N) ) Es (d^{1}_{z, w}) Es (d^{2}_{z, w} ).
\end{equation*}
The definition of $A_{n}(x)$ gives that both $d^{1}_{z, w}$ and $d^{2}_{z, w}$ are comparable to $|z-w|$. Therefore, by using Proposition \ref{up-to-const lew}, we see that 
\begin{equation*}
E \Big( \big( J^{K}_{m, n, N, x} \big)^{2} \Big) \le C \sum_{z, w \in A_{n}(x)} G^{X } (x, z, B (N) ) G^{X } (z, w, B (N) ) Es (|z-w|)^{2}.
\end{equation*}

We will first estimate $ G^{X } (x, z, B (N) )$. Lemma 2.1 of \cite{BM} gives that $ G^{X } (x, z, B (N) )$ is equal to 
\begin{equation*}
G \big( z, z, B (N) \setminus K \big) \frac{P^{x} \big( \sigma_{z} < \sigma_{K} \wedge \tau_{N} \big) P^{z} \big( \tau_{N} < \sigma_{K} \big) }{P^{x} \big( \tau_{N} < \sigma_{K} \big)}.
\end{equation*}
Note that $G \big( z, z, B (N) \setminus K \big) \le C$. We write $A = \{ w \in \partial B (x, \frac{n}{8}) \ | \ \text{dist} (w, \ell) \le \frac{n}{20} \}$ as in the proof of Lemma \ref{hakubai}. Then the probability $P^{x} \big( \tau_{N} < \sigma_{K} \big)$ is bounded below by 
\begin{equation*}
P^{x} \Big( \tau_{N} < \sigma_{K}, \ S \big( \tau_{B (x, \frac{n}{8})} \big) \in A \Big),
\end{equation*}
which is, by Proposition \ref{Masson} and the strong Markov property, bounded below by 
\begin{equation*}
c P^{x} \Big( \tau_{B (x, \frac{n}{8})} < \sigma_{K} \Big) \min_{v \in A} P^{v} \Big ( \tau_{N} < \sigma_{K} \Big).
\end{equation*}
However, the discrete Harnack principle (see Theorem 1.7.6 of \cite{Law b}) gives that $\min_{v \in A} P^{v} \Big ( \tau_{N} < \sigma_{K} \Big)$ is comparable to $P^{z} \big( \tau_{N} < \sigma_{K} \big)$. On the other hand, using Proposition 1.5.10 of \cite{Law b}, we see that $P^{x} \big( \sigma_{z} < \sigma_{K} \wedge \tau_{N} \big)$ is bounded above by 
\begin{equation*}
 \frac{C}{n} P^{x} \Big( \tau_{B (x, \frac{n}{8})} < \sigma_{K} \Big).
\end{equation*}
Thus it follows that $ G^{X } (x, z, B (N) ) \le \frac{C}{n}$. Similarly, we see that $G^{X } (z, w, B (N) ) \le \frac{C}{|z-w|}$. Consequently, we have
\begin{equation*}
E \Big( \big( J^{K}_{m, n, N, x} \big)^{2} \Big) \le C \sum_{z, w \in A_{n}(x)} \frac{C}{n |z-w|}  Es (|z-w|)^{2}.
\end{equation*}

Since $\alpha < 1$ (see \eqref{Goal1-1} for this), we can choose $\epsilon > 0$ such that $1- 2 \alpha - 2 \epsilon > -1$. Now we apply Lemma \ref{mendo2} to this $\epsilon$ to show that 
\begin{align*}
&\sum_{z, w \in A_{n}(x)} \frac{C}{n |z-w|}  Es (|z-w|)^{2} \le \frac{C}{n} \sum_{z \in A_{n} (x) } \sum_{r =1}^{n} r Es (r)^{2} \\
&\le C n^{2} \sum_{r =1}^{n} r \Big( n^{\alpha + \epsilon} Es (n) r^{-\alpha - \epsilon} \Big)^{2} \le  C n^{2} \sum_{r =1}^{n} n^{2\alpha + 2\epsilon} Es (n)^{2} r^{1-2\alpha - 2\epsilon} \le C n^{4} Es(n)^{2}.
\end{align*}
But by Lemma \ref{hakubai}, we have $E \Big( J^{K}_{m, n, N, x} \Big) \ge c n^{2} Es (n)$ and finish the proof.
\end{proof}

Recall that $\gamma^{\infty} = LE ( S[0, \infty ) )$ stands for the infinite LERW and that $\tau^{\infty}_{n}$ is the first time that $\gamma^{\infty}$ exits from $B (n)$. The next lemma relates $E (M_{k n} )$ and $E (\tau^{\infty}_{k n} )$ to $E (M_{n} )$. 

\begin{lem}\label{yamashina}
Let $d=3$. Then for all $\epsilon > 0$ there exists $c_{\epsilon}$ such that for all $k \ge 1$ and $n \ge 1$, we have
\begin{align}
&E (M_{k n} ) \le c_{\epsilon} k^{2 - \alpha + \epsilon} E (M_{n} ) \label{yama-1} \\
&E (\tau^{\infty}_{k n}  ) \le c_{\epsilon} k^{2 - \alpha + \epsilon} E (M_{n} ), \label{yama-2}
\end{align}
where $\alpha$ is the exponent as in Corollary \ref{subad}.
\end{lem}

\begin{proof}
Since the proof of \eqref{yama-2} is similar to the proof of \eqref{yama-1}, we will only prove \eqref{yama-1}. Using Theorem \ref{leipzig} and Proposition \ref{up to const indep1}, we see that $E (M_{k n} )$ is bounded above by $C k^{2} n^{2} Es (n) Es (n , kn)$. On the other hand, Lemma \ref{mendo1} shows that there exists $c_{\epsilon}$ such that for all $n \ge 1$, $Es (n , kn)$ is bounded above by $c_{\epsilon} k^{- \alpha + \epsilon}$. Since $E (M_{n} ) \ge c n^{2} Es (n)$ by Proposition \ref{zhan}, we finish the proof.
\end{proof}

Using the domain Markov property (see Proposition \ref{DMP}) along with \eqref{katsura-2}, we get the following proposition which gives lower tail bounds on $M_{k n}$ and $\tau^{\infty}_{k n}$.

\begin{prop}\label{atsuine}
Let $d=3$. There exist $0 < c_{1}, c_{2} < \infty$ such that for all $k \ge 2$ and $n \ge 1$, we have 
\begin{align}
&P \big( M_{k n} \le c_{1} E ( M_{n} ) \big) \le e^{- c_{2} k}, \label{atsui-1} \\
&P \big( \tau^{\infty}_{k n} \le c_{1} E ( M_{n} ) \big) \le e^{- c_{2} k}. \label{atsui-2}
\end{align}
\end{prop}

\begin{proof}
We will only prove \eqref{atsui-1}. The second inequality \eqref{atsui-2} can be shown similarly.

Let $\gamma = LE ( S[ 0, \tau_{k n} ] )$. We set $k' = \lfloor \frac{k}{\sqrt{3}} \rfloor$. Recall that $A_{m} = [-m, m]^{3}$ stands for a cube of length $2m$ centered at the origin. We consider $k'$ cubes $A_{j n}$ ($j= 1, 2, \cdots , k'$). For each $j$, we let $t_{j}$ be the first time that $\gamma$ exits from $A_{j n}$. We are interested in $t_{j +1} -t_{j}$ which 
is the length of $\gamma [t_{j}, t_{j+1} ]$. Suppose that $M_{k n} \le c_{1} E ( M_{n} )$ (we will define $c_{1}$ later). This implies for all $j=1, \cdots , k'$, $t_{j +1} -t_{j}$ is bounded above by $c_{1} E ( M_{n} )$. The domain Markov property (see Proposition \ref{DMP}) gives that conditioned on $\gamma [0, t_{j} ]$, the law of $\gamma$ after time $t_{j}$ is 
same as the law of the loop-erasure of a random walk $X$ starting from $x_{j} := \gamma (t_{j} )$ conditioned that $X [1, \tau^{X}_{k n} ] \cap \gamma [0, t_{j} ] = \emptyset$. Therefore, conditioned on $\gamma [0, t_{j} ]$, the law of $t_{j +1} -t_{j}$ is same as the law of the first time that $LE ( X [0, \tau^{X}_{k n} ] )$ exits from $A_{(j+1) n}$, which is bounded below by $J^{\gamma[0, t_{j}]}_{j n, n, k n, x_{j}}$ (see Definition \ref{conditioned-random-walk-da} for $J^{K}_{m, n, N, x}$). Thus we have
\begin{align*}
&P \big( M_{k n} \le c_{1} E ( M_{n} ) \big) \le P \Big( \bigcap_{j=1}^{k'} \big\{ t_{j +1} -t_{j} \le c_{1} E ( M_{n} ) \big\} \Big) \\
&\le E \Big[ \bigcap_{j=1}^{k'-1} \big\{ t_{j +1} -t_{j} \le c_{1} E ( M_{n} ) \big\} P \big( J^{\gamma[0, t_{k'}]}_{k' n, n, k n, x_{k'}} \le c_{1} E ( M_{n} ) \big) \Big]
\end{align*}
However, Lemma \ref{katsura} shows that there exists an absolute constant $c_{1} > 0$ such that 
\begin{equation*}
P \big( J^{\gamma[0, t_{j}]}_{j n, n, k n, x_{j}} \le c_{1} E ( M_{n} ) \big) \le 1- c_{1},
\end{equation*}
for all $j$. Thus, for this $c_{1}$, we have
\begin{equation}\label{atsuiyo-1}
P \big( M_{k n} \le c_{1} E ( M_{n} ) \big) \le (1- c_{1} ) P \Big( \bigcap_{j=1}^{k'-1} \big\{ t_{j +1} -t_{j} \le c_{1} E ( M_{n} ) \big\} \Big).
\end{equation}
Iterating this, we see that the left hand side of \eqref{atsuiyo-1} is bounded above by $(1- c_{1})^{k'}$, which finishes the proof.
\end{proof}

Now we are ready to establish exponential tail bounds on $M_{n}$ and $\tau^{\infty}_{n}$. The next theorem is an analog of Theorem 6.7 of \cite{BM} in 3 dimensions.

\begin{thm}\label{yattoda}
Let $d=3$. Recall that $\alpha$ is the exponent as in Corollary \ref{subad}. For any $\epsilon \in (0,1)$, there exist $c=c( \epsilon ) > 0$ and $C=C(\epsilon ) < \infty$ such that for all $\kappa  \ge 1$ and $n \ge 1$, 
\begin{align}
&P \Big( M_{n} \le \frac{ E(M_{n} ) }{\kappa} \Big) \le C \exp \big( - c \kappa^{\frac{1}{2- \alpha} - \epsilon } \big), \label{yattoda-1} \\
&P \Big( \tau^{\infty}_{n} \le \frac{ E(M_{n} ) }{\kappa} \Big) \le C \exp \big( - c \kappa^{\frac{1}{2- \alpha} - \epsilon } \big). \label{yattoda-2}
\end{align}
\end{thm}

\begin{proof}
We will only show \eqref{yattoda-1}. The second inequality can be proved similarly.

We let $k= \kappa^{ \frac{1}{2-\alpha} - \epsilon}$ so that 
\begin{equation*}
k^{2 - \alpha + \epsilon } = \kappa^{1 + \frac{\epsilon}{2-\alpha} - (2- \alpha + \epsilon ) \epsilon }.
\end{equation*}
On the other hand, by Lemma \ref{yamashina}, we see that there exists $c_{\epsilon}$ such that $E ( M_{k n} )$ is bounded above by 
\begin{equation}\label{yattone}
c_{\epsilon} k^{2 - \alpha + \epsilon } E( M_{n} ) = c_{\epsilon} \kappa^{1 + \frac{\epsilon}{2-\alpha} - (2- \alpha + \epsilon ) \epsilon } E (M_{n} ),
\end{equation}
for all $ n \ge 1$. Since $\alpha < 1$ (see \eqref{Goal1-1}), we have $1 + \frac{\epsilon}{2-\alpha} - (2- \alpha + \epsilon ) \epsilon < 1$. Therefore, we can find a large constant $\kappa_{\epsilon} < \infty$ depending on $\epsilon$ such that the right hand side of \eqref{yattone} is bounded above by $c_{1} \kappa E (M_{n})$ for all $\kappa \ge \kappa_{\epsilon}$, where $c_{1}$ is the constant as in Proposition \ref{atsuine}. Consequently, by Proposition \ref{atsuine}, if $\kappa \ge \kappa_{\epsilon}$ we have
\begin{align*}
P \Big( M_{n} \le \frac{ E(M_{n} ) }{\kappa} \Big) = P \Big( M_{k (\frac{n}{k})} \le \frac{ E(M_{k (\frac{n}{k})} ) }{\kappa} \Big) \le P \Big( M_{k (\frac{n}{k})} \le c_{1}  E(M_{ \frac{n}{k}} )  \Big) \le e^{- c_{2} k} = e^{-c_{2} \kappa^{ \frac{1}{2-\alpha} - \epsilon}}.
\end{align*}
So we get the inequality \eqref{yattoda-1} for $n \ge 1$ and $\kappa \ge \kappa_{\epsilon}$. For the case that $1 \le \kappa \le \kappa_{\epsilon}$, it is easy to check that the inequality \eqref{yattoda-1} holds if we choose $C_{\epsilon}$ with $C_{\epsilon} \ge e^{c_{2} \kappa_{\epsilon}^{ \frac{1}{2-\alpha} - \epsilon}}$. So we finish the proof.
\end{proof}

\begin{rem}
It is conjectured (\cite{Wil2}) that 
\begin{equation}\label{karakara}
P \Big( M_{n} < \frac{ E(M_{n} ) }{\kappa} \Big) = C \exp \big( - c \kappa^{\frac{1}{2- \alpha} + o(1) } \big).
\end{equation}
We have proved in Theorem \ref{yattoda} that the left hand side is bounded above by the right hand side in \eqref{karakara}, but the other direction of the inequality remains open.
\end{rem}

\section{Improvement of estimates on len$LE (\overline{S} [0, \overline{T}_{n} ] )$ for $d=3$}

Recall that Theorem \ref{critical exp} shows that $\text{len} \big( LE (\overline{S} [0, \overline{T}_{n} ] ) \big)$ divided by $n^{\alpha_{\ell}(3) + \epsilon}$ converges to zero almost surely for all $\epsilon > 0$, and $\text{len} \big( LE (\overline{S} [0, \overline{T}_{n} ] ) \big)$ divided by $n^{\alpha_{\ell}(3) - \epsilon}$ diverges in the sense that the $\limsup$ of the ratio goes to infinity. The goal of this section is to improve the lower estimates to show that for all $\epsilon > 0$, 
\begin{equation}\label{anpontan}
\lim_{n \to \infty} \frac{ \text{len} \big( LE (\overline{S} [0, \overline{T}_{n} ] ) \big)}{n^{\alpha_{\ell}(3) - \epsilon}}= \infty, \text{ a.s.}
\end{equation}
Combining this with Theorem \ref{critical exp}, we have as $n \to \infty$
\begin{equation}\label{anpontan-1}
  \text{len} \big( LE (\overline{S} [0, \overline{T}_{n} ] ) \big) \approx n^{\alpha_{\ell}(3)}, \text{ a.s.}
\end{equation}
We will also show that $\alpha_{\ell}(3) = \frac{2- \alpha}{2- \xi_{3}}$ (see Corollary \ref{subad} for the exponent $\alpha$ and \eqref{intersection-exp} for $\xi_{3}$). These results will be obtained by by establishing Proposition \ref{upper-3dim-loop} and Proposition \ref{lower-3dim-loop} in Section 9.1 and Section 9.2, respectively.

\subsection{Upper bound for $\alpha_{\ell}(3)$}
In this subsection, we will show that $\alpha_{\ell}(3) \le \frac{2- \alpha}{2- \xi_{3}}$ by proving that $\text{len} \big( LE (\overline{S} [0, \overline{T}_{n} ] )$ divided by $n^{\frac{2- \alpha}{2- \xi_{3}} + \epsilon }$ converges to zero for all $\epsilon > 0$ in Proposition \ref{upper-3dim-loop}. The proof is completely same as Proposition \ref{upper-2dim-loop}. As in the proof of Proposition \ref{upper-2dim-loop}, we will use upper tail bounds on $M_{n}$ derived in Theorem \ref{dounaruno}.

%The proof is based on the following steps. We first compare $\overline{T}_{n}$ with the first time that $\overline{S}$ exits from a ball so that $\overline{\tau}^{+}_{n^{\frac{4}{3} - \epsilon}} \le \overline{T}_{n} \le \overline{\tau}^{+}_{n^{\frac{4}{3} + \epsilon}}$. Since $\overline{T}_{n}$ is a cut time, we see that $\text{len} \big( LE (\overline{S} [0, \overline{T}_{n} ] ) \big)$ is bounded above by $\text{len} \big( LE (\overline{S} [0, \overline{\tau}^{+}_{n^{\frac{4}{3} + \epsilon}} ] ) \big)$. Now we use results from \cite{BM} which give exponential tail bounds for the length of the loop-erasure of the usual simple random walk $S$ in 2 dimensions. Theorem 1.1 of \cite{BM} gives that $\text{len} \big( LE (S [0, \tau_{n^{\frac{4}{3} + \epsilon}} ] ) \big)$ is bounded above by $n^{\frac{5}{3} + 3 \epsilon}$ with high probability. Since the probability that $\text{len} \big( LE (S [0, \tau_{n^{\frac{4}{3} + \epsilon}} ] ) \big) \ge n^{\frac{5}{3} + 3 \epsilon}$ is much smaller than the probability that $S^{1}$ and $S^{2}$ do not intersect up to the first time that they exit from $B (\tau_{n^{\frac{4}{3} + \epsilon}})$, we can conclude that $\text{len} \big( LE (\overline{S} [0, \overline{\tau}^{+}_{n^{\frac{4}{3} + \epsilon}} ] ) \big)$ is also bounded above by $n^{\frac{5}{3} + 3 \epsilon}$ with high probability.

\begin{prop}\label{upper-3dim-loop}
Let $d=3$. For all $b > \frac{2- \alpha}{2- \xi_{3}}$,
\begin{equation}
\overline{P} \Big( \lim_{n \to \infty} \frac{ \text{len} \big( LE (\overline{S} [0, \overline{T}_{n} ] ) \big) } { n^{b} } = 0 \Big) =1.
\end{equation}
In particular, $\alpha_{\ell}(3) \le \frac{2- \alpha}{2- \xi_{3}}$.
\end{prop}

\begin{proof}
Fix $\epsilon > 0$. We write $\overline{K}^{+}_{n}$ for the number of global cut times of $\overline{S}$ in $[0, \overline{\tau}^{+}_{n} ]$. In the proof of Theorem 1.1 of \cite{S}, it was shown that 
\begin{equation*}
n^{2- \xi_{3} - \epsilon } \le \overline{K}^{+}_{n} \le n^{2- \xi_{3} + \epsilon } \text{ for large } n, \ \overline{P}\text{-a.s.}
\end{equation*}
This gives that 
\begin{equation}\label{jirousan-33}
\overline{\tau}^{+}_{n^{\frac{1}{2- \xi_{3}} - 2 \epsilon }} \le \overline{T}_{n} \le \overline{\tau}^{+}_{n^{\frac{1}{2- \xi_{3}} + 2 \epsilon }} \text{ for large } n, \ \overline{P}\text{-a.s.}
\end{equation}

By Theorem \ref{leipzig} and Proposition \ref{zhan}, we have $E (M_{n} ) \asymp n^{2} Es (n)$. Furthermore, Theorem \ref{main result-kaetta} shows that $Es (n) \approx n^{-\alpha}$. Therefore, we see that $E (M_{n} ) \approx n^{2 - \alpha}$. Combining this with Theorem \ref{dounaruno}, we have 
\begin{equation}\label{uruseina-33}
P \Big( M_{n^{\frac{1}{2- \xi_{3}} + 2 \epsilon }}  \ge n^{ \frac{2- \alpha}{2- \xi_{3}} + 6 \epsilon } \Big) \le c_{0} e^{- c_{1} n^{\frac{\epsilon}{8}}},
\end{equation}
for some $0 < c_{0}, c_{1} < \infty$. 

Recall that Corollary 4.6 of \cite{L} gives that for $N > n^{\frac{1}{2- \xi_{3}} + 2 \epsilon}$
\begin{equation*}
\max_{x, y \in B \big( n^{\frac{1}{2- \xi_{3}} + 2 \epsilon} \big) } P^{x, y} \big( S^{1} [0, \tau^{1}_{N}] \cap S^{2} [0, \tau^{2}_{N}] = \emptyset \big) \le c \Big( \frac{N}{n^{\frac{1}{2- \xi_{3}} + 2 \epsilon}} \Big)^{-\xi_{3}}.
\end{equation*}
Using this along with \eqref{uruseina-33} and the strong Markov property, we see that
\begin{equation}\label{kietane-33}
P \Big( S^{1} [0, \tau^{1}_{N}] \cap S^{2} [1, \tau^{2}_{N}] = \emptyset, \ M_{ n^{\frac{1}{2- \xi_{3}} + 2 \epsilon} }   \ge n^{ \frac{2- \alpha}{2- \xi_{3}} + 6 \epsilon } \Big)  \le  c N^{-\xi_{3}} e^{- \frac{c_{1}}{2} n^{\frac{\epsilon}{8}}}.
\end{equation}
Theorem 1.3 of \cite{L} gives that $P \Big( S^{1} [0, \tau^{1}_{N}] \cap S^{2} [1, \tau^{2}_{N}] = \emptyset \Big) \asymp N^{-\xi_{3}}$. By dividing both sides of \eqref{kietane-33} by $P \Big( S^{1} [0, \tau^{1}_{N}] \cap S^{2} [1, \tau^{2}_{N}] = \emptyset \Big)$ first and then by letting $N$ go to infinity, we have
\begin{equation*}
\overline{P} \Big( \text{len} \big( LE ( \overline{S} [0, \overline{\tau}^{+}_{ n^{\frac{1}{2- \xi_{3}} + 2 \epsilon} } ] )  \big) \ge n^{\frac{2- \alpha}{2- \xi_{3}} + 6 \epsilon } \Big) \le c e^{- \frac{c_{1}}{2} n^{\frac{\epsilon}{8}}}.
\end{equation*}
By the Borel-Cantelli lemma, we have
\begin{equation}\label{jiro-2-33}
\text{len} \big( LE ( \overline{S} [0, \overline{\tau}^{+}_{ n^{\frac{1}{2- \xi_{3}} + 2 \epsilon} } ] )  \big) \le n^{\frac{2- \alpha}{2- \xi_{3}} + 6 \epsilon } \text{ for large } n, \ \overline{P}\text{-a.s.}
\end{equation}
 Using this and \eqref{jirousan-33}, with probability one, $\overline{T}_{n} \le \overline{\tau}^{+}_{n^{\frac{1}{2- \xi_{3}} + 2 \epsilon }}$ and $\text{len} \big( LE ( \overline{S} [0, \overline{\tau}^{+}_{ n^{\frac{1}{2- \xi_{3}} + 2 \epsilon} } ] )  \big) \le n^{\frac{2- \alpha}{2- \xi_{3}} + 6 \epsilon }$ hold for large $n$. This implies that $\text{len} \big( LE (\overline{S} [0, \overline{T}_{n} ] ) \big)$ is bounded above by $n^{\frac{2- \alpha}{2- \xi_{3}} + 6 \epsilon }$. Since $\epsilon > 0$ is an arbitrary positive number, we finish the proof.
\end{proof}

\subsection{Lower bound for $\alpha_{\ell}(3)$}
In this subsection, we will show that $\alpha_{\ell}(3) \ge \frac{2- \alpha}{2- \xi_{3}}$ by proving that $\text{len} \big( LE (\overline{S} [0, \overline{T}_{n} ] )$ divided by $n^{\frac{2- \alpha}{2- \xi_{3}} - \epsilon }$ goes to infinity for all $\epsilon > 0$ in Proposition \ref{lower-3dim-loop}. The proof is completely same as Proposition \ref{lower-2dim-loop}.

% based on the same ideas as in the proof of Proposition \ref{upper-2dim-loop}. We compare $\overline{T}_{n}$ with the first time that $\overline{S}$ exits from a ball as in \eqref{jirousan}. Then we will give a lower bound on the length of the loop-erasure of $\overline{S}$ up to $\overline{\tau}^{+}_{n^{\frac{4}{3} - 2 \epsilon }}$. However, there is an issue to achieve it. Since $\text{len} \overline{S} [0, t_{1} ]$ may be larger than $\text{len} \overline{S} [0, t_{2} ]$ even if $t_{1} < t_{2}$, we are not able to conclude that $\text{len} \big( LE (\overline{S} [0, \overline{T}_{n} ] ) \big)$ is bigger than $\text{len} \big( LE (\overline{S} [0, \overline{\tau}^{+}_{n^{\frac{4}{3} - 2 \epsilon }} ] ) \big)$. In order to deal with this issue, we will consider $LE (\overline{S} [0, \overline{\tau}^{+}_{n^{2}}] )$ up to the first time it exits from $B ( n^{\frac{4}{3} - 2 \epsilon } )$ instead of $LE (\overline{S} [0, \overline{\tau}^{+}_{n^{\frac{4}{3} - 2 \epsilon }} ] )$.

\begin{prop}\label{lower-3dim-loop}
Let $d=2$. For all $b < \frac{2- \alpha}{2- \xi_{3}}$,
\begin{equation}\label{oshiriitai}
\overline{P} \Big( \lim_{n \to \infty} \frac{ \text{len} \big( LE (\overline{S} [0, \overline{T}_{n} ] ) \big) } { n^{b} } = \infty \Big) =1.
\end{equation}
In particular, we have with probability one,
\begin{equation}
\text{len} \big( LE (\overline{S} [0, \overline{T}_{n} ] ) \big) \approx n^\frac{2- \alpha}{2- \xi_{3}},
\end{equation}
as $n \to \infty$ and $\alpha_{\ell}(3) = \frac{2- \alpha}{2- \xi_{3}}$.
\end{prop}

% It turns out that we need to be more carefully when we give a lower bound on $\Big| \text{LE} \big( \overline{S} [0, \overline{T_{n}}] \big) \Big|$ because of the lack of some sort of monotonicity for LERW, i.e., there is a possibility that 
% \begin{equation*}
% \Big| \text{LE} \big( \overline{S} [0, \xi_{m}] \big) \Big| > \Big| \text{LE} \big( \overline{S} [0, \xi_{n}] \big) \Big|,
% \end{equation*}
% even if $m < n$. To deal with this issue, we use our understanding of the infinite LERW (in fact, we will use some object which approaches infinite LERW), which is convenient to deal with such monotonicity issue.
 
\begin{proof}
Since we have proved Proposition \ref{upper-3dim-loop}, it suffices to show \eqref{oshiriitai}.

Fix $\epsilon > 0$. By \eqref{jirousan-33}, we see that $\overline{\tau}^{+}_{n^{\frac{1}{2- \xi_{3}} - 2 \epsilon }} \le \overline{T}_{n} \le \overline{\tau}^{+}_{n^{\frac{1}{2- \xi_{3}} + 2 \epsilon }}$ for large $n$ with probability one. Furthermore, as in \eqref{dareda}, we have
\begin{equation}\label{dareda-33}
\overline{P} \big( n^{\frac{1}{2- \xi_{3}} - 3 \epsilon } < | \overline{S} ( \overline{T}_{n} ) | < n^{\frac{1}{2- \xi_{3}} + 3 \epsilon } \big) \ge 1- c n^{-\frac{\epsilon}{2}}.
\end{equation}

We set 
\begin{equation}\label{kaetta-33}
\overline{t} := \inf \{ k \ | \ LE (\overline{S} [0, \overline{\tau}^{+}_{n^{2}}] ) (k) \in B (n^{\frac{1}{2- \xi_{3}} - 3 \epsilon })^{c} \},
\end{equation}
which is an analog of \eqref{kaetta}. Then same argument as in \eqref{kaerunone} gives that with probability at least $1- c n^{-\frac{\epsilon}{2}}$,
\begin{equation}\label{kaerunone-33}
\text{len} \big( LE (\overline{S} [0, \overline{T}_{n} ] ) \big)  > \overline{t}.
\end{equation}

We next estimate $\overline{t}$ by using Theorem \ref{yattoda}. Let 
\begin{equation}\label{kaetta-1-33}
t := \inf \{ k \ | \ LE ( S^{2} [0, \tau^{2}_{n^{2}}]) (k) \in B (n^{\frac{1}{2- \xi_{3}} - 3 \epsilon })^{c} \}.
\end{equation}
Note that Corollary 4.5 of \cite{Mas} shows that the distribution of $LE ( S^{2} [0, \tau^{2}_{n^{2}}])$ up to time $t$ is same as the distribution of the infinite LERW $\gamma^{\infty}$ up to the first time that $\gamma^{\infty}$ exits from $B (n^{\frac{1}{2- \xi_{3}} - 3 \epsilon })$ up to multiplicative constants. With this in mind, we apply Theorem \ref{yattoda} to show that
\begin{equation*}
P ( t < n^{\frac{2- \alpha}{2- \xi_{3}} - 10 \epsilon} ) \le C e^{-c n^{\epsilon}},
\end{equation*}
which gives that
\begin{equation*}
\overline{P} \big( \overline{t} < n^{\frac{2- \alpha}{2- \xi_{3}} - 10 \epsilon} \big) \le C e^{- \frac{c}{2} n^{\epsilon}}.
\end{equation*}

Thus we can conclude that with probability at least $1- c n^{-\frac{\epsilon}{2}}$, $\text{len} \big( LE (\overline{S} [0, \overline{T}_{n} ] ) \big)$ is bounded below by $n^{\frac{2- \alpha}{2- \xi_{3}} - 10 \epsilon}$. Applying the Borel-Cantelli lemma to the case that $n = 2^{k}$ first, and then by choosing $k$ with $2^{k} \le n < 2^{k+1}$ for a general index $n$, we get the claim.
\end{proof}

\begin{rem}
Similar arguments as in the proof of Proposition \ref{upper-3dim-loop} and Proposition \ref{lower-3dim-loop} show that with probability one,
\begin{equation}
\text{len} LE ( \overline{S} [0, \overline{\tau}^{+}_{n} ] ) \approx n^{2-\alpha} \text{ as } n \to \infty,
\end{equation}
where $\alpha$ is the exponent as in Corollary \ref{subad}.
\end{rem}

\section{Discussion}
In this final section, we will summarize our results and discuss further direction. What we have proved are
\begin{itemize}
\item The law of $\overline{S}$ is invariant under the shift $\overline{\theta}$ and $\overline{\theta}$ is mixing for $d=2, 3$ (Theorem \ref{stationary-o} and Theorem \ref{mixing}).

\item Using Aaronson's results in \cite{A}, several exponents are defined in Theorem \ref{critical exp}. These exponents describe asymptotic behaviors of the length of the loop-erasure, graph distance, and effective resistance of $\overline{S} [0, \overline{T}_{n} ]$.

\item There exists $\alpha \in [\frac{1}{3}, 1)$ such that $E (M_{n} ) \approx n^{2- \alpha}$ for $d=3$, where $M_{n}$ stands for the length of the loop-erasure of $S[0, \tau_{n} ]$ (Theorem \ref{main result-kaetta}, Theorem \ref{leipzig} and Proposition \ref{zhan}).

\item Exponential tail bounds on $M_{n}$ are established for $d=3$ (Theorem \ref{dounaruno} and Theorem \ref{yattoda}).

\item For $d=2, 3$, we have $\text{len} LE (\overline{S} [0, \overline{T}_{n} ] ) \approx n^{\alpha_{\ell} (d) }$ with probability one (Theorem \ref{2-dim} and Proposition \ref{lower-3dim-loop}).

\item Both $M_{n}$ and $\text{len} LE (\overline{S} [0, \overline{\tau}^{+}_{n} ] )$ are of order $n^{\frac{5}{4}}$ in 2 dimensions. Both of them are of order $n^{2-\alpha}$ in 3 dimensions. 
\end{itemize}

Theorem \ref{critical exp} is the only place where we used results of ergodic theory. In an early stage of this project, we tried to prove that three quantities as in Theorem \ref{critical exp}, the length of the loop-erasure, graph distance and effective resistance of $\overline{S} [0, \overline{T}_{n} ]$ are in fact logarithmically asymptotic to $n^{\alpha_{\ell} (d) }$, $n^{\alpha_{g} (d) }$ and $n^{\alpha_{r} (d) }$ respectively, just by using some general results of ergodic theory. However, since we could not find such a way to achieve it, we decided to deal with the length of its loop-erasure by establishing necessary results for the loop-erasure of usual simple random walks. It seems difficult to derive similar results (e.g. moments estimates, existence of the exponent for the first moment, $\cdots$) for the graph distance and effective resistance. But still we conjecture the following:

\begin{con}
For $d=2, 3$, with probability one, we have
\begin{align}
&d_{\overline{S} [0, \overline{T}_{n} ]} (0, \overline{S} ( \overline{T}_{n} ) ) \approx n^{\alpha_{g} (d)}, \\
&R_{\overline{S} [0, \overline{T}_{n} ]} (0, \overline{S} ( \overline{T}_{n} )  ) \approx n^{\alpha_{r} (d)},
\end{align}
as $n \to \infty$.
\end{con}

Since we have proved that $\text{len} LE (\overline{S} [0, \overline{T}_{n} ] ) \approx n^{\alpha_{\ell} (d) }$, it is natural to ask how the distribution of the ratio 
\begin{equation*}
\frac{\text{len} LE (\overline{S} [0, \overline{T}_{n} ] )}{n^{\alpha_{\ell} (d) }}
\end{equation*}
behaves as $n \to \infty$. Recently, some distributional limit theorems for a class of positive, stationary and mixing processes are established in \cite{AZ}. Unfortunately, it has not been proved or disproved that our quantity $\text{len} LE (\overline{S} [0, \overline{T}_{n} ] )$ belongs to the class considered in \cite{AZ}. However we believe that our results derived in the present article are useful to understand the behavior of the ratio and the structure of random walk paths.

\medskip

{\bf Acknowledgement} Enormous thanks go to Takashi Kumagai for constant encouragement and fruitful discussion. Many thanks go to Gady Kozma, Greg Lawler and Alain-Sol Sznitman for useful discussions and comments. 

I would like to thank the referees for giving constructive comments and help in improving the contents of the paper.

Finally, and most importantly, the author thanks Hidemi Aihara for all her understanding, patience and support.This article would not have existed without
her.


\begin{thebibliography}{10}
\bibitem{A} Aaronson, J. An ergodic theorem with large normalising constants. Israel J. Math. 38 (1981), no. 3, 182-188.
\bibitem{AZ} Aaronson, J. and  Zweim\"{u}ller, R. Limit theory for some positive stationary processes with infinite mean. Ann. Inst. H. Poincare Probab. Statist. Volume 50, Number 1 (2014), 256-284.
\bibitem{BM} Barlow, M. T. and Masson, R.  Exponential tail bounds for loop-erased random walk in two dimensions. Ann. Probab. 38 (2010), no. 6, 2379-2417.
\bibitem{BGL}  Benjamini, I., Gurel-Gurevich, O. and Lyons, R. Recurrence of random walk traces. Ann. Probab. 35 (2007). 732-738. MR2308594
\bibitem{BGS} Benjamini, I. Gurel-Gurevich, O. and Schramm, O. Cutpoints and resistance of random walk paths. Annals of Probability 2011, Vol. 39, No. 3, 1122-1136
\bibitem{BSZ}  Bolthausen, E. Sznitman, A-S. Zeitouni, O. Cut points and diffusive random walks in random environment. Annales de l'institut Henri Poincare (B) Probabilites et Statistiques (2003) Volume: 39, Issue: 3, page 527-555
\bibitem{Cro} Croydon, D. A. Random walk on the range of random walk. J. Stat. Phys. 136 (2009) 349-372.
\bibitem{DS} Doyle, G. P. and Snell, J. L. Random Walks and Electric Networks. Mathematical Association of America. (1984).
\bibitem{Dur} Durrett, R. Probability: Theory and Example. Cambridge University Press (2010).
\bibitem{GB} Guttmann, J. and Bursill, R. J. Critical exponents for the loop erased self-avoiding walk by Monte Carlo methods, Journal of Statistical Physics 59:1/2 (1990), 1-9.
\bibitem{JP} James, N.   and Peres, Y.  Cutpoints and exchangeable events for random walks.    Teor. Veroyatnost. i Primenen.  41  (1996),  no. 4, 854--868;  translation in 
Theory Probab. Appl.  41  (1996),  no. 4, 666-677 (1997)
\bibitem{Ken} Kenyon, R.  The asymptotic determinant of the discrete Laplacian. Acta Math. 185 239-286. (2000)
\bibitem{Koz} Kozma, G. The scaling limit of loop-erased random walk in three dimensions. Acta Math. 199: 1 (2007) 29-152. 
\bibitem{lew} Lawler, G. F. A self-avoiding random walk. Duke Math J. 47 655-693. (1980) MR587173
\bibitem{Law b} Lawler, G. F.  Intersections of random walks. Probability and its Applications. Birkhauser Boston, Inc., Boston, MA, (1991). (soft-cover version)
\bibitem{Law2} Lawler, G. F. Escape probabilities for slowly recurrent sets. Probab. Theory Related Fields 94, 91-117 (1992)
\bibitem{Law-3} Lawler, G. F. The logarithmic correction for loop-erased walk in four dimensions. In Proceedings of the Conference in Honor of Jean-Pierre Kahane (Orsay, 1993) J. Fourier Anal. Appl. (1995) 347-361. MR1364896
\bibitem{L} Lawler, G. F. Cut times for simple random walk. Electron. J. Probab. 1 (1996), no. 13, approx. 24 pp. (electronic).
\bibitem{Law3} Lawler, G. F. Hausdorff dimension of cut points for Brownian motion. Electronic Journal of Probability 1 (1996), paper no.2.
\bibitem{Law5} Lawler, G. F. Loop-erased random walk, in Perplexing problems in probability: Festschrift in honor of Harry Kesten (M. Bramson and R. T. Durrett, eds.), Progr. Probab., vol. 44, Birkhauser Boston, Boston, MA, (1999), pp. 197-217.
\bibitem{LSW} Lawler, G. F. Schramm, O.  Werner, W. Values of Brownian intersection exponents. II. Plane exponents. Acta Math. 187 (2001), no. 2, 275-308.
\bibitem{LSW2} Lawler, G. F., Schramm, O.  Werner, W. Conformal invariance of planar loop-erased random walks and uniform spanning trees. Ann. Probab. 32 939-995 (2004)
\bibitem{Law-conf} Lawler, G. F. Conformally Invariant Processes in the Plane. American Mathematical Society. (2008).
\bibitem{Law b2} Lawler, G. F. and Limic, V. Random walk: a modern introduction. Cambridge University Press (2010). 
\bibitem{Greg} Lawler, G. F. and Vermesi, B. Fast convergence to an invariant measure for non-intersecting 3-dimensional Brownian paths. (2010) preprint, available at http://arxiv.org/abs/1008.4830
\bibitem{Lawler} Lawler, G. F. The probability that planar loop-erased random walk uses a given edge. arXiv:1301.5331
\bibitem{Mas}  Masson, R. The growth exponent for planar loop-erased random walk. Electron. J. Probab. 14 (2009), no. 36, 1012-1073.
\bibitem{Pem} Pemantle, R.  Choosing a spanning tree for the integer lattice uniformly. Ann. Probab. 19 1559-1574. (1991). MR1127715
\bibitem{PP}  Moerters, P. Peres, Y. Brownian Motion. Cambridge Series in Statistical and Probabilistic Mathematics. Cambridge University Press. (2010). 
\bibitem{Sch} Schramm, O. Scaling limits of loop-erased random walks and uniform spanning trees. Israel J. Math. 118  221-288. (2000)
\bibitem{S poi} Shiraishi, D. Heat kernel for random walk trace on $\mathbb{Z}^{3}$ and $\mathbb{Z}^{4}$. Annales de l'institut Henri Poincare (B) Probabilites et Statistiques 46(2010):1001-1024
\bibitem{S pt} Shiraishi, D. Exact value of the resistance exponent for four dimensional random walk trace. Probab. Theory Related Fields. Volume 153, Issue 1-2 , pp 191-232
\bibitem{S ejp}  Shiraishi, D. Two-sided random walks conditioned to have no intersections. (2012) Electron. J. Probab. 17, no. 18, 24 pp.
\bibitem{S} Shiraishi, D. Random walk on non-intersecting two-sided random walk trace is subdiffusive in low dimensions. Trans. AMS, to appear.
\bibitem{SS} Sapozhnikov, A. Shiraishi, D. On Brownian motion, simple paths, and loops. available at http://arxiv.org/abs/1512.04864
\bibitem{Wil} Wilson, D. B. Generating random spanning trees more quickly than the cover time. In Proceedings of the Twenty-Eighth Annual ACM Symposium on the Theory of Computing (Philadelphia, PA, 1996) 296-303. ACM, New York. MR1427525
\bibitem{Wil2} Wilson, D. B. The dimension of loop-erased random walk in 3D.  Physical Review E 82(6):062102, (2010). 

\end{thebibliography}
\end{document}